\definecolor{darkblue}{rgb}{0.0,0,0.7} 
\definecolor{darkred}{rgb}{0.7,0,0} 
\definecolor{darkgreen}{rgb}{0, .6, 0} 
\newcommand{\defn}[1]{{\color{darkred}\emph{#1}}} 
\newcommand{\wt}{\operatorname{wt}}
\newtheorem{theorem}{Theorem}[section]
\newtheorem{proposition}[theorem]{Proposition}
\newtheorem{lemma}[theorem]{Lemma}
\newtheorem{corollary}[theorem]{Corollary}
\newtheorem{claim}[theorem]{Claim}
\theoremstyle{definition}
\newtheorem{definition}[theorem]{Definition}
\newtheorem{remark}[theorem]{Remark}
\newtheorem{example}[theorem]{Example}
\numberwithin{equation}{section}
\title{Characterization of queer supercrystals}
\author[M.~Gillespie]{Maria Gillespie}
\author[G.~Hawkes]{Graham Hawkes}
\author[W.~Poh]{Wencin Poh}
\author[A.~Schilling]{Anne Schilling}
\address[M. Gillespie, G. Hawkes, W. Poh, A. Schilling]{Department of Mathematics, University of California, One Shields
Avenue, Davis, CA 95616-8633, U.S.A.}
\email{anne@math.ucdavis.edu}
\urladdr{http://www.math.ucdavis.edu/\~{}anne}
\subjclass[2010]{Primary 17B37; Secondary: 05E10; 81R50}
\keywords{Crystal graphs, queer Lie superalgebras, Stembridge axioms}
\begin{document}

\begin{abstract}
We provide a characterization of the crystal bases for the quantum queer superalgebra recently
introduced by Grantcharov et al.. This characterization is a combination of local queer axioms generalizing Stembridge's local 
axioms for crystal bases for simply-laced root systems, which were recently introduced by Assaf and Oguz, with further axioms
and a new graph $G$ characterizing the relations of the type $A$ components of the queer crystal. We provide a 
counterexample to Assaf's and Oguz' conjecture that the local queer axioms uniquely characterize the queer supercrystal. 
We obtain a combinatorial description of the graph $G$ on the type $A$ components by providing explicit combinatorial rules 
for the odd queer operators on certain highest weight elements.
\end{abstract}

\maketitle{}

\section{Introduction}

The representation theory of Lie algebras is of fundamental importance, and hence combinatorial
models for representations, especially those amenable to computation, are of great use.
In the 1990's, Kashiwara~\cite{Kashiwara.1991} showed that integrable highest weight representations of the 
Drinfeld--Jimbo quantum groups $U_q(\mathfrak{g})$, where $\mathfrak{g}$ is a symmetrizable Kac--Moody Lie algebra, 
in the $q \to 0$ limit result in a combinatorial skeleton of the integrable representation. He coined the term crystal bases, 
reflecting the fact that $q$ corresponds to the temperature of the underlying physical system. Since then, crystal bases 
have appeared in many areas of mathematics, including algebraic geometry, combinatorics, mathematical physics, 
representation theory, and number theory. One of the major advances in the theory of crystals for simply-laced Lie 
algebras was the discovery by Stembridge~\cite{Stembridge.2003} of local axioms that uniquely characterize 
the crystal graphs corresponding to Lie algebra representations. These local axioms provide a completely combinatorial 
approach to the theory of crystals; this viewpoint was taken in~\cite{BumpSchilling.2017}.

Lie superalgebras~\cite{Kac.1977} arose in physics in theories that unify bosons and fermions. They are essential in 
modern string theories~\cite{GGRS.1983} and appear in other areas of mathematics, such as the projective representations 
of the symmetric group. The crystal basis theory has been developed for various quantum 
superalgebras~\cite{BKK.2000,Grantcharov.2010a,Granthcharov.2010,Grantcharov.2014, Grantcharov.2015,Grantcharov.2017,
Kwon.2015, Kwon.2016}. In this paper, we are in particular interested in the queer superalgebra $\mathfrak{q}(n)$
(see for example~\cite{ChengWang.2012}). A theory of highest weight crystals for the queer superalgebra $\mathfrak{q}(n)$
was recently developed by Grantcharov et al.~\cite{Granthcharov.2010,Grantcharov.2014, Grantcharov.2015}. They provide 
an explicit combinatorial realization of the highest weight crystal bases in terms of semistandard decomposition tableaux
and show how these crystals can be derived from a tensor product rule and the vector representation. They also 
use the tensor product rule to derive a Littlewood--Richardson rule. 
Choi and Kwon~\cite{ChoiKwon.2017} provide a new characterization of Littlewood--Richardson--Stembridge tableaux 
for Schur $P$-functions by using the theory of $\mathfrak{q}(n)$-crystals. 
Independently, Hiroshima~\cite{Hiroshima.2018} and Assaf and Oguz~\cite{AssafOguz.2018,AssafOguz.2018a}
defined a queer crystal structure on semistandard shifted tableaux, extending the type $A$ crystal structure
of~\cite{HawkesParamonovSchilling.2017} on these tableaux. 

In this paper, we provide a characterization of the queer supercrystals in analogy to Stembridge's~\cite{Stembridge.2003} 
characterization of crystals associated to classical simply-laced root systems. 
Assaf and Oguz~\cite{AssafOguz.2018,AssafOguz.2018a} conjecture a local characterization of queer crystals in the 
spirit of Stembridge~\cite{Stembridge.2003}, which involves local relations between the odd crystal operator
$f_{-1}$ with the type $A_{n-1}$ crystal operators $f_i$ for $1\leqslant i <n$. However, we provide a counterexample
to~\cite[Conjecture~4.16]{AssafOguz.2018a}, which conjectures that these local axioms uniquely characterize
the queer supercrystals. Instead, we define a new graph $G(\mathcal{C})$ on the relations between the type $A$ 
components of the queer supercrystal $\mathcal{C}$, which together with Assaf's and Oguz' local queer axioms
and further new axioms uniquely fixes the queer crystal structure (see Theorem~\ref{theorem.main}).
We provide a combinatorial description of $G(\mathcal{C})$ by providing the combinatorial rules for all
odd queer crystal operators $f_{-i}$ and $e_{-i}$ on certain highest weight elements for $1\leqslant i<n$.

This paper is structured as follows. In Section~\ref{section.queer}, we review the combinatorial definition
of the queer supercrystals by~\cite{Granthcharov.2010, Grantcharov.2014, Grantcharov.2015} and prove several results 
that are needed later for the combinatorial description of the graph $G(\mathcal{C})$. In particular, Theorems~\ref{theorem.f-i}
and~\ref{theorem.e-i} provide explicit combinatorial descriptions of the odd queer crystal operators $f_{-i}$ and $e_{-i}$
on highest weight elements.
In Section~\ref{section.axioms}, we state the local queer axioms by Assaf and Oguz~\cite{AssafOguz.2018,AssafOguz.2018a}
and provide a counterexample to~\cite[Conjecture~4.16]{AssafOguz.2018a}. The graph  $G(\mathcal{C})$ is introduced in 
Section~\ref{section.G(C)}. Theorem~\ref{theorem.combinatorial G} allows us to transform $G(\mathcal{C})$ into 
combinatorial graphs $\overline{G}(\mathcal{C})$ and $\widetilde{G}(\mathcal{C})$, 
which together with the local queer axioms of Definition~\ref{definition.local queer axioms} and new connectivity
axioms of Definition~\ref{definition.connectivity axioms} uniquely characterize the queer crystals as stated in 
Theorem~\ref{theorem.main}.

\subsection*{Acknowledgments}
We are grateful to Sami Assaf, Dan Bump, Zach Hamaker, Ezgi Oguz, and Travis Scrimshaw for helpful discussions. 
We would also like to thank Dimitar Grantcharov, Ji-Hye Jung, and Masaki Kashiwara for answering our questions about 
their work. The last two authors have implemented the queer supercrystals in  {\sc SageMath}~\cite{combinat,sage}. 
This work benefited from experimentations in {\sc SageMath}.

This work was partially supported by NSF grants  DMS--1500050, DMS--1760329, DMS--1764153 and 
NSF MSPRF grant PDRF 1604262.

\section{Queer supercrystals}
\label{section.queer}

In Section~\ref{section.definition queer supercrystal}, we review the queer crystals constructed 
in~\cite{Granthcharov.2010, Grantcharov.2014, Grantcharov.2015}. In Section~\ref{section.properties}, we review
some properties of queer crystals discovered in~\cite{AssafOguz.2018,AssafOguz.2018a}.
In Section~\ref{section.f-i and e-i}, we provide new explicit combinatorial descriptions of $f_{-i}$ and $e_{-i}$
on certain highest weight elements, which will be used in Section~\ref{section.G(C)} to construct the graph
$G(\mathcal{C})$. In Section~\ref{section.proposition bypass}, we provide relations between $e_{-i}$ when acting
on certain highest weight elements, which will be used in Section~\ref{section.G(C)} to deal with ``by-pass arrows'' in 
the component graph $G(\mathcal{C})$.

\subsection{Definition of queer supercrystals}
\label{section.definition queer supercrystal}

An \defn{(abstract) crystal} of type $A_n$ is a nonempty set $B$ together with the maps
\begin{equation}
\begin{split}
	e_i, f_i &\colon B \to B \sqcup \{0\} \qquad \text{for $i \in I$,}\\
        \wt &\colon B \to \Lambda,
\end{split}
\end{equation}
where $\Lambda = \mathbb{Z}^{n+1}_{\geqslant 0}$ is the weight lattice of the root of type $A_n$ and $I=\{1,2,\ldots,n\}$
is the index set, subject to several conditions. Denote by $\alpha_i =\epsilon_i -\epsilon_{i+1}$ for $i \in I$ the simple 
roots of type $A_n$, where $\epsilon_i$ is the $i$-th standard basis vector of $\mathbb{Z}^{n+1}$. Then we require:
\begin{itemize}
\item[\textbf{A1.}]  For $b,b'\in B$, we have $f_ib=b'$ if and only if $b=e_ib'$. In this case $\wt(b') =\wt(b)-\alpha_i$.
\end{itemize}
For $b\in B$, we also define
\[
	\varphi_i(b) = \max\{k \in \mathbb{Z}_{\geqslant 0} \mid f_i^k(b) \neq 0\}
	\qquad \text{and} \qquad
	\varepsilon_i(b) = \max\{k \in \mathbb{Z}_{\geqslant 0} \mid e_i^k(b) \neq 0\}.
\]
For further details, see for example~\cite[Definition 2.13]{BumpSchilling.2017}.

There is an action of the symmetric group $S_n$ on a type $A_n$ crystal $B$ given by the operators 
\begin{equation}
\label{equation.si}
	s_i(b) = \begin{cases}
	f_i^k(b) & \text{if $k\geqslant 0$,}\\
	e_i^{-k}(b) & \text{if $k<0$,}
	\end{cases}
\end{equation}
for $b\in B$, where $k=\varphi_i(b)-\varepsilon_i(b)$.

An element $b\in B$ is called \defn{highest weight} if $e_i(b)=0$ for all $i \in I$. Similarly, $b$ is called
\defn{lowest weight} if $f_i(b)=0$ for all $i \in I$. For a subset $J \subseteq I$, we say that $b$ is $J$-highest weight
if $e_i(b)=0$ for all $i \in J$ and similarly $b$ is $J$-lowest weight if $f_i(b)=0$ for all $i \in J$.

We are now ready to define an abstract queer crystal.

\begin{definition} \cite[Definition 1.9]{Grantcharov.2014}
\label{definition.abstract queer}
An \defn{abstract $\mathfrak{q}(n+1)$-crystal} is a type $A_n$ crystal $B$ together with the maps
$e_{-1},f_{-1} \colon B \to B \sqcup \{0\}$ satisfying the following conditions:
\begin{enumerate}
\item[\textbf{Q1.}] $\wt(B) \subset \Lambda$;
\item[\textbf{Q2.}] $\wt(e_{-1} b) = \wt(b) + \alpha_1$ and $\wt(f_{-1} b) = \wt(b) - \alpha_1$;
\item[\textbf{Q3.}] for all $b,b'\in B$, $f_{-1}b = b'$ if and only if $b = e_{-1} b'$;
\item[\textbf{Q4.}] if $3\leqslant i \leqslant n$, we have
\begin{enumerate}
\item the crystal operators $e_{-1}$ and $f_{-1}$ commute with $e_i$ and $f_i$;
\item if $e_{-1}b \in B$, then $\varepsilon_i(e_{-1}b) = \varepsilon_i(b)$ and $\varphi_i(e_{-1}b) = 
\varphi_i(b)$.
\end{enumerate}
\end{enumerate}
\end{definition}

Given two $\mathfrak{q}(n+1)$-crystals $B_1$ and $B_2$, Grantcharov et al.~\cite[Theorem 1.8]{Grantcharov.2014}
provide a crystal on the tensor product $B_1 \otimes B_2$, which we state here in reverse convention.
It consists of the type $A_n$ tensor product rule (see for example~\cite[Section 2.3]{BumpSchilling.2017})
and the \defn{tensor product rule} for $b_1 \otimes b_2 \in B_1 \otimes B_2$
\begin{equation}
\label{equation.tensor product}
\begin{split}
	e_{-1} (b_1 \otimes b_2) &= \begin{cases}
	b_1 \otimes e_{-1} b_2 & \text{if $\wt(b_1)_1 = \wt(b_1)_2 =0$,}\\
	e_{-1} b_1 \otimes b_2 & \text{otherwise,}
	\end{cases}\\
	f_{-1} (b_1 \otimes b_2) &= \begin{cases}
	b_1 \otimes f_{-1} b_2 & \text{if $\wt(b_1)_1 = \wt(b_1)_2 =0$,}\\
	f_{-1} b_1 \otimes b_2 & \text{otherwise.}
	\end{cases}
\end{split}
\end{equation}

The crystals of interest are the \defn{crystals of words} $\mathcal{B}^{\otimes \ell}$, where $\mathcal{B}$ 
is the $\mathfrak{q}(n+1)$-queer crystal of letters depicted in Figure~\ref{figure.queer letter}.

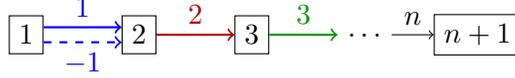
\begin{figure}
\begin{center}
\begin{tikzpicture}[node distance = 1.5cm, auto]
  \node[draw] (1)  {$1$};
  \node[draw] (2) [right of=1] {$2$};
  \node[draw] (3) [right of=2]{$3$};
  \node     (A) [right of=3]{$\ldots$};
  \node[draw] (n+1) [right of=A]{$n+1$};
  \draw[->,transform canvas={yshift=1mm},blue,thick] (1) edgenode[above,blue]{$1$} (2);
  \draw[->,dashed,transform canvas={yshift=-1mm},blue,thick] (1) edgenode[below,blue]{$-1$} (2);
  \draw[->,darkred,thick] (2) edgenode[above,darkred]{$2$} (3);
  \draw[->,darkgreen,thick] (3) edgenode[above,darkgreen]{$3$} (A);
  \draw[->] (A) edgenode[above]{$n$} (n+1);
\end{tikzpicture}
\end{center}
\caption{$\mathfrak{q}(n+1)$-queer crystal of letters $\mathcal{B}$ \label{figure.queer letter}}
\end{figure}

In addition to the queer crystal operators $f_{-1},f_1,\ldots,f_n$ and $e_{-1},e_1,\ldots,e_n$, we define the crystal operators
for $1<i\leqslant n$
\begin{equation}
\label{equation.-i}
	f_{-i} := s_{w_i^{-1}} f_{-1} s_{w_i} \qquad \text{and} \qquad e_{-i} := s_{w_i^{-1}} e_{-1} s_{w_i},
\end{equation}
where $s_{w_i}=  s_2 \cdots s_i s_1 \cdots s_{i-1}$ and $s_i$ is the reflection along the $i$-string in the crystal defined
in~\eqref{equation.si}. Furthermore for $i \in I_0:=\{1,2,\ldots,n\}$
\begin{equation}
\label{equation.-ip}
	f_{-i'} := s_{w_0} e_{-(n+1-i)} s_{w_0} \qquad \text{and} \qquad e_{-i'} := s_{w_0} f_{-(n+1-i)} s_{w_0},
\end{equation}
where $w_0$ is the long word in the symmetric group $S_{n+1}$. By~\cite[Theorem 1.14]{Grantcharov.2014}, with
all operators $e_i,f_i$ for $i\in \{-1,-2,\ldots,-n,1,2,\ldots,n\}$ each connected component of $\mathcal{B}^{\otimes \ell}$
has a unique highest weight vector and with all operators $e_i,f_i$ for $i\in \{-1',-2',\ldots,-n',1,2,\ldots,n\}$ each connected 
component of $\mathcal{B}^{\otimes \ell}$ has a unique lowest weight vector.

\subsection{Properties of queer supercrystals}
\label{section.properties}

We now review and prove several properties about the queer crystal operators.

\begin{lemma}
\label{lemma.f recursion}
For $1\leqslant i < n$, we have
\begin{equation}
\begin{split}
	f_{-(i+1)} &= (s_i s_{i+1}) \, f_{-i} \, (s_{i+1} s_i),\\
	e_{-(i+1)} &= (s_i s_{i+1}) \, e_{-i} \, (s_{i+1} s_i).
\end{split}
\end{equation}
\end{lemma}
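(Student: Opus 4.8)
The plan is to prove the recursion by starting from the definition of $f_{-i}$ in terms of the symmetric group action and manipulating the Weyl group words using braid relations. Recall from~\eqref{equation.-i} that $f_{-i} = s_{w_i^{-1}} f_{-1} s_{w_i}$ where $s_{w_i} = s_2 \cdots s_i s_1 \cdots s_{i-1}$, and similarly for $e_{-i}$. So it suffices to establish the statement for $f_{-i}$, since the argument for $e_{-i}$ is identical after replacing $f_{-1}$ with $e_{-1}$ throughout. Writing $f_{-(i+1)} = s_{w_{i+1}^{-1}} f_{-1} s_{w_{i+1}}$ and comparing with $(s_i s_{i+1}) f_{-i} (s_{i+1} s_i) = (s_i s_{i+1}) s_{w_i^{-1}} f_{-1} s_{w_i} (s_{i+1} s_i)$, the claim reduces to verifying the pure Weyl-group identity
\begin{equation}
\label{equation.weyl reduction}
	s_{w_{i+1}} = s_{w_i} \, s_{i+1} \, s_i,
\end{equation}
together with the corresponding identity $s_{w_{i+1}^{-1}} = s_i\,s_{i+1}\,s_{w_i^{-1}}$ for the conjugating factor on the left, which is just the inverse of~\eqref{equation.weyl reduction}.

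First I would expand both sides of~\eqref{equation.weyl reduction} explicitly. The right-hand side is $(s_2 \cdots s_i s_1 \cdots s_{i-1}) s_{i+1} s_i$, while the left-hand side is $s_2 \cdots s_{i+1} s_1 \cdots s_i$. The key manipulation is to migrate the trailing $s_{i+1}$ on the right-hand side leftward past the block $s_1 \cdots s_{i-1}$: since $s_{i+1}$ commutes with $s_1, \ldots, s_{i-1}$ (as their indices differ by at least $2$), we have $s_1 \cdots s_{i-1} s_{i+1} = s_{i+1} s_1 \cdots s_{i-1}$. This moves $s_{i+1}$ to sit directly after $s_2 \cdots s_i$, producing $s_2 \cdots s_i s_{i+1} s_1 \cdots s_{i-1} s_i$. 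Recognizing $s_2 \cdots s_i s_{i+1} = s_2 \cdots s_{i+1}$ and $s_1 \cdots s_{i-1} s_i = s_1 \cdots s_i$, this equals exactly $s_2 \cdots s_{i+1} s_1 \cdots s_i = s_{w_{i+1}}$, completing the verification.

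There is one subtlety I want to flag, which I expect to be the genuine obstacle rather than the bookkeeping above: the operators $s_i$ here are the Kashiwara operators~\eqref{equation.si} acting on the crystal, and on a general crystal these need not satisfy the braid or commutation relations of the symmetric group as honest operator identities (they only permute weights correctly and square to the identity along strings). So I must be careful that the word manipulations I perform are valid as operators on $\mathcal{B}^{\otimes \ell}$, not merely as abstract group elements. The safe route is to appeal to the established fact that on type $A_n$ crystals of the relevant kind the $s_i$ do satisfy the symmetric-group relations on the type $A$ part (the $S_n$-action stated in the paragraph preceding~\eqref{equation.si} is well-defined), so the commutations $s_j s_{i+1} = s_{i+1} s_j$ for $|j - (i+1)| \geqslant 2$ and the concatenation steps hold at the operator level. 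Granting this, the substitution of~\eqref{equation.weyl reduction} and its inverse into the conjugation formula for $f_{-(i+1)}$ yields the claim directly, and the identical argument with $e_{-1}$ in place of $f_{-1}$ gives the second line.
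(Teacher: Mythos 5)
Your proof is correct and follows essentially the same route as the paper: both reduce the claim to the identity $s_{w_{i+1}} = s_{w_i}\, s_{i+1}\, s_i$, verified by commuting $s_{i+1}$ past $s_1\cdots s_{i-1}$. Your added caveat about the $s_i$ satisfying the symmetric-group relations at the operator level is resolved exactly as you suggest, by the $S_n$-action on type $A_n$ crystals stated before~\eqref{equation.si}.
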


\begin{proof}
We use the definition~\eqref{equation.-i}. Note that the following recursion holds
\begin{equation}
\label{equation.w recursion}
	s_{w_{i+1}} = (s_2 \cdots s_{i+1})(s_1\cdots s_i) = (s_2 \cdots s_i)(s_1 \cdots s_{i-1})s_{i+1} s_i = s_{w_i} s_{i+1} s_i,
\end{equation}
which implies the statement.
\end{proof}

\begin{remark}\label{rmk:signature-rule}
The operators $f_i$ for $i \in I_0$ have an easy combinatorial description on $b\in \mathcal{B}^{\otimes \ell}$
given by the \defn{signature rule}, which can be directly derived from the tensor product rule (see for
example~\cite[Section 2.4]{BumpSchilling.2017}). One can consider $b$ as a word in the alphabet $\{1,2,\ldots,n+1\}$. 
Consider the subword of $b$ consisting only of the letters $i$ and $i+1$. Pair (or bracket) any consecutive letters $i+1 , i$ 
in this order, remove this pair, and repeat. Then $f_i$ changes the rightmost unpaired $i$ to $i+1$; if there is no such letter 
$f_i(b)=0$. Similarly, $e_i$ changes the leftmost unpaired $i+1$ to $i$; if there is no such letter $e_i(b)=0$.
\end{remark}

\begin{remark}\label{rmk:combo-minus-1}
From~\eqref{equation.tensor product}, one may also derive a simple combinatorial rule for $f_{-1}$ and $e_{-1}$.
Consider the subword $v$ of $b\in \mathcal{B}^{\otimes \ell}$ consisting of the letters $1$ and $2$.
The crystal operator $f_{-1}$ on $b$ is defined if the leftmost letter of $v$ is a $1$, in which case it turns it into a $2$.
Otherwise $f_{-1}(b)=0$. Similarly, $e_{-1}$ on $b$ is defined if the leftmost letter of $v$ is a $2$, in which case it turns it 
into a $1$. Otherwise $e_{-1}(b)=0$. 
\end{remark}

Lemmas~\ref{lemma.e1} and~\ref{lemma.e2} have appeared in~\cite{AssafOguz.2018,AssafOguz.2018a}.
We provide proofs for completeness.

\begin{lemma}
\label{lemma.e1}
Let $b\in \mathcal{B}^{\otimes \ell}$. The following holds:
\begin{enumerate}
\item
If $\varphi_1(b) \geqslant 2$ and $\varphi_{-1}(b)=1$, we have $\varphi_1(b) = \varphi_1(f_{-1}(b))+2$ and
$\varepsilon_1(b) = \varepsilon_1(f_{-1}(b))$. If furthermore $\varphi_1(b) > 2$, then
\[
	f_1 f_{-1}(b) = f_{-1}f_1(b).
\]
\item 
If $\varphi_1(b)=\varphi_{-1}(b)=1$, we have
\[
	f_1(b) = f_{-1}(b).
\]
\item
If $\varepsilon_1(b),\varepsilon_{-1}(b)>0$ and $e_1(b)\neq e_{-1}(b)$, we have $\varepsilon_1(b) = \varepsilon_1(e_{-1}(b))$,
$\varphi_1(b) = \varphi_1(e_{-1}(b))-2$, and 
\[
	e_1e_{-1}(b) = e_{-1}e_1(b).
\]
\end{enumerate}
\end{lemma}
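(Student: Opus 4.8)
The plan is to reduce everything to the subword $v$ of $b$ consisting only of the letters $1$ and $2$. By Remark~\ref{rmk:signature-rule} the operators $f_1,e_1$ act solely on $v$, and by Remark~\ref{rmk:combo-minus-1} so do $f_{-1},e_{-1}$; hence it suffices to understand how these four operators interact on $v$. I would model $v$ as a sequence of parentheses by reading each $2$ as an opening bracket and each $1$ as a closing bracket, so that the bracketing of the signature rule becomes ordinary parenthesis matching. With this dictionary, $\varphi_1(b)$ equals the number of unmatched $1$'s (which all precede the unmatched $2$'s), $\varepsilon_1(b)$ equals the number of unmatched $2$'s, while $\varphi_{-1}(b)=1$ (resp.\ $\varepsilon_{-1}(b)=1$) precisely when the leftmost letter of $v$ is a $1$ (resp.\ a $2$), and is $0$ otherwise. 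In particular $f_{-1}$ flips the leftmost $1$ of $v$ and $f_1$ flips the rightmost unmatched $1$, while $e_{-1}$ flips the leftmost $2$ and $e_1$ flips the leftmost unmatched $2$.

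The technical heart is a single local-change observation, proved by comparing the left-to-right matching (stack) process for $v$ and for the word $v'$ obtained by flipping its leftmost letter. I would record that the two stacks differ by exactly one element at the bottom until the first moment the taller one is reduced to that single element; at that moment one further letter switches its matched/unmatched status, and thereafter the two matchings agree. For part (1) the leftmost letter is a $1$, which is necessarily unmatched, and $\varphi_1(b)\geqslant 2$; the observation shows that $f_{-1}$ destroys this leftmost unmatched $1$ and simultaneously matches the next unmatched $1$, while leaving the unmatched $2$'s untouched, giving $\varphi_1(b)=\varphi_1(f_{-1}(b))+2$ and $\varepsilon_1(b)=\varepsilon_1(f_{-1}(b))$. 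For part (3) the leftmost letter is a $2$ that is matched---which is exactly the content of $e_1(b)\neq e_{-1}(b)$---and the symmetric computation shows that $e_{-1}$ creates two new unmatched $1$'s and preserves the unmatched $2$'s, giving $\varphi_1(b)=\varphi_1(e_{-1}(b))-2$ and $\varepsilon_1(b)=\varepsilon_1(e_{-1}(b))$.

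With the effect on the matching understood, the commutations and the equality in part (2) follow by identifying the positions that each composite flips. In part (2) the hypotheses force the unique unmatched $1$ to be the leftmost letter, so $f_1$ and $f_{-1}$ flip the same position and $f_1(b)=f_{-1}(b)$. For the commutation in part (1), when $\varphi_1(b)>2$ the rightmost unmatched $1$ lies strictly to the right of the leftmost unmatched $1$ and remains the rightmost unmatched $1$ after $f_{-1}$ acts; thus both $f_1f_{-1}(b)$ and $f_{-1}f_1(b)$ flip exactly these two positions from $1$ to $2$ and agree. For part (3) the local-change analysis shows that the set of unmatched $2$'s is unchanged by $e_{-1}$, so the leftmost unmatched $2$ is the same position before and after, and one checks that $e_1e_{-1}(b)$ and $e_{-1}e_1(b)$ both flip the leftmost $2$ and the leftmost unmatched $2$ from $2$ to $1$, hence coincide.

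The main obstacle I anticipate is the bookkeeping in the local-change observation: one must argue cleanly that flipping the leftmost letter alters the matched/unmatched status of exactly one other letter and nothing else, and that the relevant ``active'' positions for $f_1$ and $e_1$ (the rightmost unmatched $1$ and the leftmost unmatched $2$) are preserved by the $f_{-1}$ or $e_{-1}$ move. The stack comparison makes this precise and localizes all the changes, after which every claimed weight shift and every commutation reduces to the trivial fact that two disjoint single-letter flips commute.
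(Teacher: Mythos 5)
Your proposal is correct and follows essentially the same route as the paper: both reduce to the $\{1,2\}$-subword, track how flipping its leftmost letter perturbs the signature-rule bracketing (your ``local-change observation'' is exactly the paper's computation that the unbracketed word $v_{i_1}\cdots v_{i_p}v_{j_1}\cdots v_{j_q}$ loses or gains precisely two unbracketed $1$'s while the unbracketed $2$'s are untouched), and then verify each identity by checking that the two composites flip the same pair of positions. The parenthesis-matching/stack framing is only a presentational variant of the paper's bracketing argument.
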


\begin{proof}
Let $p=\varphi_1(b)$ and $q=\varepsilon_1(b)$. Consider the subword $v$ consisting of all letters 1 and 2 in $b$.
After performing 1,2-bracketing onto $v$ according to the signature rule, we have  a subword of unbracketed letters in $b$ as  
\begin{equation}
\label{equation.12 word}
	v_{i_1}v_{i_2}\ldots v_{i_p}v_{j_1}\ldots v_{j_q},
\end{equation}
where $v_{i_k}=1$ for all $1\leqslant k\leqslant p$ and $v_{j_k}=2$ for all $1\leqslant k\leqslant q$. 
\begin{enumerate}
\item We assume that $\varphi_{-1}(b)>0$, so that $f_{-1}(b)$ is defined. This implies $v_1=1$. 
Since $v_1$ is necessarily unbracketed, $i_1=1$ as well. The word $b'=f_{-1}(b)$ is formed by changing the leftmost 1 in $b$, 
namely $v_{i_1}$, into 2. This introduces a new bracketed 1,2-pair formed by $v_1=2$ and $v_{i_2}=1$. 
The subword of unbracketed letters in $b'$ now becomes 
\[
	v_{i_3}\ldots v_{i_p}v_{j_1}\ldots v_{j_q}
\]
so that $\varphi_1(f_{-1}(b))=p-2=\varphi_1(b)-2$ and $\varepsilon_1(f_{-1}(b))=q=\varepsilon_1(b)$. This establishes the 
first assertion.

Now, assume in addition that $p=\varphi_1(b)>2$. Using the sequence of unbracketed letters in $b$ as in the preceding 
paragraph, $f_1$ changes the rightmost unbracketed 1 in $b$, namely $v_{i_p}$, into 2. We still have $v_1$ to be 1 after
 the change, so that $f_{-1}(f_1(b))$ is defined and the leftmost 1 in $f_1(b)$, namely $v_1$, is changed into 2 under $f_{-1}$.
On the other hand, $f_1(f_{-1}(b))$ is defined precisely because $p>2$, and the rightmost unbracketed 1 in $f_{-1}(b)$, namely 
$v_{i_p}$, is changed into 2 under $f_1$. 
As the changes introduced in $b$ to form $f_{-1}(f_1(b))$ are the same as in those of $f_1(f_{-1}(b))$, we conclude that
$f_1(f_{-1}(b))=f_{-1}(f_1(b))$, proving the second assertion.
\item We assume $\varphi_1(b)=1$, so that~\eqref{equation.12 word} is of the form $v_{i_1}v_{j_1}\ldots v_{j_q}$
Furthermore, as $\varphi_{-1}(b)=1$, $f_{-1}(b)$ is defined and $v_1=1$. As $v_1$ is necessarily unbracketed, $i_1=1$ as 
well. Therefore, we see that $f_1(b)=f_{-1}(b)$, since the rightmost unbracketed 1 in $b$ and the leftmost 1 in $b$ are the 
same, namely $v_{i_1}=v_1$. 
\item We assume that $\varepsilon_{-1}(b)>0$, so that $e_{-1}(b)$ is defined. This implies $v_1=2$. 
However, since $e_{-1}(b)\neq e_1(b)$, $e_{-1}$ and $e_1$ must change a 2 in $b$ at different locations, so we have $j_1>1$.
Consequently $v_1$ is a bracketed 2 and hence must be paired with some $v_h=1$ where $h<i_1<j_1$ (in case $p=0$, 
$h<j_1$ still holds). The word $b'=e_{-1}(b)$ is obtained by changing the leftmost 2 in $b$, namely $v_1$, to 1. This 
introduces two new unbracketed 1's, namely, $v_1$ and $v_h$. The subword of unbracketed letters in $b'$ is now
\[
	v_1 v_h v_{i_1}\ldots v_{i_p}v_{j_1}\ldots v_{j_q}
\]
so that $\varepsilon_1(b)=q=\varepsilon_1(e_{-1}(b))$ and $\varphi_1(e_{-1}(b))=p+2=\varphi_1(b)+2$. This establishes the 
first two equalities.

Now, $e_1(e_{-1}(b))$ is the word formed by changing the leftmost unbracketed 2 in $b'=e_{-1}(b)$, namely $v_{j_1}$, to 1. 
On the other hand, using the subword of $v$ in $b$ containing unbracketed letters as described in the preceding paragraph, 
$e_1(b)$ changes the leftmost unbracketed 2 in $b$, namely $v_{j_1}$, into a 1. We still have $v_1=2$ and $v_h=1$ after the 
change, so that $e_{-1}(e_1(b))$ is defined, with the leftmost 2 in $e_1(b)$, namely $v_1$, being changed into 1 under
$e_{-1}$. As the changes introduced in $b$ to form $e_{-1}(e_1(b))$ are the same as in those of $e_1(e_{-1}(b))$, we 
conclude that $e_1(e_{-1}(b))=e_{-1}(e_1(b))$, thereby proving the final relation.
\end{enumerate}
\end{proof}

\begin{lemma}
\label{lemma.e2}
Let $b\in \mathcal{B}^{\otimes \ell}$. The following holds:
\begin{enumerate}
\item If $\varphi_2(b),\varphi_{-1}(b)>0$, we have $\varphi_2(b) = \varphi_2(f_{-1}(b))-1$, 
$\varepsilon_2(b) = \varepsilon_2(f_{-1}(b))$ and
\[
	f_2 f_{-1}(b) = f_{-1} f_2(b).
\]
\item If $\varphi_2(b)=0$ and $\varphi_{-1}(b)>0$, we have either
\begin{enumerate}
\item $\varphi_2(f_{-1}(b))=1$ and $\varepsilon_2(b) = \varepsilon_2(f_{-1}(b))$, or
\item $\varphi_2(f_{-1}(b))=0$ and $\varepsilon_2(b) = \varepsilon_2(f_{-1}(b))+1$.
\end{enumerate}
\item If $\varepsilon_2(b),\varepsilon_{-1}(b)>0$, we have either
\begin{enumerate}
\item $\varepsilon_2(e_{-1}(b))=\varepsilon_2(b)+1$, $\varphi_2(b)=\varphi_2(e_{-1}(b))=0$, or
\item $\varepsilon_2(e_{-1}(b))=\varepsilon_2(b)$, $\varphi_2(b) = \varphi_2(e_{-1}(b))+1$, and
\[
	e_{-1}e_2(b) = e_2 e_{-1}(b).
\]
\end{enumerate}
\end{enumerate}
\end{lemma}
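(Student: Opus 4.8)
The plan is to mimic the bracketing analysis used in the proof of Lemma~\ref{lemma.e1}, which reduces everything to careful bookkeeping on the $1,2$-subword under the signature rule and Remark~\ref{rmk:combo-minus-1}. Let me sketch each of the three parts.

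Let me set up notation first. For $b \in \mathcal{B}^{\otimes \ell}$, let $v$ be the subword consisting of the letters $1$ and $2$, and let $w$ be the subword consisting of the letters $2$ and $3$ (the latter being the relevant alphabet for $f_2, e_2$). The operator $f_{-1}$ changes the leftmost $1$ of $v$ into a $2$ (when $\varphi_{-1}(b)>0$, i.e. the leftmost letter of $v$ is a $1$), and $e_{-1}$ changes the leftmost $2$ of $v$ into a $1$ (when the leftmost letter of $v$ is a $2$). The key observation is that applying $f_{-1}$ or $e_{-1}$ changes a single letter from $1$ to $2$ (or vice versa) at the position of the leftmost entry of $v$, and this alters the $2,3$-subword $w$ by inserting or deleting exactly one $2$ at the position corresponding to that leftmost entry.

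Now I turn to the analysis. For part (1), assume $\varphi_2(b), \varphi_{-1}(b) > 0$. Applying $f_{-1}$ inserts a new $2$ into the $2,3$-subword $w$ at the position of the (changed) leftmost $1$ of $v$. I would argue that this new $2$ lands to the left of every unbracketed $2$ in the $2,3$-bracketing of $w$: since that position was the leftmost entry of the full $1,2$-subword, it precedes all other $2$'s in $b$, so in particular it precedes all unbracketed $2$'s. An unbracketed $2$ added to the left of the existing unbracketed $2$'s stays unbracketed (there is no $3$ to its left that could pair with it), so the number of unbracketed $2$'s increases by exactly one while the number of unbracketed $3$'s is unchanged. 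This gives $\varphi_2(f_{-1}(b)) = \varphi_2(b)+1$ and $\varepsilon_2(f_{-1}(b)) = \varepsilon_2(b)$, i.e. the claimed shift. For the commutation $f_2 f_{-1}(b) = f_{-1} f_2(b)$: since $\varphi_2(b)>0$, $f_2$ changes the rightmost unbracketed $2$ to a $3$, which is distinct from the leftmost $1$ of $v$ that $f_{-1}$ acts on; I would check that the acting positions of the two operators are disjoint and that applying one does not disturb the other's target (the leftmost $1$ remains the leftmost $1$ after $f_2$ acts on a $2$, and the rightmost unbracketed $2$ remains so after $f_{-1}$ adds an unbracketed $2$ strictly to its left), whence the two operators commute exactly as in the second assertion of Lemma~\ref{lemma.e1}(1).

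For parts (2) and (3) the same mechanism applies but the branching into subcases reflects whether the newly created unbracketed $2$ (from $f_{-1}$) pairs off, or whether the $2$ destroyed by $e_{-1}$ was bracketed or unbracketed in $w$. In part (2), $\varphi_2(b)=0$ means there are no unbracketed $2$'s in $w$; the new $2$ inserted by $f_{-1}$ at the far left is unbracketed, so generically $\varphi_2(f_{-1}(b))=1$ (case (a)), but if that new leftmost $2$ instead cancels a previously unbracketed $3$ sitting to its right we land in case (b) with $\varepsilon_2$ dropping by one — I would show these are the only two possibilities by tracking the single sign change in the $2,3$-signature. In part (3), $e_{-1}$ deletes a $2$ from $w$ at the leftmost position of $v$; whether that $2$ was unbracketed (contributing to $\varepsilon_2$) or was bracketed against a $3$ to its right determines the two cases, and in case (b) the commutation $e_{-1}e_2(b)=e_2e_{-1}(b)$ follows by the same disjoint-support argument as before. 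I expect the main obstacle to be the careful case analysis in parts (2) and (3): one must rule out spurious possibilities and verify that inserting or deleting a single $2$ at the extreme left changes the $2,3$-signature in exactly the claimed way, which requires pinning down the precise bracketing behavior of a letter at the leftmost position of the combined $1,2$-subword relative to the separate $2,3$-bracketing.
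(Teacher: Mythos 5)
Your overall strategy---signature-rule bookkeeping on the $1,2$- and $2,3$-subwords---is the same as the paper's, but there is a genuine gap at the heart of part (1) that propagates to (2) and (3). You assert that the new $2$ created by $f_{-1}$ at the position of the leftmost letter of the $1,2$-subword ``stays unbracketed (there is no $3$ to its left that could pair with it).'' This is false: letters $3$ do not belong to the $1,2$-subword, so they can freely occur to the left of the leftmost $1$. For example, $b=3122$ has $\varphi_2(b)=1$ and $\varphi_{-1}(b)=1$, and $f_{-1}(b)=3222$; the new $2$ in position $2$ becomes bracketed with the $3$ in position $1$. The conclusion $\varphi_2(f_{-1}(b))=\varphi_2(b)+1$ still holds, but only because the bracketing cascades: each $3$ left of the insertion point that was paired with a $2$ to its right re-pairs one step closer, ultimately freeing exactly one previously bracketed $2$ on the far right. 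This cascade is precisely the content of the paper's proof (the straddling subsequence $b_{s_1}\cdots b_{s_r}b_{t_1}\cdots b_{t_r}=3^r2^r$ with $s_j<i<t_j$), and it is also what one needs in order to see that the rightmost unbracketed $2$ is preserved---a fact your commutativity argument takes for granted.

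The same omission undermines the case analysis you defer in (2) and (3). In (2) the dichotomy is governed by whether some $3$ to the \emph{left} of the insertion point is unbracketed (it then absorbs the cascade and $\varepsilon_2$ drops), not by an unbracketed $3$ ``sitting to its right''; in the $2,3$-bracketing a $2$ can only pair with a $3$ on its left. In (3) the correct case split is $\varphi_2(b)=0$ versus $\varphi_2(b)>0$, not whether the deleted $2$ was itself bracketed: for $b=3223$ the leftmost $2$ is bracketed with the initial $3$, yet one lands in case (b) with $\varphi_2$ dropping by one, because the cascade re-brackets the leftmost unbracketed $2$ rather than freeing a $3$. (Note also that unbracketed $2$'s contribute to $\varphi_2$, not $\varepsilon_2$.) So while the conclusions you aim for are correct and the framework is the right one, the essential mechanism---the re-pairing cascade triggered by inserting or deleting a $2$ that is not leftmost in the $2,3$-subword---is missing from the argument as written.
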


\begin{proof}
We prove each part separately.
\begin{enumerate}
\item[(1)] Assume that $\varphi_2(b),\varphi_{-1}(b)>0$, so that $f_2(b)$ and $f_{-1}(b)$ are both nonzero.  
Let $b'=f_{-1}(b)$ and $b''=f_2(b)$.

By the signature rule, $\varphi_2(b)$ is the number of unbracketed $2$ entries in the $2,3$-bracketing of $b$.  Since 
$\varphi_2(b)>0$, there exists a leftmost unbracketed $2$, say $b_j$.  As in Remark \ref{rmk:combo-minus-1} 
$b'=f_{-1}(b)$ is formed by changing the leftmost $1$, say $b_i$, to $b'_i=2$, where $b_i$ is the leftmost of all 
$1$ and $2$ entries (so in particular $i<j$).  

If every $3$ left of $b_i$ is bracketed with a $2$ that is also to the left of $b_i$, then $b_i'$ is a new unbracketed $2$ 
in the $2,3$-bracketing of $b'$, so $\varphi_2(b')=\varphi_2(b)+1$.  Otherwise, assume there is a $3$ left of $b_i$ 
bracketed with a $2$ to the right of $b_i$, and let $b_{s_1}\cdots b_{s_r}b_{t_1}\cdots b_{t_r}=3^r2^r$ be the subsequence 
of all $3$ and $2$ entries bracketed with each other for which $s_j<i$ and $i<t_j$ for all $j$.   Then in $b'$, we have that 
$b_{s_r}'$ brackets with $b_i'$ rather than $b_{t_1}'$, and $b_{s_{r-1}}'$ brackets with $b_{t_1}'$, and so on, leaving 
$b_{t_r}'$ a new unbracketed $2$.  Thus we always have $\varphi_2(b')=\varphi_2(b)+1$.  Furthermore, since the 
number of unbracketed $3$ entries remains unchanged, we have $\varepsilon_2(b)=\varepsilon_2(f_{-1}(b))$.

For the commutativity relation, note that since $j>i$, so $b_j'=2$ is still the rightmost unbracketed $2$ in $b'$ and 
$b_i''=1$ is the leftmost $1$ or $2$ in $b''$.  Thus both $f_2(f_{-1}(b))$ and $f_{-1}(f_2(b))$ are formed by changing 
$b_i$ to $2$ and $b_j$ to $3$.  Hence $$f_2(f_{-1}(b))=f_{-1}(f_2(b))$$ as desired.

\item[(2)] Assume $\varphi_2(b)=0$ and $\varphi_{-1}(b)>0$, so that $b'=f_{-1}(b)$ is defined but $f_2(b)$ is not.  
Then there is an entry $b_i=1$ with no $1$ or $2$ left of it that changes to $2$ to form $b'$.  There are also no 
unbracketed $2$ entries in the $2,3$ bracketing.  

We consider two cases.  First, suppose that every $3$ to the left of $b_i$ in $b$ is bracketed with some $2$ to its right.  
Then in $b'$ with $b_i'=2$, the bracketed pairs for the entries $b_{s_i}'=3$ to the left of $b_i'$ shift left as in part (1) above, 
leaving a new unbracketed $2$ and exactly the same number of unbracketed $3$ entries.  Thus $\varphi_2(b')=1$ and 
$\varepsilon_2(b')=\varepsilon_2(b)$ in this case.  

If instead there is an unbracketed $3$ to the left of $b_i$, then this $3$ becomes bracketed with a $2$ (after the same 
shift in bracketed pairs) and we have $\varphi_2(b')=0$ and $\varepsilon_2(b')=\varepsilon_2(b)-1$, as desired.

\item[(3)]  Suppose $\varepsilon_2(b),\varepsilon_{-1}(b)>0$.  Then the leftmost $1$ or $2$ in $b$ is $b_i=2$ for some 
$i$, and $b':=e_{-1}(b)$ is formed by changing $b_i$ to $1$.  Since $e_2(b)$ is defined, there also exists a leftmost 
unbracketed $3$, say $b_j=3$.  

We consider two cases.  First suppose $\varphi_2(b)=0$, meaning that every $2$ is bracketed in the $2,3$-bracketing 
of $b$.  Then in particular $b_i$ is bracketed; let $b_{s_1}\cdots b_{s_r}b_ib_{t_1}\cdots b_{t_{r-1}}=3^r2^r$ be the 
subsequence consisting of all bracketed $3$'s ($b_{s_i}$) to the left of $b_i$ along with the entries they are bracketed 
with ($b_{t_{r-i}}$ where $t_{0}=i$).  Then after lowering $b_i$ to $1$ to form $b'$, we have that $b'_{s_i}$ brackets 
with $b'_{t_{r-i+1}}$ for $i\geqslant 2$, and $b'_{s_{1}}$ is an unbracketed $3$.  All other bracketed pairs are the same as in 
$b$, so there is only one more $3$ among the unbracketed letters.  It follows that $\varepsilon_{2}(b')=\varepsilon_2(b)+1$ 
and $\varphi_2(b')=\varphi_2(b)=0$.

For the second case, suppose $\varphi_2(b)>0$.  Then there is some unbracketed $2$ in $b$; let $b_k$ be the leftmost 
unbracketed $2$.  Note that $k\geqslant i$ because $b_i$ is the leftmost $2$, and note also that $k<j$ because $b_j$ is 
the leftmost unbracketed $3$.  Thus $i<j$.  

Now, lowering $b_i$ to $1$ to form $b'$ results in shifting the bracketing as in the cases above, which makes $b_k'$ 
be bracketed (and all other bracketings the same).  Thus there is one less unbracketed $2$ in $b'$ as $b$, and the 
same number of unbracketed $3$'s.  It follows that $\varepsilon_2(b')=\varepsilon_2(b)$ and $\varphi_2(b')=\varphi_2(b)-1$.
Furthermore, $b_j'$ is still the leftmost unbracketed $3$ in $b'$, and so both $e_{-1}e_2(b)$ and $e_{2}e_{-1}(b)$ are 
formed by changing $b_i$ to $1$ and $b_j$ to $2$.  The result follows.
\end{enumerate} 
\end{proof}

\subsection{Explicit description of $f_{-i}$ and $e_{-i}$}
\label{section.f-i and e-i}

In this section, we give explicit descriptions of $\varphi_{-i}(b)$, $\varepsilon_{-i}(b)$, $f_{-i} b$, and $e_{-i}b$
for $J$-highest-weight elements $b\in \mathcal{B}^{\otimes \ell}$ for certain $J \subseteq I_0$
(see Proposition~\ref{proposition.phi -i} and Theorems~\ref{theorem.f-i} and~\ref{theorem.e-i}).
We will need these results in Section~\ref{section.G(C)} when we characterize certain graphs on the type $A$ components
of the queer crystal.

\begin{lemma}
\label{lemma.e-i}
Let $i \in I_0$ and $b \in \mathcal{B}^{\otimes \ell}$ be $\{1,2,\ldots, i-1\}$-highest weight. If the first letter in the
$(i,i+1)$-subword of $b$ is $i+1$, then $\varepsilon_{-i}(b)=1$.
\end{lemma}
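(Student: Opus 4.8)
The plan is to reduce everything to the combinatorial behaviour of $e_{-1}$ and then induct on $i$, using the recursion of Lemma~\ref{lemma.f recursion}. The base case $i=1$ is immediate: by Remark~\ref{rmk:combo-minus-1}, $e_{-1}$ is defined precisely when the leftmost letter of the $(1,2)$-subword is a $2$, and after applying it this leftmost letter becomes a $1$; hence $e_{-1}^2=0$, so $\varepsilon_{-1}(b)\le 1$ for every $b$, while the hypothesis for $i=1$ says this leftmost letter is a $2$, giving $e_{-1}(b)\neq 0$ and therefore $\varepsilon_{-1}(b)=1$.

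For the inductive step, suppose the statement holds for $i$ and let $b$ be $\{1,\dots,i\}$-highest weight with the first letter of its $(i+1,i+2)$-subword equal to $i+2$. From Lemma~\ref{lemma.f recursion} and $s_j^2=\mathrm{id}$ we get $e_{-(i+1)}^k=(s_is_{i+1})\,e_{-i}^k\,(s_{i+1}s_i)$, since the inner factors telescope: $(s_{i+1}s_i)(s_is_{i+1})=\mathrm{id}$. As $s_is_{i+1}$ is a bijection of $\mathcal{B}^{\otimes\ell}$ sending nonzero elements to nonzero elements, this yields $\varepsilon_{-(i+1)}(b)=\varepsilon_{-i}(c)$ with $c:=s_{i+1}s_i\,b$. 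It therefore suffices to verify that $c$ satisfies the hypotheses of the lemma at level $i$, namely (A) $c$ is $\{1,\dots,i-1\}$-highest weight, and (B) the first letter of the $(i,i+1)$-subword of $c$ is $i+1$; the induction hypothesis then gives $\varepsilon_{-i}(c)=1$ and hence $\varepsilon_{-(i+1)}(b)=1$.

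Part (A) I would handle as follows. Since $b$ is $i$-highest weight, $s_ib=f_i^{\varphi_i(b)}b$. For $j\le i-2$ the operator $s_i$ commutes with $e_j$ (disjoint alphabets), so $e_j(s_ib)=s_ie_j(b)=0$. For $j=i-1$, the element $b$ is both $(i-1)$- and $i$-highest weight, and a direct check with the signature rule of Remark~\ref{rmk:signature-rule} on the $(i-1,i,i+1)$-subword shows that $s_i=f_i^{\varphi_i}$ carries this $\{i-1,i\}$-highest weight configuration to an $(i-1)$-highest weight one (this is the standard fact that $s_i$ sends $\{i-1,i\}$-highest weight elements to $(i-1)$-highest weight elements for the $S_n$-action on type $A$ crystals); thus $e_{i-1}(s_ib)=0$. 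Hence $s_ib$ is $\{1,\dots,i-1\}$-highest weight, and since $s_{i+1}$ commutes with every $e_j$ for $j\le i-1$, so is $c=s_{i+1}s_ib$.

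The real work is Part (B), which I expect to be the main obstacle. One must track the leftmost $(i,i+1)$-letter through $s_i$ and then $s_{i+1}$ by the signature rule. The $\{1,\dots,i\}$-highest weight hypothesis forces every letter equal to $i$ occurring to the left of the first $\{i+1,i+2\}$-letter of $b$ to be unbracketed in the $(i,i+1)$-pairing, so $s_i=f_i^{\varphi_i}$ raises all of them to $i+1$; crucially, $s_i$ also alters the $(i+1)$-content and hence the $(i+1,i+2)$-bracketing that $s_{i+1}$ then reads. The argument splits into cases according to whether $s_{i+1}$ acts by powers of $f_{i+1}$ or of $e_{i+1}$ on $s_ib$: in the first case the raised letters are pushed further up to $i+2$ and out of $\{i,i+1\}$ while a genuine $i+1$ emerges leftmost, and in the second case the hypothesised leftmost $i+2$ is itself lowered to an $i+1$ and becomes the leftmost $\{i,i+1\}$-letter (examples confirm both behaviours occur). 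In each case the first surviving $\{i,i+1\}$-letter of $c$ is an $i+1$. Keeping these two interacting bracketings under control is the delicate point, and it is precisely the general-$i$ analogue of the explicit bracket-shifting computations carried out for small indices in Lemmas~\ref{lemma.e1} and~\ref{lemma.e2}.
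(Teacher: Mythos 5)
Your overall strategy is exactly the paper's: the base case via Remark~\ref{rmk:combo-minus-1}, the reduction $\varepsilon_{-(i+1)}(b)=\varepsilon_{-i}(s_{i+1}s_i b)$ coming from Lemma~\ref{lemma.f recursion}, and the verification that $c=s_{i+1}s_i b$ satisfies the two hypotheses one level down. Your Part (A) is fine (the paper simply asserts this step). The problem is Part (B), which you yourself single out as ``the main obstacle'': you describe what ``one must'' do, split into cases according to whether $s_{i+1}$ acts on $s_i b$ by powers of $f_{i+1}$ or of $e_{i+1}$, assert the conclusion in each case with an appeal to examples, and close by conceding that ``keeping these two interacting bracketings under control is the delicate point.'' That is precisely where a proof is required, so as written this is a genuine gap, not a complete argument.

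Moreover, the case split you propose is not the right lever, and the claims inside it are not correct as stated: in the $s_{i+1}=e_{i+1}^{k}$ case you assert that the hypothesised leftmost $i+2$ is lowered to $i+1$, but $s_i$ creates new letters $i+1$ to the right of that $i+2$, which can make it $(i+1,i+2)$-bracketed, in which case $e_{i+1}^{k}$ does not touch it. The paper's argument needs no case analysis on $s_{i+1}$. Write $u$ for the leftmost $i+2$ and $v$ for the leftmost $i+1$ of $b$, so that $u$ precedes $v$ by hypothesis. Then (i) every letter $i$ to the left of $v$ is unbracketed in the $(i,i+1)$-pairing (there is no $i+1$ to its left), so $s_i=f_i^{\varphi_i(b)}$ raises all of them to $i+1$, and $s_{i+1}$ never produces a letter $i$; hence the first $i$ in $c$ lies strictly to the right of $v$. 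And (ii) $v$ is bracketed in the $(i+1,i+2)$-pairing because $u$ precedes it, so an $i+1$ weakly to the left of $v$'s position remains bracketed after $s_i$ and therefore survives $s_{i+1}$ as a letter $i+1$. Together these force the first $(i,i+1)$-letter of $c$ to be an $i+1$, which is the content of your (B). You should replace your sketch by this (or an equivalent) argument.
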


\begin{proof}
The statement is true for $i=1$ by Remark~\ref{rmk:combo-minus-1}. Now suppose that by induction on $i$
the statement of the lemma is true for $1,2,\ldots,i-1$. By Lemma~\ref{lemma.f recursion}, we have
$e_{-i} = s_{i-1} s_i e_{-(i-1)} s_i s_{i-1}$. Let $u=i+1$ be the leftmost $i+1$ in $b$ and $v=i$ be the leftmost $i$ in $b$. 
By assumption, $u$ appears to the left of $v$ and hence $v$ is bracketed in the $(i,i+1)$-bracketing. 
Since by assumption $b$ is $\{1,2,\ldots,i-1\}$-highest weight,
in the $(i-1,i)$-bracketing there are no unbracketed $i$ and $s_{i-1}$ raises all unbracketed $i-1$ to $i$.
In particular, all $i-1$ to the left of $v$ are raised to $i$ since $v$ is the leftmost $i$. In turn, $s_i$ acts on unbracketed
$i$ and $i+1$ in the $(i,i+1)$-bracketing. Since $v$ is bracketed and there are no $i-1$ to the left of $v$,
the first letter in the $(i-1,i)$-subword of $s_i s_{i-1}(b)$ is $i$. Also, $s_i s_{i-1}(b)$ is $\{1,2,\ldots,i-2\}$-highest weight.
Hence by induction $\varepsilon_{-(i-1)}(s_i s_{i-1}(b))=1$, which proves that $\varepsilon_{-i}(b)=1$.
\end{proof}

The next definition below will be used heavily throughout this section.

\begin{definition}
The \defn{initial $k$-sequence} of a word $b=b_1 \ldots b_\ell \in \mathcal{B}^{\otimes \ell}$, if it 
exists, is the sequence of letters $b_{p_k},b_{p_{k-1}},\ldots,b_{p_1}$, where $b_{p_k}$ is the leftmost $k$ and 
$b_{p_j}$ is the leftmost $j$ to the right of $b_{p_{j+1}}$ for all $1\leqslant j<k$.
\end{definition}

Let $i \in I_0$ and $b \in \mathcal{B}^{\otimes \ell}$ be $\{1,2,\ldots, i\}$-highest weight with $\wt(b)_{i+1}>0$.  
Then note that $b$ has an initial $(i+1)$-sequence, say $b_{p_{i+1}},b_{p_i},\ldots,b_{p_1}$.  Also let 
$b_{q_{i}},b_{q_{i-1}},\ldots,b_{q_1}$ be the initial $i$-sequence of $b$.  Note that $p_{i+1}< p_i<\cdots < p_1$ and 
$q_i<q_{i-1}<\cdots<q_1$ by the definition of initial sequence. Furthermore either $q_j=p_j$ or $q_j<p_{j+1}$ for 
all $1\leqslant j \leqslant i$.

\begin{proposition}
\label{proposition.phi -i}
Let $b \in \mathcal{B}^{\otimes \ell}$ be $\{1,2,\ldots, i\}$-highest weight for $i \in I_0$. Then:
\begin{enumerate}
\item[(a)]
$\varepsilon_{-i}(b)=1$ if and only if $\wt(b)_{i+1}>0$ and $p_j = q_j$ for at least
one $j\in \{1,2,\ldots,i\}$.
\item[(b)] 
$\varphi_{-i}(b)=1$ if and only if $\wt(b)_{i}>0$ and either $\wt(b)_{i+1}=0$ or $p_j \neq q_j$ for all $j\in \{1,2,\ldots,i\}$.
\end{enumerate}
\end{proposition}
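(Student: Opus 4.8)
The plan is to prove (a) and (b) simultaneously by induction on $i$. The base case $i=1$ is handled directly by the combinatorial rule for $e_{-1}$ and $f_{-1}$ in Remark~\ref{rmk:combo-minus-1}: there $\varepsilon_{-1}(b)=1$ (resp.\ $\varphi_{-1}(b)=1$) exactly when the leftmost letter of the $(1,2)$-subword is a $2$ (resp.\ a $1$), and for $\{1\}$-highest weight $b$ one checks this is equivalent to $\wt(b)_2>0$ with $p_1=q_1$ (resp.\ $\wt(b)_1>0$ with $\wt(b)_2=0$ or $p_1\neq q_1$). Note also that, since $\varepsilon_{-1},\varphi_{-1}\in\{0,1\}$ and $e_{-i},f_{-i}$ are conjugates of $e_{-1},f_{-1}$ by the bijections $s_j$, we always have $\varepsilon_{-i}(b),\varphi_{-i}(b)\in\{0,1\}$, so it suffices to decide when these equal $1$.

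For the inductive step I would use Lemma~\ref{lemma.f recursion} in the form $e_{-i}=s_{i-1}s_i\,e_{-(i-1)}\,s_is_{i-1}$ (and likewise for $f$). Because each $s_j$ is an involution, these conjugations iterate to $e_{-i}^{\,k}=s_{i-1}s_i\,e_{-(i-1)}^{\,k}\,s_is_{i-1}$, whence $\varepsilon_{-i}(b)=\varepsilon_{-(i-1)}(b')$ and $\varphi_{-i}(b)=\varphi_{-(i-1)}(b')$ for $b'=s_is_{i-1}(b)$. I would then verify that $b'$ is $\{1,\dots,i-1\}$-highest weight, so that the induction hypothesis at level $i-1$ applies to $b'$; this follows from the same bracketing argument used in the proof of Lemma~\ref{lemma.e-i}, where $s_{i-1}$ raises all unbracketed $i-1$ to $i$ and $s_i$ raises all unbracketed $i$ to $i+1$, and one checks no unbracketed $j+1$ is created in any $(j,j+1)$-bracketing with $j\leqslant i-1$. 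Finally, the symmetric group action on weights gives $\wt(b')_i=\wt(b)_{i+1}$ and $\wt(b')_{i-1}=\wt(b)_i$, which matches the weight hypotheses of (a) and (b) for $b$ against their level-$(i-1)$ counterparts for $b'$ (including the degenerate case $\wt(b)_{i+1}=0$ in (b)).

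It remains to match the ``merge'' conditions, i.e.\ to show that $b'$ satisfies $p'_j=q'_j$ for some $j\in\{1,\dots,i-1\}$ if and only if $b$ satisfies $p_j=q_j$ for some $j\in\{1,\dots,i\}$, where $p',q'$ denote the initial $i$- and $(i-1)$-sequences of $b'$. I would first record the simplification that, by the nesting of initial sequences, $p_j=q_j$ for some $j$ is equivalent to $p_1=q_1$ (if $p_j=q_j$ then $p_{j-1}=q_{j-1}$, since both are the leftmost $(j-1)$ to the right of the common position), and likewise $p'_j=q'_j$ for some $j$ iff $p'_1=q'_1$. Thus the problem reduces to showing that the two pairs of initial sequences end at the same $1$, namely $p'_1=p_1$ and $q'_1=q_1$. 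The statement $q'_1=q_1$ is the cleaner half: the surviving $(i-1)$-letters of $b'$ are exactly those bracketed in the $(i-1,i)$-bracketing of $b$, their leftmost one is $q_{i-1}$, and since $s_is_{i-1}$ fixes all letters $\leqslant i-2$ the tail of the initial $(i-1)$-sequence of $b'$ coincides with that of the initial $i$-sequence of $b$, giving $q'_1=q_1$.

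The hard part is the companion statement $p'_1=p_1$, and I expect it to be the main obstacle. Here the top of the sequence genuinely moves: under $s_is_{i-1}$ the leftmost $i$ of $b'$ need not sit at the old position $p_i$, because a raised $(i-1)$ can become the new leftmost $i$ while the old $i$ at $p_i$ may itself be raised to $i+1$, as already happens in small examples. The plan is therefore a case analysis on the relative order of the leftmost $i-1$, $i$, $i+1$ in $b$ --- equivalently on whether $q_i=p_i$ or $q_i<p_{i+1}$ --- together with the bracketing status of the key entries $b_{p_{i+1}},b_{p_i},b_{p_{i-1}}$, using bracket-shifting arguments analogous to those in the proofs of Lemmas~\ref{lemma.e1} and~\ref{lemma.e2}. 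In each case one shows that, although the $i$-entry may relocate, the first $1$ reached by greedy leftmost choices is unchanged, so the initial $i$-sequence of $b'$ terminates at the same $1$ as the initial $(i+1)$-sequence of $b$. Granting $p'_1=p_1$ and $q'_1=q_1$, the merge conditions are equivalent, and combining this with the weight identities and the induction hypothesis for $b'$ yields both (a) and (b).
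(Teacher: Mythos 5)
Your overall strategy is the paper's: induct on $i$ via the recursion $e_{-i}=s_{i-1}s_i\,e_{-(i-1)}\,s_is_{i-1}$ of Lemma~\ref{lemma.f recursion}, pass to $b'=s_is_{i-1}(b)$, check that $b'$ is $\{1,\ldots,i-1\}$-highest weight, and match the weight and ``merge'' conditions. Your reduction of the merge condition to $p_1=q_1$ (by downward propagation of $p_j=q_j$) is correct and is implicit in the paper's setup. However, the step you defer --- that the initial $i$- and $(i-1)$-sequences of $b'$ terminate at the same positions $p_1$ and $q_1$ --- is not a finishing detail; it is essentially the entire content of the paper's proof. What must be verified is stronger than ``the first $1$ reached is unchanged'': one has to show that the whole tail of the initial $i$-sequence of $b'$ is $p_{i-1}>\cdots>p_1$ (in the paper's notation, the new sequence is $p_i',p_{i-1},\ldots,p_1$ with $p_{i+1}<p_i'\leqslant p_i$). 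For instance, one must rule out that a \emph{bracketed} $i-1$ of $b$ survives strictly between $p_i'$ and $p_{i-1}$ and gets picked up first; this needs the observation that inside $(p_{i+1},p_i)$ the bracketed $(i-1)$'s precede the unbracketed ones, and that the raised letters in $(p_i',p_i)$ are subsequently lifted to $i+1$ by $s_i$ because their only potential bracketing partner is consumed by $p_i'$. Until that case analysis is actually carried out, the proof is incomplete.

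Two simplifications from the paper would also shorten your plan considerably. First, the sub-case $p_i=q_i$ (which you flag as requiring its own bracket-shifting analysis) never needs the sequence-tracking at all: if $p_i=q_i$ then the first letter of the $(i,i+1)$-subword of $b$ is the $i+1$ at $p_{i+1}$, so Lemma~\ref{lemma.e-i} gives $\varepsilon_{-i}(b)=1$ directly, and the inductive bookkeeping is only needed when $q_i<p_{i+1}$. Second, part (b) does not require a parallel induction: for $\{1,\ldots,i\}$-highest weight $b$ with $\wt(b)_i>0$ one has $\varphi_{-i}(b)+\varepsilon_{-i}(b)=1$ (the conjugate of \textbf{LQ2}/\textbf{LQ3}), so (b) is the complement of (a).
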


\begin{example}
\label{example.phi -i}
Take $b=1331242312111$ and $i=3$. Then $p_4=6, p_3=8, p_2=10, p_1=11$ and
$q_3=2,q_2=5,q_1=9$. We indicate the chosen letters $p_j$ by underlines and 
$q_j$ by overlines: $b=1\overline{3}31\overline{2}\underline{4}2\underline{3}\overline{1}\underline{2}\underline{1}11$.
Since no letter has a both an overline and underline (meaning $p_j\neq q_j$ for all $j$), we have
$\varphi_{-3}(b)=1$.
\end{example}

\begin{proof}[Proof of Proposition~\ref{proposition.phi -i}]
Let us first prove claim (a) for $i=1$. If $\wt(b)_2=0$, then certainly $\varepsilon_{-1}(b)=0$ since by definition $e_{-1}$ 
changes a 2 into a 1. If $\wt(b)_2>0$, then $q_1$ is the position of the leftmost 1, $p_2$ is the position of the 
leftmost 2, and $p_1$ is the position of the first 1 after this 2. If $p_1 = q_1$, there is no 1
to the left of the leftmost 2. By definition in this case $\varepsilon_{-1}(b)=1$. If on the other hand
$q_1<p_2$, the leftmost 1 is before the leftmost 2 and hence $\varepsilon_{-1}(b)=0$. This proves the claim.

Now assume by induction that claim (a) is true for up to $i-1$. If $\wt(b)_{i+1}=0$, then $\varepsilon_{-i}(b)=0$
since $e_{-i}$ changes the weight by the simple root $\alpha_i$. Otherwise assume that $\wt(b)_{i+1}>0$.

If $p_i=q_i$, the first letter $i$ or $i+1$ is the $i+1$ in position $p_{i+1}<p_i=q_i$. Hence by 
Lemma~\ref{lemma.e-i} we have $\varepsilon_{-i}(b)=1$. 

If $q_i<p_i$ (and hence automatically $q_i<p_{i+1}$),
recall that by Lemma~\ref{lemma.f recursion} we have $e_{-i} = s_{i-1} s_i e_{-(i-1)} s_i s_{i-1}$. The operator
$s_{i-1}$ leaves the letter $i-1$ in positions $q_{i-1}$ and $p_{i-1}$ unchanged since these letters are bracketed with
$i$ in positions $q_i$ and $p_i$, respectively. All $i-1$ to the left of position $q_{i-1}$ are unbracketed and since
$b$ is $\{1,2,\ldots, i\}$-highest weight, $s_{i-1}$ changes all of these $i-1$ to $i$. In $s_{i-1}b$ there are possibly
new letters $i$ between positions $p_{i+1}$ and $p_i$; the $i+1$ in position $p_{i+1}$ brackets with the leftmost
of these in position $p_{i+1}<p'_i\leqslant p_i$. The operator $s_i$ on $s_{i-1}b$ changes all letters $i$ to the left
of position $p'_i$ to $i+1$. Hence $\wt(s_i s_{i-1}b)_i>0$, $s_i s_{i-1} b$ is $\{1,2,\ldots, i-1\}$-highest weight with 
sequences with respect to $i-1$ given by $p_i'>p_{i-1}>\cdots>p_1$ and $q_{i-1}>q_{i-2}>\cdots>q_1$.
Claim (a) now follows by induction on $i$.

If $b$ is $\{1,2,\ldots,i\}$-highest weight and $\wt(b)_i>0$, we must have $\varphi_{-i}(b) + \varepsilon_{-i}(b)=1$.
Hence $\varphi_{-i}(b)=1$ precisely when $\varepsilon_{-i}(b)=0$, proving (b).
\end{proof}

Recall that in a queer crystal $B$ an element $b\in B$ is \defn{highest-weight} if $e_i(b)=0$ for all $i\in I_0 \cup I_-$,
where $I_0 = \{1,2,\ldots,n\}$ and $I_- = \{-1,-2,\ldots,-n\}$.

\begin{proposition}\cite[Proposition 1.13]{Grantcharov.2014}
\label{proposition.hw}
Let $b\in \mathcal{B}^{\otimes \ell}$ be highest weight. Then $\wt(b)$ is a strict partition.
\end{proposition}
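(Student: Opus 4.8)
The statement to prove is that a highest weight word $b\in\mathcal{B}^{\otimes\ell}$ has $\wt(b)$ equal to a strict partition. Recall that a strict partition is a weakly decreasing nonnegative vector whose nonzero parts are pairwise distinct, i.e. strictly decreasing. So the claim splits into two parts: that $\wt(b)$ is a partition (weakly decreasing, nonnegative), and that its positive parts strictly decrease. The plan is to extract the first part from the type $A_n$ conditions $e_i(b)=0$ for $i\in I_0$, and the second (strictness) from the odd conditions $e_{-i}(b)=0$ together with Proposition~\ref{proposition.phi -i}(a).

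For the partition property, nonnegativity is immediate since $\wt(b)\in\Lambda=\mathbb{Z}^{n+1}_{\geqslant 0}$. For weak decrease I would argue directly from the signature rule of Remark~\ref{rmk:signature-rule}: in the $(i,i+1)$-bracketing of $b$ each bracketed pair consumes exactly one letter $i$ and one letter $i+1$, so counting bracketed and unbracketed letters of each kind gives $\wt(b)_i-\varphi_i(b)=\wt(b)_{i+1}-\varepsilon_i(b)$, that is $\wt(b)_i-\wt(b)_{i+1}=\varphi_i(b)-\varepsilon_i(b)$. Since $b$ is highest weight, $e_i(b)=0$ and hence $\varepsilon_i(b)=0$ for every $i\in I_0$, which yields $\wt(b)_i-\wt(b)_{i+1}=\varphi_i(b)\geqslant 0$. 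Thus $\wt(b)$ is weakly decreasing and is a partition.

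For strictness, fix $i\in I_0$ with $\wt(b)_{i+1}>0$; I must show $\wt(b)_i>\wt(b)_{i+1}$, which by the identity above is equivalent to $\varphi_i(b)\geqslant 1$, i.e. to the existence of at least one unbracketed $i$ in the $(i,i+1)$-bracketing. Since $b$ is highest weight we have $e_{-i}(b)=0$, hence $\varepsilon_{-i}(b)=0$. Because $b$ is in particular $\{1,\ldots,i\}$-highest weight and $\wt(b)_{i+1}>0$, the hypotheses stated just before Proposition~\ref{proposition.phi -i} hold, both initial sequences $b_{p_{i+1}},\ldots,b_{p_1}$ and $b_{q_i},\ldots,b_{q_1}$ exist, and Proposition~\ref{proposition.phi -i}(a) applies. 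As $\varepsilon_{-i}(b)\neq 1$ while $\wt(b)_{i+1}>0$, that proposition forces $p_j\neq q_j$ for every $j\in\{1,\ldots,i\}$; in particular $p_i\neq q_i$, so by the dichotomy ``$q_j=p_j$ or $q_j<p_{j+1}$'' recorded just before the proposition we conclude $q_i<p_{i+1}$. In words, the leftmost letter $i$ of $b$ occurs strictly to the left of the leftmost letter $i+1$. Since in the $(i,i+1)$-bracketing a letter $i$ can only be bracketed against an $i+1$ lying to its left, this leftmost $i$ is necessarily unbracketed, so $\varphi_i(b)\geqslant 1$ and $\wt(b)_i>\wt(b)_{i+1}$, which is exactly strictness.

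The partition part is routine bookkeeping; the crux is strictness, and within it the main obstacle is translating the purely positional inequality $q_i<p_{i+1}$ supplied by Proposition~\ref{proposition.phi -i}(a) into an honestly unbracketed letter $i$. The observation that makes this clean is that the globally leftmost $i$ has no $i+1$ to its left and hence cannot be bracketed; once this is in place, strictness is a direct consequence of Proposition~\ref{proposition.phi -i}(a) and the signature rule.
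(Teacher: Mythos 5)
Your proof is correct and rests on the same mechanism as the paper's: one rules out $\wt(b)_i\leqslant\wt(b)_{i+1}$ by showing that otherwise $\varepsilon_{-i}(b)=1$, contradicting highest weight. The only cosmetic differences are that you invoke Proposition~\ref{proposition.phi -i}(a) (itself a packaged consequence of Lemma~\ref{lemma.e-i}) where the paper applies Lemma~\ref{lemma.e-i} directly to the $(i,i+1)$-subword, and that you spell out the weakly-decreasing step which the paper leaves implicit.
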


\begin{proof}
Let $b$ be highest weight and suppose that $\wt(b)_i = \wt(b)_{i+1}$ for some $i$, meaning that $b$ contains
the same number of letters $i$ and $i+1$. Since all letters $i$ and $i+1$ must be bracketed in the $(i,i+1)$-bracketing,
this means that the first letter in the $(i,i+1)$-subword of $b$ is the letter $i+1$. Then by Lemma~\ref{lemma.e-i},
$\varepsilon_{-i}(b)=1$, which means that $b$ is not highest weight. Hence $\wt(b)_i>\wt(b)_{i+1}$ for all $i$,
implying that $\wt(b)$ is a strict partition.
\end{proof}

Next, we provide an explicit description of $f_{-i}(b)$ for $i \in I_0$, when $b$ is $\{1,2,\ldots,i\}$-highest weight.
Recall that the sequence $b_{q_i}, b_{q_{i-1}},\ldots,b_{q_1}$ is the leftmost sequence of letters $i,i-1,\ldots,1$ from left 
to right. Set $r_1=q_1$ and recursively define $r_j<r_{j-1}$ for $1<j\leqslant i$ to be maximal such that $b_{r_j}=j$.
Note that by definition $q_j \leqslant r_j$. Let $1\leqslant k\leqslant i$ be maximal such that $q_k=r_k$.

\begin{theorem}
\label{theorem.f-i}
Let $b\in\mathcal{B}^{\otimes \ell}$ be $\{1,2,\ldots,i\}$-highest weight for $i \in I_0$ and $\varphi_{-i}(b)=1$
(see Proposition~\ref{proposition.phi -i}). Then $f_{-i}(b)$ is obtained from $b$ by changing 
$b_{q_j}=j$ to $j-1$ for $j=i,i-1,\ldots,k+1$ and $b_{r_j}=j$ to $j+1$ for $j=i,i-1,\ldots,k$.
\end{theorem}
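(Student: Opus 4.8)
The plan is to induct on $i$ via the conjugation recursion $f_{-i}=s_{i-1}s_i\,f_{-(i-1)}\,s_i s_{i-1}$ of Lemma~\ref{lemma.f recursion}. The base case $i=1$ is immediate from Remark~\ref{rmk:combo-minus-1}: the hypothesis $\varphi_{-1}(b)=1$ says the leftmost letter of the $(1,2)$-subword is a $1$, which is exactly $b_{q_1}=b_{r_1}$ with $k=1$, and $f_{-1}$ turns it into a $2$, matching the single prescribed up-change $b_{r_1}\colon 1\to2$. Before running the induction I would record the weight consistency check: summing the down-changes $b_{q_j}\colon j\to j-1$ for $k<j\leqslant i$ and the up-changes $b_{r_j}\colon j\to j+1$ for $k\leqslant j\leqslant i$, every interior contribution cancels and only $-\epsilon_i+\epsilon_{i+1}=-\alpha_i$ survives (in the boundary case $k=i$ the single move $b_{q_i}=b_{r_i}\colon i\to i+1$ gives the same). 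This matches Q2 and is a useful sanity check throughout.

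For the inductive step set $c=s_i s_{i-1}(b)$. Since $b$ is $\{1,\dots,i\}$-highest weight we have $\varepsilon_{i-1}(b)=0$, so $s_{i-1}$ raises every unbracketed $(i-1)$ to an $i$ and $s_i$ then acts on the $(i,i+1)$-bracketing; a signature-rule analysis as in the proof of Proposition~\ref{proposition.phi -i} shows that $c$ is $\{1,\dots,i-1\}$-highest weight. I would next check that the inductive hypothesis applies to $c$ at level $i-1$, i.e.\ $\varphi_{-(i-1)}(c)=1$: the operator identity gives $f_{-(i-1)}(c)\neq0$ from $\varphi_{-i}(b)=1$, and since for a $\{1,\dots,i-1\}$-highest weight element $\varphi_{-(i-1)}$ and $\varepsilon_{-(i-1)}$ sum to $1$ (as in the last paragraph of the proof of Proposition~\ref{proposition.phi -i}), this forces $\varphi_{-(i-1)}(c)=1$. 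The heart of the step is to express the initial $(i-1)$-sequence $q'_{i-1},\dots,q'_1$ of $c$, its associated sequence $r'_{i-1},\dots,r'_1$, and its turning index $k'$, in terms of the data $(q,r,k)$ of $b$. The reliable part is the $q$-side: because $q_{i-1}$ is always bracketed in the $(i-1,i)$-bracketing of $b$ (it is matched with an $i$ to its left) while all $(i-1)$'s to the left of $q_i$ are unbracketed and hence raised, one gets $q'_{i-1}=q_{i-1}$, and since letters of value $\leqslant i-2$ are untouched by $s_{i-1}$ and $s_i$, also $q'_j=q_j$ for $j\leqslant i-2$.

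Granting the sequence identification, I would apply the inductive hypothesis to compute $d=f_{-(i-1)}(c)$ and then push $d$ back through the outer $s_{i-1}s_i$, splitting into two regimes. In the misaligned regime $q_i<r_i$ (so $k<i$) I expect $r'_j=r_j$ and $k'=k$, the inner changes at $q_{k+1},\dots,q_{i-1}$ and $r_k,\dots,r_{i-1}$ to survive the outer reflections untouched, and the outer conjugation to contribute precisely the two new top moves: the forward $s_i$ first raises both $b_{q_i}$ and $b_{r_i}$ to $i+1$, and the fresh $i$ created by the hypothesis at $r_{i-1}$ triggers a re-bracketing that keeps $r_i$ at $i+1$ while leaving $q_i$ unbracketed, so the backward $s_i$ and then $s_{i-1}$ lower it in two steps to $i-1$. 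In the aligned regime $q_i=r_i$ (so $k=i$) the forward $s_i$ already performs the single up-change $b_{q_i}\colon i\to i+1$; here $k'=i-1$, the hypothesis produces one up-change at $r'_{i-1}$, and this is exactly transferred to the top move and cancelled by the backward conjugation, leaving only the top move. In both regimes the net change agrees with the claimed description, closing the induction.

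The step I expect to be the main obstacle is the precise determination of $(q',r',k')$ for $c$ together with the re-bracketing cascade in the backward pass. The delicate point is that a letter $b_{r_j}$ need not be bracketed in $b$: if it is unbracketed it is raised by $s_{i-1}$ or $s_i$ and so drops out of the $(i-1)$-sequence of $c$, which is precisely what makes $k'\neq k$ possible in the aligned regime (for instance when $b=32211$ and $i=3$, where $r_2$ is raised and $k'=2\neq 3=k$). One therefore cannot simply assert $r'_j=r_j$; controlling exactly which $r_j$ are bracketed, and showing that under $s_is_{i-1}$ the two paths can decouple only at the top index, requires a careful position-by-position bracketing argument using the $\{1,\dots,i\}$-highest weight hypothesis and $\varphi_{-i}(b)=1$ (and, when $(i+1)$'s are present, the interleaving condition $p_j\neq q_j$ of Proposition~\ref{proposition.phi -i}). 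This is where essentially all the combinatorial content of the theorem lies.
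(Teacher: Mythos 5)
Your overall strategy is exactly the paper's: induct on $i$ via $f_{-i}=s_{i-1}s_i\,f_{-(i-1)}\,s_is_{i-1}$ from Lemma~\ref{lemma.f recursion}, settle $i=1$ by Remark~\ref{rmk:combo-minus-1}, verify that $c=s_is_{i-1}(b)$ is $\{1,\ldots,i-1\}$-highest weight, identify the sequences $(q',r',k')$ of $c$, apply the inductive hypothesis, and push the result back through $s_{i-1}s_i$. The base case, the weight check, and the identification $q'_{i-1}=q_{i-1}$ (via the bracketing of $b_{q_{i-1}}$ with the $i$ at $q_i$) are all correct and match the paper.

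However, there is a genuine gap, and you have correctly located it yourself: the determination of $(r',k')$ and the backward re-bracketing cascade are precisely where the content lies, and your sketch of the ``misaligned'' regime is wrong in detail there. You assert that for $k<i$ one has $r'_j=r_j$ and that the inner changes ``survive the outer reflections untouched,'' but the paper must split $k<i$ into $k=i-1$ and $k<i-1$, and in the latter case it only obtains $q'_{i-1}=q_{i-1}<r'_{i-1}\leqslant r_{i-1}$, with strict inequality $r'_{i-1}<r_{i-1}$ possible (the letters $i-1$ strictly between $q_{i-1}$ and $r_{i-1}$ that are unbracketed get raised by $s_{i-1}$, so the rightmost surviving $i-1$ before $r_{i-2}$ can move left). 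When $r'_{i-1}<r_{i-1}$ the inner up-change lands at $r'_{i-1}$, and the backward pass must \emph{undo} it there and re-create it at $r_{i-1}$: in $f_{-(i-1)}s_is_{i-1}b$ the letter $i$ at $r'_{i-1}$ is unbracketed and drops back to $i-1$ under $s_{i-1}$, while the $i+1$ at $r_{i-1}$ becomes bracketed with the $i-1$ at $r_{i-2}$ and therefore only descends to $i$, not to $i-1$. So the inner changes do \emph{not} all survive untouched; the level-$(i-1)$ up-change migrates from $r'_{i-1}$ to $r_{i-1}$ during the backward pass. Tracking this migration (together with the verification that the letters $i+1$ strictly between $q_i$ and $r_i$, and between $r'_{i-1}$ and $r_{i-1}$, all return to their original values, while $q_i$ loses its bracketing partner and drops two steps to $i-1$) is the position-by-position argument your proposal defers, and without it the inductive step is not closed.
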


\begin{example}
Let us continue Example~\ref{example.phi -i} with $b=1331242312111$ and $i=3$. We overline $b_{q_j}$
and underline $b_{r_j}$, so that $b=1\overline{3}\underline{3}1\overline{2}4\underline{2}3\underline{\overline{1}}2111$.
From this we read off $q_3=2,q_2=5,q_1=9$, $r_3=3, r_2=7, r_1=9$, $k=1$ and 
$f_{-3}(b) = 1241143322111$.

As another example, take $b=545423321211$ in the $\mathfrak{q}(6)$-crystal $\mathcal{B}^{\otimes 12}$ and $i=5$.
Again, we overline $b_{q_j}$ and underline $b_{r_j}$, so that 
$b=\overline{5}\overline{4}\underline{54}2\overline{3}\underline{3}\underline{\overline{2}}\underline{\overline{1}}211$.
This means that $q_5=1$, $q_4=2$, $q_3=6$, $q_2=8$, $q_1=9$, $r_5=3$, $r_4=4$, $r_3=7$, $r_2=8$, $r_1=9$,
$k=2$, and $f_{-5}(b) =436522431211$.
\end{example}

\begin{proof}[Proof of Theorem~\ref{theorem.f-i}]
We prove the claim by induction on $i$. For $i=1$, since by assumption $\varphi_{-1}(b)=1$, the first letter
in the subword of $b$ of letters in $\{1,2\}$ is a 1. This 1 is in position $q_1=r_1$ and changes to 2, which proves the claim.

Now assume that the claim is true for $f_{-1},\ldots,f_{-(i-1)}$. Recall that by Lemma~\ref{lemma.f recursion} we have
$f_{-i}=s_{i-1} s_i f_{-(i-1)}s_i s_{i-1}$. Let $b\in\mathcal{B}^{\otimes \ell}$ be $\{1,2,\ldots,i\}$-highest weight.
Applying $s_{i-1}$ to $b$ changes all unbracketed $i-1$ in the $(i-1,i)$-bracketing to $i$.  Subsequently applying 
$s_i$ changes all unbracketed $i$ in the $(i,i+1)$-bracketing to $i+1$.  
It is not hard to see that the resulting word is $\{1,\ldots,i-1\}$-highest weight, so we can apply the inductive hypothesis in 
order to apply $f_{-(i-1)}$.

In the notation for Proposition~\ref{proposition.phi -i}, we have either $\wt(b)_{i+1}=0$ or $q_i<p_{i+1}$ and $q_{i-1}<p_i$ 
since $\varphi_{-i}(b)=1$. In particular this means that if $p_{i+1}$ is defined and $p_{i+1}<q_{i-1}$, no letter $i$ lies between
$p_{i+1}$ and $q_{i-1}$ since otherwise $p_i<q_{i-1}$ contradicting the requirement $q_{i-1}<p_i$.
This implies that all $i-1$ and $i$ in the positions to the left of position $q_{i-1}$ become $i+1$ when applying $s_i s_{i-1}$. 
The letter $i-1$ in position $q_{i-1}$ remains $i-1$ under $s_i s_{i-1}$ since it is bracketed with an $i$. Denote the 
sequences for $f_{-(i-1)}$ in $s_i s_{i-1} b$ by $q_{i-1}',\ldots,q_1'$ and $r_{i-1}',\ldots,r_1'$ and call $k'$ the maximal 
index such that $q'_{k'}=r'_{k'}$. By the above arguments, we have $q_{i-1}'=q_{i-1}$.
We need to distinguish three cases given by $k=i,i-1$ and $k<i-1$.

\smallskip

\noindent \textbf{Case $k=i$:} The claim is that the $i$ in position $q_i$ changes to $i+1$.
Since $q_i=r_i$ for $k=i$, there is only one $i$ to the left of the $i-1$ in position $r_{i-1}$. Since $q_{i-1}\leqslant r_{i-1}$,
this implies that all $i-1$ between positions $q_{i-1}$ and $r_{i-1}$ (and including $r_{i-1}$) change to $i+1$ when applying 
$s_i s_{i-1}$. This means that $k'=i-1$ and by induction $f_{-(i-1)}$ changes the $i-1$ in position $q_{i-1}$ to $i$. Hence 
under $s_{i-1} s_i$, the letter in position $q_i$ remains an $i+1$ and all other letters $i+1$ and $i$ return to their original 
value. This proves the claim.

\smallskip

\noindent \textbf{Case $k=i-1$:} In this case, we have at least two $i$ to the left of position $q_{i-1}=r_{i-1}$ and there is
no $i-1$ between positions $q_{i-1}$ and $r_{i-2} \geqslant q_{i-2}$. Since $s_i s_{i-1}$ lifts all $i$ to the left of position 
$q_{i-1}$ to $i+1$, but leaves the $i-1$ in position $q_{i-1}$ and possible $i-2$ in positions $q_{i-2}$ and $r_{i-2}$, we 
have $k'=i-1$. Hence by induction $f_{-(i-1)}$ changes the $i-1$ in position $q'_{i-1}=q_{i-1}$ to $i$. When applying 
$s_{i-1} s_i$ to $f_{-(i-1)} s_i s_{i-1} b$, the $i+1$ in position $r_i$ remains an $i+1$ since it is now bracketed with the $i$ in
position $q_{i-1}$ or an $i$ to its left. In addition, the $i+1$ in position $q_i$ becomes an $i-1$ since the $i$ in position 
$q_{i-1}$ is now bracketed with the previous bracketing partner of letter in position $q_i$ in $b$, causing it to drop to $i-1$. 
This proves the claim for $k=i-1$.

\smallskip

\noindent \textbf{Case $k<i-1$:} In this case $q_i<r_i$ and $q_{i-1}<r_{i-1}$, so that there are at least two $i$ to
the left of position $r_{i-1}$ and at least two $i-1$ between positions 
$q_i$ and $r_{i-2}\geqslant q_{i-2}$. By the arguments above, all
$i$ to the left of position $q_{i-1}$ become $i+1$ under $s_i s_{i-1}$, the letter $i-1$ in position $q_{i-1}$ remains 
$i-1$ and $q'_{i-1}=q_{i-1}<r_{i-1}' \leqslant r_{i-1}$. Also, since $s_i s_{i-1}$ leaves all letters $i-2$ and smaller untouched,
we have $q'_j=q_j$ and $r'_j=r_j$ for $1\leqslant j <i-1$.
Hence by induction $f_{-(i-1)}$ changes the letter in position $q_{i-1}=q'_{i-1}$ to $i-2$ and the letter in
position $r'_{i-1}$ to $i$, in addition to the letters in positions $q_j,r_j$ for $j<i-1$. Next applying $s_{i-1} s_i$
changes the letter in position $r_{i-1}$ to $i$ since it is now bracketed with the $i-1$ in position $r_{i-2}$.
The letters $i+1$ in positions $r_{i-1}'<p < r_{i-1}$ are changed back to $i-1$ since they are not bracketed.
If $r_{i-1}'<r_{i-1}$, then the letter $i$ in position $r'_{i-1}$ changes to $i-1$ since it is also not bracketed.
The letter in position $q_{i-1}=q'_{i-1}$ remains $i-2$. The letter $i+1$ in position $r_i$ is bracketed 
with the $i$ in position $r'_{i-1}$ in $f_{-(i-1)} s_i s_{i-1}b$ and hence remains $i+1$ in $s_{i-1} s_i f_{-(i-1)} s_i s_{i-1}b$.
The letters $i+1$ between positions $q_i$ and $r_i$ in $f_{-(i-1)} s_i s_{i-1}b$ return to their original value $i$ 
under $s_{i-1} s_i$ since they are bracketed with $i-1$ to the right. The letter in position $q_i$ lost its bracketing
partner since the $i-1$ in position $q_{i-1}$ became $i-2$. Hence the letter in position $q_i$ becomes $i-1$,
proving the claim.
\end{proof}

\begin{corollary}
\label{corollary.f-i}
Let $b\in\mathcal{B}^{\otimes \ell}$ be $J$-highest weight for $\{1,2,\ldots,i\} \subseteq J \subseteq I_0$
and $\varphi_{-i}(b)=1$ for some $i \in I_0$. Then:
\begin{enumerate}
\item
Either $f_{-i}(b) = f_i(b)$ or $f_{-i}(b)$ is $J$-highest weight.
\item
$f_{-i}(b)$ is $I_0$-highest weight only if $b=f_{i+1} f_{i+2} \cdots f_{h-1} u$ for some $h>i$ and $u$ a
$I_0$-highest weight element.
\end{enumerate}
\end{corollary}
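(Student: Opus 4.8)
The plan is to read everything off the explicit cascade in Theorem~\ref{theorem.f-i}: $f_{-i}(b)$ differs from $b$ only at the positions $q_i,\ldots,q_{k+1}$ (where $j\mapsto j-1$) and $r_i,\ldots,r_k$ (where $j\mapsto j+1$). The one computation I would use throughout is the effect on values $\geqslant i+1$: the only such letter the cascade produces is the single new $i+1$ at $r_i$, which is an opening bracket in the $(i+1,i+2)$-window and alters no $(j,j+1)$-bracketing with $j\geqslant i+2$. Writing $c=f_{-i}(b)$, this gives at once $\varepsilon_j(c)=\varepsilon_j(b)$ for $j\geqslant i+2$ and $\varepsilon_{i+1}(b)-1\leqslant\varepsilon_{i+1}(c)\leqslant\varepsilon_{i+1}(b)$.

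For part (1), since $b$ is $J$-highest weight the previous remark gives $\varepsilon_j(c)=0$ for all $j\in J$ with $j\geqslant i+1$. For $j<i$ one checks that the lowering chain on the $q$'s and the raising chain on the $r$'s interleave so that each $(j,j+1)$-window is merely rebracketed by shifting existing pairs, creating no new unbracketed $j+1$; hence $\varepsilon_j(c)=0$ for $j<i$ as well. The whole dichotomy therefore sits at $j=i$. If the new $i+1$ at $r_i$ is bracketed in the $(i,i+1)$-window of $c$, then $\varepsilon_i(c)=0$ and $c$ is $J$-highest weight; this is automatic when $k<i$, because then the raising chain also puts an $i$ at $r_{i-1}>r_i$ (from $r_{i-1}\colon i-1\mapsto i$) which brackets it. If instead the new $i+1$ is unbracketed, the cascade must be a single move, forcing $k=i$, so $c$ is obtained from $b$ by changing only the leftmost $i$ (at $q_i=r_i$) to $i+1$; unbracketedness forces this to be the rightmost unbracketed $i$, i.e. $c=f_i(b)$ by the signature rule (Remark~\ref{rmk:signature-rule}). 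This is the all-$i$ generalization of Lemma~\ref{lemma.e1}(2).

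For part (2) assume $c$ is $I_0$-highest weight. The first paragraph then forces $\varepsilon_j(b)=0$ for $j\geqslant i+2$ and $\varepsilon_{i+1}(b)\leqslant 1$, so $b$ is highest weight at every index except possibly $i+1$. If $\varepsilon_{i+1}(b)=0$ then $b$ is $I_0$-highest weight and we are done with $h=i+1$ and $u=b$ (empty product). If $\varepsilon_{i+1}(b)=1$, I would peel: set $b'=e_{i+1}(b)$ and show that the string $e_{i+1},e_{i+2},\ldots$ applied to $b$ moves the unique defect up by one index at each step and terminates at an $I_0$-highest weight $u=e_{h-1}\cdots e_{i+1}(b)$; unwinding via A1 then yields $b=f_{i+1}f_{i+2}\cdots f_{h-1}u$. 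Proposition~\ref{proposition.phi -i} and the weight identity $\wt(b)=\wt(c)+\alpha_i$ identify $h$ as the end of the flat run of the partition $\wt(c)$ beginning at $i+1$.

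The main obstacle is this peeling step, and it is genuinely false without the hypothesis: a general $\{1,\ldots,i\}$-highest weight $b$ with $\varepsilon_{i+1}(b)=1$ need not have $e_{i+1}(b)$ still $\{1,\ldots,i\}$-highest weight — for $i=1$ the word $b=13$ has $e_2(b)=12$ with $\varepsilon_1=1$, matching the fact that $f_{-1}(13)=23$ is not highest weight, so $b=13$ lies outside part (2). What makes the peeling go through is precisely that $c=f_{-i}(b)$ is $I_0$-highest weight: via part (1) this says the letter created at $r_i$ is bracketed, and combined with the explicit positions of Theorem~\ref{theorem.f-i} it pins down the placement of all letters $\geqslant i+1$ of $b$, forcing the $i+1$ produced by each successive $e_{i+1+t}$ to be bracketed and hence no low-index defect to appear. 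Turning this global bracketing picture into a clean induction — equivalently, reverse-engineering the staircase shape of $b$ from the highest-weight condition on $c$ — is where essentially all the effort goes, and the point at which an analogue of Lemma~\ref{lemma.e2} for $e_{-i}$ against $e_{i+1}$ must be controlled.
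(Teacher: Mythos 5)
Your overall strategy is the same as the paper's: read everything off the explicit description in Theorem~\ref{theorem.f-i}, dispatch the windows $j\geqslant i+1$ by noting that the only new letter there is the single $i+1$ at $r_i$, and isolate the branch $f_{-i}(b)=f_i(b)$ to the case $k=i$ with no spare unbracketed $i$ to the right of $q_i$; that part matches the paper. The genuine gap is in part (1) for $k<i$, where you assert that each window $j<i$ is ``merely rebracketed\dots creating no new unbracketed $j+1$'' and that the new $i+1$ at $r_i$ is ``automatic[ally]'' bracketed by the new $i$ at $r_{i-1}$. This buries exactly the two verifications that carry the content of the paper's proof. At level $j=k$ a genuinely new $k+1$ appears at $q_k=r_k$, and showing it acquires a partner is where the maximality of $k$ must be used: since $q_{k+1}<r_{k+1}$ there are two letters $k+1$ to the left of $q_k$, both bracketed in $b$ because $b$ is $J$-highest weight, while only one $k$ sits at $q_k$; hence some $k$ strictly right of $q_k$ is the partner of $b_{q_{k+1}}$ and is freed when $b_{q_{k+1}}$ drops to $k$. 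Your argument never invokes this maximality, so ``one checks'' cannot be right as stated. At level $i$, bracketing the new $i+1$ at $r_i$ against the new $i$ at $r_{i-1}$ does not by itself give $\varepsilon_i(f_{-i}(b))=0$: the letters $i+1$ of $b$ whose partners were the $i$'s at $q_i$ or $r_i$ (both of which leave the $(i,i+1)$-window) must also retain partners, and this is where the hypothesis $\varphi_{-i}(b)=1$ enters via Proposition~\ref{proposition.phi -i} (it forces $q_i<p_{i+1}$, so no $i+1$ lies left of $q_i$); your part (1) never uses that hypothesis.

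For part (2) you candidly flag the peeling induction as ``where essentially all the effort goes'' and do not carry it out, so that half is a plan rather than a proof. The paper closes it directly from the same bookkeeping: $\varepsilon_j(f_{-i}(b))=\varepsilon_j(b)$ for $j\geqslant i+2$ and $\varepsilon_{i+1}(f_{-i}(b))\geqslant\varepsilon_{i+1}(b)-1$ force $\varepsilon_j(b)=0$ for all $j\neq i+1$ and $\varepsilon_{i+1}(b)\leqslant 1$, and the unique unbracketed $i+2$, if present, must be the one captured by the new $i+1$ at $r_i$; that positional fact is what excludes examples like your $b=13$ and identifies $b$ as $f_{i+1}\cdots f_{h-1}u$. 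In short, the architecture of your argument coincides with the paper's, but the two points where the hypotheses ($\varphi_{-i}(b)=1$ and the maximality of $k$) actually do work are left unproved, and part (2) is unfinished.
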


\begin{proof}
We begin by proving (1).
By Theorem~\ref{theorem.f-i}, in $f_{-i}(b)$ the letters $b_{q_j}$ are changed from $j$ to $j-1$ 
for $j=i,i-1,\ldots,k+1$ and $b_{r_j}$ are changed from $j$ to $j+1$ for $j=i,i-1,\ldots,k$. Hence $f_{-i}(b)$ is
not $J$-highest weight if and only if either there is an $i+1$ to the left of position $q_i$ that is no longer bracketed 
with an $i$ or the letter $k+1$ in position $r_k$ is no longer bracketed with a $k$.

First assume that $k<i$. Since $k$ is maximal such that
$q_k=r_k$, there must be at least two $k+1$ to the left of position $q_k$ in $b$, one in position $q_{k+1}$ and
one in position $r_{k+1}$. Since $b$ is $J$-highest weight, both of these $k+1$ must be bracketed with a $k$
to their right in $b$, which implies that there is a $k$ to the right of position $q_k$ that is bracketed with the $k+1$ in 
position $q_{k+1}$ in $b$. In $f_{-i}(b)$, the letter $k+1$ in position $q_{k+1}$ changes to $k$, and hence the 
new $k+1$ in position $q_k=r_k$  is bracketed with the $k$ to its right.

Since by assumption $\varphi_{-i}(b)=1$, we have by Proposition~\ref{proposition.phi -i} that either $\wt(b)_{i+1}=0$
(in which case there cannot be an $i+1$ to the left of position $q_i$ in $b$) or $p_j\neq q_j$ for all
$j \in \{1,2,\ldots,i\}$. The condition $p_i \neq q_i$ implies that $q_i < p_{i+1}$, so that there cannot be a letter
$i+1$ to the left of position $q_i$. This proves that $f_{-i}(b)$ is $J$-highest weight when $k<i$.

Next assume that $k=i$. In this case $f_{-i}(b)$ differs from $f_i(b)$ by changing the letter $i$ in position
$q_i$ to $i+1$. If there is a letter $i$ to the right of position $q_i$ that is not bracketed with a letter $i+1$,
then the new $i+1$ in position $q_i$ will bracket with this $i$ in $f_{-i}(b)$ (or to the left of this $i$) and hence $f_{-i}(b)$ is
$J$-highest weight. Otherwise, there is no letter $i$ to the right of position $q_i$ in $b$ that is not
bracketed with an $i+1$ and therefore $f_i(b)=f_{-i}(b)$. This proves claim (1).

The above arguments also show that $f_{-i}(b)$ can only be $I_0$-highest weight if either $b$ is $I_0$-highest weight
or $\varepsilon_j(b)=0$ for $j\in I_0 \setminus \{i+1\}$ and the new letter $i+1$ in position $r_i$ in $f_{-i}(b)$
is bracketed with a letter $i+2$ in $b$. Such a $b$ is precisely of the form $b=f_{i+1} f_{i+2} \cdots f_{h-1} u$
proving claim (2).
\end{proof}

Next, we describe $e_{-i}$ on a $\{1,2,\ldots,i\}$-highest weight element $b$.  We again use the initial $(i+1)$-sequence 
$b_{p_{i+1}},b_{p_{i}},\ldots,b_{p_1}$ in $b$.

We also need the notion of \defn{cyclically scanning leftwards} for a letter $t$ starting at an entry $b_j$.
By this we mean choosing the rightmost $t$ to the left of $b_j$, if it exists, or else the rightmost $t$ in the entire word 
(i.e., ``wrapping around'' the edge of the word).

We define the \defn{$k$-bracketed entries} of a word $b$ as follows.  Every $k$ in $b$ is $k$-bracketed, and for 
$j=k-1,k-2,\ldots,1$, we recursively determine which $j$'s in $b$ are $k$-bracketed by considering the subword of 
only the $k$-bracketed $(j+1)$'s and all $j$'s, and performing an ordinary crystal bracketing on this subword.  The $j$'s 
that are bracketed in this process are the $k$-bracketed $j$'s.

\begin{example}
In the word $$142334122311322111,$$ to obtain the $4$-bracketed letters we first mark all $4$'s as $4$-bracketed: 
$$1\mathbf{4}233\mathbf{4}122311322111$$
and then bracket these with $3$'s and mark the bracketed $3$'s as being $4$-bracketed:
$$1\mathbf{4}2\mathbf{3}3\mathbf{4}122\mathbf{3}11322111.$$
We then consider only the boldface $3$'s and all the $2$'s and bracket them to obtain the $4$-bracketed $2$'s:
$$1\mathbf{4}2\mathbf{3}3\mathbf{4}1\mathbf{2}2\mathbf{3}113\mathbf{2}2111$$
Finally we bracket these boldface $2$'s with the $1$'s to obtain: 
$$1\mathbf{4}2\mathbf{3}3\mathbf{4}1\mathbf{2}2\mathbf{3}\mathbf{1}13\mathbf{2}2\mathbf{1}11$$
The boldface letters above are precisely the $4$-bracketed letters in this word.
\end{example}

We now have the tools to describe the application of $e_{-i}$ to an $\{1,2,\ldots,i\}$-highest weight word.

\begin{theorem}\label{theorem.e-i}
Let $b\in \mathcal{B}^{\otimes \ell}$ be $\{1,2,\ldots,i\}$-highest weight for $i\in I_0$ and $\varepsilon_{-i}(b)=1$ (see 
Proposition \ref{proposition.phi -i}).  Let $b_{p_{i+1}},\ldots,b_{p_1}$ be the initial $(i+1)$-sequence of $b$.  Then $e_{-i}(b)$ 
is obtained from $b$ by the following algorithm:
  \begin{itemize}
  \item Change $b_{p_j}$ from $j$ to $j-1$ for 
$j=i+1,i,\ldots,3,2$ to form a word $c^{(1)}$.
  \item Cyclically scan left in $c^{(1)}$ starting just to the left of position $p_1$ for a $1$ that is not $i$-bracketed in $c^{(1)}$. 
   Change that $1$ to $2$ to form a word $c^{(2)}$.  In $c^{(2)}$, continue cyclically scanning from just to the left of the 
   previously changed entry for a $2$ that is not $i$-bracketed in $c^{(2)}$, and change it to $3$.  Continue this process 
   until an $i-1$ changes into an $i$; the resulting word $c^{(i)}$ is $e_{-i}(b)$.
  \end{itemize}
\end{theorem}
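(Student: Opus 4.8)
The plan is to prove the theorem by induction on $i$, mirroring the structure of the proof of Theorem~\ref{theorem.f-i} and using the recursion $e_{-i} = s_{i-1} s_i\, e_{-(i-1)}\, s_i s_{i-1}$ from Lemma~\ref{lemma.f recursion}. The base case $i=1$ follows directly from Remark~\ref{rmk:combo-minus-1}: since $\varepsilon_{-1}(b)=1$, the leftmost letter among all $1$'s and $2$'s is the leftmost $2$, which sits in position $p_2$, and changing it to a $1$ is precisely the first bullet of the algorithm. The second bullet does nothing when $i=1$, since every $1$ is by definition $1$-bracketed, so the cyclic scan finds no eligible letter; thus the output equals $e_{-1}(b)$.

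For the inductive step I would first analyze $b' := s_i s_{i-1}(b)$. Exactly as in the proof of Theorem~\ref{theorem.f-i}, applying $s_{i-1}$ raises every unbracketed $i-1$ to $i$ and then $s_i$ raises every unbracketed $i$ to $i+1$, and the resulting word $b'$ is $\{1,2,\ldots,i-1\}$-highest weight. I would then verify that $\varepsilon_{-(i-1)}(b')=1$ via Proposition~\ref{proposition.phi -i}, using that $\varepsilon_{-i}(b)=1$ forces $p_j=q_j$ for some $j$, and determine the initial $i$-sequence of $b'$, relating its positions to the initial $(i+1)$-sequence $b_{p_{i+1}},\ldots,b_{p_1}$ of $b$. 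This is the bookkeeping that lets the inductive hypothesis, applied to $b'$, produce a down-cascade on positions $p_i,\ldots,p_1$ together with an up-cascade scan one level lower, leaving the single extra top change at $p_{i+1}$ to be supplied by the conjugation.

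With $e_{-(i-1)}(b')$ computed by the inductive hypothesis, I would apply $s_{i-1} s_i$ and match the result to the claimed algorithm on $b$, splitting into the two bullets. For the first bullet (the down-cascade $b_{p_j}\colon j\to j-1$), conjugation by $s_{i-1}s_i$ restores the auxiliary letters $i$ and $i+1$ to their original values while leaving the change $b_{p_{i+1}}\colon i+1\to i$ in place; this is handled by an analysis analogous to the three cases ($k=i$, $k=i-1$, $k<i-1$) appearing in the proof of Theorem~\ref{theorem.f-i}. For the second bullet, I would show that the $(i-1)$-bracketed cyclic scan computing $e_{-(i-1)}(b')$ corresponds, after conjugation, to the $i$-bracketed cyclic scan computing $e_{-i}(b)$, with the additional final step $i-1\to i$ being the new change introduced at level $i$.

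The hard part will be this last correspondence. Because the $i$-bracketing is built by first bracketing the $(i-1)$'s against the $i$'s and only then cascading downward, whereas the $(i-1)$-bracketing cascades from the $(i-1)$'s directly, I must show that passing from $b$ to $b'$ via $s_i s_{i-1}$ precisely strips off this extra top level: the $i$-bracketed $(i-1)$'s of $b$, and hence all lower $i$-bracketed letters, become exactly the $(i-1)$-bracketed letters of $b'$. I would isolate this as an auxiliary lemma comparing the two nested bracketings level by level. Two further points need care: conjugation by $s_i s_{i-1}$ changes letter values but not positions, so the cyclic ``wrap-around'' of the scan is unchanged positionally; and a letter must be flagged ``not $i$-bracketed'' at stage $c^{(t)}$ of the scan on $b$ \emph{if and only if} its image is ``not $(i-1)$-bracketed'' at the corresponding stage of the scan on $b'$. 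Maintaining this equivalence inductively across the successive words $c^{(2)},c^{(3)},\ldots$ is the technical core of the argument, and I expect it to be the main obstacle.
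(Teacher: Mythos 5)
Your plan is essentially the paper's own proof: induction via the conjugation recursion of Lemma~\ref{lemma.f recursion}, the base case from Remark~\ref{rmk:combo-minus-1}, the bookkeeping showing the initial sequence of $b'=s_is_{i-1}(b)$ agrees with that of $b$ below the top level, and — as the technical core — the equivalence between being $i$-bracketed in $b$ (resp.\ the intermediate words $c^{(t)}$) and being $(i-1)$-bracketed in $b'$ (resp.\ $c'^{(t)}$), which the paper establishes by characterizing exactly which letters are $i$'s in $c'$. The only cosmetic difference is that the paper's final matching of the $(i,i+1)$- and $(i+1,i+2)$-subwords is organized around the dichotomy $p'_{i+1}=p_{i+1}$ versus $p'_{i+1}\neq p_{i+1}$ rather than the three cases from the proof of Theorem~\ref{theorem.f-i}.
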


\begin{proof}
  We will prove this by induction on $i$.  For $i=1$ the algorithm simply changes the leftmost $2$ to a $1$ as required, 
  since the second step is vacuous in this case.  
  
  Assume the statement is true for $i$ and let $b\in \mathcal{B}^{\otimes \ell}$ be $\{1,2,\ldots,i+1\}$-highest weight.  
  Recall that $e_{-(i+1)}=s_is_{i+1}e_{-i}s_{i+1}s_i$ by Lemma~\ref{lemma.f recursion}. We will analyze each step of 
  applying $s_is_{i+1}e_{-i}s_{i+1}s_i$ to $b$ and show that it matches the desired algorithm.
  
  Let $b_{p_{i+2}},b_{p_{i+1}},b_{p_i},\ldots,b_{p_2},b_{p_1}$ be the initial $(i+2)$-sequence of $b$.  Since $e_i b=0$, 
  applying $s_i$ to $b$ simply changes all unbracketed $i$ entries in the $(i,i+1)$-pairing to $i+1$. Note that $b_{p_i}$ 
  itself must be bracketed with an $i+1$ in $b$, for if it is not then $b_{p_{i+1}}$ is paired with an earlier $i$ to its right, 
  contradicting the definition of $b_{p_i}$.  Thus $b_{p_i}$ is still $i$ in $s_i b$.  Note also that $s_i b$ 
  still satisfies $e_{i+1} s_i b=0$.
  
  Let $b'=s_{i+1}s_i b$.  Note that any $i+1$ to the left of $b_{p_{i+2}}$ in $s_i b$ is not bracketed with an $i+2$ since 
  $b_{p_{i+2}}$ is the leftmost $i+2$.  Thus every $i+1$ left of $b_{p_{i+2}}$ (including those $i$'s that changed to $i+1$ 
  from $b$) changes to $i+2$ to form $b'$, along with any other unpaired $i+1$.  Let $b_{t_{i+1}}$ be the leftmost $i+1$ 
  between $b_{p_{i+2}}$ and $b_{p_{i+1}}$ in $s_{i} b$.  Then $b_{t_{i+1}}$ is either equal to $b_{p_{i+1}}$ or was an 
  $i$ in $b$.  Furthermore, $b_{t_{i+1}}$ is still $i+1$ in $b'=s_{i+1}s_i b$ since it must be paired with either $b_{p_{i+2}}$ 
  itself or some $i+2$ to the right of $b_{p_{i+2}}$.
  
  Now consider $e_{-i} b'$.  By the induction hypothesis, this can be computed by first lowering the entries of the initial 
  $(i+1)$-sequence $b'_{p'_{i+1}},b'_{p'_{i}},\ldots,b'_{p'_1}$ appropriately to form a word $c'^{(1)}$, then cyclically raising 
  some non-$i$-bracketed entries $1,2,3,\ldots,i-1$ in order to form words $c'^{(2)},\ldots,c'^{(i)}$.  We will show that 
  $p'_j=p_j$ for $j\leqslant i$, and that the same entries $1,2,\ldots,i-1$ are changed as would be changed in the $e_{-(i+1)}$ 
  algorithm applied to $b$.
  
  For the first claim, it suffices to show that $p'_i=p_i$.  Note that $b'_{p'_{i+1}}$ may be to the left of $b_{p_{i+1}}$, but it is 
  to the right of $b_{p_{i+2}}$ by the above analysis.  If $p'_{i+1}=p_{i+1}$ we are done, so suppose $p_{i+2}<p'_{i+1}<p_{i+1}$.
  Assume by contradiction that there is an entry $b'_a=i$ between positions $p'_{i+1}$ and $p_i$ in $b'$.  Then we further 
  have $p'_{i+1}<a<p_{i+1}$ by the definition of $b_{p_i}$ and $b'$.  It follows that $b_a$ is an $i$ in $b$ that is bracketed 
  with an $i+1$, since applying $s_i$ kept it an $i$.  But then by the definition of $p_{i+1}$, the entry $b_c=i+1$ that brackets 
  with $b_a$ in $b$ is to the left of position $p_{i+2}$.  Thus $b_{p'_{i+1}}$ itself was a bracketed $i$ in $b$, a contradiction. 
  Thus $p'_i=p_i$.
  
  Let $c^{(j)}$ be the word in the definition of $e_{-(i+1)}$ acting on $b$ and $c'^{(j)}$ the word in the definition 
  of $e_{-i}$ on $b'$. Similarly, let $t_j$ (resp. $t'_j$) be the position of the chosen $j$ in $c^{(j)}$ (resp. $c'^{(j)}$)
  that is raised to $j+1$. We now wish to show that, for any $j\leqslant i-1$, we have $t_j'=t_j$.  
  
  We first show this for $j=1$.  Note that since $p_2=p_2'$ (assuming $i\geqslant 2$, since otherwise we are done) the 
  same entries are equal to $1$ in both $c=c^{(1)}$ and $c'=c'^{(1)}$.  Moreover, $p_1=p_1'$, so we start searching cyclically 
  left for a $1$ in the same position in both.  It therefore suffices to show that an entry $c_x=1$ is $(i+1)$-bracketed in $c$ if 
  and only if $c_x'=1$ is $i$-bracketed in $c'$.  Note that the $i$'s in $c$ that are bracketed with $i+1$'s are precisely either:
  \begin{itemize}
     \item $c_{p'_{i+1}}$, or
     \item an $i$ that was bracketed with an $i+1$ in $b$.
  \end{itemize}
  But since $c'$ is formed by applying $s_i$ to $b$ (which changes all unbracketed $i$'s to $i+1$'s), then $s_{i+1}$ 
  (which does not change any $i$'s), then lowering certain entries, where $b_{p'_{i+1}}$ is the only one that becomes a 
  new $i$, the above characterization gives precisely \textit{all} $i$'s in $c'$.  Since the $1,2,\ldots,i-1$ entries are the same 
  in both $c$ and $c'$, it follows that an entry is $(i+1)$-bracketed in $c$ if and only if it is $i$-bracketed in $c'$.
  
  It now follows that $t_1=t_1'$, and inductively we can conclude that $t_j=t_j'$ for all $j\leqslant i-1$.  Thus if we apply 
  $s_{i}s_{i+1}$ to $c'^{(i)}$ to obtain $e_{-(i+1)}b$, the entries less than or equal to $i-1$ match those of $c^{(i+1)}$, the 
  result of the algorithm applied to $b$.  Furthermore, since $s_i$, $s_{i+1}$, and $e_{-i}$ only change letters less than 
  or equal to $i+2$, the entries larger than $i+2$ also match.
  
  It remains to consider the entries equal to $i$, $i+1$, and $i+2$.  For $i+2$, the application of $s_{i+1}$ to $s_i b$ 
  changes all unbracketed $i+1$ entries in $s_i b$ to $i+2$, and $e_{-i}$ changes the single entry $b'_{p'_{i+1}}=i+1$ to 
  $i$ and otherwise does not affect the $i+1$ or $i+2$ entries.  In the $(i+1,i+2)$-bracketing in $b'$, $b'_{p_{i+2}}$ is the 
  leftmost bracketed $i+2$, and $b'_{p'_{i+1}}$ is the first $i+1$ after it, so removing $b'_{p'_{i+1}}$ from the 
  $(i+1,i+2)$-subword leaves the $i+2$ in position $p_{i+2}$ unbracketed, with all other bracketed $(i+2)$'s remaining 
  bracketed.  It follows that applying $s_{i+1}$ to $e_{-i} s_{i+1}s_i b$ lowers the $i+2$ in position $p_{i+2}$ to $i+1$, 
  along with any $i+2$ that was raised in the first $s_{i+1}$ step.  Therefore, the $i+2$ entries in $s_{i+1}e_{-i} b'$, and 
  hence in $s_is_{i+1}e_{-i}b'=e_{-(i+1)}b$, match those in the output of the algorithm.
  
  Finally, we consider the $(i,i+1)$-subwords of the words in question.  We first analyze how the $(i,i+1)$-subword of 
  $w:=s_i b$ differs from that of $w':=s_{i+1}e_{-i}s_{i+1}s_i b$.  By inspecting the above analysis, we see that $w'$ 
  differs from $w$ in the following four ways:
  \begin{itemize}
    \item $w'_{p_{i+2}}=i+1$ is a new $i+1$ in the $(i,i+1)$-subword in $w'$ whereas $w_{p_{i+2}}=i+2$ was not in 
    the subword in $w$.
    \item $w'_{p'_{i+1}}=i$ whereas $w_{p'_{(i+1)}}=i+1$.
    \item $w'_{p_i}=i-1$ is no longer in the subword whereas $w_{p_i}=i$ was an $i$ in the subword.
    \item $w'_{t_{i-1}}=i$ is a new $i$ in the subword, whereas $w_{t_{i-1}}=i-1$.  
  \end{itemize}
  Note that the last two items above may coincide and cancel each other out if $t_{i-1}=p_i$.
  
  We now apply $s_i$ to both subwords, and analyze how $s_i w'=e_{-(i+1)} b$ differs from $s_i w=b$ in the 
  $(i,i+1)$-subword.  In particular, we will show it is the same as how $c^{(i+1)}$ differs from $b$.  Note that the 
  $(i,i+1)$-subword in $c^{(i+1)}$ is formed from that of $b$ by making the following changes:
  \begin{itemize}
    \item A new $i+1$ is inserted in position $p_{i+2}$ ($b_{p_{i+2}}=i+2$ whereas $c^{(i+1)}_{p_{i+2}}=i+1$).
    \item The $i+1$ in position $p_{i+1}$ is lowered to $i$.
    \item The $i$ in position $p_i$ is removed.
    \item An $i$ is inserted in position $t_{i-1}$.
    \item In the current subword, look for the first unbracketed $i$ cyclically left of position $t_{i-1}$; call this position 
    $t_i$ and change this $i$ to $i+1$.
  \end{itemize}
  
  First, note that there are no $i+1$ entries between $w'_{p_{i+2}}=i+1$ and $w'_{p'_{i+1}}=i$ in $w'$, for if there were, 
  this would contradict the definition of $b_{p_{i+1}}$.  It follows that $w'_{p_{i+2}}=i+1$ is bracketed with an $i$ to its right 
  in $w'$, so in $s_i w'=e_{-(i+1)} b$, the entry in position $p_{i+2}$ remains $i+1$.  So this is one position in which it differs 
  from $b$, since $b_{p_{i+2}}=i+2$, so it matches $c^{(i+1)}$ in this position.
  
  Note also that in $w$, all $i$'s are bracketed with $(i+1)$'s.  Applying $s_i$ to $w$ simply changes the unbracketed 
  $i+1$'s back to $i$'s to form $b$.
  We now consider two cases.
  
  \noindent
  \textbf{Case 1:} Suppose $p_{i+1}'\neq p_{i+1}$.
    
  We know that $s_i w$ and $s_i w'$ match $b$ and $c^{(i+1)}$, respectively, in position $p_{i+2}$ by the above analysis.  
  For position $p'_{i+1}$, note that it is an unbracketed $i+1$ in $w$, so it changes to $i$ in $s_i w$.  It is a bracketed $i$ 
  in $w'$ since it was the first unbracketed $i+1$ to the right of position $p_{i+1}$ in $w$, so it stays $i$ in $s_i w'$.  
  Thus they are both equal to $i$ in the results, matching $b$ and $c^{(i+1)}$, which do not differ in this entry.
    
   We now wish to show that the $i+1$ in position $p_{i+1}$ is unbracketed in $w'$ unless it is bracketed via the insertion 
   of the $i$ in position $t_{i-1}$.  In other words, if we make all the changes that define $w'$ from $w$ besides the $i$ 
   in position $t_{i-1}$, we claim that position $p_{i+1}$ is an unbracketed $i+1$.  Indeed, before removing $i$ in position 
   $p_i$, this $i+1$ in position $p_{i+1}$ is the leftmost $i+1$ that is bracketed with an entry weakly right of position $p_i$, 
   since the position $p_{i+2}$ entry is bracketed with some $i$ weakly left of position $p'_{i+1}$.  It follows that removing 
   the $i$ in position $p_i$ leaves $b_{p_{i+1}}$ unbracketed, and otherwise all other $i+1$'s are bracketed if and only if 
   they are bracketed in $w$.  
   
   Furthermore, the combination of lowering both $p_{i+2}$ and $p'_{i+1}$ to $i+1$ and $i$ and removing the $i$ in 
   position $p_i$ leaves all $i$'s still bracketed, as they are in $w$.
   
   Finally, when we put back the new $i$ in position $t_{i-1}$ to form $w'$, there are two subcases: first suppose inserting 
   this $i$ makes some unbracketed $i+1$ to its left become bracketed.  Then by the above analysis, this must have been 
   the position of the first unbracketed $i$ in $c^{(i)}$  to the left of $t_{i-1}$, and this is position $t_{i}$, which remains $i+1$ 
   in $s_i w'$.  Applying $s_i$ to $w'$ then turns the remaining unbracketed $i+1$ entries back to $i$ and matches 
   $c^{(i+1)}$.  Otherwise, if inserting the $i$ in position $t_{i-1}$ does not bracket any $i+1$ to the left, it creates an 
   unbracketed $i$ in the word, and so the rightmost unbracketed $i+1$ also will not change under applying $s_i$ to $w'$. 
   This corresponds to the first unbracketed $i$ cyclically left of position $t_{i-1}$ in $c^{(i)}$, and we are done as before.
   
   \noindent
   \textbf{Case 2:}  Suppose $p_{i+1}'=p_{i+1}$.  
  
  In this case, the analysis matches the above except for the following steps: first, since position $p_{i+1}$ contains a 
  bracketed $i+1$ in $w$, lowering it to $i$ may make some $i$ to its right become unbracketed.  (The new $i$ in 
  position $p_{i+1}$ itself is bracketed due to the new $i+1$ in position $p_{i+2}$ as before.)  
  
  Then, removing the $i$ in position $p_i$ will make all $i$'s bracketed once again, since $b_{p_i}$ was the first $i$ to 
  the right of position $p_{i+1}$ in $b$ and hence in $w$.  So once again, at the step before inserting $t_{i-1}$, all 
  $i$'s are bracketed, and an $i+1$ in that matches one in $w$ is bracketed if and only if it is bracketed in the modified
  word.  Thus inserting $t_{i-1}$ has the same effect as above, and we are done.
\end{proof}

We now show that the output of $e_{-i}$ on a $\{1,2,\ldots,i\}$-highest weight element is itself $\{1,2,\ldots,i\}$-highest 
weight if and only if there is no ``cycling around the edge'' in the cycling step of Theorem~\ref{theorem.e-i}.

\begin{proposition}\label{proposition.e-output-hw}
  Let $b\in \mathcal{B}^{\otimes \ell}$ be $\{1,2,\ldots,i\}$-highest weight for $i\in I_0$, with $\varepsilon_{-i}(b)=1$.  
  Let $t_1,\ldots,t_{i-1}$ be the positions of the $1,2,\ldots,i-1$ that change to $2,3,\ldots,i$ respectively in the second step 
  of the computation of $e_{-i}(b)$ (see Theorem \ref{theorem.e-i}). Then $e_{-i}(b)$ is 
  $\{1,2,\ldots,i\}$-highest weight if and only if $t_{i-1}<t_{i-2}<\cdots<t_{1}$.  
\end{proposition}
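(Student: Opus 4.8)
The plan is to use the explicit description of $e_{-i}(b)$ from Theorem~\ref{theorem.e-i} and to test the defining condition of $\{1,2,\ldots,i\}$-highest weight directly: that for every $j\in\{1,2,\ldots,i\}$ the word $w:=e_{-i}(b)$ has no unbracketed $j+1$ in its $(j,j+1)$-bracketing (equivalently $e_j(w)=0$). By Theorem~\ref{theorem.e-i}, $w$ is obtained from $b$ by lowering the initial $(i+1)$-sequence entries $b_{p_{i+1}},\ldots,b_{p_2}$ from $i+1,i,\ldots,2$ to $i,i-1,\ldots,1$ (a decreasing staircase occupying the increasing positions $p_{i+1}<\cdots<p_2$) and by raising the scanned entries $b_{t_1},\ldots,b_{t_{i-1}}$ from $1,2,\ldots,i-1$ to $2,3,\ldots,i$. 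The first thing I would record is that the inequality $t_{i-1}<\cdots<t_1$ is exactly the statement that the cyclic scan never wraps around the edge at any step $j\geqslant 2$: scanning for the non-$i$-bracketed $j$ starting just to the left of $t_{j-1}$ produces $t_j<t_{j-1}$ when no wrap occurs and $t_j>t_{j-1}$ when it does (a wrap forces $t_j$ to be the rightmost admissible $j$, hence to the right of $t_{j-1}$). Thus it suffices to prove that $w$ is $\{1,2,\ldots,i\}$-highest weight if and only if no wrap occurs at any step $j\geqslant 2$.

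For the ``if'' direction I would fix $j$ and analyze the $(j,j+1)$-subword of $w$. The only $j+1$'s that could be unbracketed are the newly created ones, at position $p_{j+2}$ (from the lowered staircase) and at position $t_j$ (from the raised staircase). For the lowered staircase this is automatic: since $p_{j+2}<p_{j+1}$, the new $j+1$ at $p_{j+2}$ lies to the left of the new $j$ at $p_{j+1}$ and so brackets with it (or with an earlier original $j$ in between). For the raised staircase the no-wrap hypothesis gives $t_j<t_{j-1}$, so the new $j+1$ at $t_j$ lies to the left of the new $j$ at $t_{j-1}$ and again finds a bracketing partner to its right. The remaining work is to check that \emph{removing} the old letters at positions $p_j$ and $t_j$ does not unbracket any $j+1$ already bracketed in $b$; here I would use that $b$ is $\{1,2,\ldots,i\}$-highest weight together with the fact that each $t_j$ was chosen non-$i$-bracketed, so the $j$ removed at $t_j$ is compensated within its $i$-bracketing chain rather than by an isolated $j+1$. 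Carrying this bookkeeping out value by value (and treating $j=i$, where no new $j+1$ is created, separately) shows that every $j+1$ in $w$ remains bracketed.

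For the ``only if'' direction I would take the first wrap, say at step $j\geqslant 2$, so that $t_{j-1}<\cdots<t_1$ while $t_j>t_{j-1}$, and exhibit an unbracketed $j+1$. Since the scan wrapped, every non-$i$-bracketed $j$ lies to the right of $t_{j-1}$ and $t_j$ is the rightmost such entry; I would then show that the new $j+1$ created at $t_j$ has no $j$ available to its right in the $(j,j+1)$-bracketing of $w$, so that $e_j(w)\neq 0$. This is the crux, and I expect it to be the main obstacle of the whole argument: it requires pinning down the precise relationship between the $i$-bracketing that governs the scan and the ordinary $(j,j+1)$-bracketing of the output word, in order to rule out a stray $j$ to the right of $t_j$.

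Because this bracket bookkeeping is delicate, the cleanest rigorous route is to prove the proposition by induction on $i$ in parallel with Theorem~\ref{theorem.e-i}, using the recursion $e_{-(i+1)}=s_i s_{i+1}\,e_{-i}\,s_{i+1}s_i$ of Lemma~\ref{lemma.f recursion}. For $i=1$ (and $i=2$) the condition is vacuous and one checks directly that $e_{-i}(b)$ is highest weight. For the inductive step one sets $b'=s_{i+1}s_i b$; the proof of Theorem~\ref{theorem.e-i} works with this $b'$, which is $\{1,\ldots,i\}$-highest weight, and establishes $t'_j=t_j$ for $j\leqslant i-1$. The inductive hypothesis applied to $b'$ then accounts for the condition $t_{i-1}<\cdots<t_1$ through the highest weight status of $e_{-i}(b')$, and the only genuinely new point is to determine when the outer application of $s_i s_{i+1}$ leaves the final raised entry (at position $t_i$, the $i\to i+1$ change) bracketed --- which is precisely the extra inequality $t_i<t_{i-1}$. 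This reduces the global bookkeeping of the direct approach to a single local check on the $(i,i+1)$-subword, reusing the case analysis already carried out for Theorem~\ref{theorem.e-i}.
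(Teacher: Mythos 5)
Your forward ("if") direction follows the same route as the paper — a direct analysis of each $(j,j+1)$-subword of $e_{-i}(b)$, tracking the created and removed letters — and the bookkeeping you defer there is genuinely doable along the lines you sketch. The gap is in the "only if" direction. You propose to take the \emph{first} wrap, at step $j$, and exhibit an unbracketed $j+1$ in $e_{-i}(b)$, i.e.\ to show $e_j(e_{-i}(b))\neq 0$. That is false in general: if the scan also wraps at step $j+1$, then raising $t_{j+1}$ from $j+1$ to $j+2$ removes from the $(j,j+1)$-subword exactly the stray $j+1$ you are trying to exhibit, and the output \emph{is} $\{j\}$-highest weight at that index. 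This is precisely the content of Corollary~\ref{cor:which-hw}: the smallest $\ell$ with $e_\ell(e_{-i}(b))\neq 0$ is the first index with $t_{\ell-1}\leqslant t_\ell$ and $t_{\ell+1}<t_\ell$ — the \emph{end} of the first maximal block of consecutive wraps — not the first wrap itself. The paper's proof must therefore propagate the defect through the entire block of cycling steps (showing that any $k+1$ which could rebracket the offending entry would itself be non-$i$-bracketed and hence would have been chosen as $t_{k+1}$ without a wrap, a contradiction) before it can conclude that highest-weightness fails somewhere. Your plan as stated stalls whenever two consecutive steps wrap, which is exactly the delicate case.

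Your fallback inductive route via $e_{-(i+1)}=s_is_{i+1}\,e_{-i}\,s_{i+1}s_i$ has its own unaddressed obstruction: when $e_{-i}(b')$ fails to be $\{1,\ldots,i\}$-highest weight, you must still show that the outer $s_is_{i+1}$ does not repair this. Since $s_i$ alters the letters $i$ and $i+1$ and hence the $(i-1,i)$-subword, a failure of $\{i-1\}$-highest-weightness in $e_{-i}(b')$ is not automatically inherited by $s_is_{i+1}e_{-i}(b')$; only the indices $j\leqslant i-2$ are untouched by these reflections. So the induction does not reduce everything to "a single local check on the $(i,i+1)$-subword." The paper avoids both problems by arguing once, for general $i$, with suffix counts of $k$ versus $k+1$ in $e_{-i}(b)$ for each $k$.
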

\begin{proof}
  First, suppose that it is not the case that $t_{i-1}<t_{i-2}<\cdots<t_{1}$; let $1\leqslant k<i$ be the smallest index for 
  which $t_{k-1}\leqslant t_{k}$, where $t_0=p_1$.  Then in the algorithm for computing $e_{-i}(b)$, after changing a $k-1$ 
  to $k$ in position 
  $t_{k-1}$, we search cyclically left for a $k$ that is not $i$-bracketed to find position $t_{k}$.  Since $t_{k-1}\leqslant t_{k}$, 
  we cycle around the end of the word, so $t_k$ is the position of the rightmost $k$ that is not $i$-bracketed.
  
  Any $k$ to the right of $t_k$ is $i$-bracketed, and we claim that the $k+1$'s that they bracket with in the $i$-bracketing are 
  all to the right of position $t_k$ as well.  Indeed, if one such $k+1$ was to the left of $t_k$ then it should bracket with the 
  $k$ in position $t_k$ instead, a contradiction.  Thus the suffix starting at position $t_{k}+1$ has at least as many $k+1$'s 
  as $k$'s.
  
  In particular, just after changing each $b_{p_r}$ to $r-1$ in the first step of the algorithm, the resulting word $c$ is still 
  highest weight.  It follows that, just after raising $t_{k-1}$ to $k$, the resulting word is still $\{k\}$-highest weight.  It follows 
  that the suffix starting at position $t_{k}+1$ at this step has exactly as many $k+1$'s as $k$'s.
  
  Now, if $t_{k+1}< t_k$, changing $t_k$ to $k+1$ and then changing $t_{k+1}$ to $k+2$ leaves the suffix starting at 
  $t_k$ being not $\{k\}$-highest weight in the final word.  Thus we are done in this case.  
  
  Otherwise, suppose $t_{k+1}$ also cycles, so that $t_{k+1}\geqslant t_k$ and $t_{k+1}$ is the new position of the rightmost 
  $k+1$ that is not $i$-bracketed after changing $t_k$ to $k+1$.  Changing $t_{k+1}$ to $k+2$ could potentially make the 
  word $\{k\}$-highest weight again. In fact, suppose for contradiction that, just after 
  changing $t_{k-1}$ to $k$, there were a $k+1$ between position $t_{k-1}$ and $t_k$ that makes its suffix not 
  $\{k\}$-highest weight.  Then some entry $k+1$ in position $p<t_k$ brackets with the $k$ in position $t_k$, and since 
  position $t_k$ is not $i$-bracketed, this $k+1$ is not $i$-bracketed either.    Thus after changing $t_k$ to $k+1$,  the 
  $k+1$ in position $p$ is still not $i$-bracketed and it would be picked up in the search for $t_{k+1}$, a contradiction to 
  the assumption that $t_{k+1}\geqslant t_k$.
  
  We now, however, can repeat the argument with $t_{k+1}$ and the $(k+1,k+2)$-subword, and so on until we either reach 
  the last step or a non-cycling step, say with index $\ell$. At this point we conclude that the final word $e_{-i}(b)$ is not 
  $\{\ell\}$-highest weight.
  
  It follows that if $t_{k-1}\leqslant t_{k}$ for some $k$, then $e_{-i}(b)$ is not $\{1,2,\ldots,i\}$-highest weight.

  For the converse, we wish to show that if $t_{i-1}<t_{i-2}<\cdots<t_1$ then $e_{-i}(b)$ remains highest weight.  
  Notice that by construction we must have $t_{k-1}\leqslant p_k$ for all $k\leqslant i$.  
  
  We first show that the $(1,2)$-subword remains highest weight in $e_{-i}(b)$ if $t_2<t_1$.  If $i=1$, then the first $2$ 
  simply changes to a $1$ and so it is still $\{1\}$-highest weight.  So suppose $i\geqslant 2$.
  
  The changes that affect the $(1,2)$-subword are that $b_{p_3}$ changes from $3$ to $2$, $b_{p_2}$ changes from $2$ 
  to $1$, $b_{t_1}$ changes from $1$ to $2$, and (if $i\geqslant 3$) $b_{t_2}$ changes from $2$ to $3$.  Note that after the 
  first two of these changes, any suffix of the word starting between positions $p_3$ and $p_2$ has at least two more $1$'s 
  than $2$'s (due to the change in $b_{p_2}$ starting from a highest weight word) and any suffix starting weakly before position 
  $p_3$ has at least one more $1$ than $2$.  
  
  If $i=2$, $b_{t_1}$ is an unbracketed $1$, so the suffixes before it must in fact have at least two more $1$'s than $2$'s 
  even if $t_1<p_3$.  Thus changing $b_{t_1}$ to $2$ leaves the word highest weight, and we are done in this case.
  
  If $i\geqslant 3$, $b_{t_1}$ is a $1$ that is not $i$-bracketed to the left of $b_{p_2}$, and $b_{t_2}$ is the first $2$ that is not 
  $i$-bracketed to the left of $t_1$ (and necessarily to the left of $b_{p_3}$).  It follows that, after changing them to $2$ and 
  $3$ respectively, the suffixes all have at least as many $1$'s as $2$'s except possibly those starting between position 
  $t_2$ and $t_1$.  Assume to the contrary that there is a suffix with more $2$'s than $1$'s starting between $t_2$ and 
  $t_1$; the rightmost such starts at another entry $b_a=2$ between $t_2$ and $t_1$, and this $2$ must be $i$-bracketed 
  by the definition of $t_2$.  But then since $b_{t_1}$ is not $i$-bracketed, $b_a$ must be bracketed with a $1$ between 
  $b_a$ and $b_{t_1}$; hence the suffix starting at $b_a$ cannot have a higher difference between $2$'s and $1$'s than 
  the suffix starting at $b_{t_1}$ after its change, a contradiction. It follows that the $(1,2)$-subword remains highest weight.
  
  Now consider the $(k,k+1)$-subword for some $k\leqslant i-1$.  This is changed by $b_{p_{k+2}}, b_{p_{k+1}},b_{p_{k}}$ 
  changing from $k+2$ to $k+1$, $k+1$ to $k$, and $k$ to $k-1$ respectively, and then $b_{t_{k-1}}, b_{t_{k}},b_{t_{k+1}}$ 
  changing from $k-1$ to $k$, $k$ to $k+1$, $k+1$ to $k+2$ respectively.  
  
  If we first change $b_{p_k}$ to $k-1$, then we have removed a $k$ from the subword, but since there are no $k$ entries 
  between $b_{p_{k+1}}$ and $b_{p_k}$, the rightmost suffix that may become not highest weight for $k$ starts at 
  $b_{p_{k+1}}$ itself.  Thus changing $b_{p_{k+1}}$ from $k+1$ to $k$ afterwards keeps the $(k,k+1)$-subword being 
  $\{k\}$-highest weight, and in fact any suffix starting to the left of $b_{p_{k+1}}$ at this point has at least one more $k$ 
  than $k+1$.  Finally if we change $b_{p_{k+2}}$ to $k+1$, this adds a single $k+1$ to any suffix starting left of this position, 
  so again the word remains $\{k\}$-highest weight.  Next, we change $b_{t_{k-1}}$ from $k-1$ to $k$, which means any 
  suffix starting left of $t_{k-1}$ has at least one more $k$ than $k+1$.  The argument for what happens after changing 
  $t_{k}$ and $t_{k+1}$ now is identical to that of the $(1,2)$-subword above.
  
  Finally, consider the $(i,i+1)$-subword.  This is only affected by the changes to $b_{p_{i+1}}$, $b_{p_i}$, and $b_{t_{i-1}}$.  
  The same argument as above shows that it stays $\{i\}$-highest weight after changing $b_{p_{i+1}}$ and $b_{p_i}$, and 
  then changing $b_{t_{i-1}}$ to $i$ certainly keeps it $\{i\}$-highest weight as well.  This completes the proof.
\end{proof}

From the above proof, we immediately obtain the following corollary.

\begin{corollary}\label{cor:which-hw}
Let $b\in \mathcal{B}^{\otimes \ell}$ be $\{1,2,\ldots,i\}$-highest weight for $i\in I_0$, with $\varepsilon_{-i}(b)=1$.  
Let $t_1,\ldots,t_{i-1}$ be the positions of the $1,2,\ldots,i-1$ that change to $2,3,\ldots,i$ respectively in the second step 
of the computation of $e_{-i}(b)$ (see Theorem \ref{theorem.e-i}).
Then if $e_{-i}(b)$ is not $\{1,2,\ldots,i\}$-highest weight, the smallest index $\ell$ for which $e_{-i}(b)$ is not 
$\{\ell\}$-highest weight is precisely the smallest index for which $t_{\ell-1}\leqslant t_\ell$ and $t_{\ell+1}<t_\ell$ (these 
inequalities being vacuously true if $\ell=1$ or $\ell=i-1$, respectively).

In other words,
$\ell$ is the smallest index for which one needs to cycle to get from $t_{\ell-1}$ to $t_\ell$, but one does not
need to cycle to get from $t_\ell$ to $t_{\ell+1}$.
\end{corollary}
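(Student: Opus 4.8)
The plan is to read off the index $\ell$ directly from the forward direction of the proof of Proposition~\ref{proposition.e-output-hw}, which already performs the necessary tracking of where the highest-weight property first breaks; the corollary is essentially a bookkeeping statement extracting that index.

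First I would recall the mechanism established in that proof. Assuming $e_{-i}(b)$ is not $\{1,\ldots,i\}$-highest weight, it is not the case that $t_{i-1}<\cdots<t_1$, so there is a smallest index $k$ with $t_{k-1}\leqslant t_k$ (using the convention $t_0=p_1$); this is the first step of the cycling procedure that ``wraps around the edge'' of the word. The proof shows that after changing $b_{t_{k-1}}$ to $k$ and then $b_{t_k}$ to $k+1$, the $(k,k+1)$-subword of the resulting word fails to be $\{k\}$-highest weight, and that this failure is undone at the next step precisely when $t_{k+1}\geqslant t_k$, i.e.\ when the next raising step also cycles. In that masking case the defect is pushed into the $(k+1,k+2)$-subword, and the argument repeats verbatim.

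Next I would identify $\ell$ as the first index $\geqslant k$ at which the cycling halts, namely the smallest $\ell\geqslant k$ with $t_{\ell+1}<t_\ell$ (this being vacuously satisfied at $\ell=i-1$). By the propagation described above, all the intermediate subwords $(k,k+1),\ldots,(\ell-1,\ell)$ are restored to $\{j\}$-highest weight while $e_{-i}(b)$ fails to be $\{\ell\}$-highest weight, and no earlier subword is disturbed. It then remains to match this $\ell$ with the description in the statement, namely the smallest index with both $t_{\ell-1}\leqslant t_\ell$ and $t_{\ell+1}<t_\ell$. For indices $j<k$, minimality of $k$ gives $t_{j-1}>t_j$, so the first inequality already fails; for $k\leqslant j<\ell$ the cycling continues, so $t_{j+1}\geqslant t_j$ and the second inequality fails; and at $j=\ell$ both hold by construction (the first because every step from $k$ to $\ell$ cycles in). Hence the two descriptions coincide.

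The only real subtlety, and thus the step I would be most careful with, is this reconciliation of the two characterizations of $\ell$, in particular verifying minimality: one must check separately that no index below $k$ and no index strictly between $k$ and $\ell$ can satisfy both inequalities simultaneously. The dynamical content, namely that the defect is genuinely carried from the $(j,j+1)$-subword to the $(j+1,j+2)$-subword under continued cycling and surfaces exactly when cycling stops, is already supplied by the proof of Proposition~\ref{proposition.e-output-hw}, so nothing beyond this index bookkeeping is required.
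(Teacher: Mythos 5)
Your proposal is correct and follows essentially the same route as the paper: the paper's proof of this corollary is likewise a direct appeal to the argument in Proposition~\ref{proposition.e-output-hw}, which already locates the first failure of the highest-weight property at the first non-cycling step after the first cycling step. Your extra care in reconciling the two characterizations of $\ell$ (checking that no index below $k$ nor strictly between $k$ and $\ell$ satisfies both inequalities) is sound and, if anything, slightly more explicit than the paper's one-line justification.
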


\begin{proof}
The proof of Lemma \ref{proposition.e-output-hw} shows that $e_{-i}(b)$ is not $\{\ell\}$-highest weight, 
and that it is $\{k\}$-highest weight for $k<\ell$ if $t_{k-1}\leqslant t_k\leqslant t_{k+1}$ (i.e., if $t_k$ and $t_{k+1}$ both cycle). 
\end{proof}

\subsection{Relation among $e_{-i}$}
\label{section.proposition bypass}

The main result of this section is Proposition~\ref{proposition.by-pass}, which provides relations between
$e_{-i}$ that do and do not yield a $\{1,2,\ldots,i\}$-highest weight element when acting on an $I_0$-highest weight
element. This proposition will be used in Section~\ref{section.G(C)} to deal with ``by-pass arrows'' in the component 
graph $G(\mathcal{C})$.

We require several technical lemmas about $k$-bracketed entries and the $e_{-i}$ operation on highest weight words.

\begin{lemma}\label{lem:k-bracketed}
 Suppose $b\in \mathcal{B}^{\otimes \ell}$ is $\{1,2,\ldots,i\}$-highest weight and $1\leqslant k\leqslant i$.  
 If a letter $b_r=a$ in $b=b_1 b_2 \ldots b_\ell$ is $k$-bracketed, then $b_r$ is $j$-bracketed for all $a<j\leqslant k$.
\end{lemma}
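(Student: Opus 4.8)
The plan is to reduce the claim to a monotonicity property of ordinary crystal bracketing. For a target value $k$, write $S_k(j)$ for the set of positions of $k$-bracketed $j$'s; by definition $S_k(k)$ is the set of all $k$'s, and for $j<k$ the set $S_k(j)$ consists of those $j$'s that become bracketed when one performs the ordinary $(j,j+1)$-bracketing on the subword formed by the positions in $S_k(j+1)$ (playing the role of the higher letters) together with all $j$'s. The statement to prove is that if $b_r=a$ lies in $S_k(a)$ then it lies in $S_j(a)$ for every $a<j\leqslant k$, and I will obtain this from the single nested inclusion
\[
  S_k(j)\subseteq S_{k-1}(j) \qquad (j\leqslant k-1),
\]
since iterating it yields the chain $S_k(a)\subseteq S_{k-1}(a)\subseteq\cdots\subseteq S_{a+1}(a)$, which contains $S_j(a)$ for each $a<j\leqslant k$.

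The engine is the following elementary monotonicity fact for crystal bracketing: if $L$ is a fixed set of lower letters and $H'\subseteq H$ are two sets of higher letters (openers, sitting to the left of the closers they match by the signature rule of Remark~\ref{rmk:signature-rule}), then the set of bracketed lower letters obtained from $H'$ is contained in the one obtained from $H$. I would prove this by the standard left-to-right stack scan, processing the two subwords simultaneously and inducting on position: one maintains the invariant that the stack height for $H$ always dominates that for $H'$ and that every lower letter matched under $H'$ is also matched under $H$. The cases to check are an opener present in $H$ but not $H'$ (which only widens the gap), and a lower letter, where $H'$-matchedness forces a strictly positive $H'$-stack, hence a positive $H$-stack by domination, hence $H$-matchedness; the decrements then preserve both parts of the invariant.

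With this fact the nested inclusion is a short downward induction on $j$. For $j=k-1$ it is immediate, since $S_{k-1}(k-1)$ is the set of \emph{all} $(k-1)$'s and therefore contains the matched subset $S_k(k-1)$. For the inductive step assume $S_k(j+1)\subseteq S_{k-1}(j+1)$; then at level $j$ both $S_k(j)$ and $S_{k-1}(j)$ are produced by bracketing the same lower letters (all $j$'s) against the respective higher sets $S_k(j+1)$ and $S_{k-1}(j+1)$, so the monotonicity fact gives $S_k(j)\subseteq S_{k-1}(j)$. This completes the induction, and chaining the inclusions as above finishes the proof. The $\{1,2,\ldots,i\}$-highest weight hypothesis is available throughout, but the argument uses only the recursive definition of $k$-bracketing together with monotonicity in the opener set.

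The only real content I expect is the monotonicity fact for bracketing; everything else is bookkeeping with the recursive definition. The subtlety there is that shrinking the opener set also shrinks the pool of closers that can be matched, so one cannot argue about the matched set in isolation: the induction must carry both the stack-height domination and the matched-set inclusion at the same time, which is exactly what makes the two cases above close.
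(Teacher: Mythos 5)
Your proof is correct and follows essentially the same route as the paper's: both reduce the lemma to the monotonicity of the set of bracketed closers in the set of openers, and then cascade the inclusion $S_k(j)\subseteq S_{k-1}(j)$ down through the levels $j=k-1,\ldots,a$. The only difference is cosmetic --- the paper establishes the monotonicity step by inserting one opener at a time and using the balance of the interval between a matched pair, while you use a left-to-right stack scan with a stack-height domination invariant; both are standard and correct.
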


\begin{proof}
  We first show that if an entry $a$ in $b$ is $(a+2)$-bracketed, then it is $(a+1)$-bracketed; for simplicity we set $a=1$. 
  Let $v$ be the subword of $b$ consisting of only the $2$'s that are bracketed with a $3$ along with all the $1$'s, and 
  let $v'$ be the subword consisting of all the $1$'s and $2$'s.  Then $v'$ can be formed from $v$ by inserting some $2$ 
  letters.  It therefore suffices to show that any $1$ that was bracketed in $v$ is still bracketed after inserting a  single $2$.

  Indeed, let $v_s=2$ and $v_r=1$ be a bracketed pair in $v$.  Note that by the definition of the ordinary crystal bracketing
   rule, the subword $v_s\ldots v_r$ has exactly the same number of $2$'s as $1$'s, all of them bracketed with some other 
   letter in $v_s\ldots v_r$.  Therefore, if we insert a $2$ to the left or right of this pair, then the pair $(v_s,v_r)$ remains 
   bracketed.  If instead we insert it between $v_s$ and $v_r$, then the interval between $v_s$ and $v_r$  contains  
   strictly more $2$'s than $1$'s, and so there is some entry $v_t$ between $v_s$ and $v_r$ for which the subword 
   $v_t\cdots v_r$ is tied; in other words, $v_r$ is now bracketed with some $2$ to the right of $v_s$.  Thus $v_r$ 
   stays bracketed after inserting a $2$, as desired.

 Now, if $b_r=a$ is $k$-bracketed, then by the above reasoning it is also $(k-1)$-bracketed, since there are weakly more 
 $(k-1)$'s available in this bracketing, and hence weakly more $(k-2)$'s available, and so on.  The conclusion follows
  by induction.
\end{proof}

\begin{lemma}
\label{lemma.sequence}
Let $b\in \mathcal{B}^{\otimes \ell}$ be $\{1,2,\ldots,i\}$-highest weight and $\varepsilon_{-i}(b)=1$.
Let $b_{p_{i+1}},\ldots,b_{p_1}$ be the initial $(i+1)$-sequence of $b$ and $c$ the word obtained by changing
$b_{p_j}$ from $j$ to $j-1$. Let $k \leqslant i' \leqslant i$.
If $b$ contains a sequence of letters $k-1,k-2,\ldots,1$ before position $p_1$ that is not $i'$-bracketed,
then $c$ contains a sequence of letters $k-1,k-2,\ldots,1$ before position $p_1$ that is not $i'$-bracketed.
\end{lemma}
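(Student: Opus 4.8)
The plan is to compare the $i'$-bracketings of $b$ and $c$ directly, exploiting the fact that $b$ and $c$ differ only at the initial-sequence positions $p_2,p_3,\ldots,p_{i+1}$, all of which lie to the left of $p_1$, and where each $b_{p_j}=j$ is replaced by $j-1$. Restricting attention to the letters $\leqslant i'$ (the only ones entering the $i'$-bracketing), the effect is a single ``downward shift'': reading the positions $p_{i'+1}<p_{i'}<\cdots<p_1$, the values $(i'+1,i',\ldots,2,1)$ present in $b$ become $(i',i'-1,\ldots,1,1)$ in $c$. Thus $c$ carries exactly one additional $1$ (at position $p_2$), while the number of letters of each value $2,\ldots,i'$ is unchanged and only their positions are shifted.

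First I would dispose of the initial-sequence positions themselves by showing that each $b_{p_j}$ with $2\leqslant j\leqslant i'$ is $i'$-bracketed in $b$. This follows by downward induction on $j$ from the opener/closer description of crystal bracketing: $b_{p_{i'}}=i'$ is $i'$-bracketed by definition, and since $b_{p_j}$ is the leftmost $j$ to the right of the already-$i'$-bracketed letter $b_{p_{j+1}}$, there are no $j$'s strictly between $p_{j+1}$ and $p_j$, so when the bracketing reaches $b_{p_j}$ the opener $b_{p_{j+1}}$ (or a later $i'$-bracketed $(j+1)$) is still unmatched and captures it. Consequently the given $i'$-unbracketed sequence cannot meet $\{p_2,\ldots,p_{i+1}\}$: a coincidence $s_j=p_{j'}$ forces $j=j'$ by equality of values, which for $j\geqslant 2$ contradicts that $b_{p_j}$ is $i'$-bracketed while $b_{s_j}$ is not, and for $j=1$ is excluded since $s_1<p_1$. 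Hence $c_{s_j}=b_{s_j}=j$, so the candidate sequence already has the correct values in $c$.

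The heart of the matter is then to produce an $i'$-unbracketed descending sequence of values $k-1,\ldots,1$ before $p_1$ in $c$. Here I would argue by a conservation principle rather than claim that the original sequence survives verbatim, since lowering the initial sequence can genuinely create new brackets: the lowered letter at $p_{j+1}$, now of value $j$, may bracket a previously $i'$-unbracketed letter of value $j-1$ to its right, so some $s_m$ may become $i'$-bracketed in $c$. The point to establish is that every such newly created bracket is compensated by a newly $i'$-unbracketed letter appearing at one of the lowered positions $p_{j+1}$, lying to the left of the letter it displaces yet still before $p_1$ (in the running example $b=13121\mapsto c=12111$ the unbracketed $1$ at position $3$ is captured, but the new $1$ at $p_2=4$ becomes unbracketed and serves as its replacement). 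Following the order in which the $i'$-bracketing is built, from value $i'$ downward, and invoking Lemma~\ref{lem:k-bracketed} to transfer unbracketedness between consecutive bracketing levels, I would assemble a replacement sequence $s'_{k-1}<\cdots<s'_1<p_1$ of $i'$-unbracketed letters of values $k-1,\ldots,1$.

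The main obstacle is precisely this conservation step. Because the $i'$-bracketing is a nesting of ordinary bracketings across all levels $i',i'-1,\ldots,1$, a bracket created at one level changes which letters are $i'$-bracketed at every lower level, so the downward shift does not localize and one cannot track a single value in isolation. I expect the crux to be a careful matching argument showing that, level by level, the shift of the initial sequence induces a correspondence between the brackets of $c$ and those of $b$ under which at least as many letters of each value remain $i'$-unbracketed in the relevant prefix, together with the verification that the surviving and replacement positions can be chosen strictly decreasing and strictly left of $p_1$. The $\{1,2,\ldots,i\}$-highest weight hypothesis, which forces every letter $2,\ldots,i'$ to be bracketed with a smaller letter to its right, is what keeps this matching under control and prevents the shifted openers from cascading uncontrollably.
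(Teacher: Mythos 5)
Your setup is sound and matches the paper's: you correctly isolate the difference between $b$ and $c$ as a downward shift along the initial sequence, and your preliminary step (each $b_{p_j}$ with $2\leqslant j\leqslant i'$ is $i'$-bracketed in $b$, so the given unbracketed sequence is disjoint from the changed positions and survives with its values intact) is essentially the paper's first paragraph, proved by a slightly different route (your induction on the absence of $j$'s between $p_{j+1}$ and $p_j$ versus the paper's appeal to Lemma~\ref{lem:k-bracketed} applied to the $(i+1)$-bracketed initial sequence). But the proof has a genuine gap, and you name it yourself: the ``conservation principle'' is asserted, not established. You write that you \emph{would} argue that every newly created bracket is compensated and that you \emph{expect} the crux to be a careful matching argument --- that matching argument is the entire content of the lemma, and without it nothing is proved. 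In particular you never verify (i) that a compensating $i'$-unbracketed letter actually exists when some $s_j$ becomes bracketed in $c$, (ii) that the replacement positions can be chosen strictly decreasing, and (iii) that they stay strictly before $p_1$.

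The paper closes exactly this gap with a concrete replacement procedure. One takes $S$ to be the \emph{rightmost} such sequence, observes that for the largest $j\leqslant i'$ admitting a $j$ between $p_{j+2}$ and $p_{j+1}$, that intermediate $j$ steals the bracket of the lowered letter at $p_{j+2}$, which in turn leaves the new letter $j$ at position $p_{j+1}$ $i'$-unbracketed in $c$. The rule is then: keep $s_j$ if it stays unbracketed, and otherwise replace it by the first $i'$-unbracketed $j$ in $c$ to its right, which is guaranteed to exist and to satisfy $s'_j\leqslant p_{j+1}$ precisely because $p_{j+1}$ itself has become an $i'$-unbracketed $j$. This bound is what keeps the whole replacement sequence weakly left of $p_2$, hence strictly before $p_1$, and the same mechanism one level down yields $s'_j<s'_{j-1}$. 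A secondary concern: you invoke Lemma~\ref{lem:k-bracketed} to ``transfer unbracketedness between consecutive bracketing levels,'' but that lemma transfers \emph{bracketedness} downward in $k$; it is not the tool that controls the cascade here --- the control comes from the positional bounds $s'_j\leqslant p_{j+1}$ above.
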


\begin{proof}
Suppose that $b$ contains a sequence $S$ of letters $k-1,k-2,\ldots,1$ in positions $s_{k-1},\ldots,s_1$ respectively, 
before position $p_1$, that are not $i'$-bracketed; take $S$ to be the rightmost such sequence in the sense that it 
contains the rightmost $1$ left of $p_1$ that is not $i'$-bracketed, then the rightmost $2$ that is not $i'$-bracketed 
before that, and so on.  Note that $s_1<p_1$ implies that $s_1<p_2$ by the definition of $p_1$.  Thus $s_2<s_1<p_2$ 
and so $s_2<p_3$, and so on, showing that $s_j<p_{j+1}$ for all $j$.  Also note that the initial $(i+1)$-sequence 
$b_{p_{i+1}},\ldots,b_{p_1}$ is $(i+1)$-bracketed, so that the letters $b_{p_k},\ldots,b_{p_1}$ must also be $i'$-bracketed 
by Lemma \ref{lem:k-bracketed}. Since $k\leqslant i' \leqslant i$, this means that the initial $(i+1)$-sequence is disjoint from 
$S$ and hence $S$ remains unchanged in $c$.  

We now form a sequence $S'$ from $S$ that is not $i'$-bracketed in $c$ as follows.  Consider the largest entry 
$j\leqslant i'$ for which there exists a $j$ between $p_{j+2}$ and $p_{j+1}$. Then all bracketing with higher letters remains 
the same in $c$, but the letter $j$ between positions $p_{j+2}$ and $p_{j+1}$ becomes bracketed with the letter
$j+1$ in position $p_{j+2}$ in the $i'$-bracketing in $c$, leaving the letter $j$ in position $p_{j+1}$ to be an $i'$-unbracketed 
$j$. If $s_j<p_{j+2}$ (or otherwise $c_{s_j}$ does not become bracketed) we keep it in $S'$, and if 
$p_{j+2}<s_j<p_{j+1}$ and it becomes bracketed, we replace $s_j$ with the first $i'$-unbracketed position $s'_j$ of a 
$j$ in $c$ to the right of $s_j$, to choose the $j$ for $S'$.  

We now show that we can choose a $j-1$ after this step to be in $S'$.  If the $j$ on the previous step did not change, 
then we repeat this process for $j-1$.  If it did change, from $s_j$ to an index $s_j'$, note that if $s_{j-1}<s'_j$ then 
the previous $j-1$ is now $i'$-bracketed with $s_{j}$ in $c$ as well, so we also have to choose the next $j-1$ to the right.  
Either way we replace $s_{j-1}$ with the next $i'$-unbracketed $j-1$, in position $s_{j-1}'$, if the $j-1$ became bracketed, 
and we see that $s_{j}'<s_{j-1}'$.  Furthermore, $s_{j-1}'\leqslant p_{j}$ since we know that $p_j$ becomes an $i'$-unbracketed 
$j-1$ as in the case of $j$ above.  Continuing in this manner we can form a sequence $S'$ of elements of $c$ that are not 
$i'$-bracketed, all weakly to the left of $p_2$ (and hence strictly before $p_1$).
\end{proof}

\begin{lemma}\label{lem:e-ie-k}
Let $b \in \mathcal{B}^{\otimes \ell}$ be $I_0$-highest weight such that $\varepsilon_{-i}(b)>0$ for some $i\in I_0$
and $e_{-i}(b)$ is not $\{1,2,\ldots,i\}$-highest weight.  Let $k$ be the smallest index for which $t_{k-1}\leqslant t_k$, where 
$t_0=p_1$ and $t_j$ for $j=1,\ldots,i-1$ are the indices that are raised in the second step of the computation of $e_{-i}(b)$ 
(such a $k$ exists by Proposition \ref{proposition.e-output-hw}).  Then we have that $\varepsilon_{-k}(b)=1$ and  $e_{-k}(b)$ 
is $\{1,2,\ldots,k\}$-highest weight.
\end{lemma}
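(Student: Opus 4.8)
The plan is to read off both assertions from the explicit description of $e_{-k}$ in Theorem~\ref{theorem.e-i}, the highest-weight criterion of Proposition~\ref{proposition.e-output-hw}, and a comparison of the computation of $e_{-k}(b)$ with the first $k$ stages of the computation of $e_{-i}(b)$. I begin with the routine reductions. As $b$ is $I_0$-highest weight it is in particular $\{1,2,\ldots,k\}$-highest weight, and $\wt(b)$ is a strict partition by Proposition~\ref{proposition.hw}. Since $\varepsilon_{-i}(b)>0$ forces $\wt(b)_{i+1}>0$ by Proposition~\ref{proposition.phi -i} and $k<i$, strictness gives $\wt(b)_{k+1}>0$; hence Proposition~\ref{proposition.phi -i} yields $\varepsilon_{-k}(b)+\varphi_{-k}(b)=1$. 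It therefore suffices to show that $\varepsilon_{-k}(b)\neq 0$ and that the word $e_{-k}(b)$ is $\{1,2,\ldots,k\}$-highest weight.

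For the second statement I would run Theorem~\ref{theorem.e-i} for $e_{-k}$, writing $p^{(k)}_{k+1},\ldots,p^{(k)}_1$ for the initial $(k+1)$-sequence and $t^{(k)}_1,\ldots,t^{(k)}_{k-1}$ for the positions of the letters $1,\ldots,k-1$ raised in the second step. By the converse direction of Proposition~\ref{proposition.e-output-hw} (applied with $k$ in the role of $i$) it is enough to show that the $e_{-k}$ computation never cycles, i.e.\ $t^{(k)}_{k-1}<\cdots<t^{(k)}_1$. The point is that these scanning steps select the same letters as the scanning steps $1,\ldots,k-1$ of the $e_{-i}$ computation, which do not cycle because $k$ is by definition the first cycling index, so that $t_{k-1}<\cdots<t_1$. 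Transporting between the two computations rests on two facts: Lemma~\ref{lem:k-bracketed}, which shows that a letter $\leqslant k$ that is $i$-bracketed is also $k$-bracketed, so that on the relevant prefix a letter $\leqslant k-1$ is non-$i$-bracketed exactly when it is non-$k$-bracketed; and Lemma~\ref{lemma.sequence}, which shows that the two different first-step lowerings (of the initial $(i+1)$-sequence and of the initial $(k+1)$-sequence) both preserve the relevant decreasing non-bracketed sequence of letters $k-1,\ldots,1$ lying before the scanning start. Together these identify the non-cycling positions in the two computations and give that $e_{-k}(b)$ is highest weight via Proposition~\ref{proposition.e-output-hw}.

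For $\varepsilon_{-k}(b)=1$ I would argue by contradiction. If $\varepsilon_{-k}(b)=0$, then $\varphi_{-k}(b)=1$ and Proposition~\ref{proposition.phi -i}(b) gives $q^{(k)}_j<p^{(k)}_{j+1}$ for all $j$, where $q^{(k)}_k,\ldots,q^{(k)}_1$ is the initial $k$-sequence of $b$. This ``no match'' condition produces a decreasing, non-$i$-bracketed sequence of letters $k,k-1,\ldots,1$ in $b$: each element of the initial $k$-sequence lies to the left of the corresponding element of the initial $(k+1)$-sequence, making it unbracketed at the next level up and hence, by Lemma~\ref{lem:k-bracketed}, non-$i$-bracketed. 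Applying Lemma~\ref{lemma.sequence} (with its internal parameter $k$ taken to be $k+1$, and with $i'=i$), this sequence survives the first-step lowering to a non-$i$-bracketed sequence $k,\ldots,1$ before $p_1$ in $c^{(1)}$. But then the non-cycling steps $1,\ldots,k-1$ of the $e_{-i}$ computation lock onto the letters $1,\ldots,k-1$ of this sequence, leaving a non-$i$-bracketed $k$ to the left of $t_{k-1}$ that the scan at step $k$ would reach without wrapping around, contradicting the fact that $e_{-i}$ cycles at step $k$. Hence $\varepsilon_{-k}(b)=1$.

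The hard part will be the comparison underlying the previous two paragraphs. The first steps of $e_{-i}$ and $e_{-k}$ lower different sequences, namely the initial $(i+1)$-sequence versus the initial $(k+1)$-sequence, so the newly created small letters occupy different positions, and the two algorithms bracket at the different levels $i$ and $k$. One must show that, on the prefix to the left of the first cycling position, these discrepancies do not affect which letters $1,\ldots,k-1$ are selected, nor which copies of $k$ remain available for step $k$; this amounts to carefully tracking which of the positions $p_j$ and $t_j$ coincide across the two sequences and invoking Lemmas~\ref{lem:k-bracketed} and~\ref{lemma.sequence} at each stage. This bookkeeping, rather than any new idea, is the technical heart of the proof.
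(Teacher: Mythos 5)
Your reductions in the first paragraph are sound, and your overall architecture (get $\varepsilon_{-k}(b)=1$ by contradiction from Proposition~\ref{proposition.phi -i} and Lemma~\ref{lemma.sequence}, then get highest-weightness from the non-cycling criterion of Proposition~\ref{proposition.e-output-hw}) is the same as the paper's. But both of your key steps rest on claims that are not correct as stated. For $\varepsilon_{-k}(b)=1$: the assertion that each element of the initial $k$-sequence is ``unbracketed at the next level up'' is only clear for the top element (the leftmost $k$ precedes the leftmost $k+1$). For $j<k$ the letter $j$ in position $q^{(k)}_j$ may perfectly well be bracketed, in the ordinary $(j,j+1)$-bracketing, with the $j+1$ in position $q^{(k)}_{j+1}$ sitting to its left, so Lemma~\ref{lem:k-bracketed} gives you nothing. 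What is actually needed is that no \emph{$i$-bracketed} $j+1$ lies to the left of $q^{(k)}_j$; the paper gets this by comparing with the initial $i$-sequence $q_i,\ldots,q_1$ of $b$ (proving $q^{(k)}_j<q_j$ and $q^{(k)}_j<q_{j+1}$ via a separate contradiction argument, and using that the first $i$-bracketed $j+1$ is weakly right of $q_{j+1}$). That comparison is absent from your proposal.

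For the highest-weight part the problem is more serious: the claim that the $e_{-k}$ scan selects the same letters as the first $k-1$ scanning steps of $e_{-i}$ is false in general, and your justification via ``non-$i$-bracketed exactly when non-$k$-bracketed'' uses Lemma~\ref{lem:k-bracketed} in the wrong direction. The lemma gives $i$-bracketed $\Rightarrow$ $k$-bracketed, hence non-$k$-bracketed $\Rightarrow$ non-$i$-bracketed, but not the converse: a letter can be $k$-bracketed yet non-$i$-bracketed (e.g.\ when it pairs with a $k$ that itself fails to be $i$-bracketed), so the $e_{-i}$ scan may stop at positions the $e_{-k}$ scan must skip. Indeed, in the proof of Proposition~\ref{proposition.by-pass} the two selected sequences $t_j$ and $t'_j$ genuinely differ (e.g.\ $t_j=p_{j+1}$ versus $t'_j=p'_{j+1}$ for small $j$). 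The paper proves only the weaker statement that suffices: it shows $t_j<q^{(k)}_j$ for $j<k$ — which needs the additional structural fact that $p'_j=q_j$ for some $j$, itself proved by another contradiction with the cycling hypothesis — concludes that the $t_j$ entries are not $k$-bracketed, and then transports the resulting non-$k$-bracketed sequence into $c'$ by Lemma~\ref{lemma.sequence} so that the $e_{-k}$ scan never wraps around. Since the ``bookkeeping'' you defer is exactly this content, and the guiding principle you propose for it (equality of the selected positions) is false, the proposal has a genuine gap rather than a merely unfinished computation.
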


\begin{proof}
Let $b_{p_{i+1}},b_{p_{i}},\ldots,b_{p_1}$ be the initial $(i+1)$-sequence, 
$b_{q_{i}},b_{q_{i-1}},\ldots,b_{q_1}$ be the initial $i$-sequence,
$b_{p'_{k+1}},\ldots,b_{p'_1}$ the initial $(k+1)$-sequence, and
$b_{q'_k},\ldots,b_{q'_{1}}$ the initial $k$-sequence of $b$.  Also define $c$ and $c'$ respectively to be the words 
formed by lowering the entries in the sequences $\{b_{p_j}\}$ or $\{b_{p'_j}\}$ by one, respectively.

Since $\varepsilon_{-i}(b)>0$, we have by Proposition~\ref{proposition.phi -i} that
$q_a = p_a$ for some $1\leqslant a \leqslant i$. If $a$ is maximal with this property, then in
fact $q_j=p_j$ for all $j\leqslant a$ by the definition of the initial sequences.
Assume by contradiction that $\varepsilon_{-k}(b)=0$.  Then again by Proposition \ref{proposition.phi -i},
$q'_j< p'_j$ for all $j\in \{1,\ldots,k\}$. Furthermore, $p'_j\leqslant p_j$ for all $j\leqslant k$ so $q'_j<p_j$ as well.  

Suppose that $q'_{a'}=q_{a'}$ for some $1\leqslant a'\leqslant k$. Then $q'_j = q_j$ for all $j\leqslant a'$ and hence 
$q'_j=q_j=p_j$ for $j\leqslant \min(a,a')$, contradicting the fact that $q'_j<p_j$ for all $j$.  Hence $q'_j < q_j$ for all 
$1\leqslant j \leqslant k$.  Thus we also have $q'_j<q_{j+1}$ for all $1\leqslant j \leqslant k$, for otherwise $b_{q'_j}$ 
would be the first $j$ after $q_{j+1}$ and we would have $q'_j=q_j$.  

The sequence of letters $k,k-1,\ldots,1$ in positions $q'_k,\ldots,q'_1$ in $b$ is not $i$-bracketed since
the first bracketed $k+1$ in $b$ must be weakly right of position $q_{k+1}>q'_{k}$. Hence by 
Lemma~\ref{lemma.sequence}, the word $c$ also contains a sequence $k,k-1,\ldots,1$ of letters that are not 
$i$-bracketed before position $p_1$, contradicting the fact that $t_{k-1}\leqslant t_k$. It follows that $\varepsilon_{-k}(b)=1$.

Next we show that $e_{-k}(b)$ is $\{1,2,\ldots,k\}$-highest weight. Note that by the definition of 
the initial sequences $q'_j \leqslant p'_j \leqslant q_j \leqslant p_j$. Since $\varepsilon_{-i}(b)=1$
and $\varepsilon_{-k}(b)=1$, we also have $q'_j=p_j'$ for $j\leqslant a'$ and
$q_j=p_j$ for $j\leqslant a$ for some $a',a$. Suppose $p'_j<q_j$ for all $j$.  Then by a similar argument to that above, in 
the word $c$ there exists a sequence of positions $t_k<t_{k-1}<\cdots<t_1<t_0=p_1$ such that $c_{t_j}=j$ which are not 
$i$-bracketed in $c$.  This contradicts the fact that $t_{k-1}\leqslant t_k$.  Hence we must have $p'_j=q_j$ for some $j$ 
and hence $q'_j=p'_j=q_j=p_j$ for $j\leqslant x$ for some $x\geqslant 1$.
We claim that $t_j<q'_j$ for all $1\leqslant j<k$. Indeed, $t_1$ is to the left of position $p_1=q'_1$, so that $t_1<q'_1$.  
By the definition of $p_1$ we also cannot have $p_2<t_1<p_1$ so in fact $t_1\leqslant p_2$.
The letter in position $q'_j=p_j$ for $1<j\leqslant x$ in $c$ is $j-1$, so that also $t_j<q'_j$ for $1<j\leqslant x$.
For $j>x$, the letter in position $q_j'<p_j$ in $c$ as well as in $b$ is $j$. It is $k$-bracketed in $c$ and $b$ since
the first letter $k$ in $c$ and $b$ is in position $q'_k$.  If $t_j\geqslant q_{j}'$ then since the sequence of entries $q_r'$ 
for $r\geqslant j$ is $k$-bracketed but not $i$-bracketed, we would have $t_k<t_{k-1}$, a contradiction. Thus $t_j<q'_j$.  

It follows that the $t_j$ entries are not $k$-bracketed, so 
 $b$ contains a sequence $k-1,k-2,\ldots,1$ that is not $k$-bracketed.  By Lemma~\ref{lemma.sequence} this means that
$c'$ has a sequence $k-1,\ldots,1$ in positions $t'_{k-1}<\cdots<t'_1$ that is not $k$-bracketed,
proving that $e_{-k}(b)$ is $\{1,2,\ldots,k\}$-highest weight by Proposition~\ref{proposition.e-output-hw}.
\end{proof}

For an element $b \in \mathcal{B}^{\otimes \ell}$, denote by $\uparrow b$ the unique $I_0$-highest weight element in 
the same component as $b$.  The next lemma describes the action of $\uparrow$ after an application of $e_{-i}$.

\begin{lemma}\label{lem:uparrowe-i}
  Let $b \in \mathcal{B}^{\otimes \ell}$ be $I_0$-highest weight such that $\varepsilon_{-i}(b)>0$ for some $i\in I_0$
and $e_{-i}(b)$ is not $\{1,2,\ldots,i\}$-highest weight.  Let $k$ be as in Lemma \ref{lem:e-ie-k} and let the sequences 
$p_j$ and $t_j$ be as in Theorem \ref{theorem.e-i}.  Then $\uparrow e_{-i} (b)$ can be obtained from $b$ by 
changing $j$ in position $p_j$ to $j-1$ for $1<j \leqslant i+1$ and $j$ in position $t_j$ for $1\leqslant j<k$ to $j+1$, and 
lowering some letters larger than $i+1$.  In particular, the changes in positions $t_j$ for $j\geqslant k$ in $e_{-i}$ are 
undone by the application of $\uparrow$.
\end{lemma}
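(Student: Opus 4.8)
The plan is to compute $\uparrow e_{-i}(b)$ by applying the raising operators comprising $\uparrow$ directly to $c:=e_{-i}(b)$, using the explicit description of $c$ from Theorem~\ref{theorem.e-i}, the location of its highest weight violations from Corollary~\ref{cor:which-hw}, and Lemma~\ref{lem:e-ie-k} to control where the process halts. Recall that $c$ arises from the $I_0$-highest weight word $b$ by lowering $b_{p_j}$ from $j$ to $j-1$ for $2\leqslant j\leqslant i+1$ and raising $b_{t_j}$ from $j$ to $j+1$ for $1\leqslant j\leqslant i-1$. Proposition~\ref{proposition.e-output-hw} shows that in the absence of cycling the result would already be $\{1,\ldots,i\}$-highest weight, so every violation in $c$ among the letters $\leqslant i$ comes from the cycling raisings $t_k,\ldots,t_{i-1}$; by Corollary~\ref{cor:which-hw} these violations sit at the ends of the cycling runs, each being a single unbracketed $\ell+1$ located at position $t_\ell$.

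First I would analyze the cascade triggered by a single such violation. Applying $e_\ell$ reverts the entry at $t_\ell$ from $\ell+1$ back to $\ell$; deleting this $\ell+1$ can leave a new unbracketed $\ell$ in the $(\ell-1,\ell)$-subword, opening a violation at index $\ell-1$, and at the same time leave a new unbracketed $\ell+2$ in the $(\ell+1,\ell+2)$-subword, opening one at index $\ell+1$. I claim the resulting two-directional cascade reverts exactly the entries $t_j$ with $j\geqslant k$: the downward branches walk down through the cycling runs, while the upward branches walk up through their non-cycling tails, and iterating over the separate runs exhausts all of $t_k,\ldots,t_{i-1}$. Crucially, the downward motion never passes below index $k$: Lemma~\ref{lem:e-ie-k} guarantees that the configuration carried by the surviving non-cycling prefix $t_1,\ldots,t_{k-1}$ is already $\{1,\ldots,k\}$-highest weight, so reverting $t_k$ produces no violation at index $k-1$ and the prefix $t_1,\ldots,t_{k-1}$ is preserved untouched.

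When an upward branch reaches index $i$ --- which happens precisely when reverting $t_{i-1}$ deletes an $i$ that unbrackets an $i+1$ --- it passes through $e_i$ and then continues as $e_{i+1},e_{i+2},\ldots$, each of these higher steps lowering a single letter $>i+1$. Using Lemma~\ref{lem:k-bracketed} together with Lemma~\ref{lemma.sequence}, I would check that the new $i$ created by the crossing operator $e_i$ lands in a bracketed position of the $(i-1,i)$-subword, so it does not re-open any violation among the already-settled letters $\leqslant i$; the high part of the cascade thus contributes exactly the ``lowering of some letters larger than $i+1$'' of the statement, and nothing else. Once all branches terminate, the word differs from $b$ only in the lowerings at $p_2,\ldots,p_{i+1}$, the surviving raisings at $t_1,\ldots,t_{k-1}$, and the lowered high letters; it is $\{1,\ldots,i\}$-highest weight by Proposition~\ref{proposition.e-output-hw} and $\{i+1,\ldots,n\}$-highest weight by construction, hence $I_0$-highest weight. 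As it is reachable from $c$ by raising operators, it equals $\uparrow e_{-i}(b)$, with every change at $t_j$ for $j\geqslant k$ undone, exactly as asserted.

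The main obstacle is the joint control of the downward and upward cascades and their coupling through the letter $i+1$. One must show that reverting a cycling entry never re-creates a violation among letters already brought to highest weight --- in particular that the extra $i$ introduced when a branch crosses into the high letters via $e_i$ remains bracketed, so the low and high parts do not feed back into one another --- and that the downward motion halts exactly at index $k$ rather than eroding the non-cycling prefix. Securing this clean stopping point is exactly where Lemma~\ref{lem:e-ie-k} is indispensable, while disentangling the low and high cascades at $i+1$ is the most delicate bookkeeping in the argument.
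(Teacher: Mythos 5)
Your overall strategy coincides with the paper's: starting from the violation at the end of the first cycling run identified by Corollary~\ref{cor:which-hw}, revert the entries $t_j$ with $j\geqslant k$ block by block (downward through each cycling block, upward through the non-cycling stretches), argue that the process halts at $k$, and finish with raising operators that only touch high letters. However, there is a genuine error at the interface between the low and the high letters. You assert that the upward cascade can ``pass through $e_i$'' when reverting $t_{i-1}$ unbrackets an $i+1$, and you fold the resulting change into the ``lowering of letters larger than $i+1$''. This cannot be right: an $i+1$ is not larger than $i+1$, and the position at which such an $e_i$ would act is neither one of the $p_j$ nor one of the $t_j$ with $j<k$, so the final word would differ from $b$ in a way the lemma explicitly excludes. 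In the paper's proof, the word obtained after undoing all $t_j$ with $j\geqslant k$ is shown to be $\{1,2,\ldots,i\}$-highest weight --- in particular $e_i$ is never applied --- and the remaining cleanup $e_{i+1},e_{i+2},\ldots$ is triggered not by the reversion cascade but by the lowering of $b_{p_{i+1}}$ from $i+1$ to $i$ in the first step of $e_{-i}$, which may leave an unbracketed $i+2$. Proving that no unbracketed $i+1$ is ever created (so that $e_i$ is never applicable) is a real piece of the argument; your sketch replaces it with a case that, if it occurred, would falsify the very statement you are proving.

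A secondary, smaller point: you attribute the halting of the downward motion at index $k$ to Lemma~\ref{lem:e-ie-k}, but that lemma concerns $\varepsilon_{-k}(b)$ and $e_{-k}(b)$; what is actually used is the definition of $k$ as the smallest index with $t_{k-1}\leqslant t_k$ together with the highest-weight bookkeeping from the proof of Proposition~\ref{proposition.e-output-hw} (changing $t_k$ back to $k$ leaves the word highest weight at that step precisely because $t_{k-1}$ did not cycle). This is a misattribution rather than a gap, but it should be repaired alongside the $e_i$ issue above.
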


\begin{proof}
    By Corollary \ref{cor:which-hw}, the smallest index $\ell$ for which $e_{\ell}(e_{-i}(b))$ is defined is the first $\ell$ for 
    which $t_\ell$ cycled but $t_{\ell+1}$ did not (or does not exist).  In particular $\ell\geqslant k$ and all $t_j$ with 
    $k\leqslant j\leqslant \ell$ cycle around the end of the word.  
    
    Note that $t_\ell$ was chosen as the rightmost $\ell$ that is not $i$-bracketed (after raising $t_1,\ldots,t_{\ell-1}$).  
    Also recall that the word $c$ formed by lowering the $b_{p_j}$ entries is $\{1,2,\ldots,i\}$-highest weight, so just before 
    changing $t_\ell$ the word is still $\{\ell\}$-highest weight.  Finally, by assumption $t_\ell$ is weakly right of $t_{\ell-1}$ 
    (which is the only new $\ell$ since starting at the word $c$).  Thus, after changing $t_\ell$ to $\ell+1$, if it bracketed 
    with an $\ell$ to its right (in the ordinary crystal bracketing) then in fact that $\ell$ is also not $i$-bracketed on the 
    previous step, a contradiction since $t_{\ell-1}\leqslant t_\ell$.  
    
    Therefore $t_\ell$ is an unbracketed $\ell+1$ in $e_{-i}(b)$, and since all other $(\ell+1)$'s before it are bracketed with 
    some $\ell$, we know that $e_{\ell}$ changes it back to an $\ell$.  After doing so, by the same argument we see that 
    position $t_{\ell-1}$ is now an unbracketed $\ell$, so applying $e_{\ell-1}$ changes it back to $\ell-1$, and so on down 
    to $t_k$.  At this point the resulting word
    \[
    	w:=e_k\cdots e_{\ell-1}e_\ell(e_{-i}b)
    \]
    is $\{1,2,\ldots,\ell\}$-highest weight, since $t_{k-1}$ did not cycle and so changing $t_k$ back to $k$ leaves 
    $w$ highest weight at that step.
    
    Now suppose $t_{\ell+1}$ exists (that is, $\ell\leqslant i-2$); then $t_{\ell+1}<t_\ell$, and in $w$ the position $t_\ell$ is 
    changed back to $\ell$.  We claim that $e_{\ell+1}$ is defined on $w$ and applying it changes $t_{\ell+1}$ from $\ell+2$ 
    back to $\ell+1$. Indeed, if $t_{\ell+1}$ is bracketed with an $\ell+1$ in $w$ then this $\ell+1$ must be to the right of $t_\ell$ 
    (since otherwise it would have been a preferred non-$i$-bracketed choice of $t_{\ell+1}$ in the $e_{-i}$ algorithm).  But
    then this $\ell+1$ is bracketed with an $\ell$ to its right since $w$ is $\{\ell\}$-highest weight, and then this $\ell$ similarly 
    contradicts the choice of $t_\ell$.  Thus $t_{\ell+1}$ is an $\ell+2$ that is not bracketed with an $\ell+1$ after lowering 
    $t_\ell$ back to $\ell$.  By the weight changes it must be the only such $\ell+2$ and so applying $e_{\ell+1}$ changes 
    $t_{\ell+1}$ back to $\ell+1$.
    Continuing in this fashion, we can apply $e_{\ell+2},e_{\ell+3}$, and so on in that order to change the next entries 
    $t_{\ell+2}$, $t_{\ell+3}$, and so on back to their original values, until some $t_{\ell+r}$ cycles again. Let $t_m$ be the 
    next entry for which $t_{m+1}$ does not cycle (the end of the next block of cycling entries); by the same arguments as  
    above we can now apply $e_{m}$, then $e_{m-1}$, and so on down to $e_{\ell+r}$.  Repeating this process on 
    every block of cycling and non-cycling entries yields a $\{1,\ldots,i\}$-highest weight word formed by changing 
    $t_k,\ldots,t_{i-1}$ back to $k,k+1,\ldots,i-1$ respectively.  Finally, to finish forming $\uparrow e_{-i}(b)$, only entries 
    larger than $i+1$ may be changed, and the conclusion follows.
\end{proof}

The next proposition will be used in Section~\ref{section.G(C)} to deal with ``by-pass arrows'' in the component graph 
$G(\mathcal{C})$.

\begin{proposition}
\label{proposition.by-pass}
Let $b \in \mathcal{B}^{\otimes \ell}$ be $I_0$-highest weight such that $\varepsilon_{-i}(b)>0$ for some $i\in I_0$
and $e_{-i}(b)$ is not $\{1,2,\ldots,i\}$-highest weight. Then there exists $1\leqslant k<i$ such that 
$\varepsilon_{-k}(b)=1$, $e_{-k}(b)$ is $\{1,2,\ldots,k\}$-highest weight and
\begin{equation}
\label{equation.e-i e-k com}
	\uparrow e_{-i}(b) = \uparrow e_{-i} \uparrow e_{-k}(b) \quad \text{or} \quad
	\uparrow e_{-i}(b) = \uparrow e_{-k}(b).
\end{equation}
\end{proposition}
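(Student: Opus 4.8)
The plan is to combine Lemmas~\ref{lem:e-ie-k} and~\ref{lem:uparrowe-i}, which already carry most of the weight, with a direct comparison of the explicit forms of $\uparrow e_{-i}(b)$ and $\uparrow e_{-k}(b)$ supplied by Theorem~\ref{theorem.e-i}. Let $b_{p_{i+1}},\ldots,b_{p_1}$ be the initial $(i+1)$-sequence of $b$ and let $t_1,\ldots,t_{i-1}$ be the positions raised in the cycling step of the computation of $e_{-i}(b)$, with $t_0=p_1$. Since $e_{-i}(b)$ is not $\{1,2,\ldots,i\}$-highest weight, Proposition~\ref{proposition.e-output-hw} produces a smallest index $k$ with $t_{k-1}\leqslant t_k$, and since the $t_j$ are only defined for $j\leqslant i-1$ one checks $1\leqslant k<i$. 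Lemma~\ref{lem:e-ie-k} then yields at once that $\varepsilon_{-k}(b)=1$ and that $e_{-k}(b)$ is $\{1,2,\ldots,k\}$-highest weight, so the first two assertions hold for this $k$, and it remains only to establish~\eqref{equation.e-i e-k com}.

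Next I would write down both sides explicitly. By Lemma~\ref{lem:uparrowe-i}, the element $\uparrow e_{-i}(b)$ is obtained from $b$ by lowering $b_{p_j}$ from $j$ to $j-1$ for $2\leqslant j\leqslant i+1$, raising $b_{t_j}$ from $j$ to $j+1$ for $1\leqslant j<k$, and lowering some letters larger than $i+1$; crucially, the changes at $t_j$ for $j\geqslant k$ are undone by $\uparrow$. For the right-hand side set $b'=\uparrow e_{-k}(b)$. Because $e_{-k}(b)$ is already $\{1,2,\ldots,k\}$-highest weight, the operator $\uparrow$ only applies raising operators $e_{k+1},\ldots,e_n$, which do not touch letters $\leqslant k$. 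Hence on the letters $\leqslant k$ the word $b'$ differs from $b$ exactly by the changes prescribed by Theorem~\ref{theorem.e-i} for $e_{-k}$: the relevant entries of the initial $(k+1)$-sequence are lowered and the non-cycling entries $t'_1,\ldots,t'_{k-1}$ are raised.

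The technical heart of the argument is to show that these two descriptions agree on the small letters, namely that the relevant initial segments of the two initial sequences coincide and that $t'_j=t_j$ for $1\leqslant j<k$. This should follow from the same initial-sequence and $k$-bracketing bookkeeping used in Lemmas~\ref{lem:k-bracketed}, \ref{lemma.sequence}, and~\ref{lem:e-ie-k}: the minimality of $k$ forces positions $t_1,\ldots,t_{k-1}$ not to cycle, so that they are determined as the first $k-1$ successive non-bracketed raises, and a comparison of the $k$-bracketing governing $e_{-k}$ with the $i$-bracketing governing $e_{-i}$ (controlled by Lemma~\ref{lem:k-bracketed}) shows these choices coincide through index $k-1$. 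Granting this, $b'$ and $\uparrow e_{-i}(b)$ carry identical changes in all letters $\leqslant k$, and the only remaining discrepancy is that $\uparrow e_{-i}(b)$ additionally lowers $b_{p_{k+1}},\ldots,b_{p_{i+1}}$, together with possible rearrangements among letters larger than $i+1$.

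Finally I would split into the two cases of~\eqref{equation.e-i e-k com} according to whether this residual change is still realized by an $e_{-i}$-move out of $b'$. Since $b'$ is $I_0$-highest weight, Proposition~\ref{proposition.phi -i} gives $\varepsilon_{-i}(b')\in\{0,1\}$. If $\varepsilon_{-i}(b')=1$, then applying $e_{-i}$ to $b'$ (now a $\{1,2,\ldots,i\}$-highest weight word, so Theorem~\ref{theorem.e-i} applies) produces precisely the lowering of $b_{p_{k+1}},\ldots,b_{p_{i+1}}$ with no further cycling, and $\uparrow$ then reconciles the letters larger than $i+1$, yielding $\uparrow e_{-i}(b)=\uparrow e_{-i}\uparrow e_{-k}(b)$. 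If instead $\varepsilon_{-i}(b')=0$, then $\uparrow$ must already have absorbed those lowerings in forming $b'$, and one gets $b'=\uparrow e_{-i}(b)$ directly. The main obstacle I anticipate is exactly this position-matching across two distinct operator computations and the control of the letters larger than $k$ (respectively $i+1$) that $\uparrow$ modifies only implicitly in Lemmas~\ref{lem:e-ie-k} and~\ref{lem:uparrowe-i}; the $k$-bracketing and initial-sequence lemmas of this section are the tools I would rely on to make the comparison rigorous.
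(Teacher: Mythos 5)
Your setup coincides with the paper's: you take $k$ from Lemma~\ref{lem:e-ie-k} (equivalently, the first index with $t_{k-1}\leqslant t_k$), which immediately gives $\varepsilon_{-k}(b)=1$ and that $e_{-k}(b)$ is $\{1,\ldots,k\}$-highest weight, and you invoke Lemma~\ref{lem:uparrowe-i} for the left-hand side of~\eqref{equation.e-i e-k com}. The gap is in the comparison step, and it is not just routine bookkeeping left to the reader: the position-matching you assert is false in general. Writing $b_{p'_{k+1}},\ldots,b_{p'_1}$ for the initial $(k+1)$-sequence of $b$ and $t'_j$ for the cycling positions of $e_{-k}$, one only has $p'_j\leqslant p_j$, and the paper establishes $t'_j=t_j$ only for $j\geqslant d'$, where $d'$ is a threshold below which $t'_j=p'_{j+1}$ (while $t_j=p_{j+1}$ below a possibly smaller threshold $d$). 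When $p'_{j+1}<p_{j+1}$ these positions genuinely differ, so the claim that ``$b'$ and $\uparrow e_{-i}(b)$ carry identical changes in all letters $\leqslant k$'' does not hold; the entire $d$, $d'$, $x$ case analysis in the paper's proof exists precisely to handle this.

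Second, your description of the residual step is incorrect. Applying $e_{-i}$ to $b'=\uparrow e_{-k}(b)$ does not ``produce precisely the lowering of $b_{p_{k+1}},\ldots,b_{p_{i+1}}$ with no further cycling'': by Theorem~\ref{theorem.e-i} it must lower the \emph{entire} initial $(i+1)$-sequence $\overline{p}_{i+1},\ldots,\overline{p}_1$ of $b'$ (thereby re-lowering small letters already changed by $e_{-k}$) and then perform its own cycling step $\overline{t}_1,\ldots,\overline{t}_{i-1}$; the substance of the paper's argument is the verification that $\overline{t}_j=\overline{p}_{j+1}$ for small $j$ (so those changes cancel) and that $\overline{t}_j$ equals $t_j$ or $p'_{j+1}$ on the remaining ranges. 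You also drop the letters in the interval $(k+1,i+1]$ that $\uparrow$ lowers when raising $e_{-k}(b)$ to highest weight (it applies $e_{k+1},\ldots,e_r$ with $r<i$); these are neither ``letters $\leqslant k$'' nor ``letters larger than $i+1$,'' so your accounting misses them entirely. Finally, the dichotomy in~\eqref{equation.e-i e-k com} is governed in the paper by whether $x=k+1$ (i.e.\ the full initial $(k+1)$-sequence of $b$ agrees with the initial segment of the $(i+1)$-sequence) together with the bracketing status of the $k+2$ in position $p_{k+2}$, not by $\varepsilon_{-i}(b')$; your proposed criterion would at minimum need its own justification.
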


\begin{example}
Take $b=343212211 \in \mathcal{B}^{\otimes 9}$, which satisfies $\varepsilon_{-3}(b)>0$. Then
\[
	\uparrow e_{-3} b = e_2 e_1 e_{-3} b = 332112211 = e_2 e_{-3} e_{-1} b  = \uparrow e_{-3} \uparrow e_{-1} b.
\]
Furthermore, $e_{-1} b = 343112211$ is $\{1\}$-highest weight.

Take $b=4321321 \in \mathcal{B}^{\otimes 7}$, which satisfies $\varepsilon_{-3}(b)>0$. Then
\[
	\uparrow e_{-3} b = e_1 e_2 e_{-3} b = 3211321 = e_{-3} e_2 e_{-1} b 
	= \uparrow e_{-3} \uparrow e_{-1}b.
\]
Furthermore, $e_{-1}b=4311321$ is $\{1\}$-highest weight.

Take $b=2154321 \in \mathcal{B}^{\otimes 7}$, which satisfies $\varepsilon_{-4}(b)>0$. Then
\[
 	\uparrow e_{-4} b = e_3 e_{-4} b = 3243211 = e_4 e_{-3} b = \uparrow e_{-3} b.
\]
\end{example}

\begin{proof}[Proof of Proposition~\ref{proposition.by-pass}]
Let $k$ be as in Lemma \ref{lem:e-ie-k}.  Then the first statements hold for $k$ by Lemma \ref{lem:e-ie-k} and it only 
remains to prove~\eqref{equation.e-i e-k com}.  By Lemma \ref{lem:uparrowe-i}, $\uparrow e_{-i} b$ changes $j$ in 
position $p_j$ to $j-1$ for $1<j \leqslant i+1$ and $j$ in position $t_j$ for $1\leqslant j<k$ to $j+1$. The changes in 
positions $t_j$ for $j\geqslant k$ in $e_{-i}$ are undone by $\uparrow$. Some letters bigger than $i+1$ might also be 
lowered by $\uparrow$. 

We use the same notation as in the proof of Lemma~\ref{lem:e-ie-k}. There we proved that $t_j<q'_j$ for all $1\leqslant j<k$. 
Since $q'_j \leqslant p_j$ and there is no letter $j$ between positions $p_{j+1}$ and $p_j$ in $b$, it follows that 
$t_j \leqslant p_{j+1}$ for all $1\leqslant j < k$.
Now suppose that $t_j=p_{j+1}$ for some $1\leqslant j<k$. We claim that then $t_{j-1}=p_j$ as well.
Let $d-1$ be maximal such that $t_{d-1}=p_d$. Then there has to be a letter $d-1$ in position $p$ in $b$ with
$p_{d+1}<p<p_d$, so that the letter $d-1$ in position $p_d$ in $c$ is not $i$-bracketed. 
Suppose that there is no letter $d-2$ between positions $p$ and $p_{d-1}$ in $b$. In this case the letter $d-2$ in 
position $p_{d-1}$ in $c$ is $i$-bracketed, so that $t_{d-2}>p_{d-1}$, which contradicts $t_{d-2}\leqslant p_{d-1}$.
Continuing this argument, there has to be a sequence of letters $d-1,d-2,\ldots,1$ between positions $p_{d+1}$ and $p_2$
that is not $i$-bracketed. Moreover, letter $j$ in this sequence has to appear before position $p_{j+1}$.
But this means that the letter $j$ in position $p_{j+1}$ for $1\leqslant j<d$ is not $i$-bracketed, so that $t_j=p_{j+1}$
for all $1\leqslant j<d$.

By the arguments above, we have that $t_j=p_{j+1}$ for $1\leqslant j <d$ for some $d$ and $t_j$ for $j\geqslant d$ is part 
of a sequence of non $k$-bracketed letters in $b$ (by the definition of $k$ and the sequence $q_j'$). Similarly, we have 
$t'_j=p'_{j+1}$ for $1\leqslant j<d'$ for some $d'$ and $t_j'$ for $j\geqslant d'$ is part of the same sequence of non 
$k$-bracketed letters in $b$ as $t_j$. Also, $d'\geqslant d$ since $p'_j\leqslant p_j$ for all $1\leqslant j\leqslant k+1$. 
In particular, this implies $t_j=t'_j$ for $d'\leqslant j<k$. 

Furthermore, before applying the $\uparrow$ operator the entries that change are:
\begin{align*}
	\text{In $\uparrow e_{-i} b$:} \qquad & b_{p_j}\colon j\mapsto j-1 \quad \text{for }d<j\leqslant i+1\\
        &b_{t_j} \colon j\mapsto j+1 \quad \text{for }d\leqslant j<i\\        
        	\text{In $\uparrow e_{-k} b$:} \qquad & b_{p'_j}\colon j\mapsto j-1 \quad \text{for }d'<j\leqslant k+1\\
        &b_{t'_j} \colon j\mapsto j+1 \quad \text{for }d'\leqslant j<k.     
\end{align*}

Recall also that $p'_j=p_j$ for $1\leqslant j\leqslant x$ for some $x\geqslant 1$.
Denote by $\overline{t}_j$ and $\overline{p}_j$ the selected positions by $e_{-i}$ on the element $\uparrow e_{-k} b$.

First assume that $x=k+1$, so that $p'_j=p_j$ for all $1\leqslant j \leqslant k+1$. In this case $t'_j=t_j$ for
$1\leqslant j <k$. Furthermore, if in $e_{-k}(b)$ the letter $k+2$ in position $p_{k+2}$ is unbracketed, then
in $\uparrow e_{-k}(b)$, the letter $k+2$ in position $p_{k+2}$, then the letter $k+3$ in position $p_{k+3}$ etc
will be lowered. These are the same changes as in $\uparrow e_{-i}(b)$, so that $\uparrow e_{-i}(b) = \uparrow e_{-k}(b)$.

Next assume that $d'<x\leqslant k$ or $x=k+1$ but the letter $k+2$ in position $p_{k+2}$ in $e_{-k}(b)$ is bracketed.  
We first show that in this case $\overline{p}_j=p_j$ for $x<j\leqslant i+1$.  
Note that to form $\uparrow e_{-k}(b)$, since $e_{-k}(b)$ is $\{1,2,\ldots,k\}$-highest weight, we apply $e_{k+1}, e_{k+2},
\ldots,e_r$ in order for some $r$, so that we lower a $k+2$ to a $k+1$, $k+3$ to $k+2$, and so on until we reach an 
$I_0$-highest weight word.   Note also that $b_{p_{k+1}'}$ was the entry that lowered from $k+1$ to $k$, so the $k+2$ 
that gets lowered, if it exists, is to the left of $p_{k+1}'<p_{k+1}$.  Similarly the $k+3$ that gets lowered is left of 
$p_{k+2}'<p_{k+2}$, and so on, and hence $r<i$ since $p_{i+1}$ is the leftmost $i+1$.  It follows that no $i+1$ lowers
to an $i$, and so $\overline{p}_{i+1}=p_{i+1}$.  Since the entries lowered by $\uparrow$ are left of $p_j$ for each $j>x$,
it follows that $\overline{p}_{j}=p_j$ for $x<j\leqslant i+1$.

For the sequence $\overline{t}_j$, note that the entries $\overline{p}_j$ that we lower for $j\leqslant x$ cannot be 
$i$-bracketed in $\overline{c}$ due to the condition $\overline{p}_{i+1}=p_{i+1}$ shown above, and because 
$t_{x-1}=t_{x-1}'$, so that $t_{x-1}'$ cannot be between $p_{x+1}$ and $p_x$.  
Furthermore, for $x\leqslant j<k$ the letters in positions $\overline{p}_{j+1}$ are all $i$-bracketed in $\overline{c}$
and $t_j=t_j'<p_{j+1}'< p_{j+1}=\overline{p}_{j+1}$.
Also note that $d=d'$ since $p_{j+1}=p_{j+1}'=t'_j$ for $d\leqslant j<d'<x$ and the letter $j$ in position
$p_{j+1}=p_{j+1}'$ in $c'$ is not $k$-bracketed and hence not $i$-bracketed in $c'$ and $c$. It follows that 
\[
	\overline{t}_j = \begin{cases}
	\overline{p}_{j+1} & \text{for $1\leqslant j<x$,}\\
	p_{j+1}' & \text{for $x\leqslant j\leqslant k$,} \end{cases}
\]
and for $k<j\leqslant r$, we have that $\overline{t}_j$  is equal to the position of letter $j+1$ that is lowered when applying
$\uparrow$ to $e_{-k}(b)$. Hence $\uparrow e_{-i}(b) = \uparrow e_{-i} \uparrow e_{-k}(b)$.

Finally, assume that $x\leqslant d'$. In this case, by a similar argument, we have $\overline{p}_j=p_j$ for 
$1\leqslant j\leqslant i+1$ and
\[
	\overline{t}_j = \begin{cases}
	\overline{p}_{j+1} & \text{for $1\leqslant j<d$,}\\
	t_j & \text{for $d\leqslant j <d'$,}\\
	p_{j+1}' & \text{for $d'\leqslant j\leqslant k$,} \end{cases}
\]
and for $k<j\leqslant r$, we have that $\overline{t}_j$ is equal to the position of letter $j+1$ that is lowered when applying
$\uparrow$ to $e_{-k}(b)$. Again, we have $\uparrow e_{-i}(b) = \uparrow e_{-i} \uparrow e_{-k}(b)$.
\end{proof}

\section{Local axioms}
\label{section.axioms}

In~\cite[Definition 4.11]{AssafOguz.2018a}, Assaf and Oguz give a definition of regular queer crystals.
In essence, their axioms are rephrased in the following definition, where $\tilde{I}:=I_0 \cup \{-1\}$.

\begin{definition}[\defn{Local queer axioms}]
\label{definition.local queer axioms}
Let $\mathcal{C}$ be a graph with labeled directed edges given by $f_i$ for $i \in I_0$ and $f_{-1}$.
If $b'=f_jb$ for $j \in \tilde{I}$ define $e_j$ by $b=e_j b'$.
\begin{enumerate}
\item[{\bf LQ1.}] The subgraph with all vertices but only edges labeled by $i\in I_0$ is a type $A_n$ Stembridge crystal.
\item[{\bf LQ2.}] $\varphi_{-1}(b),\varepsilon_{-1}(b) \in \{0,1\}$ for all $b\in \mathcal{C}$.
\item[{\bf LQ3.}] $\varphi_{-1}(b)+\varepsilon_{-1}(b)>0$ if $\wt(b)_1 + \wt(b)_2>0$.
\item[{\bf LQ4.}] Assume $\varphi_{-1}(b)=1$ for $b\in \mathcal{C}$.
\begin{enumerate}
\item If $\varphi_1(b)>2$, we have
\begin{equation*}
\begin{split}
	f_1 f_{-1}(b) &= f_{-1} f_1(b),\\
	\varphi_1(b) &= \varphi_1(f_{-1}(b))+2,\\
	\varepsilon_1(b) &= \varepsilon_1(f_{-1}(b)).
\end{split}
\end{equation*}
\item If $\varphi_1(b)=1$, we have
\[
	f_1(b) = f_{-1}(b).
\]
\end{enumerate}
\item[{\bf LQ5.}] Assume $\varphi_{-1}(b)=1$ for $b\in \mathcal{C}$.
\begin{enumerate}
\item If $\varphi_2(b)>0$, we have
\begin{equation*}
\begin{split}
	f_2 f_{-1}(b) &= f_{-1} f_2(b),\\
	\varphi_2(b) &= \varphi_2(f_{-1}(b))-1,\\
	\varepsilon_2(b) &= \varepsilon_2(f_{-1}(b)).
\end{split}
\end{equation*}
\item If $\varphi_2(b)=0$, we have
\begin{equation*}
\begin{aligned}
	\varphi_2(b) &= \varphi_2(f_{-1}(b))-1=0, & \text{or} \quad \varphi_2(b) &= \varphi_2(f_{-1}(b))=0,\\
	\varepsilon_2(b) &= \varepsilon_2(f_{-1}(b)), & \varepsilon_2(b) &= \varepsilon_2(f_{-1}(b))+1.
\end{aligned}
\end{equation*}
\end{enumerate}
\item[{\bf LQ6.}] Assume that $\varphi_{-1}(b)=1$ and $\varphi_i(b)>0$ with $i\geqslant 3$ for $b\in \mathcal{C}$. Then
\begin{equation*}
\begin{split}
	f_i f_{-1}(b) &= f_{-1} f_i(b),\\
	\varphi_i(b) & = \varphi_i(f_{-1}(b)),\\
	\varepsilon_i(b) &= \varepsilon_i(f_{-1}(b)).
\end{split}
\end{equation*}
\end{enumerate}
\end{definition}

Axioms {\bf LQ4} and {\bf LQ5} are illustrated in Figure~\ref{figure.local queer}.

\begin{figure}
\centering
\begin{tikzpicture}[node distance = 1.5cm, auto]
  \tikzstyle{place}=[circle,fill=black,scale=1]
  \tikzstyle{minusone}=[->,dashed,blue,thick]
  \tikzstyle{one}=[->,blue,thick]
  \tikzstyle{two}=[->,darkred,thick]
  \node [place] (A1) {};  
  \node [place] (B1) [right= 1cm of A1] {};
  \draw [minusone] (A1) edgenode{$-1$} (B1);
  \node [place] (A2) [below= 1cm of A1] {};  
  \node [place] (B2) [right= 1cm of A2] {};
  \draw [minusone] (A2) edgenode{$-1$} (B2);
  \draw [one] (A1) edgenode[left]{$1$} (A2);
  \draw [one] (B1) edgenode[right]{$1$} (B2);
  \node [place] (A3) [below= 1cm of A2] {};  
  \node [place] (B3) [right= 1cm of A3] {};
  \draw [minusone] (A3) edgenode{$-1$} (B3);
  \draw [one] (A2) edgenode[left]{$1$} (A3);
  \draw [one] (B2) edgenode[right]{$1$} (B3);
  \node [place] (A4) [below= 1cm of A3] {};  
  \node [place] (B4) [right= 1cm of A4] {};
  \draw [minusone] (A4) edgenode{$-1$} (B4);
  
  \draw [loosely dotted,blue,thick] (A3) -- (A4);
  \draw [loosely dotted,blue,thick] (B3) -- (B4);
  
  \node [place] (A5) [below= 1cm of A4] {};  
  \node [place] (B5) [right= 1cm of A5] {};
  \draw [minusone] (A5) edgenode{$-1$} (B5);
  \draw [one] (A4) edgenode[left]{$1$} (A5);
  \draw [one] (B4) edgenode[right]{$1$} (B5);
  
  \node [place] (A6) [below= 1cm of A5] {};
  \draw [one] (A5) edgenode[left]{$1$} (A6);
  \node [place] (A7) [below= 1cm of A6] {};
  \draw [minusone,transform canvas={xshift=-1mm}] (A6) edgenode[left]{$-1$} (A7);
  \draw [one,transform canvas={xshift=1mm}] (A6) edgenode[right]{$1$} (A7);

  \node [place] (C1) [right= 2cm of B1] {};  
  \node [place] (D1) [right= 1cm of C1] {};
  \draw [minusone] (C1) edgenode{$-1$} (D1);
  \node [place] (C2) [below= 1cm of C1] {};  
  \node [place] (D2) [right= 1cm of C2] {};
  \draw [minusone] (C2) edgenode{$-1$} (D2);
  \draw [two] (C1) edgenode[left]{$2$} (C2);
  \draw [two] (D1) edgenode[right]{$2$} (D2);
  \node [place] (C3) [below= 1cm of C2] {};  
  \node [place] (D3) [right= 1cm of C3] {};
  \draw [minusone] (C3) edgenode{$-1$} (D3);
  \draw [two] (C2) edgenode[left]{$2$} (C3);
  \draw [two] (D2) edgenode[right]{$2$} (D3);
  \node [place] (C4) [below= 1cm of C3] {};  
  \node [place] (D4) [right= 1cm of C4] {};
  \draw [minusone] (C4) edgenode{$-1$} (D4);
  
  \draw [loosely dotted,red,thick] (C3) -- (C4);
  \draw [loosely dotted,red,thick] (D3) -- (D4);
  
  \node [place] (C5) [below= 1cm of C4] {};  
  \node [place] (D5) [right= 1cm of C5] {};
  \draw [minusone] (C5) edgenode{$-1$} (D5);
  \draw [two] (C4) edgenode[left]{$2$} (C5);
  \draw [two] (D4) edgenode[right]{$2$} (D5);
  
  \node [place] (D6) [below= 1cm of D5] {};
  \draw [two] (D5) edgenode[left]{$2$} (D6);

  \node [place] (E2) [right= 1cm of D2] {};  
  \node [place] (F2) [right= 1cm of E2] {};
  \draw [minusone] (E2) edgenode{$-1$} (F2);
  \node [place] (E3) [below= 1cm of E2] {};  
  \node [place] (F3) [right= 1cm of E3] {};
  \draw [minusone] (E3) edgenode{$-1$} (F3);
  \draw [two] (E2) edgenode[left]{$2$} (E3);
  \draw [two] (F2) edgenode[right]{$2$} (F3);
  \node [place] (E4) [below= 1cm of E3] {};  
  \node [place] (F4) [right= 1cm of E4] {};
  \draw [minusone] (E4) edgenode{$-1$} (F4);
  
  \draw [loosely dotted,red,thick] (E3) -- (E4);
  \draw [loosely dotted,red,thick] (F3) -- (F4);
  
  \node [place] (E5) [below= 1cm of E4] {};  
  \node [place] (F5) [right= 1cm of E5] {};
  \draw [minusone] (E5) edgenode{$-1$} (F5);
  \draw [two] (E4) edgenode[left]{$2$} (E5);
  \draw [two] (F4) edgenode[right]{$2$} (F5);
  
  \node [place] (F6) [below= 1cm of F5] {};
  \draw [two] (F5) edgenode[left]{$2$} (F6);
  
  \draw [minusone] (D6) edgenode{$-1$} (F6);
\end{tikzpicture}
\caption{Illustration of axioms \textbf{LQ4} (left) and \textbf{LQ5} (right). The $(-1)$-arrow at the bottom of the right
figure might or might not be there.}
\label{figure.local queer}
\end{figure}
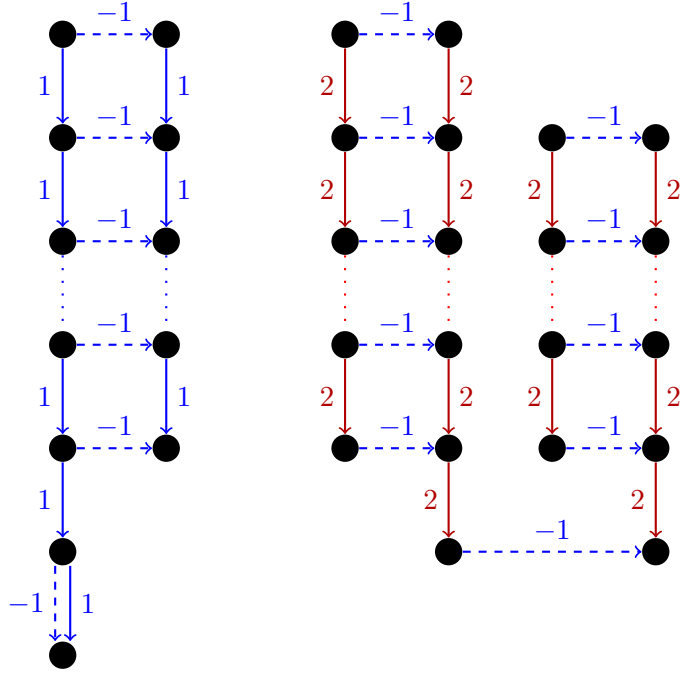

\begin{proposition}[\cite{AssafOguz.2018a}]
\label{proposition.local queer axioms}
The queer crystal of words $\mathcal{B}^{\otimes \ell}$ satisfies the axioms in Definition~\ref{definition.local queer axioms}.
\end{proposition}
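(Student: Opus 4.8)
The plan is to verify the six axioms of Definition~\ref{definition.local queer axioms} for $\mathcal{B}^{\otimes \ell}$ one at a time, exploiting the fact that essentially all of the combinatorial content has already been packaged into the lemmas of Section~\ref{section.properties}. Throughout I would work with the explicit descriptions of the operators: the signature rule of Remark~\ref{rmk:signature-rule} for $f_i,e_i$ with $i\in I_0$, and the $1,2$-subword rule of Remark~\ref{rmk:combo-minus-1} for $f_{-1},e_{-1}$. Since the axioms \textbf{LQ4}, \textbf{LQ5}, \textbf{LQ6} are all conditioned on $\varphi_{-1}(b)=1$, they are phrased on the $f$-side, and their $e$-side counterparts follow automatically from the defining relation $b=e_jb'$ whenever $b'=f_jb$.

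First I would dispose of \textbf{LQ1}: the type $A_n$ edges of $\mathcal{B}^{\otimes \ell}$ form exactly the $\ell$-th tensor power of the type $A_n$ crystal of letters $\mathcal{B}$, which is the crystal of the vector representation $B(\Lambda_1)$. Since $B(\Lambda_1)$ is a Stembridge crystal and tensor products of Stembridge crystals are again Stembridge crystals, \textbf{LQ1} holds by the standard type $A$ theory (see~\cite{BumpSchilling.2017}). Next, \textbf{LQ2} and \textbf{LQ3} follow directly from Remark~\ref{rmk:combo-minus-1}: applying $f_{-1}$ turns the leftmost letter of the $1,2$-subword of $b$ from $1$ into $2$, after which that leftmost letter is a $2$ and $f_{-1}$ is no longer defined, so $\varphi_{-1}(b)\in\{0,1\}$, and symmetrically $\varepsilon_{-1}(b)\in\{0,1\}$; this gives \textbf{LQ2}. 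For \textbf{LQ3}, the $1,2$-subword of $b$ is nonempty precisely when $\wt(b)_1+\wt(b)_2>0$, in which case its leftmost letter is either a $1$ (forcing $\varphi_{-1}(b)=1$) or a $2$ (forcing $\varepsilon_{-1}(b)=1$), so $\varphi_{-1}(b)+\varepsilon_{-1}(b)>0$.

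The two substantive axioms \textbf{LQ4} and \textbf{LQ5} are then immediate translations of results already established. Under the hypothesis $\varphi_{-1}(b)=1$, axiom \textbf{LQ4}(a) (the case $\varphi_1(b)>2$) and \textbf{LQ4}(b) (the case $\varphi_1(b)=1$) are precisely parts (1) and (2) of Lemma~\ref{lemma.e1}; likewise \textbf{LQ5}(a) and \textbf{LQ5}(b) are parts (1) and (2) of Lemma~\ref{lemma.e2}, whose two subcases match the two displayed alternatives of \textbf{LQ5}(b). Here I would only check that the hypotheses align: $\varphi_{-1}(b)=1$ is the same as $\varphi_{-1}(b)>0$ given \textbf{LQ2}, and for each configuration the axiom singles out, the corresponding lemma furnishes exactly the asserted identities. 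Finally, \textbf{LQ6} follows either by citing axiom \textbf{Q4} of the abstract $\mathfrak{q}(n+1)$-crystal structure on $\mathcal{B}^{\otimes\ell}$, or more transparently from the signature rule: for $i\geqslant 3$ the operator $f_{-1}$ only alters letters in $\{1,2\}$, while $f_i,e_i$ only read and alter letters in $\{i,i+1\}$, and these letter sets are disjoint, so the $i,i+1$-bracketing is unaffected by $f_{-1}$ (yielding the equalities $\varphi_i(b)=\varphi_i(f_{-1}(b))$ and $\varepsilon_i(b)=\varepsilon_i(f_{-1}(b))$) and the two operators act on disjoint positions (yielding the commutativity $f_if_{-1}(b)=f_{-1}f_i(b)$).

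Because the genuinely delicate bracketing analysis has been absorbed into Lemmas~\ref{lemma.e1} and~\ref{lemma.e2}, no single hard step remains; the proposition is essentially an assembly of these pieces, consistent with its attribution to~\cite{AssafOguz.2018a}. The only point that demands care is the bookkeeping for \textbf{LQ4} and \textbf{LQ5}: I would confirm that each case named in the axioms is covered by the matching lemma part, and note that the boundary value $\varphi_1(b)=2$ is deliberately omitted from \textbf{LQ4} (mirroring the fact that Lemma~\ref{lemma.e1}(1) asserts the statistic identities but not commutativity there), so nothing must be proven in that case. Verifying this alignment, together with the three elementary arguments for \textbf{LQ1}--\textbf{LQ3} and \textbf{LQ6}, completes the proof.
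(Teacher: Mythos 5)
Your proposal is correct and follows essentially the same route as the paper: the paper's proof likewise disposes of \textbf{LQ1} by definition, derives \textbf{LQ2} and \textbf{LQ3} from Remark~\ref{rmk:combo-minus-1}, cites Lemma~\ref{lemma.e1} for \textbf{LQ4} and Lemma~\ref{lemma.e2} for \textbf{LQ5}, and identifies \textbf{LQ6} with axiom \textbf{Q4}. Your additional bookkeeping (the alignment of cases and the deliberate omission of $\varphi_1(b)=2$ in \textbf{LQ4}) is accurate but not needed beyond what the paper records.
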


\begin{proof}
{\bf LQ1} follows by definition. {\bf LQ2} and {\bf LQ3} follow from Remark~\ref{rmk:combo-minus-1}.
{\bf LQ4} follows from Lemma~\ref{lemma.e1} and {\bf LQ5} follows from Lemma~\ref{lemma.e2}.
Finally, {\bf LQ6} is {\bf Q4}.
\end{proof}

In~\cite[Conjecture 4.16]{AssafOguz.2018a}, Assaf and Oguz conjecture that every regular queer crystal is a 
normal queer crystal. In other words, every connected graph satisfying the local queer axioms
of Definition~\ref{definition.local queer axioms} is isomorphic to a connected component in some
$\mathcal{B}^{\otimes \ell}$. We provide a counterexample to this claim in Figure~\ref{figure.counterexample}.
In the figure, the $I_0$-components of the $\mathfrak{q}(3)$-crystal of highest weight $(4,2,0)$ are shown.
Some of the $f_{-1}$-arrows are drawn in green. The remaining arrows can be filled in using the axioms
of Figure~\ref{figure.local queer} in a consistent manner. If the dashed green arrow from $331131$ to $332131$ and the 
dashed green arrow from $331132$ to $332132$ are replaced by the dashed purple arrow from $331131$ to
$331231$ and the dashed purple arrow from $331132$ to $332231$, respectively, all axioms of 
Definition~\ref{definition.local queer axioms} are still satisfied with the remaining $f_{-1}$-arrows filled in. 
However, the $I_0$-component with highest weight element $132121$ has become disconnected and hence the two 
crystals are not isomorphic.

\begin{figure}
\rotatebox{270}{\scalebox{0.38}{
\begin{tikzpicture}[>=latex,line join=bevel,]
\node (node_32) at (1159.0bp,497.0bp) [draw,draw=none] {$1 \otimes 3 \otimes 1 \otimes 1 \otimes 2 \otimes 1$};
  \node (node_63) at (162.0bp,287.0bp) [draw,draw=none] {$2 \otimes 2 \otimes 1 \otimes 2 \otimes 3 \otimes 2$};
  \node (node_46) at (1347.0bp,147.0bp) [draw,draw=none] {$3 \otimes 3 \otimes 2 \otimes 2 \otimes 3 \otimes 1$};
  \node (node_35) at (390.0bp,287.0bp) [draw,draw=none] {$1 \otimes 3 \otimes 1 \otimes 3 \otimes 3 \otimes 1$};
  \node (node_47) at (376.0bp,77.0bp) [draw,draw=none] {$2 \otimes 3 \otimes 2 \otimes 3 \otimes 3 \otimes 2$};
  \node (node_42) at (1159.0bp,427.0bp) [draw,draw=none] {$1 \otimes 3 \otimes 1 \otimes 2 \otimes 2 \otimes 1$};
  \node (node_34) at (960.0bp,217.0bp) [draw,draw=none] {$3 \otimes 3 \otimes 1 \otimes 1 \otimes 3 \otimes 2$};
  \node (node_26) at (562.0bp,427.0bp) [draw,draw=none] {$1 \otimes 3 \otimes 1 \otimes 1 \otimes 3 \otimes 1$};
  \node (node_27) at (48.0bp,287.0bp) [draw,draw=none] {$2 \otimes 2 \otimes 1 \otimes 3 \otimes 3 \otimes 1$};
  \node (node_24) at (324.0bp,427.0bp) [draw,draw=none] {$1 \otimes 2 \otimes 1 \otimes 2 \otimes 3 \otimes 1$};
  \node (node_25) at (704.0bp,497.0bp) [draw,draw=none] {$2 \otimes 2 \otimes 1 \otimes 1 \otimes 2 \otimes 1$};
  \node (node_22) at (921.0bp,147.0bp) [draw,draw=none] {$3 \otimes 3 \otimes 1 \otimes 2 \otimes 3 \otimes 2$};
  \node (node_23) at (1486.0bp,357.0bp) [draw,draw=none] {$2 \otimes 3 \otimes 1 \otimes 2 \otimes 2 \otimes 1$};
  \node (node_20) at (732.0bp,357.0bp) [draw,draw=none] {$2 \otimes 3 \otimes 2 \otimes 1 \otimes 2 \otimes 1$};
  \node (node_21) at (1074.0bp,77.0bp) [draw,draw=none] {$3 \otimes 3 \otimes 2 \otimes 3 \otimes 3 \otimes 1$};
  \node (node_48) at (618.0bp,287.0bp) [draw,draw=none] {$1 \otimes 3 \otimes 2 \otimes 1 \otimes 3 \otimes 2$};
  \node (node_49) at (1352.0bp,357.0bp) [draw,draw=none] {$3 \otimes 3 \otimes 1 \otimes 1 \otimes 2 \otimes 1$};
  \node (node_60) at (276.0bp,287.0bp) [draw,draw=none] {$1 \otimes 2 \otimes 1 \otimes 3 \otimes 3 \otimes 2$};
  \node (node_36) at (141.0bp,147.0bp) [draw,draw=none] {$3 \otimes 3 \otimes 1 \otimes 3 \otimes 3 \otimes 1$};
  \node (node_28) at (490.0bp,147.0bp) [draw,draw=none] {$2 \otimes 3 \otimes 2 \otimes 2 \otimes 3 \otimes 2$};
  \node (node_29) at (141.0bp,217.0bp) [draw,draw=none] {$2 \otimes 3 \otimes 1 \otimes 3 \otimes 3 \otimes 1$};
  \node (node_37) at (921.0bp,77.0bp) [draw,draw=none] {$3 \otimes 3 \otimes 2 \otimes 2 \otimes 3 \otimes 2$};
  \node (node_61) at (376.0bp,147.0bp) [draw,draw=none] {$1 \otimes 3 \otimes 2 \otimes 3 \otimes 3 \otimes 2$};
  \node (node_62) at (504.0bp,357.0bp) [draw,draw=none] {$1 \otimes 3 \otimes 1 \otimes 1 \otimes 3 \otimes 2$};
  \node (node_9) at (255.0bp,77.0bp) [draw,draw=none] {$3 \otimes 3 \otimes 1 \otimes 3 \otimes 3 \otimes 2$};
  \node (node_8) at (1074.0bp,147.0bp) [draw,draw=none] {$2 \otimes 3 \otimes 2 \otimes 3 \otimes 3 \otimes 1$};
  \node (node_7) at (1074.0bp,357.0bp) [draw,draw=none] {$1 \otimes 3 \otimes 2 \otimes 2 \otimes 2 \otimes 1$};
  \node (node_6) at (846.0bp,287.0bp) [draw,draw=none] {$2 \otimes 3 \otimes 1 \otimes 1 \otimes 3 \otimes 2$};
  \node (node_5) at (846.0bp,217.0bp) [draw,draw=none] {$2 \otimes 3 \otimes 1 \otimes 2 \otimes 3 \otimes 2$};
  \node (node_4) at (504.0bp,287.0bp) [draw,draw=none] {$1 \otimes 3 \otimes 1 \otimes 2 \otimes 3 \otimes 2$};
  \node (node_3) at (732.0bp,217.0bp) [draw,draw=none] {$3 \otimes 3 \otimes 2 \otimes 1 \otimes 3 \otimes 1$};
  \node (node_2) at (618.0bp,217.0bp) [draw,draw=none] {$2 \otimes 3 \otimes 2 \otimes 1 \otimes 3 \otimes 2$};
  \node (node_1) at (1074.0bp,287.0bp) [draw,draw=none] {$1 \otimes 3 \otimes 2 \otimes 2 \otimes 3 \otimes 1$};
  \node (node_0) at (562.0bp,497.0bp) [draw,draw=none] {$1 \otimes 2 \otimes 1 \otimes 1 \otimes 3 \otimes 1$};
  \node (node_53) at (325.0bp,7.0bp) [draw,draw=none] {$3 \otimes 3 \otimes 2 \otimes 3 \otimes 3 \otimes 2$};
  \node (node_52) at (1352.0bp,427.0bp) [draw,draw=none] {$2 \otimes 3 \otimes 1 \otimes 1 \otimes 2 \otimes 1$};
  \node (node_40) at (1416.0bp,287.0bp) [draw,draw=none] {$3 \otimes 3 \otimes 2 \otimes 1 \otimes 2 \otimes 1$};
  \node (node_50) at (1188.0bp,287.0bp) [draw,draw=none] {$2 \otimes 3 \otimes 2 \otimes 2 \otimes 2 \otimes 1$};
  \node (node_57) at (846.0bp,357.0bp) [draw,draw=none] {$2 \otimes 2 \otimes 1 \otimes 1 \otimes 3 \otimes 2$};
  \node (node_56) at (1444.0bp,217.0bp) [draw,draw=none] {$3 \otimes 3 \otimes 1 \otimes 2 \otimes 3 \otimes 1$};
  \node (node_55) at (1188.0bp,217.0bp) [draw,draw=none] {$2 \otimes 3 \otimes 2 \otimes 2 \otimes 3 \otimes 1$};
  \node (node_54) at (255.0bp,147.0bp) [draw,draw=none] {$2 \otimes 3 \otimes 1 \otimes 3 \otimes 3 \otimes 2$};
  \node (node_59) at (632.0bp,567.0bp) [draw,draw=none] {$1 \otimes 2 \otimes 1 \otimes 1 \otimes 2 \otimes 1$};
  \node (node_58) at (960.0bp,357.0bp) [draw,draw=none] {$2 \otimes 3 \otimes 1 \otimes 1 \otimes 3 \otimes 1$};
  \node (node_39) at (390.0bp,357.0bp) [draw,draw=none] {$1 \otimes 2 \otimes 1 \otimes 3 \otimes 3 \otimes 1$};
  \node (node_38) at (661.0bp,147.0bp) [draw,draw=none] {$3 \otimes 3 \otimes 2 \otimes 1 \otimes 3 \otimes 2$};
  \node (node_19) at (618.0bp,357.0bp) [draw,draw=none] {$1 \otimes 3 \otimes 2 \otimes 1 \otimes 3 \otimes 1$};
  \node (node_18) at (376.0bp,217.0bp) [draw,draw=none] {$1 \otimes 3 \otimes 1 \otimes 3 \otimes 3 \otimes 2$};
  \node (node_17) at (846.0bp,427.0bp) [draw,draw=none] {$2 \otimes 2 \otimes 1 \otimes 1 \otimes 3 \otimes 1$};
  \node (node_16) at (1530.0bp,287.0bp) [draw,draw=none] {$2 \otimes 3 \otimes 1 \otimes 2 \otimes 3 \otimes 1$};
  \node (node_15) at (732.0bp,287.0bp) [draw,draw=none] {$2 \otimes 3 \otimes 2 \otimes 1 \otimes 3 \otimes 1$};
  \node (node_14) at (704.0bp,427.0bp) [draw,draw=none] {$1 \otimes 3 \otimes 2 \otimes 1 \otimes 2 \otimes 1$};
  \node (node_13) at (1302.0bp,287.0bp) [draw,draw=none] {$3 \otimes 3 \otimes 1 \otimes 2 \otimes 2 \otimes 1$};
  \node (node_12) at (255.0bp,217.0bp) [draw,draw=none] {$2 \otimes 2 \otimes 1 \otimes 3 \otimes 3 \otimes 2$};
  \node (node_11) at (448.0bp,427.0bp) [draw,draw=none] {$1 \otimes 2 \otimes 1 \otimes 1 \otimes 3 \otimes 2$};
  \node (node_10) at (276.0bp,357.0bp) [draw,draw=none] {$1 \otimes 2 \otimes 1 \otimes 2 \otimes 3 \otimes 2$};
  \node (node_41) at (162.0bp,357.0bp) [draw,draw=none] {$2 \otimes 2 \otimes 1 \otimes 2 \otimes 3 \otimes 1$};
  \node (node_31) at (1074.0bp,217.0bp) [draw,draw=none] {$1 \otimes 3 \otimes 2 \otimes 3 \otimes 3 \otimes 1$};
  \node (node_43) at (490.0bp,217.0bp) [draw,draw=none] {$1 \otimes 3 \otimes 2 \otimes 2 \otimes 3 \otimes 2$};
  \node (node_30) at (324.0bp,497.0bp) [draw,draw=none] {$1 \otimes 2 \otimes 1 \otimes 2 \otimes 2 \otimes 1$};
  \node (node_44) at (1188.0bp,357.0bp) [draw,draw=none] {$1 \otimes 3 \otimes 1 \otimes 2 \otimes 3 \otimes 1$};
  \node (node_51) at (1302.0bp,217.0bp) [draw,draw=none] {$3 \otimes 3 \otimes 2 \otimes 2 \otimes 2 \otimes 1$};
  \node (node_33) at (189.0bp,427.0bp) [draw,draw=none] {$2 \otimes 2 \otimes 1 \otimes 2 \otimes 2 \otimes 1$};
  \node (node_45) at (960.0bp,287.0bp) [draw,draw=none] {$3 \otimes 3 \otimes 1 \otimes 1 \otimes 3 \otimes 1$};
  \draw [red,->] (node_2) ..controls (629.2bp,198.77bp) and (642.24bp,177.54bp)  .. (node_38);
  \definecolor{strokecol}{rgb}{0.0,0.0,0.0};
  \pgfsetstrokecolor{strokecol}
  \draw (653.0bp,182.0bp) node {$2$};
  \draw [blue,->] (node_34) ..controls (949.9bp,198.87bp) and (938.24bp,177.95bp)  .. (node_22);
  \draw (954.0bp,182.0bp) node {$1$};
  \draw [red,->] (node_12) ..controls (255.0bp,199.19bp) and (255.0bp,179.15bp)  .. (node_54);
  \draw (264.0bp,182.0bp) node {$2$};
  \draw [red,->] (node_47) ..controls (362.64bp,58.664bp) and (346.96bp,37.14bp)  .. (node_53);
  \draw (365.0bp,42.0bp) node {$2$};
  \draw [red,->] (node_6) ..controls (877.16bp,267.87bp) and (916.02bp,244.0bp)  .. (node_34);
  \draw (925.0bp,252.0bp) node {$2$};
  \draw [darkgreen,thick,->] (node_49) edgenode[above]{$-1$} (node_40);
  \draw [blue,->] (node_14) ..controls (711.21bp,408.98bp) and (719.46bp,388.35bp)  .. (node_20);
  \draw (730.0bp,392.0bp) node {$1$};
  \draw [red,->] (node_39) ..controls (390.0bp,339.19bp) and (390.0bp,319.15bp)  .. (node_35);
  \draw (399.0bp,322.0bp) node {$2$};
  \draw [red,->] (node_63) ..controls (187.14bp,268.08bp) and (218.01bp,244.85bp)  .. (node_12);
  \draw (228.0bp,252.0bp) node {$2$};
  \draw [blue,->] (node_7) ..controls (1117.0bp,345.92bp) and (1132.5bp,340.08bp)  .. (1145.0bp,332.0bp) .. controls (1157.4bp,324.01bp) and (1168.8bp,311.78bp)  .. (node_50);
  \draw (1176.0bp,322.0bp) node {$1$};
  \draw [darkgreen,thick,->] (node_32) edgenode[above]{$-1$} (node_52);
  \draw [blue,->] (node_11) ..controls (376.89bp,419.46bp) and (349.55bp,413.27bp)  .. (327.0bp,402.0bp) .. controls (312.15bp,394.58bp) and (298.22bp,381.73bp)  .. (node_10);
  \draw (336.0bp,392.0bp) node {$1$};
  \draw [blue,->] (node_59) ..controls (545.13bp,547.26bp) and (427.26bp,520.47bp)  .. (node_30);
  \draw (523.0bp,532.0bp) node {$1$};
  \draw [red,->] (node_50) ..controls (1188.0bp,269.19bp) and (1188.0bp,249.15bp)  .. (node_55);
  \draw (1197.0bp,252.0bp) node {$2$};
  \draw [red,->] (node_17) ..controls (877.16bp,407.87bp) and (916.02bp,384.0bp)  .. (node_58);
  \draw (925.0bp,392.0bp) node {$2$};
  \draw [red,->] (node_20) ..controls (732.0bp,339.19bp) and (732.0bp,319.15bp)  .. (node_15);
  \draw (741.0bp,322.0bp) node {$2$};
  \draw [blue,->] (node_32) ..controls (1159.0bp,479.19bp) and (1159.0bp,459.15bp)  .. (node_42);
  \draw (1168.0bp,462.0bp) node {$1$};
  \draw [blue,->] (node_1) ..controls (1105.2bp,267.87bp) and (1144.0bp,244.0bp)  .. (node_55);
  \draw (1153.0bp,252.0bp) node {$1$};
  \draw [red,->] (node_59) ..controls (613.45bp,548.45bp) and (591.32bp,526.32bp)  .. (node_0);
  \draw (614.0bp,532.0bp) node {$2$};
  \draw [red,->] (node_7) ..controls (1074.0bp,339.19bp) and (1074.0bp,319.15bp)  .. (node_1);
  \draw (1083.0bp,322.0bp) node {$2$};
  \draw [blue,->] (node_3) ..controls (713.19bp,198.45bp) and (690.74bp,176.32bp)  .. (node_38);
  \draw (713.0bp,182.0bp) node {$1$};
  \draw [blue,->] (node_10) ..controls (244.84bp,337.87bp) and (205.98bp,314.0bp)  .. (node_63);
  \draw (241.0bp,322.0bp) node {$1$};
  \draw [blue,->] (node_56) ..controls (1417.7bp,198.03bp) and (1385.3bp,174.64bp)  .. (node_46);
  \draw (1416.0bp,182.0bp) node {$1$};
  \draw [blue,->] (node_18) ..controls (376.0bp,199.19bp) and (376.0bp,179.15bp)  .. (node_61);
  \draw (385.0bp,182.0bp) node {$1$};
  \draw [red,->] (node_58) ..controls (960.0bp,339.19bp) and (960.0bp,319.15bp)  .. (node_45);
  \draw (969.0bp,322.0bp) node {$2$};
  \draw [red,->] (node_28) ..controls (458.84bp,127.87bp) and (419.98bp,104.0bp)  .. (node_47);
  \draw (455.0bp,112.0bp) node {$2$};
  \draw [red,->] (node_11) ..controls (462.67bp,408.66bp) and (479.89bp,387.14bp)  .. (node_62);
  \draw (491.0bp,392.0bp) node {$2$};
  \draw [blue,->] (node_49) ..controls (1339.0bp,338.77bp) and (1323.8bp,317.54bp)  .. (node_13);
  \draw (1341.0bp,322.0bp) node {$1$};
  \draw [red,->] (node_1) ..controls (1074.0bp,269.19bp) and (1074.0bp,249.15bp)  .. (node_31);
  \draw (1083.0bp,252.0bp) node {$2$};
  \draw [blue,->] (node_52) ..controls (1388.9bp,407.71bp) and (1435.5bp,383.37bp)  .. (node_23);
  \draw (1443.0bp,392.0bp) node {$1$};
  \draw [red,->] (node_55) ..controls (1156.8bp,197.87bp) and (1118.0bp,174.0bp)  .. (node_8);
  \draw (1153.0bp,182.0bp) node {$2$};
  \draw [red,->] (node_5) ..controls (865.98bp,198.35bp) and (890.02bp,175.91bp)  .. (node_22);
  \draw (901.0bp,182.0bp) node {$2$};
  \draw [red,->] (node_57) ..controls (846.0bp,339.19bp) and (846.0bp,319.15bp)  .. (node_6);
  \draw (855.0bp,322.0bp) node {$2$};
  \draw [blue,->] (node_0) ..controls (530.84bp,477.87bp) and (491.98bp,454.0bp)  .. (node_11);
  \draw (527.0bp,462.0bp) node {$1$};
  \draw [red,->] (node_54) ..controls (255.0bp,129.19bp) and (255.0bp,109.15bp)  .. (node_9);
  \draw (264.0bp,112.0bp) node {$2$};
  \draw [red,->] (node_0) ..controls (562.0bp,479.19bp) and (562.0bp,459.15bp)  .. (node_26);
  \draw (571.0bp,462.0bp) node {$2$};
  \draw [blue,->] (node_22) ..controls (921.0bp,129.19bp) and (921.0bp,109.15bp)  .. (node_37);
  \draw (930.0bp,112.0bp) node {$1$};
  \draw [blue,->] (node_19) ..controls (618.0bp,339.19bp) and (618.0bp,319.15bp)  .. (node_48);
  \draw (627.0bp,322.0bp) node {$1$};
  \draw [red,->] (node_41) ..controls (130.84bp,337.87bp) and (91.978bp,314.0bp)  .. (node_27);
  \draw (128.0bp,322.0bp) node {$2$};
  \draw [blue,->] (node_61) ..controls (376.0bp,129.19bp) and (376.0bp,109.15bp)  .. (node_47);
  \draw (385.0bp,112.0bp) node {$1$};
  \draw [blue,->] (node_58) ..controls (928.84bp,337.87bp) and (889.98bp,314.0bp)  .. (node_6);
  \draw (925.0bp,322.0bp) node {$1$};
  \draw [red,->] (node_51) ..controls (1313.7bp,198.77bp) and (1327.4bp,177.54bp)  .. (node_46);
  \draw (1338.0bp,182.0bp) node {$2$};
  \draw [blue,->] (node_26) ..controls (546.72bp,408.56bp) and (528.64bp,386.73bp)  .. (node_62);
  \draw (548.0bp,392.0bp) node {$1$};
  \draw [blue,->] (node_39) ..controls (358.84bp,337.87bp) and (319.98bp,314.0bp)  .. (node_60);
  \draw (355.0bp,322.0bp) node {$1$};
  \draw [red,->] (node_24) ..controls (341.49bp,408.45bp) and (362.35bp,386.32bp)  .. (node_39);
  \draw (373.0bp,392.0bp) node {$2$};
  \draw [red,->] (node_16) ..controls (1507.0bp,268.25bp) and (1479.0bp,245.5bp)  .. (node_56);
  \draw (1506.0bp,252.0bp) node {$2$};
  \draw [blue,->] (node_35) ..controls (386.42bp,269.08bp) and (382.35bp,248.75bp)  .. (node_18);
  \draw (393.0bp,252.0bp) node {$1$};
  \draw [red,->] (node_15) ..controls (732.0bp,269.19bp) and (732.0bp,249.15bp)  .. (node_3);
  \draw (741.0bp,252.0bp) node {$2$};
  \draw [blue,->] (node_24) ..controls (295.38bp,414.96bp) and (279.2bp,408.11bp)  .. (265.0bp,402.0bp) .. controls (238.5bp,390.6bp) and (208.27bp,377.36bp)  .. (node_41);
  \draw (274.0bp,392.0bp) node {$1$};
  \draw [blue,->] (node_17) ..controls (846.0bp,409.19bp) and (846.0bp,389.15bp)  .. (node_57);
  \draw (855.0bp,392.0bp) node {$1$};
  \draw [blue,->] (node_6) ..controls (846.0bp,269.19bp) and (846.0bp,249.15bp)  .. (node_5);
  \draw (855.0bp,252.0bp) node {$1$};
  \draw [blue,->] (node_48) ..controls (618.0bp,269.19bp) and (618.0bp,249.15bp)  .. (node_2);
  \draw (627.0bp,252.0bp) node {$1$};
  \draw [blue,->] (node_29) ..controls (172.16bp,197.87bp) and (211.02bp,174.0bp)  .. (node_54);
  \draw (220.0bp,182.0bp) node {$1$};
  \draw [red,->] (node_43) ..controls (458.84bp,197.87bp) and (419.98bp,174.0bp)  .. (node_61);
  \draw (455.0bp,182.0bp) node {$2$};
  \draw [red,->] (node_60) ..controls (303.1bp,268.03bp) and (336.52bp,244.64bp)  .. (node_18);
  \draw (346.0bp,252.0bp) node {$2$};
  \draw [blue,->] (node_42) ..controls (1136.2bp,408.25bp) and (1108.6bp,385.5bp)  .. (node_7);
  \draw (1135.0bp,392.0bp) node {$1$};
  \draw [red,->] (node_42) ..controls (1166.5bp,408.98bp) and (1175.0bp,388.35bp)  .. (node_44);
  \draw (1185.0bp,392.0bp) node {$2$};
  \draw [red,->] (node_27) ..controls (73.137bp,268.08bp) and (104.01bp,244.85bp)  .. (node_29);
  \draw (115.0bp,252.0bp) node {$2$};
  \draw [blue,->] (node_36) ..controls (172.16bp,127.87bp) and (211.02bp,104.0bp)  .. (node_9);
  \draw (220.0bp,112.0bp) node {$1$};
  \draw [blue,->] (node_4) ..controls (500.42bp,269.08bp) and (496.35bp,248.75bp)  .. (node_43);
  \draw (507.0bp,252.0bp) node {$1$};
  \draw [red,->] (node_25) ..controls (742.85bp,477.85bp) and (792.72bp,453.27bp)  .. (node_17);
  \draw (800.0bp,462.0bp) node {$2$};
  \draw [blue,->] (node_30) ..controls (287.17bp,477.9bp) and (240.07bp,453.48bp)  .. (node_33);
  \draw (281.0bp,462.0bp) node {$1$};
  \draw [red,->] (node_23) ..controls (1497.5bp,338.77bp) and (1510.8bp,317.54bp)  .. (node_16);
  \draw (1522.0bp,322.0bp) node {$2$};
  \draw [blue,->] (node_43) ..controls (490.0bp,199.19bp) and (490.0bp,179.15bp)  .. (node_28);
  \draw (499.0bp,182.0bp) node {$1$};
  \draw [red,->] (node_33) ..controls (182.05bp,408.98bp) and (174.09bp,388.35bp)  .. (node_41);
  \draw (187.0bp,392.0bp) node {$2$};
  \draw [blue,->] (node_45) ..controls (960.0bp,269.19bp) and (960.0bp,249.15bp)  .. (node_34);
  \draw (969.0bp,252.0bp) node {$1$};
  \draw [darkgreen,thick,->] (node_59) edgenode[above]{$-1$} (node_25);
  \draw [red,->] (node_29) ..controls (141.0bp,199.19bp) and (141.0bp,179.15bp)  .. (node_36);
  \draw (150.0bp,182.0bp) node {$2$};
  \draw [red,->] (node_8) ..controls (1074.0bp,129.19bp) and (1074.0bp,109.15bp)  .. (node_21);
  \draw (1083.0bp,112.0bp) node {$2$};
  \draw [blue,->] (node_13) ..controls (1302.0bp,269.19bp) and (1302.0bp,249.15bp)  .. (node_51);
  \draw (1311.0bp,252.0bp) node {$1$};
  \draw [blue,->] (node_62) ..controls (504.0bp,339.19bp) and (504.0bp,319.15bp)  .. (node_4);
  \draw (513.0bp,322.0bp) node {$1$};
  \draw [red,->] (node_10) ..controls (276.0bp,339.19bp) and (276.0bp,319.15bp)  .. (node_60);
  \draw (285.0bp,322.0bp) node {$2$};
  \draw [blue,->] (node_31) ..controls (1074.0bp,199.19bp) and (1074.0bp,179.15bp)  .. (node_8);
  \draw (1083.0bp,182.0bp) node {$1$};
  \draw [red,->] (node_30) ..controls (324.0bp,479.19bp) and (324.0bp,459.15bp)  .. (node_24);
  \draw (333.0bp,462.0bp) node {$2$};
  \draw [blue,->] (node_60) ..controls (270.63bp,269.08bp) and (264.52bp,248.75bp)  .. (node_12);
  \draw (276.0bp,252.0bp) node {$1$};
  \draw [blue,->] (node_44) ..controls (1150.6bp,345.72bp) and (1135.4bp,339.71bp)  .. (1123.0bp,332.0bp) .. controls (1109.6bp,323.71bp) and (1096.6bp,311.52bp)  .. (node_1);
  \draw (1132.0bp,322.0bp) node {$1$};
  \draw [blue,->] (node_9) ..controls (273.55bp,58.455bp) and (295.68bp,36.324bp)  .. (node_53);
  \draw (307.0bp,42.0bp) node {$1$};
  \draw [red,->] (node_14) ..controls (680.96bp,408.25bp) and (653.01bp,385.5bp)  .. (node_19);
  \draw (680.0bp,392.0bp) node {$2$};
  \draw [red,->] (node_52) ..controls (1352.0bp,409.19bp) and (1352.0bp,389.15bp)  .. (node_49);
  \draw (1361.0bp,392.0bp) node {$2$};
  \draw [darkgreen,thick,->] (node_36) edgenode[above]{$-1$} (node_21);
  \draw [darkgreen,thick,->,dashed] (node_45) edgenode[above]{$-1$} (node_3);
  \draw [darkgreen,thick,->,dashed] (node_34) edgenode[above]{$-1$} (node_38);
  \draw [purple,thick,->,dashed] (node_45) edgenode[above]{$-1$} (node_56);
  \draw [purple,thick,->,dashed] (node_34) edgenode[above]{$-1$} (node_46);
\end{tikzpicture}}}
\caption{Counterexample to the unique characterization of the local queer axioms of 
Definition~\ref{definition.local queer axioms}.
\label{figure.counterexample}}
\end{figure}
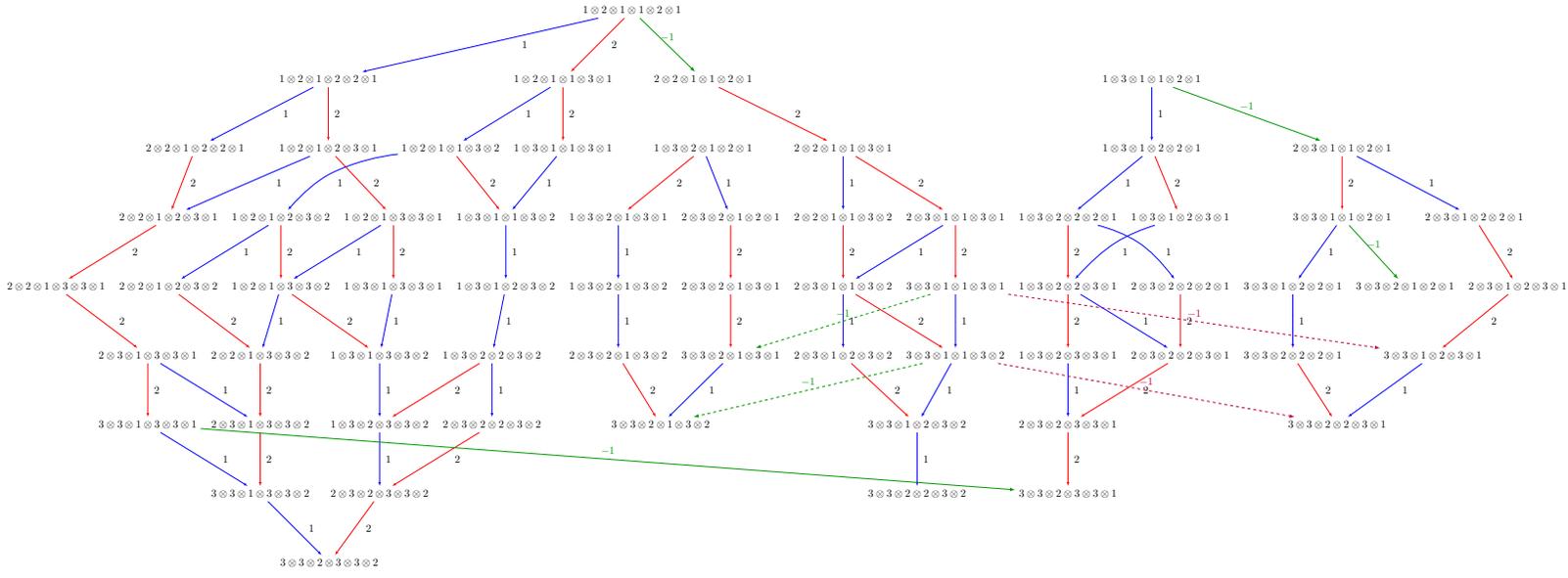

The problem with Axiom \textbf{LQ5} illustrated in Figure~\ref{figure.local queer} is that the $(-1)$-arrow at the bottom
of the 2-strings is not closed at the top. Hence, as demonstrated by the counterexample in Figure~\ref{figure.counterexample}
switching components with the same $I_0$-highest weights can cause non-uniqueness. 
In fact, if $f_{-1}b$ is determined for all $b \in \mathcal{C}$ such that
\begin{equation}
\label{equation.almost lowest}
	\varphi_i(b)=0 \quad \text{for all $i\in I_0 \setminus \{1\}$ and} \quad 
	\varphi_1(b)=2,
\end{equation}
then, by the relations between $f_{-1}$ and $f_i$ for $i\in I_0$ of Definition~\ref{definition.local queer axioms},
$f_{-1}$ is determined on all elements in $\mathcal{C}$. 

\begin{lemma}
\label{lemma.gjk}
Let $v \in \mathcal{B}^{\otimes \ell}$ be an $I_0$-lowest weight element, that is, $\varphi_i(v)=0$ for all $i\in I_0$.
Then every $b \in \mathcal{B}^{\otimes \ell}$ satisfying~\eqref{equation.almost lowest} is of the form
\begin{equation}
\label{equation.precise almost lowest}
	g_{j,k} := (e_1 \cdots e_j) (e_1 \cdots e_k) v \qquad \text{for some $1\leqslant j \leqslant k  \leqslant n$.}
\end{equation}
Conversely, every $g_{j,k} \neq 0$ with $1\leqslant j \leqslant k  \leqslant n$ satisfies~\eqref{equation.almost lowest}.
\end{lemma}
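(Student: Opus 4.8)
The plan is to work inside the fixed $I_0$-component of $v$, so that $b$ and $v$ lie in the same type $A_n$ crystal whose unique $I_0$-lowest weight element is $v$. I would first rephrase condition~\eqref{equation.almost lowest} combinatorially: since $\varphi_i(b)=0$ for all $i\geqslant 2$ means exactly that $b$ is $\{2,3,\ldots,n\}$-lowest weight, the elements satisfying~\eqref{equation.almost lowest} are precisely the $\{2,\ldots,n\}$-lowest weight elements $b$ with $\varphi_1(b)=2$. The entire statement then reduces to a single ``string'' lemma and its inverse.

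\smallskip
\noindent\textbf{Key step (building up).} I would prove: if $u$ is $\{2,\ldots,n\}$-lowest weight and $e_1 e_2\cdots e_m u\neq 0$ for some $1\leqslant m\leqslant n$, then $e_1 e_2\cdots e_m u$ is again $\{2,\ldots,n\}$-lowest weight with $\varphi_1(e_1\cdots e_m u)=\varphi_1(u)+1$. The proof is by tracking the signature rule of Remark~\ref{rmk:signature-rule} through a ``defect-propagation'' induction: applying the innermost $e_m$ to $u$ (which has $\varphi_m(u)=0$) changes a single $m+1$ to $m$, raising $\varphi_m$ to $1$ while leaving $\varphi_i=0$ for the other $i\in\{2,\ldots,n\}$ and leaving the letters $1,2$ untouched when $m\geqslant 3$. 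One then shows inductively, for $r=1,2,\ldots,m$, that $e_{m-r+1}\cdots e_m u$ is lowest weight for every index in $\{2,\ldots,n\}$ except $m-r+1$, where $\varphi_{m-r+1}=1$; each successive operator $e_{m-r}$ absorbs the unique unbracketed letter created at the previous stage and shifts the ``defect'' one step to the left. When the defect reaches index $1$ (the case $m-r+1=2$ followed by the outermost $e_1$), Remark~\ref{rmk:combo-minus-1} together with condition~\textbf{A1} shows $\varphi_1$ goes up by exactly one and $\varphi_2$ returns to $0$. Each increment/absorption is a local check on adjacent strings, so only removing or adding a single letter at a time is involved, which keeps the number of unbracketed letters of non-adjacent strings fixed.

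\smallskip
\noindent\textbf{Converse.} Given the Key step, the converse is immediate: starting from $v$ (which is $\{2,\ldots,n\}$-lowest with $\varphi_1(v)=0$), one application yields that $u:=e_1\cdots e_k v$ is $\{2,\ldots,n\}$-lowest with $\varphi_1(u)=1$, and a second application yields that $g_{j,k}=e_1\cdots e_j u$ is $\{2,\ldots,n\}$-lowest with $\varphi_1=2$, i.e.\ satisfies~\eqref{equation.almost lowest}. (Note this uses only $g_{j,k}\neq 0$, not $j\leqslant k$.) For the forward direction I would ``peel'' $b$ down: the inverse form of the Key step says that for a $\{2,\ldots,n\}$-lowest weight $w$ with $\varphi_1(w)\geqslant 1$ there is a \emph{unique} index $m$ for which $f_m\cdots f_1 w$ is $\{2,\ldots,n\}$-lowest with $\varphi_1$ decremented, and that this inverts $e_1\cdots e_m$; the index $m$ is forced because $f_1 w$ creates a defect that one is compelled to chase rightward with $f_2, f_3, \ldots$ until the word is $\{2,\ldots,n\}$-lowest again. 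Applying this to $b$ gives $j$ and $u=f_j\cdots f_1 b$; applying it to $u$ gives $k$ and a fully $I_0$-lowest element, which by the uniqueness of the $I_0$-lowest weight vector in the component must be $v$. Undoing the peels gives $b=e_1\cdots e_j(e_1\cdots e_k v)=g_{j,k}$, as in~\eqref{equation.precise almost lowest}.

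\smallskip
\noindent\textbf{Main obstacle.} The remaining point is the ordering $j\leqslant k$, equivalently that the ``peel length'' is weakly monotone: peeling one copy of the $1$-string off $b$ cannot produce an element $u$ whose own peel length is smaller. My example analysis shows this is genuinely delicate and is \emph{not} controlled merely by the values $\varepsilon_m(u)$, since a chain $e_1\cdots e_j u$ can instead die at an outer operator. I expect this to be the hard part, and I would handle it by comparing the two peeling paths at the level of the positions of the changed letters, using the bracketing/initial-sequence bookkeeping already developed in this section (as in Proposition~\ref{proposition.phi -i} and Theorems~\ref{theorem.f-i},~\ref{theorem.e-i}): one shows the sequence of positions altered in the second peel lies weakly to the left of, and is nested inside, that of the first, so that the forced chain for $u$ runs at least as far as that for $b$. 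This nestedness is exactly the semistandard-type constraint that forces $j\leqslant k$.
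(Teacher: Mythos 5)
Your peel/build architecture is reasonable in outline, but the proposal has a concrete false step at its core: the Key Step fails as stated, and with it your parenthetical claim that the converse ``uses only $g_{j,k}\neq 0$, not $j\leqslant k$.'' Take $n=2$, $\ell=5$, $v=33322$ (an $I_0$-lowest weight word) and $u=e_1v=33312$, which is $\{2\}$-lowest weight with $\varphi_1(u)=1$. Then $e_2u=23312\neq 0$ and $e_1e_2u=23311\neq 0$, but $\varphi_2(23311)=1$, so $e_1e_2u$ is not $\{2\}$-lowest weight; equivalently $g_{2,1}=(e_1e_2)(e_1)v=23311$ is nonzero yet violates~\eqref{equation.almost lowest}. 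The mechanism is exactly the one your defect-propagation induction assumes away: $e_2$ turns the leading $3$ into a $2$ that is unbracketed in the $(2,3)$-bracketing, but the subsequent $e_1$ acts on the leftmost $(1,2)$-\emph{unbracketed} $2$, and the newly created $2$ is $(1,2)$-bracketed with the $1$ behind it, so $e_1$ skips it and the $\varphi_2$-defect is never absorbed. Hence $j\leqslant k$ is doing real work already in the converse direction, not only in the forward direction as you assert; the Key Step must be restricted (say to $u=e_1\cdots e_k v$ and $m\leqslant k$) and re-proved with that restriction genuinely used.

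Separately, the inequality $j\leqslant k$ in the forward direction --- which you correctly identify as the content-bearing part --- is left as a plan: the ``nestedness'' of the positions altered by the two peels is asserted, not proved, so as written this is not a proof. For comparison, the paper sidesteps both difficulties by working in the tableau model: an $I_0$-lowest weight $v$ is the explicit tableau with $n+2-i$ in the $i$-th box from the top of each column, the elements satisfying~\eqref{equation.almost lowest} are exactly the tableaux obtained from it by turning two first-row entries into $1$'s sitting below entries $\geqslant 3$, and both implications (including which chains $e_1\cdots e_j$ still apply after the first chain, which is where $j\leqslant k$ enters) are read off from that description. If you wish to keep the signature-rule route, you must both prove the corrected, restricted Key Step and establish the monotonicity of the peel lengths; the initial-sequence bookkeeping of Proposition~\ref{proposition.phi -i} is a plausible tool, but neither statement is currently established.
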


\begin{proof}
The statement of the lemma is a statement about type $A_n$ crystals and hence can be verified by
the tableaux model for type $A_n$ crystals (see for example~\cite{BumpSchilling.2017}). The element $v$
is $I_0$-lowest weight and hence as a tableau in French notation contains the letter $n+1$ at the top of each column,
the letter $n$ in the second to top box in each column, and in general the letter $n+2-i$ in the $i$-th box from
the top in its column. If there is a letter $k+1$ in the first row of $v$, then $(e_1 \cdots e_k)$ applies to $v$
and $b'=(e_1 \cdots e_k)v$ satisfies $\varphi_i(b')=0$ for $i\in I_0 \setminus \{1\}$ and $\varphi_1(b')=1$.
The element $b'$ has several changed entries in the first row, and otherwise the entries above the first row
all have letter $n+2-i$ in the $i$-th box from the top in their column.   If $b'$ has a letter $j+1$ in the first row with 
$1\leqslant j \leqslant k$, then $(e_1 \cdots e_j)$ applies to $b'$ and $b=g_{j,k} = (e_1 \cdots e_j)b'$ 
satisfies~\eqref{equation.almost lowest}.  Note that if $j>k$, then the last $e_1$ would no longer apply and hence $b=0$. 
This proves that $g_{j,k} \neq 0$ as in~\eqref{equation.precise almost lowest} satisfies~\eqref{equation.almost lowest}. If 
conversely $b$ satisfies~\eqref{equation.almost lowest}, then as a tableau it contains two extra $1$'s in the first row that 
have a $3$ or bigger above them rather than a $2$ in their columns, and for entries higher than the first row the $i$-th box 
from the top in its column contains $n+2-i$. It is not hard to check that then $(f_k \cdots f_1)(f_j\cdots f_1)b=v$ for 
some $1\leqslant j\leqslant k \leqslant n$.  Hence $b$ is of the form~\eqref{equation.precise almost lowest}.
\end{proof}

In the next section, we introduce a new graph just on $I_0$-highest weight elements and 
new connectivity axioms (see Definition~\ref{definition.connectivity axioms}) that
uniquely characterizes queer crystals (see Theorem~\ref{theorem.main}).  

\section{Graph on type $A$ components}
\label{section.G(C)}

Let $\mathcal{C}$ be a crystal with index set $I_0 \cup \{-1\}$ that is a Stembridge crystal of type $A_n$ when
restricted to the arrows labeled $I_0$. In this section, we define a graph for $\mathcal{C}$ labeled by the type $A_n$ 
components of $\mathcal{C}$. We draw an edge from vertex $C_1$ to vertex $C_2$ in this graph if there is an element 
$b_1$ in the component $C_1$ and an element $b_2$ in the component $C_2$ such that $f_{-1} b_1 = b_2$.
We provide an easy combinatorial way to describe this graph for a queer crystal leveraging the explicit actions of $f_{-i}$ 
described in Theorem~\ref{theorem.f-i} and $e_{-i}$ described in Theorem~\ref{theorem.e-i}, respectively
(see Theorem~\ref{theorem.combinatorial G}). We also provide new axioms in 
Definition~\ref{definition.connectivity axioms} that will be used in Section~\ref{section.characterization} to provide a 
unique characterization of queer crystals.

\begin{definition}
Let $\mathcal{C}$ be a crystal with index set $I_0 \cup \{-1\}$ that is a Stembridge crystal of type $A_n$ when
restricted to the arrows labeled $I_0$. We define the \defn{component graph} of $\mathcal{C}$, denoted by 
\defn{$G(\mathcal{C})$}, as follows. The vertices of $G(\mathcal{C})$ are the type $A_n$ components of $\mathcal{C}$ 
(typically labeled by their highest weight elements). There is an edge from vertex $C_1$ to vertex $C_2$ in this graph, if 
there is an element $b_1$ in the component $C_1$ and an element $b_2$ in the component $C_2$ such that
\[
	f_{-1} b_1 = b_2.
\]
\end{definition}

\begin{example}
\label{example.G counterexample}
Let $\mathcal{C}$ be the connected component in the $\mathfrak{q}(3)$-crystal $\mathcal{B}^{\otimes 6}$ with highest 
weight element $1\otimes 2 \otimes 1 \otimes 1 \otimes 2 \otimes 1$ of highest weight $(4,2,0)$. The graph $G(\mathcal{C})$
is given in Figure~\ref{figure.queer 12 graph} on the left (disregarding the labels on the edges).
The graph $G(\mathcal{C}')$ for the counterexample $\mathcal{C}'$ in Figure~\ref{figure.counterexample}
is given in Figure~\ref{figure.queer 12 graph} on the right. Since the two graphs are not isomorphic as unlabeled graphs, 
this confirms that the purple dashed arrows in Figure~\ref{figure.counterexample} do not give the queer crystal even 
though the induced crystal satisfies the axioms in Definition~\ref{definition.local queer axioms}.
\end{example}

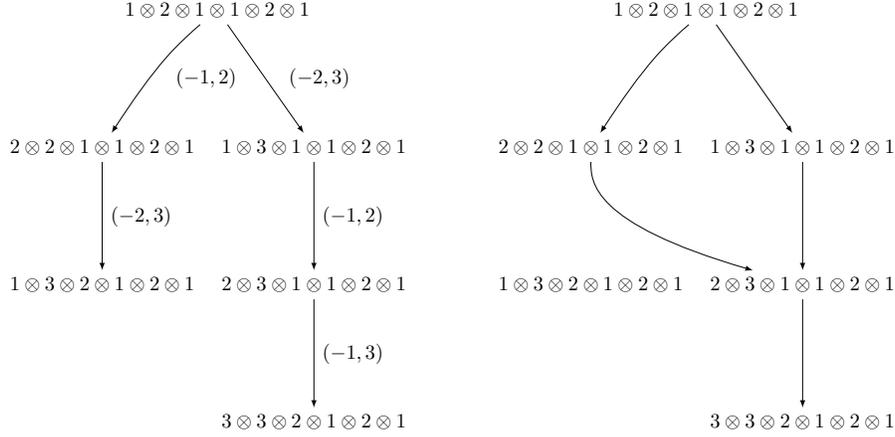
\begin{figure}
\scalebox{0.7}{
\begin{tikzpicture}[>=latex,line join=bevel,]
\node (node_5) at (162.0bp,7.0bp) [draw,draw=none] {$3 \otimes 3 \otimes 2 \otimes 1 \otimes 2 \otimes 1$};
  \node (node_4) at (162.0bp,81.0bp) [draw,draw=none] {$2 \otimes 3 \otimes 1 \otimes 1 \otimes 2 \otimes 1$};
  \node (node_3) at (48.0bp,155.0bp) [draw,draw=none] {$2 \otimes 2 \otimes 1 \otimes 1 \otimes 2 \otimes 1$};
  \node (node_2) at (48.0bp,81.0bp) [draw,draw=none] {$1 \otimes 3 \otimes 2 \otimes 1 \otimes 2 \otimes 1$};
  \node (node_1) at (162.0bp,155.0bp) [draw,draw=none] {$1 \otimes 3 \otimes 1 \otimes 1 \otimes 2 \otimes 1$};
  \node (node_0) at (110.0bp,229.0bp) [draw,draw=none] {$1 \otimes 2 \otimes 1 \otimes 1 \otimes 2 \otimes 1$};
  \draw [black,->] (node_3) ..controls (48.0bp,136.61bp) and (48.0bp,114.21bp)  .. (node_2);
  \definecolor{strokecol}{rgb}{0.0,0.0,0.0};
  \pgfsetstrokecolor{strokecol}
  \draw (69.0bp,118.0bp) node {$\left(-2, 3\right)$};
  \draw [black,->] (node_0) ..controls (96.003bp,217.17bp) and (88.742bp,210.52bp)  .. (83.0bp,204.0bp) .. controls (73.785bp,193.54bp) and (64.671bp,180.68bp)  .. (node_3);
  \draw (104.0bp,192.0bp) node {$\left(-1, 2\right)$};
  \draw [black,->] (node_4) ..controls (162.0bp,62.61bp) and (162.0bp,40.211bp)  .. (node_5);
  \draw (183.0bp,44.0bp) node {$\left(-1, 3\right)$};
  \draw [black,->] (node_1) ..controls (162.0bp,136.61bp) and (162.0bp,114.21bp)  .. (node_4);
  \draw (183.0bp,118.0bp) node {$\left(-1, 2\right)$};
  \draw [black,->] (node_0) ..controls (123.23bp,210.17bp) and (139.9bp,186.45bp)  .. (node_1);
  \draw (165.0bp,192.0bp) node {$\left(-2, 3\right)$};
\end{tikzpicture}}
\hspace{0.7cm}
\scalebox{0.7}{
\begin{tikzpicture}[>=latex,line join=bevel,]
\node (node_5) at (162.0bp,7.0bp) [draw,draw=none] {$3 \otimes 3 \otimes 2 \otimes 1 \otimes 2 \otimes 1$};
  \node (node_4) at (162.0bp,81.0bp) [draw,draw=none] {$2 \otimes 3 \otimes 1 \otimes 1 \otimes 2 \otimes 1$};
  \node (node_3) at (48.0bp,155.0bp) [draw,draw=none] {$2 \otimes 2 \otimes 1 \otimes 1 \otimes 2 \otimes 1$};
  \node (node_2) at (48.0bp,81.0bp) [draw,draw=none] {$1 \otimes 3 \otimes 2 \otimes 1 \otimes 2 \otimes 1$};
  \node (node_1) at (162.0bp,155.0bp) [draw,draw=none] {$1 \otimes 3 \otimes 1 \otimes 1 \otimes 2 \otimes 1$};
  \node (node_0) at (110.0bp,229.0bp) [draw,draw=none] {$1 \otimes 2 \otimes 1 \otimes 1 \otimes 2 \otimes 1$};
  \draw [black,->] (node_3) ..controls (48.0bp,136.61bp) and (48.0bp,114.21bp)  .. (node_4);
  \definecolor{strokecol}{rgb}{0.0,0.0,0.0};
  \pgfsetstrokecolor{strokecol}
  \draw [black,->] (node_0) ..controls (96.003bp,217.17bp) and (88.742bp,210.52bp)  .. (83.0bp,204.0bp) .. controls (73.785bp,193.54bp) and (64.671bp,180.68bp)  .. (node_3);
  \draw [black,->] (node_4) ..controls (162.0bp,62.61bp) and (162.0bp,40.211bp)  .. (node_5);
  \draw [black,->] (node_1) ..controls (162.0bp,136.61bp) and (162.0bp,114.21bp)  .. (node_4);
  \draw [black,->] (node_0) ..controls (123.23bp,210.17bp) and (139.9bp,186.45bp)  .. (node_1);
\end{tikzpicture}}
\caption{\textbf{Left:} $\overline{G}(\mathcal{C})$. The graph $G(\mathcal{C})$ is obtained from $\overline{G}(\mathcal{C})$
by removing the labels. \textbf{Right:} $G(\mathcal{C}')$ for the crystals of 
Example~\ref{example.G counterexample}.
\label{figure.queer 12 graph}} 
\end{figure}

\begin{example}
\label{example.G(C)}
Let $\mathcal{C}$ be the connected component with highest weight element
$1 \otimes 1 \otimes 2 \otimes 2 \otimes 1 \otimes 3 \otimes 2 \otimes 1$ in the $\mathfrak{q}(4)$-crystal 
$\mathcal{B}^{\otimes 9}$. Then the graph $G(\mathcal{C})$ is given in Figure~\ref{figure.G(C)}.
One may easily check using Theorem~\ref{theorem.f-i} that all arrows in Figure~\ref{figure.G(C)} are given
by the application of $f_{-i}$ for some $i$ except for the arrows that by-pass other arrows and the arrow
to the lowest vertex, which is given by $f_{-2} f_3$ (which is also determined by Theorem~\ref{theorem.f-i}).
The result is shown in Figure~\ref{figure.G(C) labeled}.
\end{example}

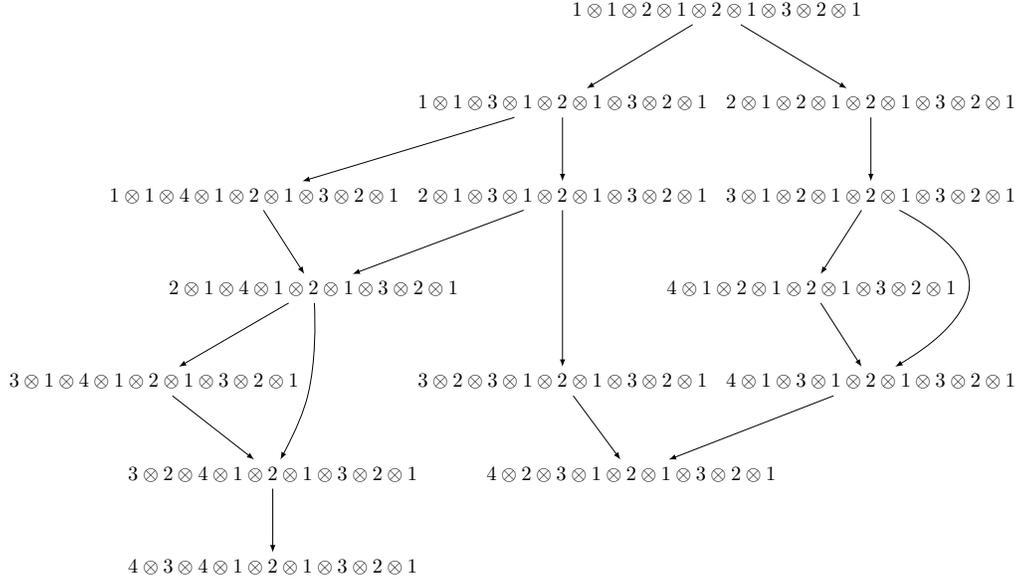
\begin{figure}
\scalebox{0.7}{
\begin{tikzpicture}[>=latex,line join=bevel,]
\node (node_13) at (138.0bp,7.0bp) [draw,draw=none] {$4 \otimes 3 \otimes 4 \otimes 1 \otimes 2 \otimes 1 \otimes 3 \otimes 2 \otimes 1$};
  \node (node_2) at (128.0bp,207.0bp) [draw,draw=none] {$1 \otimes 1 \otimes 4 \otimes 1 \otimes 2 \otimes 1 \otimes 3 \otimes 2 \otimes 1$};
  \node (node_9) at (138.0bp,57.0bp) [draw,draw=none] {$3 \otimes 2 \otimes 4 \otimes 1 \otimes 2 \otimes 1 \otimes 3 \otimes 2 \otimes 1$};
  \node (node_8) at (294.0bp,107.0bp) [draw,draw=none] {$3 \otimes 2 \otimes 3 \otimes 1 \otimes 2 \otimes 1 \otimes 3 \otimes 2 \otimes 1$};
  \node (node_7) at (74.0bp,107.0bp) [draw,draw=none] {$3 \otimes 1 \otimes 4 \otimes 1 \otimes 2 \otimes 1 \otimes 3 \otimes 2 \otimes 1$};
  \node (node_6) at (460.0bp,207.0bp) [draw,draw=none] {$3 \otimes 1 \otimes 2 \otimes 1 \otimes 2 \otimes 1 \otimes 3 \otimes 2 \otimes 1$};
  \node (node_5) at (160.0bp,157.0bp) [draw,draw=none] {$2 \otimes 1 \otimes 4 \otimes 1 \otimes 2 \otimes 1 \otimes 3 \otimes 2 \otimes 1$};
  \node (node_4) at (294.0bp,207.0bp) [draw,draw=none] {$2 \otimes 1 \otimes 3 \otimes 1 \otimes 2 \otimes 1 \otimes 3 \otimes 2 \otimes 1$};
  \node (node_3) at (460.0bp,257.0bp) [draw,draw=none] {$2 \otimes 1 \otimes 2 \otimes 1 \otimes 2 \otimes 1 \otimes 3 \otimes 2 \otimes 1$};
  \node (node_12) at (331.0bp,57.0bp) [draw,draw=none] {$4 \otimes 2 \otimes 3 \otimes 1 \otimes 2 \otimes 1 \otimes 3 \otimes 2 \otimes 1$};
  \node (node_1) at (294.0bp,257.0bp) [draw,draw=none] {$1 \otimes 1 \otimes 3 \otimes 1 \otimes 2 \otimes 1 \otimes 3 \otimes 2 \otimes 1$};
  \node (node_0) at (377.0bp,307.0bp) [draw,draw=none] {$1 \otimes 1 \otimes 2 \otimes 1 \otimes 2 \otimes 1 \otimes 3 \otimes 2 \otimes 1$};
  \node (node_11) at (460.0bp,107.0bp) [draw,draw=none] {$4 \otimes 1 \otimes 3 \otimes 1 \otimes 2 \otimes 1 \otimes 3 \otimes 2 \otimes 1$};
  \node (node_10) at (428.0bp,157.0bp) [draw,draw=none] {$4 \otimes 1 \otimes 2 \otimes 1 \otimes 2 \otimes 1 \otimes 3 \otimes 2 \otimes 1$};
  \draw [black,->] (node_0) ..controls (402.08bp,291.89bp) and (423.6bp,278.93bp)  .. (node_3);
  \draw [black,->] (node_4) ..controls (294.0bp,184.17bp) and (294.0bp,146.68bp)  .. (node_8);
  \draw [black,->] (node_4) ..controls (252.54bp,191.53bp) and (214.64bp,177.39bp)  .. (node_5);
  \draw [black,->] (node_8) ..controls (304.77bp,92.451bp) and (312.99bp,81.343bp)  .. (node_12);
  \draw [black,->] (node_6) ..controls (493.05bp,190.19bp) and (520.85bp,171.19bp)  .. (511.0bp,150.0bp) .. controls (505.05bp,137.2bp) and (493.21bp,126.84bp)  .. (node_11);
  \draw [black,->] (node_5) ..controls (134.01bp,141.89bp) and (111.72bp,128.93bp)  .. (node_7);
  \draw [black,->] (node_3) ..controls (460.0bp,242.97bp) and (460.0bp,232.94bp)  .. (node_6);
  \draw [black,->] (node_5) ..controls (160.94bp,138.95bp) and (161.19bp,117.54bp)  .. (157.0bp,100.0bp) .. controls (154.79bp,90.727bp) and (150.64bp,80.963bp)  .. (node_9);
  \draw [black,->] (node_1) ..controls (242.02bp,241.34bp) and (193.6bp,226.76bp)  .. (node_2);
  \draw [black,->] (node_1) ..controls (294.0bp,242.97bp) and (294.0bp,232.94bp)  .. (node_4);
  \draw [black,->] (node_11) ..controls (420.09bp,91.53bp) and (383.6bp,77.388bp)  .. (node_12);
  \draw [black,->] (node_6) ..controls (450.74bp,192.53bp) and (443.73bp,181.57bp)  .. (node_10);
  \draw [black,->] (node_9) ..controls (138.0bp,42.974bp) and (138.0bp,32.942bp)  .. (node_13);
  \draw [black,->] (node_7) ..controls (92.96bp,92.188bp) and (108.7bp,79.892bp)  .. (node_9);
  \draw [black,->] (node_10) ..controls (437.26bp,142.53bp) and (444.27bp,131.57bp)  .. (node_11);
  \draw [black,->] (node_2) ..controls (137.26bp,192.53bp) and (144.27bp,181.57bp)  .. (node_5);
  \draw [black,->] (node_0) ..controls (351.92bp,291.89bp) and (330.4bp,278.93bp)  .. (node_1);
\end{tikzpicture}}
\caption{The graph $G(\mathcal{C})$ for Example~\ref{example.G(C)}.
\label{figure.G(C)}}
\end{figure}

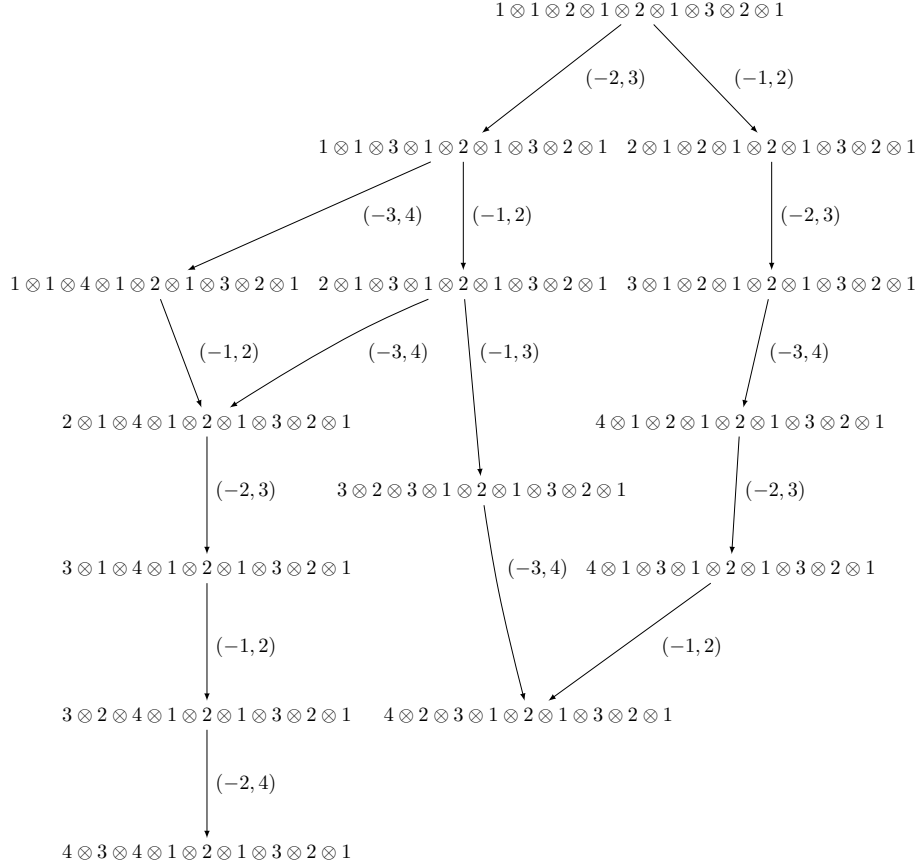
\begin{figure}
\scalebox{0.7}{
\begin{tikzpicture}[>=latex,line join=bevel,]
\node (node_13) at (102.0bp,7.0bp) [draw,draw=none] {$4 \otimes 3 \otimes 4 \otimes 1 \otimes 2 \otimes 1 \otimes 3 \otimes 2 \otimes 1$};
  \node (node_2) at (74.0bp,313.0bp) [draw,draw=none] {$1 \otimes 1 \otimes 4 \otimes 1 \otimes 2 \otimes 1 \otimes 3 \otimes 2 \otimes 1$};
  \node (node_9) at (102.0bp,81.0bp) [draw,draw=none] {$3 \otimes 2 \otimes 4 \otimes 1 \otimes 2 \otimes 1 \otimes 3 \otimes 2 \otimes 1$};
  \node (node_8) at (250.0bp,202.0bp) [draw,draw=none] {$3 \otimes 2 \otimes 3 \otimes 1 \otimes 2 \otimes 1 \otimes 3 \otimes 2 \otimes 1$};
  \node (node_7) at (102.0bp,160.0bp) [draw,draw=none] {$3 \otimes 1 \otimes 4 \otimes 1 \otimes 2 \otimes 1 \otimes 3 \otimes 2 \otimes 1$};
  \node (node_6) at (406.0bp,313.0bp) [draw,draw=none] {$3 \otimes 1 \otimes 2 \otimes 1 \otimes 2 \otimes 1 \otimes 3 \otimes 2 \otimes 1$};
  \node (node_5) at (102.0bp,239.0bp) [draw,draw=none] {$2 \otimes 1 \otimes 4 \otimes 1 \otimes 2 \otimes 1 \otimes 3 \otimes 2 \otimes 1$};
  \node (node_4) at (240.0bp,313.0bp) [draw,draw=none] {$2 \otimes 1 \otimes 3 \otimes 1 \otimes 2 \otimes 1 \otimes 3 \otimes 2 \otimes 1$};
  \node (node_3) at (406.0bp,387.0bp) [draw,draw=none] {$2 \otimes 1 \otimes 2 \otimes 1 \otimes 2 \otimes 1 \otimes 3 \otimes 2 \otimes 1$};
  \node (node_12) at (275.0bp,81.0bp) [draw,draw=none] {$4 \otimes 2 \otimes 3 \otimes 1 \otimes 2 \otimes 1 \otimes 3 \otimes 2 \otimes 1$};
  \node (node_1) at (240.0bp,387.0bp) [draw,draw=none] {$1 \otimes 1 \otimes 3 \otimes 1 \otimes 2 \otimes 1 \otimes 3 \otimes 2 \otimes 1$};
  \node (node_0) at (335.0bp,461.0bp) [draw,draw=none] {$1 \otimes 1 \otimes 2 \otimes 1 \otimes 2 \otimes 1 \otimes 3 \otimes 2 \otimes 1$};
  \node (node_11) at (384.0bp,160.0bp) [draw,draw=none] {$4 \otimes 1 \otimes 3 \otimes 1 \otimes 2 \otimes 1 \otimes 3 \otimes 2 \otimes 1$};
  \node (node_10) at (389.0bp,239.0bp) [draw,draw=none] {$4 \otimes 1 \otimes 2 \otimes 1 \otimes 2 \otimes 1 \otimes 3 \otimes 2 \otimes 1$};
  \draw [black,->] (node_0) ..controls (353.28bp,441.95bp) and (376.67bp,417.56bp)  .. (node_3);
  \definecolor{strokecol}{rgb}{0.0,0.0,0.0};
  \pgfsetstrokecolor{strokecol}
  \draw (402.0bp,424.0bp) node {$\left(-1, 2\right)$};
  \draw [black,->] (node_4) ..controls (242.22bp,288.31bp) and (246.22bp,243.92bp)  .. (node_8);
  \draw (265.0bp,276.0bp) node {$\left(-1, 3\right)$};
  \draw [black,->] (node_4) ..controls (211.63bp,301.22bp) and (196.75bp,294.63bp)  .. (184.0bp,288.0bp) .. controls (162.1bp,276.62bp) and (137.96bp,261.92bp)  .. (node_5);
  \draw (205.0bp,276.0bp) node {$\left(-3, 4\right)$};
  \draw [black,->] (node_5) ..controls (102.0bp,219.84bp) and (102.0bp,194.53bp)  .. (node_7);
  \draw (123.0bp,202.0bp) node {$\left(-2, 3\right)$};
  \draw [black,->] (node_3) ..controls (406.0bp,368.61bp) and (406.0bp,346.21bp)  .. (node_6);
  \draw (427.0bp,350.0bp) node {$\left(-2, 3\right)$};
  \draw [black,->] (node_8) ..controls (252.53bp,184.65bp) and (255.6bp,164.77bp)  .. (259.0bp,148.0bp) .. controls (262.49bp,130.78bp) and (267.24bp,111.27bp)  .. (node_12);
  \draw (280.0bp,160.0bp) node {$\left(-3, 4\right)$};
  \draw [black,->] (node_1) ..controls (195.28bp,367.06bp) and (134.28bp,339.87bp)  .. (node_2);
  \draw (202.0bp,350.0bp) node {$\left(-3, 4\right)$};
  \draw [black,->] (node_1) ..controls (240.0bp,368.61bp) and (240.0bp,346.21bp)  .. (node_4);
  \draw (261.0bp,350.0bp) node {$\left(-1, 2\right)$};
  \draw [black,->] (node_11) ..controls (356.11bp,139.79bp) and (316.46bp,111.05bp)  .. (node_12);
  \draw (363.0bp,118.0bp) node {$\left(-1, 2\right)$};
  \draw [black,->] (node_6) ..controls (401.78bp,294.61bp) and (396.63bp,272.21bp)  .. (node_10);
  \draw (421.0bp,276.0bp) node {$\left(-3, 4\right)$};
  \draw [black,->] (node_9) ..controls (102.0bp,62.61bp) and (102.0bp,40.211bp)  .. (node_13);
  \draw (123.0bp,44.0bp) node {$\left(-2, 4\right)$};
  \draw [black,->] (node_7) ..controls (102.0bp,140.84bp) and (102.0bp,115.53bp)  .. (node_9);
  \draw (123.0bp,118.0bp) node {$\left(-1, 2\right)$};
  \draw [black,->] (node_10) ..controls (387.79bp,219.84bp) and (386.19bp,194.53bp)  .. (node_11);
  \draw (408.0bp,202.0bp) node {$\left(-2, 3\right)$};
  \draw [black,->] (node_2) ..controls (81.0bp,294.5bp) and (89.599bp,271.78bp)  .. (node_5);
  \draw (114.0bp,276.0bp) node {$\left(-1, 2\right)$};
  \draw [black,->] (node_0) ..controls (310.11bp,441.61bp) and (277.46bp,416.18bp)  .. (node_1);
  \draw (322.0bp,424.0bp) node {$\left(-2, 3\right)$};
\end{tikzpicture}}
\caption{The graph $\overline{G}(\mathcal{C})$ of Figure~\ref{figure.G(C)} obtained from $G(\mathcal{C})$ by labeling
each edge (except for the by-pass edges) by $(-i,h)$ if $f_{(-i,h)}$ applies.
\label{figure.G(C) labeled}}
\end{figure}

\begin{figure}
\scalebox{0.7}{
\begin{tikzpicture}[>=latex,line join=bevel,]
\node (node_13) at (153.0bp,7.0bp) [draw,draw=none] {$4 \otimes 3 \otimes 4 \otimes 1 \otimes 2 \otimes 1 \otimes 3 \otimes 2 \otimes 1$};
  \node (node_2) at (101.0bp,299.0bp) [draw,draw=none] {$1 \otimes 1 \otimes 4 \otimes 1 \otimes 2 \otimes 1 \otimes 3 \otimes 2 \otimes 1$};
  \node (node_9) at (153.0bp,77.0bp) [draw,draw=none] {$3 \otimes 2 \otimes 4 \otimes 1 \otimes 2 \otimes 1 \otimes 3 \otimes 2 \otimes 1$};
  \node (node_8) at (267.0bp,226.0bp) [draw,draw=none] {$3 \otimes 2 \otimes 3 \otimes 1 \otimes 2 \otimes 1 \otimes 3 \otimes 2 \otimes 1$};
  \node (node_7) at (74.0bp,150.0bp) [draw,draw=none] {$3 \otimes 1 \otimes 4 \otimes 1 \otimes 2 \otimes 1 \otimes 3 \otimes 2 \otimes 1$};
  \node (node_6) at (433.0bp,299.0bp) [draw,draw=none] {$3 \otimes 1 \otimes 2 \otimes 1 \otimes 2 \otimes 1 \otimes 3 \otimes 2 \otimes 1$};
  \node (node_5) at (101.0bp,226.0bp) [draw,draw=none] {$2 \otimes 1 \otimes 4 \otimes 1 \otimes 2 \otimes 1 \otimes 3 \otimes 2 \otimes 1$};
  \node (node_4) at (267.0bp,299.0bp) [draw,draw=none] {$2 \otimes 1 \otimes 3 \otimes 1 \otimes 2 \otimes 1 \otimes 3 \otimes 2 \otimes 1$};
  \node (node_3) at (433.0bp,369.0bp) [draw,draw=none] {$2 \otimes 1 \otimes 2 \otimes 1 \otimes 2 \otimes 1 \otimes 3 \otimes 2 \otimes 1$};
  \node (node_12) at (326.0bp,77.0bp) [draw,draw=none] {$4 \otimes 2 \otimes 3 \otimes 1 \otimes 2 \otimes 1 \otimes 3 \otimes 2 \otimes 1$};
  \node (node_1) at (267.0bp,369.0bp) [draw,draw=none] {$1 \otimes 1 \otimes 3 \otimes 1 \otimes 2 \otimes 1 \otimes 3 \otimes 2 \otimes 1$};
  \node (node_0) at (364.0bp,439.0bp) [draw,draw=none] {$1 \otimes 1 \otimes 2 \otimes 1 \otimes 2 \otimes 1 \otimes 3 \otimes 2 \otimes 1$};
  \node (node_11) at (433.0bp,150.0bp) [draw,draw=none] {$4 \otimes 1 \otimes 3 \otimes 1 \otimes 2 \otimes 1 \otimes 3 \otimes 2 \otimes 1$};
  \node (node_10) at (433.0bp,226.0bp) [draw,draw=none] {$4 \otimes 1 \otimes 2 \otimes 1 \otimes 2 \otimes 1 \otimes 3 \otimes 2 \otimes 1$};
  \draw [blue,->] (node_0) ..controls (382.28bp,420.45bp) and (404.1bp,398.32bp)  .. (node_3);
  \definecolor{strokecol}{rgb}{0.0,0.0,0.0};
  \pgfsetstrokecolor{strokecol}
  \draw (418.5bp,404.0bp) node {$-1$};
  \draw [red,->] (node_4) ..controls (252.49bp,287.28bp) and (246.74bp,281.03bp)  .. (244.0bp,274.0bp) .. controls (240.77bp,265.72bp) and (241.0bp,262.37bp)  .. (244.0bp,254.0bp) .. controls (245.67bp,249.34bp) and (248.48bp,244.91bp)  .. (node_8);
  \draw (256.5bp,264.0bp) node {$-2$};
  \draw [blue,->] (node_4) ..controls (268.21bp,286.6bp) and (268.76bp,279.92bp)  .. (269.0bp,274.0bp) .. controls (269.35bp,265.12bp) and (269.32bp,262.88bp)  .. (269.0bp,254.0bp) .. controls (268.88bp,250.6bp) and (268.68bp,246.98bp)  .. (node_8);
  \draw (281.5bp,264.0bp) node {$-1$};
  \draw [green,->] (node_4) ..controls (221.82bp,279.13bp) and (161.03bp,252.4bp)  .. (node_5);
  \draw (220.5bp,264.0bp) node {$-3$};
  \draw [green,->] (node_8) ..controls (278.82bp,196.15bp) and (306.69bp,125.76bp)  .. (node_12);
  \draw (312.5bp,150.0bp) node {$-3$};
  \draw [green,->] (node_6) ..controls (469.83bp,282.51bp) and (502.25bp,263.71bp)  .. (516.0bp,236.0bp) .. controls (531.82bp,204.14bp) and (489.22bp,176.38bp)  .. (node_11);
  \draw (531.5bp,226.0bp) node {$-3$};
  \draw [red,->] (node_5) ..controls (94.35bp,207.28bp) and (85.881bp,183.44bp)  .. (node_7);
  \draw (103.5bp,188.0bp) node {$-2$};
  \draw [red,->] (node_3) ..controls (433.0bp,351.19bp) and (433.0bp,331.15bp)  .. (node_6);
  \draw (445.5bp,334.0bp) node {$-2$};
  \draw [red,->] (node_5) ..controls (123.78bp,207.71bp) and (147.68bp,185.39bp)  .. (157.0bp,160.0bp) .. controls (164.92bp,138.43bp) and (161.44bp,111.58bp)  .. (node_9);
  \draw (173.5bp,150.0bp) node {$-2$};
  \draw [green,->] (node_1) ..controls (221.21bp,349.69bp) and (161.76bp,324.62bp)  .. (node_2);
  \draw (215.5bp,334.0bp) node {$-3$};
  \draw [blue,->] (node_1) ..controls (267.0bp,351.19bp) and (267.0bp,331.15bp)  .. (node_4);
  \draw (279.5bp,334.0bp) node {$-1$};
  \draw [blue,->] (node_11) ..controls (404.23bp,130.37bp) and (367.19bp,105.1bp)  .. (node_12);
  \draw (401.5bp,112.0bp) node {$-1$};
  \draw [green,->] (node_6) ..controls (433.0bp,280.58bp) and (433.0bp,258.93bp)  .. (node_10);
  \draw (445.5bp,264.0bp) node {$-3$};
  \draw [green,->] (node_9) ..controls (138.49bp,65.283bp) and (132.74bp,59.033bp)  .. (130.0bp,52.0bp) .. controls (126.77bp,43.719bp) and (126.77bp,40.281bp)  .. (130.0bp,32.0bp) .. controls (131.46bp,28.264bp) and (133.76bp,24.748bp)  .. (node_13);
  \draw (142.5bp,42.0bp) node {$-3$};
  \draw [red,->] (node_9) ..controls (154.21bp,64.603bp) and (154.76bp,57.917bp)  .. (155.0bp,52.0bp) .. controls (155.35bp,43.118bp) and (155.35bp,40.882bp)  .. (155.0bp,32.0bp) .. controls (154.9bp,29.596bp) and (154.76bp,27.066bp)  .. (node_13);
  \draw (167.5bp,42.0bp) node {$-2$};
  \draw [blue,->] (node_7) ..controls (94.759bp,130.82bp) and (120.63bp,106.91bp)  .. (node_9);
  \draw (137.5bp,112.0bp) node {$-1$};
  \draw [red,->] (node_10) ..controls (433.0bp,207.39bp) and (433.0bp,183.9bp)  .. (node_11);
  \draw (445.5bp,188.0bp) node {$-2$};
  \draw [blue,->] (node_2) ..controls (101.0bp,280.58bp) and (101.0bp,258.93bp)  .. (node_5);
  \draw (113.5bp,264.0bp) node {$-1$};
  \draw [red,->] (node_0) ..controls (337.71bp,420.03bp) and (305.3bp,396.64bp)  .. (node_1);
  \draw (339.5bp,404.0bp) node {$-2$};
\end{tikzpicture}
}
\caption{The graph $\widetilde{G}(\mathcal{C})$ recovered from the graph $\overline{G}(C)$ of Figure~\ref{figure.G(C) labeled}.
\label{figure.G(C).recovered}}
\end{figure}
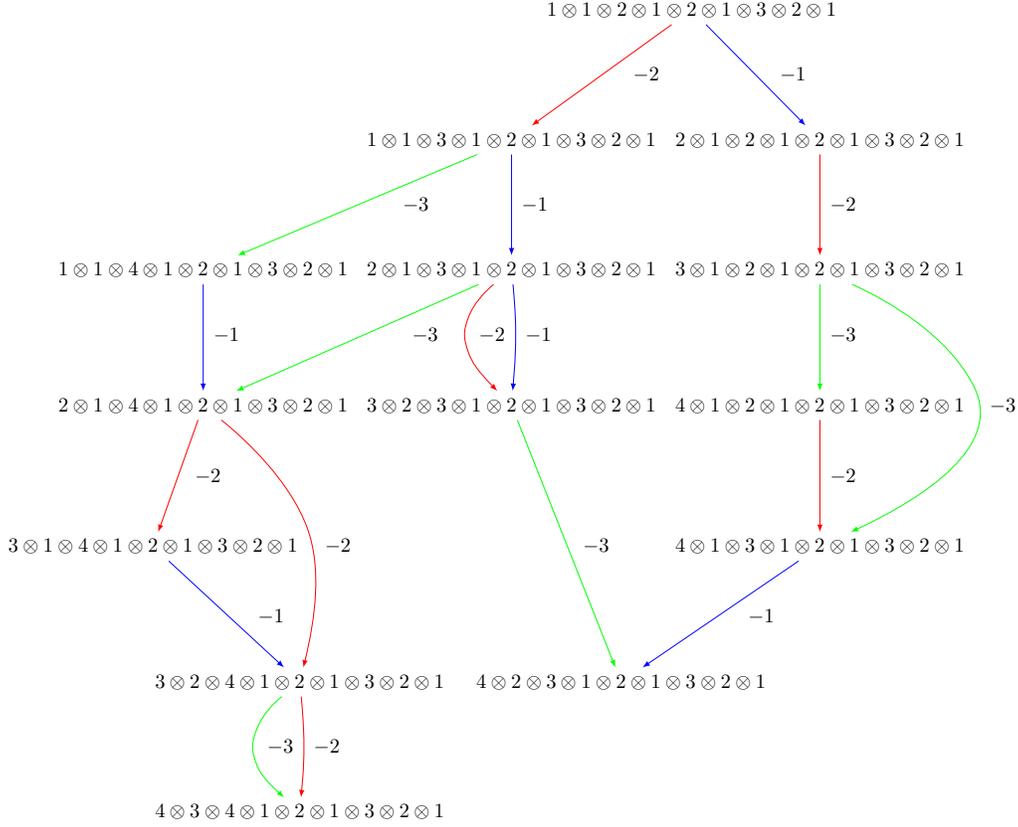

Next we introduce new axioms.

\begin{definition}[\defn{Connectivity axioms}]
\label{definition.connectivity axioms}
Let $\mathcal{C}$ be a connected crystal satisfying the local queer axioms of Definition~\ref{definition.local queer axioms}.
Let $v \in \mathcal{C}$ be an $I_0$-lowest weight element and $u = \uparrow v$. As in~\eqref{equation.precise almost lowest},
define $g_{j,k} := (e_1 \cdots e_j)(e_1 \cdots e_k) v$ for $1\leqslant j\leqslant k \leqslant n$. 
\begin{enumerate}
\item[{\bf C0.}] $\varphi_{-1}(g_{j,k})=0$ implies that $\varphi_{-1}(e_1\cdots e_k v)=0$.
\item[{\bf C1.}] Suppose that $G(\mathcal{C})$ contains an edge $u\to u'$ such that $\wt(u')$ is obtained from $\wt(u)$
by moving a box from row $n+1-k$ to row $n+1-h$ with $h<k$. For all $h<j \leqslant k$ such that $g_{j,k}\neq 0$, we require 
that $f_{-1} g_{j,k} \neq 0$ and 
\[
	f_{-1} g_{j,k} = (e_2 \cdots e_j) (e_1 \cdots e_h) v',
\]
where $v'$ is $I_0$-lowest weight with $\uparrow v' = u'$.
\item[{\bf C2.}] Suppose that either (a) $G(\mathcal{C})$ contains an edge $u\to u'$ such that $\wt(u')$ is obtained from 
$\wt(u)$ by moving a box from row $n+1-k$ to row $n+1-h$ with $h<k$ or (b) no such edge exists in $G(\mathcal{C})$.
For all $1\leqslant j \leqslant h$ in case (a) and all $1\leqslant j \leqslant k$ in case (b) such that 
$g_{j,k}\neq 0$ and $f_{-1} g_{j,k} \neq 0$, we require that 
\[
	f_{-1} g_{j,k} = (e_2 \cdots e_k) ( e_1 \cdots e_j) v.
\]
\end{enumerate}
\end{definition}

\begin{remark}
\label{remark.new C0}
Condition \textbf{C0} can be replaced by the following condition:
\[
	\text{\textbf{LQ7.}} \quad \text{If $\varepsilon_1(e_2(b))>\varepsilon_1(b)$ for $b\in \mathcal{C}$ with 
	$\varepsilon_2(b)>0$, then $\varphi_{-1}(b) \leqslant \varphi_{-1}(e_1e_2(b))$.}
\]
This condition indeed implies \textbf{C0}. Suppose $\varphi_{-1}(e_1\cdots e_k v)=1$. Then for
$b=(e_3 \cdots e_j) (e_1 \cdots e_k) v$, we have $\varphi_{-1}(b)=1$.  However, $b$ satisfies 
$\varepsilon_1(e_2(b))>\varepsilon_1(b)$, so the above condition implies that $\varphi_{-1}(e_1e_2(b))=1$ as well.  
But $e_1e_2(b)=g_{j,k}$. Hence $\varphi_{-1}(g_{j,k})=0$ implies that $\varphi_{-1}(e_1\cdots e_k v)=0$.

Moreover, in $\mathcal{B}^{\otimes \ell}$ the conditions in \textbf{LQ7} are satisfied. Namely, the condition 
$\varepsilon_1(e_2(b))>\varepsilon_1(b)$ implies that $e_2(b) \neq 0$ and $e_1e_2(b) \neq 0$. 
Moreover,  this condition implies that $e_1$ acts on $e_2(b)$ in a position weakly to the left of where $e_2$ acts on $b$.  
Thus if $\varphi_{-1}(b)=1$, it immediately follows that $\varphi_{-1}(e_1e_2(b))=1$ which proves the statement.
\end{remark}

\begin{theorem}
\label{theorem.connectivity axioms}
The $\mathfrak{q}(n+1)$-queer crystal $\mathcal{B}^{\otimes \ell}$ satisfies the axioms in 
Definition~\ref{definition.connectivity axioms}.
\end{theorem}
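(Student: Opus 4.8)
The plan is to dispatch \textbf{C0} immediately and then to reduce \textbf{C1} and \textbf{C2} to the explicit highest-weight computations of Section~\ref{section.f-i and e-i} and Section~\ref{section.proposition bypass} by means of the long-element conjugation of~\eqref{equation.-ip}.

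For \textbf{C0}, no new argument is needed: Remark~\ref{remark.new C0} shows that \textbf{C0} is implied by the condition \textbf{LQ7}, and verifies directly from the combinatorial rule of Remark~\ref{rmk:combo-minus-1} that $\mathcal{B}^{\otimes\ell}$ satisfies \textbf{LQ7} (if $\varepsilon_1(e_2(b))>\varepsilon_1(b)$ then $e_1$ acts on $e_2(b)$ weakly left of where $e_2$ acts on $b$, so $\varphi_{-1}$ cannot decrease from $b$ to $e_1e_2(b)$). I would simply cite this.

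For \textbf{C1} and \textbf{C2}, the elements $g_{j,k}=(e_1\cdots e_j)(e_1\cdots e_k)v$ are $I_0$-near-lowest-weight, whereas the tools available --- Theorem~\ref{theorem.e-i}, Proposition~\ref{proposition.e-output-hw}, and Proposition~\ref{proposition.by-pass} --- describe odd operators on $I_0$-highest-weight elements. The first step is therefore to transport the computation of $f_{-1}g_{j,k}$ to the highest-weight side using the long-element conjugation of~\eqref{equation.-ip}, which interchanges $I_0$-highest and $I_0$-lowest weight elements, turns each descending run of even operators built on $v$ into a run built on $u=\uparrow v$, and converts the single operator $f_{-1}$ into an odd operator $e_{-i}$ acting on a $\{1,2,\ldots,i\}$-highest weight element $b$. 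The explicit form of $v$ and $g_{j,k}$ from Lemma~\ref{lemma.gjk} lets one read off the depth $i$ and the element $b$ from the pair $(j,k)$, and Proposition~\ref{proposition.phi -i} keeps track of exactly when $\varepsilon_{-i}(b)=1$ and $\varphi_{-i}(b)=1$. With this dictionary in place the two axioms split according to whether $f_{-1}g_{j,k}$ remains in the $I_0$-component of $v$ or jumps to that of $u'$. Axiom \textbf{C2} is the ``remains'' case and dualizes to the statement that $e_{-i}(b)$ is again $\{1,\ldots,i\}$-highest weight; by Proposition~\ref{proposition.e-output-hw} this is exactly the non-cycling regime $t_{i-1}<\cdots<t_1$, and transporting the explicit output of Theorem~\ref{theorem.e-i} back through the conjugation produces precisely $(e_2\cdots e_k)(e_1\cdots e_j)v$. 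Axiom \textbf{C1} is the ``jumps'' case and dualizes to $e_{-i}(b)$ failing to be highest weight; here Proposition~\ref{proposition.by-pass} supplies the smaller index (which matches the row-shift parameter $h$) together with the identity $\uparrow e_{-i}(b)=\uparrow e_{-i}\uparrow e_{-h}(b)$ or $\uparrow e_{-i}(b)=\uparrow e_{-h}(b)$, while Lemma~\ref{lem:uparrowe-i} records the explicit entries that change; transporting these back yields the claimed value $(e_2\cdots e_j)(e_1\cdots e_h)v'$ with $\uparrow v'=u'$. The split of the $j$-range into $h<j\leqslant k$ (for \textbf{C1}) and $1\leqslant j\leqslant h$ (for \textbf{C2}) is matched to the cycling/non-cycling dichotomy via Corollary~\ref{cor:which-hw}.

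The routine parts --- that the conjugation of~\eqref{equation.-ip} acts on the even operators as stated, and that the $(1,2)$-bracketing rule of Remark~\ref{rmk:combo-minus-1} behaves predictably under it --- I would state and then suppress. The main obstacle is the index bookkeeping: one must simultaneously reconcile the box-move data $(h,k)$ on the strict partition $\wt(u)$, the indices $(j,k)$ parametrizing $g_{j,k}$, the depth $i$ of the dual operator $e_{-i}$, and the by-pass index of Proposition~\ref{proposition.by-pass}, while keeping straight the primed/unprimed switch introduced by the long-element conjugation. The most delicate point is confirming that the two branches of Proposition~\ref{proposition.by-pass} both reproduce the single normal form appearing in \textbf{C1}, and that the threshold at $j=h$ for passing from \textbf{C2} to \textbf{C1} agrees with the first cycling step identified in Corollary~\ref{cor:which-hw}.
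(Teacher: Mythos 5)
Your treatment of \textbf{C0} matches the paper exactly (cite Remark~\ref{remark.new C0}), but the proposed route to \textbf{C1} and \textbf{C2} has a genuine gap, and it is not the route the paper takes. The step you label ``routine'' and propose to suppress --- transporting $f_{-1}$ acting on the near-lowest-weight elements $g_{j,k}$ to an odd operator $e_{-i}$ acting on a $\{1,\ldots,i\}$-highest-weight element --- is in fact the hard content. First, the conjugation~\eqref{equation.-ip} you invoke relates $e_{-(n+1-i)}$ to the \emph{primed} operators $f_{-i'}$, which are not the operators $f_{-i}$ (let alone $f_{-1}$) for which Theorems~\ref{theorem.f-i} and~\ref{theorem.e-i} give explicit descriptions; no combinatorial rule for the primed operators is available. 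Second, the dictionary you need --- that an edge $C_1\to C_2$ realized by $f_{-1}$ on some element of $C_1$ corresponds to $e_{-i}u_2\in C_1$ for the highest-weight element $u_2$ of $C_2$ --- is precisely Proposition~\ref{proposition.e-i component}, whose proof in the paper \emph{uses} Theorem~\ref{theorem.connectivity axioms} and axiom \textbf{C1}; building your argument on that correspondence is circular unless you re-prove it independently, which is comparable in difficulty to the axioms themselves. Third, \textbf{C2} does not dualize as you claim: there $f_{-1}g_{j,k}$ stays inside the same type~$A$ component, so there is no associated edge of $G(\mathcal{C})$ and no statement about $e_{-i}$ on a highest-weight element to dualize to. Finally, even granting a dictionary, \textbf{C1} asserts a \emph{common} image for all $j\in(h,k]$, i.e.\ it compares $f_{-1}g_{j,k}$ across different $j$ within one component; a single application of Theorem~\ref{theorem.e-i} to one highest-weight element cannot see this, and the identification of the by-pass index of Proposition~\ref{proposition.by-pass} with the row-shift parameter $h$ is asserted rather than proved.

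For contrast, the paper's proof (Appendix~\ref{appendix.proof connectivity}) never leaves the lowest-weight end. It reformulates the axioms as conditions \textbf{C1'} and \textbf{C2'} on $I_0$-lowest-weight vectors; proves \textbf{C2'} by analyzing the RSK insertion and recording tableaux of the reversed--complemented word $x^{\#}$ and how they change under $f_{-1}$; and proves \textbf{C1'} by reducing to the cases $(j,j')=(n-1,n)$ and $(n,n-1)$ and then running a long induction on the positions $x_i,y_i,x_{-i},y_{-i}$ at which the successive operators $e_i$, $f_i$, $f_{-1}$ act, controlled by the counting function $AB_i(p:q)$. None of that machinery is replaced by your dualization, so as written the proposal does not constitute a proof of \textbf{C1} and \textbf{C2}.
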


The proof of Theorem~\ref{theorem.connectivity axioms} is given in Appendix~\ref{appendix.proof connectivity}.

Next we show that the arrows in $G(\mathcal{C})$, where $\mathcal{C}$ is a connected component in 
$\mathcal{B}^{\otimes \ell}$, can be modeled by $e_{-i}$ on type $A$ highest weight elements.

\begin{proposition}
\label{proposition.e-i component}
Let $\mathcal{C}$ be a connected component in the $\mathfrak{q}(n+1)$-crystal $\mathcal{B}^{\otimes \ell}$.
Let $C_1$ and $C_2$ be two distinct type $A_n$ components in $\mathcal{C}$ and let $u_2$ be the $I_0$-highest weight
element in $C_2$. Then there is an edge from $C_1$ to $C_2$  in $G(\mathcal{C})$ if and only if
$e_{-i} u_2 \in C_1$ for some $i\in I_0$.
\end{proposition}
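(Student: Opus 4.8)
The plan is to prove both implications by exploiting the conjugation formula $e_{-i} = s_{w_i^{-1}} e_{-1} s_{w_i}$ from~\eqref{equation.-i}, together with the basic fact that each Weyl operator $s_j$ with $j\in I_0$ is a bijection of $\mathcal{B}^{\otimes \ell}$ that preserves every type $A_n$ component. Since $s_{w_i}$ and $s_{w_i^{-1}}$ are products of such $s_j$, they map $C_1$ to $C_1$ and $C_2$ to $C_2$, and the queer relation \textbf{Q3} transfers verbatim to the conjugated operators, so that $f_{-i}b=b'$ if and only if $b=e_{-i}b'$. In particular $e_{-i}u_2 \in C_1$ if and only if $e_{-1}(s_{w_i}u_2)\in C_1$, because applying $s_{w_i^{-1}}$ does not change the component. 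Thus the entire statement reduces to understanding the odd arrows $e_{-1}(s_{w_i}u_2)$ emanating from the single family of Weyl translates $s_{w_1}u_2=u_2,\,s_{w_2}u_2,\ldots,s_{w_n}u_2$ inside $C_2$.

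For the direction $(\Leftarrow)$, suppose $e_{-i}u_2=b_1\in C_1$ for some $i\in I_0$. By \textbf{Q3} for the conjugated operators this is equivalent to $f_{-i}b_1=u_2$, and substituting $f_{-i}=s_{w_i^{-1}}f_{-1}s_{w_i}$ gives $f_{-1}(s_{w_i}b_1)=s_{w_i}u_2$. Since $s_{w_i}$ fixes components, $s_{w_i}b_1\in C_1$ and $s_{w_i}u_2\in C_2$, so this single application of $f_{-1}$ realizes the edge $C_1\to C_2$ in $G(\mathcal{C})$. (Note also that $e_{-i}u_2\neq 0$ forces $e_{-i}u_2\notin C_2$, since its weight $\wt(u_2)+\alpha_i$ exceeds that of the highest weight element $u_2$, so $C_1\neq C_2$ is automatic.)

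The direction $(\Rightarrow)$ is the substantive one. Here I would start from an edge, i.e.\ from $b_1\in C_1$ and $b_2\in C_2$ with $f_{-1}b_1=b_2$, equivalently $e_{-1}b_2=b_1$, and transport this incoming odd arrow up to the $I_0$-highest weight $u_2$ of $C_2$. Concretely, I would raise $b_2$ to $u_2$ through a sequence of operators $e_j$ with $j\in I_0$ and use the local interaction lemmas (Lemmas~\ref{lemma.e1} and~\ref{lemma.e2} and axiom \textbf{Q4}) to commute the odd arrow past each $e_j$; the recursion $e_{-(i+1)}=(s_is_{i+1})\,e_{-i}\,(s_{i+1}s_i)$ of Lemma~\ref{lemma.f recursion} records exactly how the index of the odd operator shifts under these moves. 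Since $u_2$ is $\{1,\ldots,i\}$-highest weight for every $i$ (Proposition~\ref{proposition.hw}), the endpoint is governed by the explicit description of $e_{-i}$ on highest weight words in Theorem~\ref{theorem.e-i}, from which one reads off an index $i$ for which $e_{-i}u_2$ is defined and lands in $C_1$. The most efficient way to organize this transport is through the machinery already built for by-pass arrows: Lemma~\ref{lem:uparrowe-i} computes $\uparrow e_{-i}(b)$ for an $I_0$-highest weight $b$, and Proposition~\ref{proposition.by-pass} relates an $e_{-i}$ that fails to produce a highest weight element to an $e_{-k}$ that does. I would use these to reduce the given arrow to one emanating from a Weyl translate $s_{w_i}u_2$ of the highest weight whose source still lies in $C_1$.

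The main obstacle is precisely the bookkeeping of the index $i$ during this lifting: each raising step either commutes the odd arrow (keeping the source inside $C_1$ but possibly altering the relevant index via the $s_is_{i+1}$-conjugation) or reaches a boundary case where the odd arrow merges with an $e_j$-arrow, as in the forks at the bottom of the strings in axioms \textbf{LQ4} and \textbf{LQ5}. The delicate point is to verify that after all these steps the transported arrow is genuinely of the form $e_{-i}u_2$ for a single honest index $i$, and that its target component never changes along the way, so that it remains $C_1$. This is where the detailed case analysis based on Theorem~\ref{theorem.e-i}, Corollary~\ref{corollary.f-i}, and Proposition~\ref{proposition.by-pass} must be carried out in full.
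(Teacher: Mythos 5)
Your $(\Leftarrow)$ direction is correct and is exactly the paper's argument: conjugating by $s_{w_i}$, which preserves type $A_n$ components, turns the arrow $e_{-i}u_2\in C_1$ into an honest $e_{-1}$-arrow between elements of $C_1$ and $C_2$.

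The $(\Rightarrow)$ direction, however, has a genuine gap, and it sits precisely at the step you flag as ``bookkeeping.'' Your plan is to raise $b_2$ to $u_2$ and commute the odd arrow past each $e_j$ using the local interaction lemmas. But those lemmas only give commutation away from the tops of $2$-strings: in the $\varphi_2(b)=0$ case of Lemma~\ref{lemma.e2} (axiom \textbf{LQ5}(b)) the $(-1)$-arrow at the top of a $2$-string is \emph{not} determined by the one at the bottom, and the counterexample of Figure~\ref{figure.counterexample} shows this is not a technicality: local propagation genuinely fails to control the target component across such a step. So ``transport up to $u_2$'' cannot be carried out by local commutation alone, and the tools you cite for the endgame (Theorem~\ref{theorem.e-i}, Lemma~\ref{lem:uparrowe-i}, Proposition~\ref{proposition.by-pass}) describe the combinatorics of $e_{-i}$ on highest weight elements but do not by themselves show that the \emph{given} edge is realized by some $e_{-i}u_2$.

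The paper goes the opposite way: it pushes the given $(-1)$-arrow \emph{down} to one of the canonical almost-lowest elements $g_{j,k}=(e_1\cdots e_j)(e_1\cdots e_k)v_1$, from which all $(-1)$-arrows of the component are induced by the local axioms; it then invokes the connectivity axiom \textbf{C1} (Theorem~\ref{theorem.connectivity axioms}) to identify $f_{-1}g_{j,k}=(e_2\cdots e_j)(e_1\cdots e_h)v_2$, and hence the index $i=n-h$; finally it proves a separate tableau-theoretic claim that $s_{w_{n-h}}u_2$ is joined to $(e_2\cdots e_{h+1})(e_1\cdots e_h)v_2$ by a path of $f_j$'s along which $\varphi_2>0$ whenever $f_2$ is applied, so that \textbf{LQ5}(a) applies at every step and the source component of the incoming $(-1)$-arrow never changes. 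This last claim --- a carefully chosen path avoiding the tops of $2$-strings --- is the key mechanism your proposal is missing.
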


\begin{proof}
First note that there is an edge from $C_1$ to $C_2$ in $G(\mathcal{C})$ if there exists $b_1\in C_1$ and 
$b_2 \in C_2$ such that $e_{-1}b_2 = b_1$.
Recall that by~\eqref{equation.-i} we have $e_{-i} := s_{w_i^{-1}} e_{-1} s_{w_i}$.
Hence, if $e_{-i} u_2$ is defined and $e_{-i}u_2\in C_1$, then $b_2 := e_{-1} b_1$ is defined, where $b_1:= s_{w_i} u_1 
\in C_1$ and $b_2 \in C_2$. This proves that  there is an edge between $C_1$ and $C_2$ in $G(\mathcal{C})$.

Conversely assume that $b_1 = e_{-1}b_2$ for some $b_1\in C_1$ and $b_2 \in C_2$.
We want to show that then $e_{-i} u_2 \in C_1$ for some $i\in I_0$. 
By the discussion before Lemma~\ref{lemma.gjk}, we know that the $(-1)$-arrow on $b_1$ is induced (using the local 
queer axioms of Definition~\ref{definition.local queer axioms}) by the $(-1)$-arrow on 
$g_{j,k} = (e_1 \cdots e_j) (e_1 \cdots e_k) v_1$ for some $j\leqslant k$. By Theorem~\ref{theorem.connectivity axioms} and 
Condition \textbf{C1} of Definition~\ref{definition.connectivity axioms}, we must have
\[
	f_{-1} g_{j,k} = (e_2 \cdots e_j)(e_1 \cdots e_h) v_2 \qquad \text{for some $h<j\leqslant k$,}
\]
where $v_2$ is the $I_0$-lowest weight element in the component $C_2$. In particular, for the edge $u_1 \to u_2$ 
in $G(\mathcal{C})$, where $u_1$ is the $I_0$-highest weight element in the component $C_1$, the weight $\wt(u_2)$ 
differs from $\wt(u_1)$ by moving a box from row $n+1-k$ to row $n+1-h$ with $1\leqslant h<k\leqslant n$.
Furthermore, all $g_{j',k} \neq 0$ with $h<j'\leqslant k$ are mapped to component $C_2$ under $f_{-1}$. 

\smallskip

\noindent
\textbf{Claim:} 
Set $b:=s_{w_{n-h}} u_2$ and $b':=(e_2 \cdots e_{h+1}) ( e_1 \cdots e_h) v_2$.
If $\wt(b)_2>0$, there exist $j_1,\ldots, j_p \in I_0$ such that $b'= f_{j_1} \cdots f_{j_p} b$ and 
\begin{equation}
\label{equation.phi2 condition}
	\varphi_2(f_{j_a} \cdots f_{j_p} b)>0 \quad \text{if $j_a=2$.}
\end{equation}

\smallskip

The claim is a statement about type $A_n$ crystal operators, hence one may use the tableaux model
to verify it. It is straightforward to verify that every column of height $d>n-h$ in the insertion tableau of $b$ contains the 
letter $m$ in row $m$; the columns of height $n-h$ contain 1 in the first row and $m+1$ in row $m>1$; finally the columns 
of height $d<n-h$ contain the letter $m+2$ in row $m$. Hence $\wt(b)_2>0$ is only satisfied if there is at least 
one column of height $d>n-h$. Now we start acting with operators $f_j$ on $b$, where $j\in I_0\setminus \{2\}$, to make
$b$ into a $I_0\setminus \{2\}$-lowest weight element. This element differs from $v_2$ only in columns of height
$d\geqslant n-h$; columns of height $d>n-h$ contain 1 and 2 in rows 1 and 2, respectively, whereas columns of
height $d=n-h$ contain 2 in row 1. Suppose that there are $p$ columns whose height is less than $n+1$ and at least $n-h$.
Then we can apply $f_2^{p-1}$ without violating~\eqref{equation.phi2 condition} since each such column contains an 
unbracketed 2. Then apply again $f_j$ with $j\in I_0\setminus \{2\}$ to make the tableau into a $I_0\setminus \{2\}$-lowest 
weight element, followed by the maximal number of $f_2$ satisfying~\eqref{equation.phi2 condition}, followed by making the 
result $I_0\setminus \{2\}$-lowest weight. This tableau is exactly $(e_2 \cdots e_{h+1}) ( e_1 \cdots e_h) v_2$. 
This proves the claim.

Now since by assumption $\wt(u_2)$ differs from $\wt(u_1)$ by moving a box from row $n+1-k$ to row $n+1-h$,
as a tableau $s_{w_{n-h}}u_2$ indeed has a column of height $d>n-k$, so that $\wt(s_{w_{n-h}}u_2)_2>0$.
By condition~\eqref{equation.phi2 condition}, the $(-1)$-arrow coming into $s_{w_{n-h}}u_2$ is induced by the $(-1)$-arrow
coming into $(e_2 \cdots e_{h+1}) ( e_1 \cdots e_h) v_2$ by the local queer axioms of Definition~\ref{definition.local queer
axioms}. Hence $e_{-(n-h)} u_2 \in C_1$, which proves the proposition where $i=n-h$.
\end{proof}

\begin{example}
Let us illustrate the claim in the proof of Proposition~\ref{proposition.e-i component}.
Let $n=5,h=2$ and consider the type $A_5$ component $C_2$ of weight $(4,3,3,2,1)$.
Then
\[
	\raisebox{1cm}{$b=s_{w_3} u_2=$} \young(5,44,334,223,1113)
	\raisebox{1cm}{. This becomes} \;\; \young(6,56,456,235,1136)
\]
after making it $\{1,3,4,5\}$-lowest weight and applying $f_2^2$. Making this element $\{1,3,4,5\}$-lowest weight
again, no further $f_2$ are applicable and we obtain
\[
	\young(6,56,456,235,1246) \raisebox{1cm}{$=(e_2e_3)(e_1e_2)v_2$.}
\]
\end{example}

By Proposition~\ref{proposition.e-i component}, there is an edge from component $C_1$ to component $C_2$
in $G(\mathcal{C})$ if and only if $e_{-i} u_2 \in C_1$ for some $i\in I_0$, where $u_2$ is the $I_0$-highest weight
element of $C_2$. We call the arrow \defn{combinatorial} if $e_{-i} u_2$ is $\{1,2,\ldots,i\}$-highest weight.
Otherwise the arrow is called a \defn{by-pass arrow}.

Define $f_{(-i,h)}:=f_{-i} f_{i+1} f_{i+2} \cdots f_{h-1}$.

\begin{theorem}
\label{theorem.combinatorial G}
Let $\mathcal{C}$ be a connected component in $\mathcal{B}^{\otimes \ell}$. Then each by-pass arrow is the 
composition of combinatorial arrows. Furthermore, each combinatorial edge in $G(\mathcal{C})$ can be obtained by 
$f_{(-i,h)}$ for some $i\in I_0$ and $h>i$ minimal such that $f_{(-i,h)}$ applies.
\end{theorem}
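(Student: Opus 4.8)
The plan is to rephrase everything in terms of $e_{-i}$ acting on the $I_0$-highest weight element $u_2$ of the target component, via Proposition~\ref{proposition.e-i component}, and then to read off the two assertions from the structural results already established. By that proposition an edge $C_1\to C_2$ is recorded by the condition $e_{-i}u_2\in C_1$ for some $i\in I_0$; such an edge is combinatorial exactly when $e_{-i}u_2$ is $\{1,2,\ldots,i\}$-highest weight, and a by-pass arrow otherwise. I would treat the combinatorial case first, since it is essentially a restatement of Corollary~\ref{corollary.f-i}, and then handle by-pass arrows by a recursion powered by Proposition~\ref{proposition.by-pass}.

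For a combinatorial edge $C_1\to C_2$, set $w:=e_{-i}u_2$, so that $w$ is $\{1,2,\ldots,i\}$-highest weight, lies in $C_1$, and satisfies $f_{-i}w=u_2$. Since $\wt(w)_i>0$ and $f_{-i}w\neq 0$, Proposition~\ref{proposition.phi -i} forces $\varphi_{-i}(w)=1$, and because $f_{-i}w=u_2$ is $I_0$-highest weight, Corollary~\ref{corollary.f-i}(2) gives $w=f_{i+1}f_{i+2}\cdots f_{h-1}u_1$ for some $h>i$, where $u_1=\uparrow w$ is the $I_0$-highest weight element of $C_1$. Applying $f_{-i}$ then yields $u_2=f_{-i}w=f_{(-i,h)}u_1$, so the edge is realized by $f_{(-i,h)}$. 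To see that this $h$ is minimal with the property that $f_{(-i,h)}$ applies to $u_1$, I would appeal to the explicit rule of Theorem~\ref{theorem.f-i}: tracking the initial sequences $p_j,q_j,r_j$, the entire chain $f_{i+1}\cdots f_{h-1}$ is needed before an unbracketed $i$ is exposed on which $f_{-i}$ can act and return to an $I_0$-highest weight element, so that no strictly smaller $h'$ makes $f_{(-i,h')}$ applicable.

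For a by-pass arrow $C_1\to C_2$ recorded by $e_{-i}u_2\in C_1$, I would apply Proposition~\ref{proposition.by-pass} to $b=u_2$. It produces an index $k<i$ with $\varepsilon_{-k}(u_2)=1$ and $e_{-k}(u_2)$ being $\{1,2,\ldots,k\}$-highest weight, so that $C_1'$, the component of $e_{-k}u_2$, is joined to $C_2$ by a combinatorial arrow, together with one of two alternatives. If $\uparrow e_{-i}u_2=\uparrow e_{-k}u_2$, then $C_1=C_1'$ and the by-pass arrow already coincides with this combinatorial arrow. Otherwise $\uparrow e_{-i}u_2=\uparrow e_{-i}\uparrow e_{-k}u_2$; writing $u_1'=\uparrow e_{-k}u_2$, this equality says that $e_{-i}u_1'\in C_1$, so the original arrow factors as an arrow $C_1\to C_1'$ of index $i$ followed by the combinatorial arrow $C_1'\to C_2$. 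I would then recurse on $C_1\to C_1'$: if it is combinatorial the process stops, and otherwise Proposition~\ref{proposition.by-pass} applies again to $b=u_1'$ with the same index $i$.

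Termination of this recursion I would control by a weight measure. Both $e_{-i}$ and $\uparrow$ preserve the total weight $\ell$, so every component highest weight that occurs is a partition of $\ell$, and by the box-moving description in the proof of Proposition~\ref{proposition.e-i component} every edge of $G(\mathcal{C})$ moves a box to a strictly shorter row, so that the highest weight of its source strictly dominates that of its target. Hence at each noncoinciding step the target's highest weight moves strictly upward in the dominance order on partitions of $\ell$; this is a finite poset, so the recursion halts. Reading off, in order, the combinatorial arrows peeled at each stage exhibits the by-pass arrow as a directed path $C_1\to\cdots\to C_2$ of combinatorial arrows, that is, as their composition. I expect the main obstacle to be the minimality of $h$ in the second assertion, whose verification requires the detailed bracketing bookkeeping of Theorem~\ref{theorem.f-i}; a secondary point to check is that Proposition~\ref{proposition.by-pass} genuinely re-applies at every recursion step, i.e.\ that the residual arrow $C_1\to C_1'$ still has $\varepsilon_{-i}(u_1')>0$ and, when it remains a by-pass arrow, feeds back into exactly the same argument.
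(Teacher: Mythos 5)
Your proposal is correct and follows essentially the same route as the paper: combinatorial arrows are handled via Proposition~\ref{proposition.phi -i}, Theorem~\ref{theorem.f-i} and Corollary~\ref{corollary.f-i}(2), and by-pass arrows by iterating Proposition~\ref{proposition.by-pass}, exactly as in the paper's proof. The only difference is that you make explicit the induction the paper leaves implicit (the recursion on the residual arrow $C_1\to C_1'$ and its termination via dominance order), which is a welcome elaboration rather than a new idea.
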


\begin{proof}
Consider a combinatorial arrow from component $C_1$ to $C_2$. This means that $e_{-i} u_2$ is defined 
for some $i \in I_0$ and $e_{-i} u_2$ is $\{1,2,\ldots, i\}$-highest weight.
Then by Theorem~\ref{theorem.f-i} and Corollary~\ref{corollary.f-i} we have $f_{(-i,h)} u_1 = u_2$ for some $h>i$.

If the arrow is a by-pass arrow, then $e_{-i} u_2$ is not $\{1,2,\ldots, i\}$-highest weight.
By Proposition~\ref{proposition.by-pass} and induction, 
there exists a sequence of indices $1\leqslant i_1,\ldots,i_a<i$ such that
\[
	\uparrow e_{-i} u_2 = \uparrow e_{-i} \uparrow e_{-i_1} \cdots \uparrow e_{-i_a} u_2
\]
where each partial sequence $e_{-i_j} \uparrow e_{-i_{j+1}} \cdots \uparrow e_{-i_a} u_2$ is 
$\{1,2,\ldots, i_j\}$-highest weight. This means that each by-pass arrow is the composition of combinatorial
arrows.
\end{proof}

Theorem~\ref{theorem.combinatorial G} provides a combinatorial description of the graph $G(\mathcal{C})$.
Let $\overline{G}(\mathcal{C})$ be the graph $G(\mathcal{C})$ with all by-pass arrows removed and
each edge labeled by the tuple $(-i,h)$ for the combinatorial arrow $f_{(-i,h)} u_1 = u_2$, where 
$f_{-i}$ is given by the combinatorial rules stated in Theorem~\ref{theorem.f-i}.
Hence $\overline{G}(\mathcal{C})$ can be constructed from the $\mathfrak{q}(n+1)$-highest weight element $u$ by 
the application of combinatorial arrows, see for example Figure~\ref{figure.G(C) labeled}. In particular, the graph 
$G(\mathcal{C})$ and the graph $\overline{G}(\mathcal{C})$ have the same vertices.

Next we construct a graph $\widetilde{G}(\mathcal{C})$ from $\overline{G}(\mathcal{C})$ by applying $\uparrow e_{-i}$
to each vertex in the graph $\widetilde{G}(\mathcal{C})$ (if applicable). This will add additional labeled edges between
the vertices in the graph, see Figure~\ref{figure.G(C).recovered}. We would like to emphasize that the construction
of $\widetilde{G}(\mathcal{C})$ for a connected component $\mathcal{C}$ of $\mathcal{B}^{\otimes \ell}$ is purely 
combinatorial, starting with the highest weight element $u$ of a given weight $\lambda$, applying $f_{(-i,h)}$ of 
Theorem~\ref{theorem.f-i}, and then applying $\uparrow e_{-i}$ to all vertices using Theorem~\ref{theorem.e-i}.
This provides a combinatorial construction of $G(\mathcal{C})$ by dropping the labels in $\widetilde{G}(\mathcal{C})$.

\section{Characterization of queer crystals}
\label{section.characterization}

Our main theorem gives a characterization of the queer supercrystals.

\begin{theorem}
\label{theorem.main}
Let $\mathcal{C}$ be a connected component of a generic abstract queer crystal (see 
Definition~\ref{definition.abstract queer}). Suppose that $\mathcal{C}$ satisfies the following conditions:
\begin{enumerate}
\item $\mathcal{C}$ satisfies the local queer axioms of Definition~\ref{definition.local queer axioms}.
\item $\mathcal{C}$ satisfies the connectivity axioms of Definition~\ref{definition.connectivity axioms}.
\item $G(\mathcal{C})$ is isomorphic to $G(\mathcal{D})$, where $\mathcal{D}$ is some connected
component of $\mathcal{B}^{\otimes \ell}$.
\end{enumerate}
Then the queer supercrystals $\mathcal{C}$ and $\mathcal{D}$ are isomorphic.
\end{theorem}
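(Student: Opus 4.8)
The plan is to show that the three hypotheses together pin down the entire $\mathfrak{q}(n+1)$-crystal structure on $\mathcal{C}$, so that any crystal satisfying them with a prescribed component graph is unique up to isomorphism; since $\mathcal{D}$ is itself such a crystal by Proposition~\ref{proposition.local queer axioms} and Theorem~\ref{theorem.connectivity axioms}, this forces $\mathcal{C}\cong\mathcal{D}$. First I would use hypothesis (1): by axiom \textbf{LQ1} the $I_0$-restrictions of both $\mathcal{C}$ and $\mathcal{D}$ are Stembridge crystals of type $A_n$, hence disjoint unions of normal type $A_n$ crystals, and by Stembridge's characterization each connected $I_0$-component is determined up to unique isomorphism by the weight of its $I_0$-highest weight element. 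These components are exactly the vertices of $G$, so the graph isomorphism of hypothesis (3) gives a bijection of vertices. I would upgrade this to a weight-preserving bijection as follows: the unique $\mathfrak{q}(n+1)$-highest weight element $u$ (which exists by Definition~\ref{definition.abstract queer} together with the uniqueness statement of Grantcharov et al.\ quoted after Definition~\ref{definition.abstract queer}) sits in the unique source vertex of $G$, whose weight is a strict partition as in Proposition~\ref{proposition.hw}. The isomorphism carries source to source, and I would then propagate weights downward: each edge of $G(\mathcal{D})$ is realized by some $f_{(-i,h)}$ (Theorem~\ref{theorem.combinatorial G}) and thus corresponds to a definite box-move in the weight, and the local queer axioms force the same structural relation in $\mathcal{C}$, matching weights throughout.

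Next I would reduce the odd operators to a finite, graph-determined data set. For each matched component fix an $I_0$-lowest weight element $v$ and the elements $g_{j,k}=(e_1\cdots e_j)(e_1\cdots e_k)v$ of Lemma~\ref{lemma.gjk}. By the discussion following Proposition~\ref{proposition.local queer axioms}, once $f_{-1}g_{j,k}$ is known for all $g_{j,k}$ satisfying~\eqref{equation.almost lowest}, the axioms \textbf{LQ4}--\textbf{LQ6} determine $f_{-1}$ (and hence $e_{-1}$ via \textbf{Q3}) on every element of the crystal. The connectivity axioms of hypothesis (2) supply these values purely in terms of the graph: \textbf{C1} handles the indices $h<j\leqslant k$ whose image lands in the target component, \textbf{C2} handles the remaining $j$ whose image stays in the same component, and \textbf{C0} fixes the domain of $f_{-1}$. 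Because each of these formulas refers only to the edge $u\to u'$ and the rows $n+1-k\to n+1-h$ of the associated box-move, all of which are now matched between $\mathcal{C}$ and $\mathcal{D}$, the action of $f_{-1}$ on the elements $g_{j,k}$ agrees under the bijection.

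Finally I would assemble the global isomorphism $\psi\colon\mathcal{C}\to\mathcal{D}$. Within each matched $I_0$-component every element is obtained from its $I_0$-highest weight element by a canonical word in the $f_i$ with $i\in I_0$, so the component-wise Stembridge isomorphisms glue to a weight-preserving bijection $\psi$ commuting with $e_i,f_i$ for $i\in I_0$. It remains to verify $\psi f_{-1}=f_{-1}\psi$. This holds on the $g_{j,k}$ by the previous step, and for a general $b$ one traces back to the relevant $g_{j,k}$ using \textbf{LQ4}--\textbf{LQ6}: the commutation relations of $f_{-1}$ with the $f_i$ recorded there transport the equality along $I_0$-arrows, which $\psi$ respects by construction. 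Thus $\psi$ is a $\mathfrak{q}(n+1)$-crystal isomorphism.

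The hard part will be the rigidity argument underlying Steps~1 and~2, namely showing that an isomorphism of the underlying graphs actually forces the edge-data (the box-moves, equivalently the labels $(-i,h)$) to correspond and that no ambiguity remains in reconstructing $f_{-1}$. This is exactly the point where the local axioms alone fail: in the counterexample of Figure~\ref{figure.counterexample} two distinct $f_{-1}$-assignments satisfy Definition~\ref{definition.local queer axioms} yet yield non-isomorphic crystals with non-isomorphic component graphs. I would therefore invest the most care in checking that \textbf{C1} and \textbf{C2}, together with the uniqueness of the $\mathfrak{q}(n+1)$-highest weight element, tie $f_{-1}g_{j,k}$ so rigidly to the fixed graph $G$ that the reconstruction is forced, thereby excluding precisely the kind of ``component-switching'' freedom illustrated by that counterexample.
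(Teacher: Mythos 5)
Your overall architecture matches the paper's: use \textbf{LQ1} and Stembridge's theorem plus the graph isomorphism to identify $\mathcal{C}$ and $\mathcal{D}$ as type $A_n$ crystals, then reduce the determination of $f_{-1}$ to its values on the elements $g_{j,k}$ via the local axioms, and finally argue that the connectivity axioms pin down those values. However, the step you yourself flag as ``the hard part'' is exactly where the proposal has a genuine gap, and the one-line summary you give of it --- ``\textbf{C1} handles the indices $h<j\leqslant k$, \textbf{C2} handles the remaining $j$, and \textbf{C0} fixes the domain of $f_{-1}$'' --- is not accurate as stated. Both \textbf{C1} and \textbf{C2} are conditional: \textbf{C2} only prescribes $f_{-1}g_{j,k}$ \emph{under the hypothesis} $f_{-1}g_{j,k}\neq 0$, and neither axiom tells you directly when $f_{-1}g_{j,k}$ vanishes. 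So the core of the argument is missing: you must show that the axioms together with the fixed graph $G$ force the answer to the vanishing question, and in the nonvanishing case force the target component. The paper does this via Lemma~\ref{lemma.gjk condition} (which combines \textbf{C0} with \textbf{LQ5}--\textbf{LQ6} to get the equivalence $\varphi_{-1}(g_{j,k})=0\Leftrightarrow\varphi_{-1}(e_1\cdots e_k v)=0$, letting one transfer vanishing information between different $j$ for the same $k$) and then a case analysis you do not address.

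Concretely, two cases are unaccounted for. First, when $G(\mathcal{C})$ has \emph{no} outgoing edge $u\to u'$ of the relevant type and $g_{k,k}\neq 0$, one must show $f_{-1}g_{j,k}=0$; the paper derives this by observing that \textbf{C2}(b) would force $f_{-1}g_{k,k}=f_1 g_{k,k}$, contradicting \textbf{LQ4} since $\varphi_1(g_{k,k})>1$, and then propagating the vanishing to all $j$ via Lemma~\ref{lemma.gjk condition}. Second, when $g_{k,k}=0$, one must rule out that $f_{-1}g_{j,k}$ secretly lands in a \emph{different} component (which would contradict the absence of the edge in $G$): the paper does this by examining incoming edges $\overline{u}\to u$, applying \textbf{C1} in the source component, and invoking the injectivity of $f_{-1}$ (axiom \textbf{Q3}) to derive a contradiction from $f_{-1}\overline{g}_{k,\overline{k}}=f_{-1}g_{\overline{h},k}$ with $\overline{g}_{k,\overline{k}}\neq g_{\overline{h},k}$. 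Without these arguments the ``component-switching'' freedom you mention is not actually excluded, so the proposal as written does not yet establish the theorem.
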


Theorem~\ref{theorem.main} states that the local queer axioms, the connectivity axioms, and the component graph
uniquely characterize queer crystals. Before we give its proof, we need the following statement. Recall that
$g_{j,k} = (e_1 \cdots e_j)(e_1 \cdots e_k)v$ was defined in~\eqref{equation.precise almost lowest}, where $v$
is an $I_0$-lowest weight vector.

\begin{lemma}
\label{lemma.gjk condition}
In a crystal satisfying the local queer axioms of Definition~\ref{definition.local queer axioms}
and \textbf{C0} of Definition~\ref{definition.connectivity axioms}, we have for any
$g_{j,k} \neq 0$ with $1\leqslant j \leqslant k$
\[
	\varphi_{-1}(g_{j,k}) = 0 \quad \text{if and only if} \quad \varphi_{-1}(e_1 \cdots e_k v) =0.
\]
\end{lemma}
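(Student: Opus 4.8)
The forward implication is exactly axiom \textbf{C0} of Definition~\ref{definition.connectivity axioms}, so the whole content of the lemma is the reverse implication: writing $b' := e_1 \cdots e_k v$ and assuming $\varphi_{-1}(b') = 0$, I must show $\varphi_{-1}(g_{j,k}) = 0$. The plan is to follow the pair $(\varphi_{-1}, \varepsilon_{-1})$ along the chain $b' \to e_j b' \to \cdots \to e_1 \cdots e_j b' = g_{j,k}$. First I would record the shape data from the proof of Lemma~\ref{lemma.gjk}: $b'$ has $\varphi_1(b') = 1$ and $\varphi_i(b') = 0$ for $i \geqslant 2$, while $g_{j,k}$ has $\varphi_1(g_{j,k}) = 2$ and $\varphi_i(g_{j,k}) = 0$ for $i \geqslant 2$. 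Since $\varphi_1(b') = 1$ forces $\wt(b')_1 \geqslant 1$, axioms \textbf{LQ2} and \textbf{LQ3} give $\varepsilon_{-1}(b') = 1$; that is, $b'$ has an incoming $(-1)$-edge, $e_{-1} b' \neq 0$.

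Next I would reduce to the letters $1$ and $2$. The operators $e_i$ with $i \geqslant 3$ commute with $e_{-1}$ and $f_{-1}$ and hence preserve both $\varphi_{-1}$ and $\varepsilon_{-1}$ (this is \textbf{LQ6}, used exactly as in Remark~\ref{remark.new C0}). Applying $e_j, \ldots, e_3$ to $b'$ therefore produces $b''$ with $\varphi_{-1}(b'') = 0$ and $\varepsilon_{-1}(b'') = 1$ and with $g_{j,k} = e_1 e_2 b''$ (the cases $j \leqslant 2$ being shorter). For the $e_2$-step I would appeal to \textbf{LQ5}, whose analysis of the interaction of $f_2$ (dually $e_2$) with the $(-1)$-arrows shows that the incoming edge persists, i.e.\ $\varphi_{-1}(e_2 b'') = 0$ and $\varepsilon_{-1}(e_2 b'') = 1$.

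The crux is the last step $c \mapsto e_1 c = g_{j,k}$, where $c$ has $\varphi_1(c) = 1$, $\varphi_{-1}(c) = 0$, $\varepsilon_{-1}(c) = 1$ and $e_1 c$ has $\varphi_1 = 2$. This is precisely the configuration $\varphi_1 = 2$ on which \textbf{LQ4} says nothing, since it constrains only $\varphi_1 > 2$ (part (a)) and $\varphi_1 = 1$ (part (b)); this silence is exactly why \textbf{C0} must be imposed for the forward direction. To cross the boundary I would use the incoming arrow to step out of it: put $d = e_{-1} c$, so $f_{-1} d = c$ and $\varphi_{-1}(d) = 1$, and verify that $\varphi_1(d) = \varphi_1(c) + 2 = 3 > 2$. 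Then \textbf{LQ4}(a) is available at $d$, and the commutation $f_1 f_{-1} = f_{-1} f_1$ it provides, together with the fact that $e_{-1}$ and $f_{-1}$ are mutually inverse partial maps, shows that $g_{j,k} = e_1 c$ again carries an incoming, not an outgoing, $(-1)$-edge; hence $f_{-1} g_{j,k} = 0$, i.e.\ $\varphi_{-1}(g_{j,k}) = 0$.

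The main obstacle is this boundary value $\varphi_1 = 2$. Two points must be pinned down: that passing to $d = e_{-1} c$ really lands in the regime $\varphi_1(d) \geqslant 3$ (rather than $\varphi_1(d) = 2$, which would only reproduce the boundary), and that $g_{j,k}$ cannot simultaneously carry an incoming and an outgoing $(-1)$-edge, so that $\varepsilon_{-1}(g_{j,k}) = 1$ does force $\varphi_{-1}(g_{j,k}) = 0$. I expect to settle both from the $\mathfrak{sl}_2$-weight identity $\varphi_1 - \varepsilon_1 = \wt_1 - \wt_2$ combined with \textbf{LQ2}, \textbf{LQ3} and the equality $f_1 = f_{-1}$ of \textbf{LQ4}(b).
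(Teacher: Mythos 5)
Your proposal is correct and follows essentially the same route as the paper: the forward direction is \textbf{C0}, and the converse converts $\varphi_{-1}=0$ to $\varepsilon_{-1}=1$ via \textbf{LQ2}/\textbf{LQ3}, propagates this through $e_j,\ldots,e_3$ by \textbf{LQ6} and through $e_2$ by \textbf{LQ5}, and then crosses the final $e_1$ step using the fact that $\varphi_1\bigl((e_2\cdots e_j)(e_1\cdots e_k)v\bigr)=1$ together with \textbf{LQ4}. Your discussion of the $\varphi_1=2$ boundary via $d=e_{-1}c$ is just a more explicit unpacking of the step the paper compresses into ``hence by the local queer axioms.''
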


\begin{proof}
The condition \textbf{C0} requires that $\varphi_{-1}(g_{j,k})=0$ implies $\varphi_{-1}(e_1\cdots e_k v)=0$.

For the converse direction, note that $\wt(e_1 \cdots e_k v)_1>0$. Hence 
\[
	\varphi_{-1}(e_1 \cdots e_k v) =0 \quad \Leftrightarrow \quad 
	\varepsilon_{-1}(e_1 \cdots e_k v) =1.
\]
By the local queer axioms \textbf{LQ6} and \textbf{LQ5} of Definition~\ref{definition.local queer axioms}
(see also Figure~\ref{figure.local queer}), we have
\[
	\varepsilon_{-1}(e_1 \cdots e_k v) =1 \quad \Leftrightarrow \quad 
	\varepsilon_{-1}((e_3 \cdots e_j)(e_1 \cdots e_k) v) =1 \quad \Rightarrow \quad 
	\varepsilon_{-1}((e_2 \cdots e_j)(e_1 \cdots e_k) v) =1.
\]
It can be easily checked that $\varphi_1((e_2 \cdots e_j)(e_1 \cdots e_k)v)=1$ for $j\leqslant k$
(for example using the tableaux model for type $A_n$ crystals). Hence by the local queer axioms
\[
	\varepsilon_{-1}((e_2 \cdots e_j)(e_1 \cdots e_k) v) =1 \quad \Leftrightarrow \quad 
	\varepsilon_{-1}((e_1 \cdots e_j)(e_1 \cdots e_k) v) =1.
\]
This proves that $\varphi_{-1}(e_1 \cdots e_k v)=0$ implies $\varphi_{-1}(g_{j,k})=0$.
\end{proof}

\begin{proof}[Proof of Theorem~\ref{theorem.main}]
By Proposition~\ref{proposition.local queer axioms} and Theorem~\ref{theorem.connectivity axioms},
$\mathcal{D}$ satisfies the local queer axioms and the connectivity axioms and hence all conditions of
the theorem.

By \textbf{LQ1} of the local queer axioms of Definition~\ref{definition.local queer axioms}, each type $A_n$-component
of $\mathcal{C}$ is a Stembridge crystal and hence is uniquely characterized by~\cite{Stembridge.2003}.
By assumption $G(\mathcal{C}) \cong G(\mathcal{D})$. In particular, the vertices of $G(\mathcal{C})$ and
$G(\mathcal{D})$ agree. This proves that $\mathcal{C}$ and $\mathcal{D}$ are isomorphic as $A_n$ crystals.

Next we show that all $(-1)$-arrows also agree on $\mathcal{C}$ and $\mathcal{D}$. 
As discussed just before Lemma~\ref{lemma.gjk}, given the local queer axioms of Definition~\ref{definition.local queer axioms},
it suffices to show that $f_{-1}$ acts in the same way in $\mathcal{C}$ and $\mathcal{D}$ on the almost lowest elements 
satisfying~\eqref{equation.almost lowest} or equivalently by Lemma~\ref{lemma.gjk} on every
$g_{j,k}\neq 0$ with $1\leqslant j \leqslant k \leqslant n$. For the remainder of this proof, fix $g_{j,k} \neq 0$ in the 
$I_0$-component $u$.

Let us first assume that $G(\mathcal{C})$ contains an edge $u\to u'$ such that $\wt(u')$ is obtained from $\wt(u)$
by moving a box from row $n+1-k$ to row $n+1-h$ for some $h<k$. If $h<j\leqslant k$, then $f_{-1} g_{j,k}$ is determined
by \textbf{C1} of Definition~\ref{definition.connectivity axioms}. If $j\leqslant h$, pick $h<j'\leqslant k$ such that
$g_{j',k} \neq 0$. Such a $j'$ must exist since there is an edge $u\to u'$ in $G(\mathcal{C})$. By \textbf{C1},
we have $\varphi_{-1}(g_{j',k})=1$ and hence by Lemma~\ref{lemma.gjk condition} also $\varphi_{-1}(g_{j,k})=1$.
Hence $f_{-1} g_{j,k}$ is determined by \textbf{C2}(a).

Next assume that $G(\mathcal{C})$ does not contain an edge $u\to u'$ such that $\wt(u')$ is obtained from $\wt(u)$
by moving a box from row $n+1-k$.

\smallskip

\noindent \textbf{Claim}: If $g_{k,k} \neq 0$, then $f_{-1} g_{j,k}=0$.

\begin{proof}
Suppose $f_{-1} g_{k,k} \neq 0$. By \textbf{C2}(b), we have $f_{-1} g_{k,k} = (e_2 \cdots e_k) ( e_1 \cdots e_k) v
= f_1 g_{k,k}$. But this contradicts the local queer axioms of Definition~\ref{definition.local queer axioms} since
$\varphi_1(g_{k,k})>1$. Hence $\varphi_{-1}(g_{k,k})=0$ and by Lemma~\ref{lemma.gjk condition}
also $\varphi_{-1}(g_{j,k})=0$, which proves the claim.
\end{proof}

If $g_{k,k}=0$, we have $j<k$ since by assumption $g_{j,k} \neq 0$.

\smallskip

\noindent \textbf{Claim}: Suppose $g_{k,k}=0$.
\begin{enumerate}
\item Suppose there is an edge $\overline{u} \to u$ in $G(\mathcal{C})$ such that $\wt(u)$ is obtained from 
$\wt(\overline{u})$ by moving a box from row $n+1-\overline{k}$ to row $n+1-\overline{h}$ such that 
$\overline{h}<k\leqslant \overline{k}$. Then $f_{-1} g_{j,k} =0$.
\item Suppose $G(\mathcal{C})$ does not contain an edge as in (1). Then
$f_{-1} g_{j,k} = (e_2 \cdots e_k) ( e_1 \cdots e_j) v$.
\end{enumerate}

\begin{proof}
Suppose that the conditions in (1) are satisfied. Then by \textbf{C1} there must exist
\[
	\overline{g}_{\overline{j},\overline{k}} := (e_1 \cdots e_{\overline{j}}) (e_1 \cdots e_{\overline{k}}) \overline{v} \neq 0,
\]
where $\overline{h} < \overline{j} \leqslant \overline{k}$ and $\overline{v}$ is the $I_0$-lowest weight element in the 
component of $\overline{u}$, such that
\begin{equation}
\label{equation.overline g}
	f_{-1} \overline{g}_{\overline{j},\overline{k}} = (e_2 \cdots e_{\overline{j}})(e_1 \cdots e_{\overline{h}}) v.
\end{equation}
Since $g_{j,k}\neq 0$, we have in particular that $(e_1 \cdots e_k) v \neq 0$. Since $\wt(u)$ is obtained from 
$\wt(\overline{u})$ by moving a box from row $n+1-\overline{k}$ to row $n+1-\overline{h}$, this hence also implies that
$\overline{g}_{k,\overline{k}} = (e_1 \cdots e_k)(e_1 \cdots e_{\overline{k}})\overline{v} \neq 0$. Hence by \textbf{C1}
Equation~\eqref{equation.overline g} holds for $\overline{j}=k$.

If $f_{-1}g_{\overline{h},k}=0$, we also have $f_{-1} g_{j,k}=0$ by Lemma~\ref{lemma.gjk condition} as claimed.
Hence we may assume that $f_{-1}g_{\overline{h},k}\neq 0$. Then by \textbf{C2}(b) we have
\[
	f_{-1} g_{\overline{h},k} = (e_2 \cdots e_k)(e_1 \cdots e_{\overline{h}}) v.
\]
But then $f_{-1} \overline{g}_{k,\overline{k}} = f_{-1} g_{\overline{h},k} = (e_2 \cdots e_k)(e_1 \cdots e_{\overline{h}}) v$,
which contradicts the fact that the crystal operator $f_{-1}$ has a partial inverse since 
$\overline{g}_{k,\overline{k}} \neq g_{\overline{h},k}$. This proves (1).

Now suppose that the conditions in (2) are satisfied. Recall that by assumption $g_{j,k} \neq 0$ with $j<k$. This implies
that $y:=(e_2 \cdots e_k) (e_1 \cdots e_j)v \neq 0$, $\varphi_i(y)=0$ for $i \in I_0 \setminus \{2\}$ and
$\varphi_2(y)=1$. By the local queer axioms of Definition~\ref{definition.local queer axioms}, this implies that
$x:= e_{-1} y \neq 0$ with $\varphi_1(x) \in \{1,2\}$ and $\varphi_i(x)=0$ for $i\in I_0 \setminus \{1\}$.
Thus we may write $x= (e_1 \cdots e_s)(e_1 \cdots e_t)\overline{v}$, where $0 \leqslant s \leqslant t$ and
$\overline{v} \in \mathcal{C}$ is some $I_0$-lowest weight vector. This yields the equality
\[
	f_{-1} (e_1 \cdots e_s)(e_1 \cdots e_t) \overline{v} = (e_2 \cdots e_k)(e_1\cdots e_j)v.
\]
If $\overline{v}\neq v$, then by the connectivity axioms of Definition~\ref{definition.connectivity axioms} this means that 
$j<k=s\leqslant t$ and there is an edge in $G(\mathcal{C})$ from $\uparrow \overline{v}$ to $u=\uparrow v$, moving a 
box from row $n+1-t$ to row $n+1-j$. This contradicts the assumptions of (2). Hence we must have $\overline{v}=v$.
By \textbf{C2}(b) we have $f_{-1} g_{s,t} = (e_2 \cdots e_t)(e_1 \cdots e_s)v$, so that $k=t$ and $j=s$.
This implies $f_{-1} g_{j,k} = (e_2 \cdots e_k) ( e_1 \cdots e_j) v$, proving the claim.
\end{proof}

We have now shown that $f_{-1} g_{j,k}$ is determined in all cases, which proves the theorem.
\end{proof}

\begin{remark}
\label{remark.GC example}
Consider the $\mathfrak{q}(4)$-queer crystal $\mathcal{B}^{\otimes 4}$. The elements $4114$ and $4113$ both lie
in the same $\{1,2,3\}$-component of highest weight $(3,1)$. The highest (resp. lowest) weight element in this component 
is $u=2111$ (resp. $v=4344$). Both $4114$ and $4113$ satisfy~\eqref{equation.almost lowest}. In fact,
$4114 = (e_1 e_2) (e_1 e_2 e_3) v = g_{2,3}$ and $4113 = (e_1 e_2 e_3) (e_1 e_2 e_3) v = g_{3,3}$.
In the component of $u$ there is no sequence of crystal operators that would induce the action of $f_{-1}$ on $4114$
from the action of $f_{-1}$ on $4113$ using the local queer axioms of Definition~\ref{definition.local queer axioms}.

This suggests that the connectivity axioms of Definition~\ref{definition.connectivity axioms} are indeed necessary.
However, in this example the graph $G(\mathcal{C})$, where $\mathcal{C}$ is the connected component in 
$\mathcal{B}^{\otimes 4}$ containing $2111$, is linear and hence forces $4114$ and $4113$ to be mapped to the
same $\{1,2,3\}$-component by $f_{-1}$, see Figure~\ref{figure.GC example}.
\end{remark}

\begin{figure}[t]
\begin{tikzpicture}[>=latex,line join=bevel,]
\node (node_3) at (31.0bp,7.0bp) [draw,draw=none] {$4 \otimes 3 \otimes 2 \otimes 1$};
  \node (node_2) at (31.0bp,57.0bp) [draw,draw=none] {$3 \otimes 2 \otimes 1 \otimes 1$};
  \node (node_1) at (31.0bp,107.0bp) [draw,draw=none] {$2 \otimes 1 \otimes 1 \otimes 1$};
  \node (node_0) at (31.0bp,157.0bp) [draw,draw=none] {$1 \otimes 1 \otimes 1 \otimes 1$};
  \draw [black,->] (node_2) ..controls (31.0bp,42.974bp) and (31.0bp,32.942bp)  .. (node_3);
  \draw [black,->] (node_0) ..controls (31.0bp,142.97bp) and (31.0bp,132.94bp)  .. (node_1);
  \draw [black,->] (node_1) ..controls (31.0bp,92.974bp) and (31.0bp,82.942bp)  .. (node_2);
\end{tikzpicture}
\caption{The graph $G(\mathcal{C})$ for the example in Remark~\ref{remark.GC example}.
\label{figure.GC example}}
\end{figure}
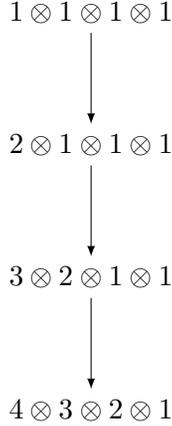

\begin{remark}
Consider the connected component $\mathcal{C}$ of $111212121$ in the $\mathfrak{q}(6)$-queer crystal 
$\mathcal{B}^{\otimes 9}$. The $\{1,2,3,4,5\}$-component containing $321312121$ is connected to the components 
$421312121$, $431312121$, and $432312121$ in $G(\mathcal{C})$.
The elements $g_{4,5}=651615464$ and $g_{3,5}=651615465$ in the component of $321312121$
are mapped to the same component $432312121$ by \textbf{C1} of Definition~\ref{definition.connectivity axioms}.
However, the element $g_{4,5}$ is connected to $431413131$ in the crystal using only arrows that commute with $f_{-1}$
and the element $g_{3,5}$ is connected to $431413143$ in the crystal using only arrows that commute with $f_{-1}$.
However, these two components (containing $431413131$ resp. $431413143$ using only crystal operators $f_i$ and $e_i$
with $i\in I_0$ that commute with $f_{-1}$) are disjoint.
This suggests that \textbf{C1} of Definition~\ref{definition.connectivity axioms} is necessary for uniqueness.
\end{remark}

\appendix
\section{Proof of Theorem~\ref{theorem.connectivity axioms}}
\label{appendix.proof connectivity}

In this appendix we prove Theorem~\ref{theorem.connectivity axioms}. We use the shorthand notation
$e_1^k := e_1\cdots e_k$, $e_{\bar{1}}^k := e_{-1} e_2 \cdots e_k$, $f_k^1 := f_k \cdots f_1$, and 
$f_k^{\bar{1}} := f_k \cdots f_2 f_{-1}$.

\begin{lemma}
In $\mathcal{B}^{\otimes \ell}$, condition \textbf{C0} of Definition~\ref{definition.connectivity axioms} holds.
\end{lemma}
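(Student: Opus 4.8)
The plan is to deduce \textbf{C0} from the reformulation \textbf{LQ7} given in Remark~\ref{remark.new C0}, so that the whole task reduces to verifying \textbf{LQ7} directly in $\mathcal{B}^{\otimes\ell}$. Write $w := e_1 \cdots e_k v$ and assume $g_{j,k}\neq 0$, so that $w\neq 0$ and, by Lemma~\ref{lemma.gjk}, $\varphi_1(w)=1$ and $\varphi_i(w)=0$ for $i\geqslant 2$. I would prove the contrapositive: if $\varphi_{-1}(w)=1$, then $\varphi_{-1}(g_{j,k})=1$ (recall $\varphi_{-1}\in\{0,1\}$). First I would observe that $g_{j,k}=e_1 e_2\,(e_3\cdots e_j)\,w$ and set $b:=(e_3\cdots e_j)w$ (an empty product when $j\leqslant 2$). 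Since each $e_i$ with $i\geqslant 3$ changes only letters $\geqslant 4$ into letters $\geqslant 3$, it fixes the $(1,2)$-subword of $w$; hence by Remark~\ref{rmk:combo-minus-1} the statistics $\varphi_{-1},\varepsilon_{-1},\varphi_1,\varepsilon_1$ agree on $b$ and $w$, giving $\varphi_{-1}(b)=1$ and $\varphi_1(b)=1$. The case $j=1$ is immediate: $g_{1,k}=e_1 w$ merely turns some $2$ into a $1$ and cannot create a $2$ strictly left of the leading letter, so the leftmost $\{1,2\}$-letter stays a $1$.

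Next I would check that $b$ satisfies the hypothesis of \textbf{LQ7}. Since $g_{j,k}=e_1 e_2 b\neq 0$, both $e_2 b$ and $e_1 e_2 b$ are nonzero, so $\varepsilon_2(b)>0$. Moreover $\varphi_1(g_{j,k})=2$ forces $\varphi_1(e_2 b)=1=\varphi_1(b)$; as changing a $3$ into a $2$ can keep $\varphi_1$ unchanged only by producing a new \emph{unbracketed} $2$ in the $(1,2)$-bracketing, this yields $\varepsilon_1(e_2 b)=\varepsilon_1(b)+1>\varepsilon_1(b)$. Thus it remains to verify \textbf{LQ7} for such a $b$: from $\varphi_{-1}(b)=1$ one must deduce $\varphi_{-1}(e_1 e_2 b)=1$. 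Here I would use the signature rules of Remarks~\ref{rmk:signature-rule} and~\ref{rmk:combo-minus-1}, tracking the leftmost $\{1,2\}$-letter of $b$, say at position $\ell_0$ (a $1$, since $\varphi_{-1}(b)=1$), through the two moves: $e_2$ changes the leftmost unbracketed $3$, at some position $q$, into a $2$, and then $e_1$ changes the leftmost unbracketed $2$, at some position $p\leqslant q$, into a $1$.

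The argument then splits into two cases. If $q\geqslant \ell_0$, the new $2$ sits weakly to the right of $\ell_0$, and neither $e_2$ nor $e_1$ creates a $1$ or $2$ strictly left of $\ell_0$, so the leading $\{1,2\}$-letter remains the $1$ at $\ell_0$ and $\varphi_{-1}(e_1 e_2 b)=1$. The delicate case, which I expect to be the main obstacle, is $q<\ell_0$: there $e_2$ produces a $2$ \emph{to the left} of the leading $1$, momentarily making $\varphi_{-1}(e_2 b)=0$. The key point is that all letters strictly left of $\ell_0$ in $b$ are $\geqslant 3$, so after $e_2$ there are no $1$'s or $2$'s strictly left of $q$; combined with the fact (from $\varepsilon_1(e_2 b)>\varepsilon_1(b)$) that the new $2$ at $q$ is unbracketed, this new $2$ is exactly the leftmost unbracketed $2$ of $e_2 b$. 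Hence $e_1$ acts at $p=q$ and restores a $1$ at position $q<\ell_0$, so the leftmost $\{1,2\}$-letter of $e_1 e_2 b$ is again a $1$ and $\varphi_{-1}(g_{j,k})=1$. The careful bracketing bookkeeping needed to confirm that $e_1$ reverses precisely the $2$ introduced by $e_2$ is the crux; once it is in place, \textbf{LQ7}, and therefore \textbf{C0}, follows.
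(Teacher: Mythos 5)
Your proposal is correct and follows essentially the same route as the paper: the paper's proof of this lemma simply invokes Remark~\ref{remark.new C0}, which reduces \textbf{C0} to \textbf{LQ7} and then verifies \textbf{LQ7} in $\mathcal{B}^{\otimes\ell}$ by observing that $e_1$ acts on $e_2(b)$ weakly to the left of where $e_2$ acted. Your case analysis on the position $q$ relative to the leading $1$ (and your explicit treatment of $j=1$) just spells out the details the paper leaves as ``it immediately follows.''
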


\begin{proof}
This follows from Remark~\ref{remark.new C0}.
\end{proof}

The connectivity axioms \textbf{C1} and \textbf{C2} of Definition~\ref{definition.connectivity axioms} are implied by the 
following conditions. Here $v$ is an $I_0$-lowest weight vector in $\mathcal{C}$:
\begin{enumerate}
\item[\textbf{C1'.}] If $h<k$ and there exists some $j \in(h,k]$ such that $f_h^1f_j^{\bar{1}}e_1^je_1^k (v)$ is $I_0$-lowest 
weight, then for any $j' \in (h,k]$ with $e_1^{j'}e_1^k (v)\neq0$ we have $f_{j'}^{\bar{1}}e_1^{j'}e_1^k(v) = 
f_j^{\bar{1}}e_1^je_1^k(v)$.
\item[\textbf{C2'.}] If $j \leqslant k$ and $f_{-1}e_1^je_1^k (v)\neq 0$, then either:
\begin{enumerate}
\item $j \neq k$ and $f_j^1f_k^{\bar{1}}e_1^je_1^k(v)=v$, or
\item $f_h^1f_j^{\bar{1}}e_1^je_1^k(v)$ is $I_0$-lowest weight for some $h<j$.
\end{enumerate}
\end{enumerate}

\begin{proposition}
\label{proposition.ins}
In $\mathcal{B}^{\otimes \ell}$, condition \textbf{C2'} holds.
\end{proposition}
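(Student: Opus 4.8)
The plan is to reduce \textbf{C2'} to an explicit identification of $f_{-1}g_{j,k}$ and then to read off the two cases from the combinatorics of the $(1,2)$-subword, using the local lemmas already available. Write $g:=g_{j,k}=e_1^je_1^k(v)$ and $g':=f_{-1}g$. I first record two reformulations obtained by cancelling type-$A$ operators against $e$'s: the relation $f_j^1 f_k^{\bar{1}} g=v$ of case~(a) is equivalent to $g'=(e_2\cdots e_k)(e_1\cdots e_j)v$ (telescoping $f_k\cdots f_2$ against $e_2\cdots e_k$ and $f_j\cdots f_1$ against $e_1\cdots e_j$), and the requirement in case~(b) that $f_h^1 f_j^{\bar{1}} g$ be $I_0$-lowest weight is equivalent to $g'=(e_2\cdots e_j)(e_1\cdots e_h)v'$ for the lowest weight element $v':=f_h^1 f_j^{\bar{1}} g$ and some $h<j$. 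Thus both cases assert that $g'$ has the shape $(e_2\cdots e_m)(e_1\cdots e_l)(\text{$I_0$-lowest weight})$, and the content of \textbf{C2'} is a dichotomy on the triple $(m,l,v')$: either $(m,l)=(k,j)$ with $v'=v$ and $j<k$, or $m=j$, $l=h<j$, and $v'\neq v$.

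Next I would pin down the local structure of $g'$. Since $g$ satisfies~\eqref{equation.almost lowest}, it is $\{2,\dots,n\}$-lowest weight with exactly two unbracketed $1$'s, and the hypothesis $f_{-1}g\neq0$ forces the leftmost letter of the $(1,2)$-subword to be a $1$, which $f_{-1}$ turns into a $2$. Applying Lemma~\ref{lemma.e1}(1) with $\varphi_1(g)=2$ gives $\varphi_1(g')=0$; Axiom \textbf{Q4} gives $\varphi_i(g')=\varphi_i(g)=0$ for all $i\geqslant3$; and Lemma~\ref{lemma.e2} forces $\varphi_2(g')\in\{0,1\}$. If $\varphi_2(g')=0$, then $g'$ is itself $I_0$-lowest weight, which is case~(b) in its degenerate form $h=0$; here I must also verify that this can only occur when $j=1$, i.e.\ that $j\geqslant2$ together with $f_{-1}g\neq0$ forces $\varphi_2(g')=1$ (intuitively, the extra $e_2$ used in building $g$ has already consumed the relevant $3$, so the new $2$ is $(2,3)$-unbracketed).

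The heart of the argument is the case $\varphi_2(g')=1$, where $g'$ carries a single $f$-unbracketed letter and is otherwise $I_0$-lowest weight. I would show that lowering $g'$ to the bottom of its $I_0$-component proceeds along an ascending chain $f_2,f_3,\dots,f_m$ followed by a descending chain $f_1,f_2,\dots,f_l$, so that $g'=(e_2\cdots e_m)(e_1\cdots e_l)v'$; this is exactly the ``hook'' shape demanded by the two reformulations. The dichotomy then reduces to whether the ascending chain, seeded by the new $2$, propagates all the way up to level $k$ (case~(a), necessarily $j<k$ and $v'=v$) or halts at level $j$ (case~(b), $v'\neq v$). This is the same cycling-versus-no-cycling phenomenon that governs Proposition~\ref{proposition.e-output-hw} and Proposition~\ref{proposition.by-pass}, now read in the lowest-weight picture, and I would control it by tracking the cascade of re-bracketings through the successive $(i,i+1)$-subwords for $i=2,\dots,k$, using the tableau description of $g_{j,k}$ from Lemma~\ref{lemma.gjk}, most likely by induction on $k$.

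I expect the main obstacle to be precisely this multi-level cascade: proving that the lowering path of $g'$ genuinely has the claimed hook shape and that it turns around at level $j$ unless it reaches level $k$ with bottom element equal to $v$. The local lemmas (Lemma~\ref{lemma.e1}, Lemma~\ref{lemma.e2}) only describe the interaction of $f_{-1}$ with $f_1$ and $f_2$, so the propagation past level $2$ is not handed to us and must be extracted from the global structure of $g_{j,k}$; this is the analog of the $t_{k-1}\leqslant t_k$ analysis in Proposition~\ref{proposition.e-output-hw}. Once the shape and the turning level are established, matching $(m,l,v')$ against the two reformulations from the first step closes the proof.
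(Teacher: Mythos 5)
Your opening reductions are correct: unwinding the partial bijections, case~(a) of \textbf{C2'} is equivalent to $f_{-1}g_{j,k}=(e_2\cdots e_k)(e_1\cdots e_j)v$ and case~(b) to $f_{-1}g_{j,k}=(e_2\cdots e_j)(e_1\cdots e_h)v'$ with $h<j$ and $v'$ a different $I_0$-lowest weight element, and the local lemmas do give $\varphi_1(f_{-1}g_{j,k})=0$, $\varphi_i(f_{-1}g_{j,k})=0$ for $i\geqslant 3$, and $\varphi_2(f_{-1}g_{j,k})\leqslant 1$. But from that point on the argument is a plan, not a proof. The entire content of \textbf{C2'} is the claim that the lowering path of $f_{-1}g_{j,k}$ has the ``hook'' shape $(e_2\cdots e_m)(e_1\cdots e_l)v'$ with $(m,l,v')$ equal to either $(k,j,v)$ or $(j,h,v')$ for some $h<j$ --- ruling out, for instance, an ascending chain that stops at some intermediate level between $j$ and $k$, or one that reaches level $k$ but lands at a lowest weight element other than $v$. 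You explicitly defer this (``I would show\ldots'', ``I expect the main obstacle to be precisely this multi-level cascade\ldots must be extracted from the global structure of $g_{j,k}$''), and you do not supply the induction or the re-bracketing analysis that would carry the propagation past level $2$, where Lemmas~\ref{lemma.e1} and~\ref{lemma.e2} stop helping. The side claim that $\varphi_2(f_{-1}g_{j,k})=0$ forces $j=1$ is likewise flagged but not verified.

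For comparison, the paper closes exactly this gap by a change of model: it passes to the reversed complement $x^{\#}$ and tracks RSK insertion. The insertion tableau $P(g_{j,k}^{\#})$ is the lowest-weight tableau with two entries promoted to $n+1$ (in rows $n+1-k$ and $n+1-j$), and the dichotomy of \textbf{C2'} becomes: either the last two inserted $(n+1)$'s never share a row during the insertion of $x^{\#}$ (then the recording tableau is unchanged by $f_{-1}$, the component is the same, and case~(a) falls out), or they do share a row at some point (then the bumping analysis shows the new $n+1$ lands in a row $n-h+1$ with $h<j$, giving case~(b)). That bookkeeping is precisely the ``cascade'' you identify as the main obstacle; without it, or an equivalent direct bracketing argument carried out in full, the proposal does not establish the proposition.
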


\noindent
The proof of Proposition~\ref{proposition.ins} is given in Section~\ref{section.proof C2'}.

\begin{proposition} 
\label{proposition.prin}
In $\mathcal{B}^{\otimes \ell}$, condition \textbf{C1'} holds.
\end{proposition}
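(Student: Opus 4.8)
The plan is to establish \textbf{C1'} by a direct computation in the tableau model for type $A_n$ crystals, using the explicit signature-rule description of $f_{-1}$ together with the already-established condition \textbf{C2'} (Proposition~\ref{proposition.ins}). First I would fix the $I_0$-lowest weight vector $v$ and, for every $j'$ with $h<j'\leqslant k$ and $e_1^{j'}e_1^k(v)\neq 0$, record that all the elements $g_{j',k}=e_1^{j'}e_1^k(v)$ lie in the single type $A_n$ component of $u=\uparrow v$ and are of the almost-lowest-weight form analyzed in Lemma~\ref{lemma.gjk}: in the insertion tableau each $g_{j',k}$ differs from $v$ only in its first row, by two extra $1$'s sitting in columns whose next entry is $\geqslant 3$. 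Since the hypothesis of \textbf{C1'} produces a $j$ for which $f_{-1}(g_{j,k})\neq 0$, and hence $\varphi_{-1}(g_{j,k})=1$, condition \textbf{C0} together with Lemma~\ref{lemma.gjk condition} gives $\varphi_{-1}(e_1^k(v))=1$ and therefore $\varphi_{-1}(g_{j',k})=1$ for every admissible $j'$. Thus $f_{-1}$ is defined on all these elements and each left-hand side $f_{j'}^{\bar 1}e_1^{j'}e_1^k(v)$ is a genuine element of the crystal.

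Next I would compute $f_{j'}^{\bar 1}g_{j',k}=f_{j'}\cdots f_2\,f_{-1}(g_{j',k})$ explicitly. By the signature rule (Remark~\ref{rmk:combo-minus-1}), $f_{-1}$ changes the leftmost $1$ of $g_{j',k}$ into a $2$, and the subsequent $f_2,\ldots,f_{j'}$ slide this new entry up along its column. The key claim is that the result has the form $e_1^h(v')$ for an $I_0$-lowest weight vector $v'$, where $h$ and $v'$ depend only on the moved-box data $(h,k)$ and not on $j'$. Here \textbf{C2'} supplies the essential dichotomy for each admissible $j'$: either $f_{j'}^1 f_k^{\bar 1}g_{j',k}=v$ (the box stays in row $n+1-k$), or $f_{h'}^1 f_{j'}^{\bar 1}g_{j',k}$ is $I_0$-lowest weight for some $h'<j'$ (the box moves to row $n+1-h'$). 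The hypothesis of \textbf{C1'} places the distinguished index $j$ in the second alternative with $h'=h$, and I would argue that this forces every $j'\in(h,k]$ into the second alternative with the same $h$: the column receiving the raised entry is governed by $k$ and by the position of the leftmost non-bracketed $1$, which is common to all $g_{j',k}$ in the range $(h,k]$.

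Finally, once all $j'\in(h,k]$ land in the box-moving case with a common target row $n+1-h$, the explicit description of $f_{j'}^{\bar 1}g_{j',k}$ identifies it as $e_1^h(v')$ for the same $v'$, yielding $f_{j'}^{\bar 1}e_1^{j'}e_1^k(v)=f_j^{\bar 1}e_1^je_1^k(v)$ as desired. The main obstacle is exactly the invariance across $j'$ asserted in the previous paragraph: as $j'$ varies, the bracketing that selects the leftmost moveable $1$, and the upward slide that follows, can change discontinuously — this is precisely the ``cycling around the edge'' phenomenon analyzed for the dual operators in Theorem~\ref{theorem.e-i} and Proposition~\ref{proposition.by-pass}. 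I expect to control it by transporting that analysis to the lowest-weight side: passing from $j'$ to $j'+1$ alters $g_{j',k}$ only by one further $e_{j'+1}$-step applied to $e_1^k(v)$, and one checks that this step commutes with the $f_{-1}$-move modulo the raising operators $f_2,\ldots,f_{j'}$, so that the net displacement of the moved box, and hence $v'$, is unaffected. Assembling these steps proves \textbf{C1'}.
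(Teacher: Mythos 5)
Your overall architecture is reasonable and partially parallels the paper: you correctly use \textbf{C0} and Lemma~\ref{lemma.gjk condition} to get $\varphi_{-1}(g_{j',k})=1$ for every admissible $j'$, and you correctly see that \textbf{C2'} reduces the problem to showing that each $j'\in(h,k]$ lands in the box-moving alternative with the same target and, more importantly, produces the \emph{same element}. But there is a genuine gap exactly where you flag ``the main obstacle'': the invariance of $f_{j'}^{\bar{1}}g_{j',k}$ as $j'$ varies. Your proposed resolution --- that passing from $j'$ to $j'+1$ adds one $e_{j'+1}$ and that ``one checks that this step commutes with the $f_{-1}$-move modulo the raising operators $f_2,\ldots,f_{j'}$'' --- restates the conclusion rather than proving it. In particular, the position at which $f_{-1}$ acts (the leftmost letter of the $\{1,2\}$-subword) is \emph{not} obviously the same for $g_{j',k}$ and $g_{j'+1,k}$; in the notation of the paper's appendix this is the claim $x_{-1}=y_{-1}$, which itself requires an argument invoking \textbf{C2'}. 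Nor does knowing that both outputs have the form $e_1\cdots e_h$ applied to an $I_0$-lowest weight vector of the right weight suffice: two distinct $I_0$-lowest weight vectors of the same weight can coexist in $\mathcal{C}$, so one must show that the resulting \emph{words} literally coincide.

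The paper closes this gap in two stages, neither of which your sketch supplies. First, Stembridge relations are used to pull the operators $e_{j+2},\ldots,e_{j'}$ (resp.\ $e_{j'+2},\ldots,e_j$) out of the expression, reducing \textbf{C1'} to the two adjacent cases $(j,j')=(n-1,n)$ and $(j,j')=(n,n-1)$ of Lemma~\ref{lemma.clean}. Second --- and this is the bulk of the appendix --- each adjacent case needs a delicate combinatorial analysis: for $(n-1,n)$ one tracks the positions $x_{\pm i},y_{\pm i}$ at which successive words differ, establishes the chains $x_n\geqslant\cdots\geqslant x_1>x_{-1}\leqslant\cdots\leqslant x_{-n}$ (and similarly for $y$), and after a lengthy case analysis on coincidences among these positions (using the counting function $AB_i$ and the hypothesis $B_{-(n+1)}=0$) concludes $x_j=x_{-j}$ and hence $A_{-(n+1)}=B_{-n}$ as words; the case $(n,n-1)$ needs a separate argument via longest weakly increasing subsequences and RSK (Lemma~\ref{clean2}). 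None of this is ``free'' commutation, and without some substitute for it your proof does not close.
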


We will prove a seemingly weaker statement:
\begin{lemma} 
\label{lemma.clean}
In $\mathcal{B}^{\otimes \ell}$, condition \textbf{C1'} holds for $j=n-1$, $j'=k=n$ and for $j=k=n$, $j'=n-1$.
\end{lemma}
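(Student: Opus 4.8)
The plan is to prove both instances of \textbf{C1'} at once, since they assert the very same identity. Taking $k=n$ throughout, the case $j=n-1,\ j'=n$ and the case $j=n,\ j'=n-1$ both reduce to the single equality
\begin{equation}
\label{equation.clean-target}
	f_n^{\bar 1}\, g_{n,n} = f_{n-1}^{\bar 1}\, g_{n-1,n},
\end{equation}
the two differing only in which side is hypothesized to reach an $I_0$-lowest weight element after a further application of $f_h^1$. I would therefore establish~\eqref{equation.clean-target} directly in the word model, using the lowest-weight hypotheses only to guarantee that both sides are defined and that the top operator lands where expected. Note the chains in~\eqref{equation.clean-target} differ precisely by the single extra $f_n$ at the top.

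First I would pin down the two elements. Writing $w:=e_1^n v$, we have $g_{n-1,n}=e_1^{n-1}w$ and $g_{n,n}=e_1^n w=e_1^{n-1}(e_n w)$, so the two words differ only through the extra application of $e_n$ to $w$. Using the explicit tableau description of $g_{j,k}$ from the proof of Lemma~\ref{lemma.gjk} (equivalently, tracking the cascade $e_1^{n-1}$ applied to $w$ and to $e_n w$), the key structural claim is that $g_{n,n}$ and $g_{n-1,n}$ are identical as words except in one position $p$, where $g_{n,n}$ carries the letter $n$ and $g_{n-1,n}$ carries $n+1$. Verifying this amounts to checking that the new $n$ created by $e_n$ is never selected by the subsequent operators $e_1,\dots,e_{n-1}$; here the $I_0$-lowest weight structure of $v$ is what guarantees that there is always a more-preferred $n$ (or that the new $n$ is bracketed), so that the cascade runs in parallel on $w$ and $e_n w$.

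Next I would run the two operator chains of~\eqref{equation.clean-target} in tandem. Applying $f_{-1}$ (Remark~\ref{rmk:combo-minus-1}) and then $f_2,f_3,\dots,f_{n-2}$ (Remark~\ref{rmk:signature-rule}) to each word, I would show that at every step the two computations select the same position: since the sole discrepancy sits at $p$ and involves the letters $n,n+1$, which for $i\le n-2$ lie outside the $(i,i{+}1)$- and $(1,2)$-subwords (and in the remaining low-rank case sit to the right of the active low letters), it alters neither the leftmost entry of the $(1,2)$-subword nor the chosen rightmost unbracketed entries. Hence after $f_{n-2}\cdots f_2 f_{-1}$ the two partial results still agree away from $p$, with an $n$ there on the $g_{n,n}$-side and an $n+1$ on the $g_{n-1,n}$-side. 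The top of the chains then reconciles the difference: on the $g_{n-1,n}$-side, $f_{n-1}$ finishes, while on the $g_{n,n}$-side, $f_{n-1}$ followed by $f_n$ pushes the letter at $p$ from $n$ up to $n+1$, after which the outputs coincide. This is exactly where the lowest-weight hypothesis enters: it forces the relevant entry at $p$ to be the unique unbracketed $n$ seen by the final $f_n$ (and symmetrically controls the $f_{n-1}$ step), so the extra operator acts precisely at $p$.

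The main obstacle I anticipate is the bracketing bookkeeping at the $f_{n-1}$ and $f_n$ steps: one must rule out that the single-position difference $n$ versus $n+1$ at $p$ causes the two chains to diverge earlier in the $(n-1,n)$-bracketing, and must show that the concluding $f_n$ on the $g_{n,n}$-side lands exactly on $p$. Controlling this is precisely what the hypothesis that $f_h^1 f_j^{\bar 1}(\cdot)$ is $I_0$-lowest weight for some $h<n$ provides, since it constrains $\varphi_{n-1}$ and $\varphi_n$ of the intermediate word; combined with the explicit structure of $w=e_1^n v$, this closes the argument.
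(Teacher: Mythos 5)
Your plan rests on the structural claim that $g_{n,n}$ and $g_{n-1,n}$ differ in exactly one position $p$ (an $n$ versus an $n+1$), so that the two chains $f_{n}^{\bar 1}$ and $f_{n-1}^{\bar 1}$ run in parallel and only the final $f_n$ reconciles the discrepancy. This claim is false in general, and it is precisely where the real difficulty of the lemma lives. Writing $X=e_1^n v$, $A_i=(e_i\cdots e_n)X$, $B_i=(e_i\cdots e_{n-1})X$ and letting $x_i$, $y_i$ be the positions acted on at each step, one can only prove $x_i\leqslant y_i$; the two cascades can genuinely diverge at some minimal index $j<n$ with $x_j<y_j$ (the extra $n$ produced by $e_n$, or the re-bracketing it induces, can shift the leftmost unbracketed letter seen by $e_{n-1}$, and this propagates downward). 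When that happens $g_{n,n}$ and $g_{n-1,n}$ differ in many positions, and the identity $f_n^{\bar 1}g_{n,n}=f_{n-1}^{\bar 1}g_{n-1,n}$ holds not because the difference stays localized at $p$, but because the extra raisings performed on the way back down by $f_2,\ldots,f_{n}$ exactly undo the extra lowerings ($x_{-i}=x_i$ and $y_{-i}=y_i$ for $i\geqslant j$), so the discrepancies cancel in pairs. Establishing this cancellation is the bulk of the paper's argument: it requires tracking the mirror sequences $x_{-i},y_{-i}$, the counting quantities $AB_i(p:q)$, and a long case analysis, all driven by the hypothesis that $f_h^1 f_j^{\bar 1}e_1^je_1^k(v)$ is $I_0$-lowest weight (which enters, e.g., as $f_n$ annihilating the bottom of the $B$-chain). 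Your proposal to use the lowest-weight hypothesis ``only to guarantee that both sides are defined'' cannot work, since without that hypothesis the conclusion of \textbf{C1'} is simply false (compare cases (a) and (b) of \textbf{C2'}).

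A second, related gap: the two instances of the lemma are not interchangeable and cannot be proved ``at once.'' Although the asserted identity is the same, the hypothesis names a different chain in each case, and the paper needs a genuinely different argument for $j=n$, $j'=n-1$: an RSK/longest-weakly-increasing-subsequence comparison showing $L(e_{\bar 1}^{n-1}e_1^h(v^*))\geqslant L(e_{\bar 1}^{n}e_1^h(v^*))$, which pins down the number of $(n+1)$'s in the target lowest-weight element and forces $t=n$, $s=n-1$; this case is then reduced to the first one rather than proved symmetrically. Your proposal does not engage with this asymmetry. To repair the argument you would need either to prove that divergence of the cascades cannot occur under the stated hypotheses (it can), or to carry out the cancellation analysis that the paper performs.
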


\noindent
The proof of Lemma~\ref{lemma.clean} is given in Sections~\ref{section.proof clean 1} and~\ref{section.proof clean 2}.

\begin{proposition}
Lemma~\ref{lemma.clean} implies Proposition~\ref{proposition.prin}.
\end{proposition}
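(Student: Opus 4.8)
The plan is to deduce the general condition \textbf{C1'} from the two boundary instances in Lemma~\ref{lemma.clean} by combining transitivity with a rank-reduction (``letter truncation'') argument. Since \textbf{C1'} asserts that $f_{j'}^{\bar 1} e_1^{j'} e_1^k(v)$ takes a common value for all admissible $j' \in (h,k]$, and equality is transitive, I would first reduce to two consecutive indices: it suffices to prove
\[
	f_{m}^{\bar 1} e_1^{m} e_1^{k}(v) = f_{m+1}^{\bar 1} e_1^{m+1} e_1^{k}(v)
\]
whenever $h < m$, $m+1 \leqslant k$, both sides are nonzero, and the hypothesis of \textbf{C1'} is in force. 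Chaining these consecutive equalities outward from the distinguished index $j$ supplied by the hypothesis then yields the full statement.

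For a fixed consecutive pair $(m,m+1)$ I would introduce the truncation map $\pi$ deleting every letter $>m+2$ from a word of $\mathcal{B}^{\otimes \ell}$. The key point is that each operator appearing in $f_m^{\bar 1}$ and $f_{m+1}^{\bar 1}$---namely $f_{-1}, f_2, \ldots, f_{m+1}$---rewrites only letters in $\{1,\ldots,m+2\}$ among themselves, hence commutes with $\pi$, and the truncated word lies in a $\mathfrak{q}(m+2)$-crystal of words $\mathcal{B}'^{\otimes \ell'}$. Moreover $g_{m,k}=e_1^m e_1^k(v)$ and $g_{m+1,k}=e_1^{m+1}e_1^k(v)$ agree in all letters $>m+2$ (they differ only through $e_1,\ldots,e_{m+1}$, which preserve $\{1,\ldots,m+2\}$), and these large letters are untouched by $f_m^{\bar 1}$ and $f_{m+1}^{\bar 1}$. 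Therefore the desired equality in $\mathcal{B}^{\otimes \ell}$ holds if and only if its $\pi$-image holds in $\mathcal{B}'^{\otimes \ell'}$.

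Next I would compute $\pi(g_{m,k})$ and $\pi(g_{m+1,k})$ directly from the tableau description of Lemma~\ref{lemma.gjk}---directly, because the operators $e_{m+2},\ldots,e_k$ used to build these elements do \emph{not} commute with $\pi$. Using that, above the first row, every column of $g_{j,k}$ carries its lowest-weight entries, the two modified columns of $g_{m,k}$ (heights $n+1-m$ and $n+1-k$) truncate to a height-$2$ column with entries $1,m+2$ and a height-$1$ column with entry $1$, whereas the two modified columns of $g_{m+1,k}$ truncate to two height-$1$ columns with entry $1$. Writing $v'$ for the $I_0$-lowest weight element of $\mathcal{B}'^{\otimes \ell'}$ of the resulting shape, this identifies
\[
	\pi(g_{m,k}) = e_1^{m} e_1^{m+1}(v') \quad\text{and}\quad \pi(g_{m+1,k}) = e_1^{m+1} e_1^{m+1}(v'),
\]
so that $(m,m+1)$ becomes exactly the top boundary pair $(n'-1,n')$ with $n'=m+1$ in $\mathcal{B}'^{\otimes \ell'}$. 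The required consecutive equality is then precisely the instance of \textbf{C1'} granted by Lemma~\ref{lemma.clean} for this smaller queer crystal.

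The main obstacle will be the bookkeeping that makes the chaining and the hypothesis transfer rigorous. I must check that Lemma~\ref{lemma.clean}'s own hypothesis (that $f_{h'}^1 f_m^{\bar 1}\pi(g_{m,k})$ is $I_0$-lowest weight for some $h'<m$) holds at each link of the chain, and that every intermediate $g_{j',k}$ together with its $f_{-1}$-image is nonzero so that no gap breaks the chain. Nonvanishing I would obtain from the fact that $f_{-1}$ (changing a $1$ into a $2$) commutes with $\pi$ and never deletes the altered letter; the lowest-weight hypothesis I would obtain by feeding the global \textbf{C1'} hypothesis into the dichotomy of Proposition~\ref{proposition.ins} (condition \textbf{C2'}), which classifies the landing component of $f_{-1}g_{j',k}$. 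Propagating the ``box-move'' alternative of \textbf{C2'} from the distinguished index $j$ to its neighbours is what guarantees that Lemma~\ref{lemma.clean} is applicable at every step, and verifying this propagation is the delicate part of the argument.
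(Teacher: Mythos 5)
Your overall strategy---reduce to a lower-rank subcrystal where the pair of indices becomes the boundary pair $(n'-1,n')$ of Lemma~\ref{lemma.clean}---is the same as the paper's, and your truncation map $\pi$ is essentially the paper's restriction to the operators with indices in $\{1,\ldots,m+1\}$ acting on the $\{1,\ldots,m+1\}$-lowest weight element $v'=(e_{m+2}\cdots e_{j'})(e_{m+2}\cdots e_k)v$. The genuine gap is your very first step: the reduction to \emph{consecutive} pairs followed by chaining. By Lemma~\ref{lemma.gjk}, $g_{j,k}\neq 0$ precisely when the letter $j+1$ occurs in the first row of the tableau of $e_1^k(v)$, and that set of letters is governed by the distinct column heights of the shape of the component; it need not be an interval. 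For instance, for $n=4$ and $v$ the $I_0$-lowest weight word $5432545$ of shape $(3,2,1,1)$ with $k=4$, one computes $e_1^4(v)=5431524$, whose insertion tableau has first row $1,2,4$; hence $g_{1,4}\neq 0$ and $g_{3,4}\neq 0$ but $g_{2,4}=0$. Your chain from $j=1$ to $j'=3$ breaks at $m=2$, and no amount of checking nonvanishing of $f_{-1}$-images repairs this, since it is $g_{m,k}$ itself that vanishes. Skipping over the dead index returns you to the non-consecutive case you were trying to avoid.

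The fix is to compare $j$ and $j'$ directly rather than through intermediate indices. For $h<j<j'\leqslant k$, insert the identity $(f_{j'}\cdots f_{j+2})(e_{j+2}\cdots e_{j'})$, commute $f_h^1 f_j^{\bar 1} e_1^j$ (whose indices are all at most $j$, together with $f_{-1}$) past the operators of index at least $j+2$ using the Stembridge and \textbf{Q4} relations, and use that $v$ is $I_0$-lowest weight to write
\[
	e_1^{j}e_1^k(v)=e_1^{j}e_1^{j+1}(v'), \qquad e_1^{j'}e_1^k(v)=(e_{j+2}\cdots e_{j'})^{-1}\text{-conjugate of } e_1^{j+1}e_1^{j+1}(v'),
\]
with $v'=(e_{j+2}\cdots e_{j'})(e_{j+2}\cdots e_k)(v)$ a $\{1,\ldots,j+1\}$-lowest weight element. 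Both $g_{j,k}$ and $g_{j',k}$ then reduce simultaneously to the boundary pair $g_{n'-1,n'}$ and $g_{n',n'}$ with $n'=j+1$, so a single application of Lemma~\ref{lemma.clean} suffices, with no intermediate $g_{m,k}$ ever appearing; the case $h<j'<j\leqslant k$ is symmetric with $n'=j'+1$. Your observation that the chained equalities propagate the $I_0$-lowest weight hypothesis is correct but becomes unnecessary once the comparison is done in one step.
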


\begin{proof}
We first assume that $h<j<j'\leqslant k$ and the assumptions in \textbf{C1'} hold. Then we have
\begin{equation*}
\begin{split}
	f_h^1 f_j^{\bar{1}} e_1^j e_1^k(v) 
	&= f_h^1 f_j^{\bar{1}} (f_{j'} \cdots f_{j+2})(e_{j+2} \cdots e_{j'}) e_1^j e_1^k(v)\\
	&= (f_{j'} \cdots f_{j+2}) f_h^1 f_j^{\bar{1}} e_1^j (e_{j+2} \cdots e_{j'}) e_1^k(v)\\
	&= (f_{j'} \cdots f_{j+2}) f_h^1 f_j^{\bar{1}} e_1^j e_1^{j+1} (v'),
\end{split}
\end{equation*}
where $v' = (e_{j+2} \cdots e_{j'})(e_{j+2} \cdots e_k)(v)$. Here we have used Stembridge relations to commute
crystal operators and in the last step also that the operators are acting on an $I_0$-lowest weight element.
Note that $v'$ is $\{1, \ldots, j+1\}$-lowest weight.  Moreover, 
$f_h^1 f_j^{\bar{1}} e_1^j e_1^{j+1} (v')$ is $\{1, \ldots, j+1\}$-lowest weight.  
Since  $e_1^{j+1} e_1^{j+1} (v') = e_1^{j'} e_1^k(v)\neq 0$, we may apply Lemma~\ref{lemma.clean} with $n=j+1$.
This implies
\begin{equation*}
\begin{split}
	(f_{j'} \cdots f_{j+2}) f_h^1 f_j^{\bar{1}} e_1^j e_1^{j+1} (v')
	&= (f_{j'} \cdots f_{j+2}) f_h^1 f_{j+1}^{\bar{1}} e_1^{j+1} e_1^{j+1} (v')\\
	&= f_h^1 f_{j'}^{\bar{1}} e_1^{j+1} e_1^{j+1} (e_{j+2} \cdots e_{j'}) (e_{j+2} \cdots e_k) (v)\\
	&= f_h^1 f_{j'}^{\bar{1}} e_1^{j'} e_1^k(v),
\end{split}
\end{equation*}
which proves the claim.

Next assume that $h<j'<j\leqslant k$. Then
\[
	f_h^1f_j^{\bar{1}}e_1^je_1^k(v)
	=f_h^1f_j^{\bar{1}}e_1^{j'+1}e_1^{j'+1}(e_{j'+2} \cdots e_j)(e_{j'+2} \cdots e_k)(v)
	=(f_j \cdots f_{j'+2})f_h^1 f_{j'+1}^{\bar{1}}e_1^{j'+1}e_1^{j'+1}(v'),
\]
where $v' = (e_{j'+2} \cdots e_j)(e_{j'+2} \cdots e_k)(v)$.
In this case, both $v'$ and $f_h^1 f_{j'+1}^{\bar{1}}e_1^{j'+1}e_1^{j'+1}(v')$ are $\{1, \ldots, j'+1\}$-lowest weight.
Since  $e_1^{j'} e_1^{j'+1} (v') \neq 0$, we may apply Lemma~\ref{lemma.clean} with $n=j'+1$ to obtain
\[
	f_h^1f_j^{\bar{1}}e_1^je_1^k(v)
	=(f_j \cdots f_{j'+2})f_h^1 f_{j'}^{\bar{1}}e_1^{j'}e_1^{j'+1}(v') = f_h^1 f_{j'}^{\bar{1}} e_1^{j'} e_1^k(v),
\]
proving the claim.
\end{proof}

\subsection{Proof of Proposition~\ref{proposition.ins}}
\label{section.proof C2'}

Given a word $w=w_1 \cdots w_\ell$ in the letters $\{1, \ldots, n+1\}$ we write $w^{\#}=\overline{w_\ell} \cdots \overline{w_1}$, 
where $\overline{w_i}=n+2-w_i$. Suppose that $x=g_{j,k} = e_1^j e_1^k(v) \in \mathcal{B}^{\otimes \ell}$, where $v$ is
$I_0$-lowest weight and $1\leqslant j \leqslant k \leqslant n$, so that by Lemma~\ref{lemma.gjk} we have $\varphi_1(x)=2$ 
and  $\varphi_i(x)=0$ for all $i>1$.  
The RSK insertion tableau for $x^{\#}$, denoted by $P(x^{\#})$, can be constructed as follows:  Construct the 
semistandard Young tableau with weight and shape equal to the weight of $v^{\#}$.  Change the rightmost $n+1-k$ in 
row $n+1-k$ and the rightmost $n+1-j$ in row $n+1-j$ to $n+1$. 

For instance, suppose $n=8$ and $x=198199887766$.  Then $x=e_1^6e_1^8(v)$, where $v=998799887766$ is 
$I_0$-lowest weight and $v^{\#}=443322113211$ has weight $(4,3,3,2)$. Hence the tableau $P(x^{\#})$ is obtained from 
the tableau of shape and weight equal to $(4,3,3,2)$ by changing the rightmost $1$ in row $1$ to $9$ and the rightmost 
$3$ in row $3$ to $9$:
\[
	\young(44,333,222,1111) \quad \raisebox{0.7cm}{$\longrightarrow$} \quad \young(44,339,222,1119)\;.
\]

Below, we consider the entries of a tableau to be linearly ordered in the row reading order.
If $f_{-1}(x)\neq 0$ there are two possibilities:

\begin{enumerate}
\item The recording tableau of $x^{\#}$ is the same as the recording tableau of $(f_{-1}(x))^{\#}$. This implies that during 
the insertion of $x^{\#}$, the final  two $(n+1)$'s to be inserted are at no point in the same row.  (Note that this is clearly
impossible if $j=k$.) This means, that after the insertion of the final two $(n+1)$'s, the rightmost $n+1$ is 
never inserted into another row containing an $n+1$, and, moreover, there is never an $n$ being inserted into the row 
containing the rightmost $n+1$ (since after the insertion of the final two $(n+1)$'s, the rightmost $n$ or $n+1$ is always $n+1$). 
In this case, $P((f_{-1}(x)^{\#})$ is obtained from $P(x^{\#})$ by changing the $n+1$ in row $n+1-k$ into an $n$.  Since
$x^{\#}$ and $(f_{-1}(x))^{\#}$ have the same recording tableau, $x$ and $f_{-1}(x)$ are in the same connected component.  
Since it is evident from $P((f_{-1}(x)^{\#})$ that $f_j^1   f_k \cdots f_2 (f_{-1}(x))$ must be $I_0$-lowest weight, it follows that 
$v=f_j^1 f_k ^{\bar{1}}e_1^j e_1^k(v)$.  This is precisely what happens in the example above; $P((f_{-1}(x)^{\#})$ is 
obtained from $P(x^{\#})$ by:
\[
	\young(44,339,222,1119)  \quad \raisebox{0.7cm}{$\longrightarrow$} \quad
	\young(44,339,222,1118)\;.
\]
Hence \textbf{C2'}(a) holds.
\item The recording tableau of $x^{\#}$ differs from the recording tableau of $(f_{-1}(x))^{\#}$. This implies that during 
the insertion of  $x^{\#}$, there is some point at which the final  two $(n+1)$'s to be inserted are in the same row. Call
 this row $r$ and suppose that this occurs during the insertion of the $i^{th}$ letter of  $x^{\#}$.  Let $P_i$ be the tableau 
 obtained from inserting the first $i$ letters of  $x^{\#}$ and let $P_i '$ be the tableau obtained from inserting the first $i$
 letters of $(f_{-1}(x))^{\#}$.  Then $P_i'$ is obtained from $P_i$ by changing the second to rightmost $n+1$ to $n$ and 
 moving the rightmost $n+1$ from row $r$ to some row $s>r$.

  Now continue with the insertion of the $(i+1)^{st}$ letter in each case. Since the $(n,n+1)$-subword of $x^{\#}$ ends 
  with two $(n+1)$'s, and these are the only $(n,n+1)$-unbracketed $(n+1)$'s in this subword, the same is true of the 
  $(n,n+1)$-subword of each of $P_i, P_{i+1}, \ldots, P_\ell$.  This implies that  at no point in the rest of the insertion of 
  $x^{\#}$ is the second to rightmost $n+1$ inserted into a row containing another $n+1$, and moreover at no point 
  is an $n$ inserted into the row containing the second to rightmost $n+1$ (since after the insertion of the final two 
  $(n+1)$'s, the two rightmost entries which are either $n$ or $n+1$ must both be $n+1$).

It follows that, if we ignore, the rightmost $n+1$ in $P((f_{-1}(x)^{\#})$ and $P(x^{\#})$, then they have the same shape, 
and the second differs from the first only by changing its rightmost $n$ to $n+1$.  Adding back the rightmost $n+1$ to 
$P(x^{\#})$, we see that it must go somewhere to the right of this position (by definition), and  adding back the rightmost 
$n+1$ to $P(f_{-1}(x^{\#}))$, we see that it must go somewhere to the left of this position (otherwise $P((f_{-1}(x)^{\#})$ 
would have an $(n,n+1)$-unbracketed $n+1$.) 

It follows that $P((f_{-1}(x)^{\#})$ is obtained from $P(x^{\#})$ by eliminating the (rightmost) $n+1$ in row $n-k+1$, 
changing the (leftmost) $n+1$ in row $n-j+1$ to $n$ and adding an $n+1$ to some row $n-h+1$ for $h<j$.
It follows that $v'=f_h^1f_j^{\bar{1}}e_1^je_1^k(v)$ and $v$ are both (distinct) $I_0$-lowest weight elements.
Hence \textbf{C2'}(b) holds.

To see an example of the second case, let $v=99889$.  Then $v^{\#}=12211$, $(e_1^7e_1^8(v))^{\#}=29911$,  
$(f_{-1}e_1^7e_1^8(v))^{\#}=29811$, and $(f_6^1f_7^{\bar{1}}e_1^7e_1^8(v))^{\#}=23211$ have the following 
insertion tableaux:
\[
	\young(22,111) \quad \raisebox{0.4cm}{$\longrightarrow$} \quad \young(29,119)
	\quad \raisebox{0.4cm}{$\longrightarrow$} \quad \young(9,28,11)
	\quad \raisebox{0.4cm}{$\longrightarrow$} \quad \young(3,22,11) \;.
\]
\end{enumerate}

\subsection{Proof of Lemma~\ref{lemma.clean} for $j=n-1$ and $j'=n$}
\label{section.proof clean 1}

Define $X=(e_1 \cdots e_n)v$.  
For $1 \leqslant i \leqslant n+1$, set $A_i=(e_i \cdots e_n)X$ and  $B_i=(e_i \cdots e_{n-1})X$. For $2 \leqslant i \leqslant n+1$,
set $A_{-i}=( f_{(i-1)}\cdots f_2f_{-1})A_1$ and  $B_{-i}=( f_{(i-1)}\cdots f_2f_{-1})B_1$.  (So $A_1=A_{-1}$ and 
$B_1=B_{-1}$.  Moreover, $B_{n+1}=B_n$.)  By assumption $(f_h \cdots f_1)(B_{-n})$ is $I_0$-lowest weight, so 
$f_{n}(f_h \cdots f_1)(B_{-n})=0$ and hence $B_{-(n+1)}=0$.

Let $x_i$ be the integer which represents the position where $A_{i+1}$ and $A_i$ differ, and $y_i$ be the integer which 
represents the position where $B_{i+1}$ and $B_i$ differ.  Also, let $x_{-i}$ be the integer which represents the position where
$A_{-i}$ and $A_{-(i+1)}$ differ, and let $y_{-i}$ be the integer which represents the position where $B_{-i}$ and $B_{-(i+1)}$ 
differ. Note that $y_n$ and $y_{-n}$ are undefined.

Recall that $v\in \mathcal{B}^{\otimes \ell}$. Suppose $W$ is any word of length $\ell$ in the letters $\{1, \ldots, n+1$\}. 
If $1 \leqslant p \leqslant \ell$, we define $W(p)$ to be the $p^{th}$ entry of $W$. If $ 1 \leqslant p \leqslant q \leqslant \ell$ 
are integers, then the notation $W(p:q)$ will be used to refer to the word $W(p) W(p+1) \ldots W(q-1) W(q)$.

If $1 \leqslant i \leqslant n$, we define the $i/(i+1)$-subword of $W$ to be the word composed of the symbols $\{i,i+1,\_\}$ 
which is obtained from $W$ by changing each entry that is neither $i$ nor $i+1$ to the symbol $\_$.  For instance the 
$2/3$-subword of $241432143$ is $2\,\_\,\_\,\_\,32\,\_\,\_\,3$.  When we speak of erasing an $i$ or $i+1$, we mean changing 
that entry to $\_$ ; similarly, when we speak of adding an $i$ or $i+1$, we mean changing some $\_$ to $i$ or $i+1$.  Moving an 
$i$ or $i+1$ from $p$ to $q$ means erasing an $i$ or $i+1$ from position $p$ and adding an $i$ or $i+1$ to position $q$. 
The notation $W(p:q)$ is used in the same way for subwords as it is for words.  For instance,
if W=$3\,\_\,\_\,\_\,32\,\_\,\_\,3$ then $W(3:7)=\,\_\,\_\,32\,\_$.

\begin{claim}
For $2 \leqslant i\leqslant n$, we have $x_i\geqslant x_{i-1}$.   
For $2 \leqslant i \leqslant n-1$, we have $y_i\geqslant y_{i-1}$.
\end{claim}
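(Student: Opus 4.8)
The plan is to prove both inequalities by a single bracketing argument, since the chains $(A_i)$ and $(B_i)$ are built by the same descending sweep of lowering operators $e_n,e_{n-1},\ldots$ applied to $X=e_1^n(v)$ (for $A$), respectively to $X$ with the top operator $e_n$ omitted (for $B$). I would carry out the argument for $x_i\geqslant x_{i-1}$ and then observe that the argument for $y_i\geqslant y_{i-1}$ is verbatim the same, only with the index range shortened because $y_n$ is undefined. Throughout I use the signature rule of Remark~\ref{rmk:signature-rule}: in the $(i-1)/i$-bracketing one pairs each $i$ (to the left) with an $i-1$ (to its right), and $e_{i-1}$ changes the leftmost unpaired $i$ to $i-1$. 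Thus $x_{i-1}$ is the position of the leftmost $(i-1)/i$-unbracketed $i$ in $A_i$, while $x_i$ is the position at which the sweep just created an $i$, namely by turning the leftmost $i/(i+1)$-unbracketed $i+1$ of $A_{i+1}$ into an $i$.

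First I would record the one structural input coming from $v$: since $v$ is $I_0$-lowest weight, every letter $i$ of $v$ is already $(i-1)/i$-bracketed, so in $v$ there are no $(i-1)/i$-unbracketed $i$'s at all. Consequently any $(i-1)/i$-unbracketed $i$ appearing later in the chain must be traced back to a letter that was altered by one of the $e$-operators applied so far in the sweep. This is what localizes the unbracketed $i$'s relative to $x_i$.

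The heart of the proof is the following invariant, which I would establish by induction along the sweep: in $A_i$ there is no $(i-1)/i$-unbracketed $i$ strictly to the right of position $x_i$. Granting this, the leftmost $(i-1)/i$-unbracketed $i$ of $A_i$ (which exists exactly when $A_{i-1}\neq 0$, the only case in which $x_{i-1}$ is defined) lies weakly to the left of $x_i$, which is precisely $x_{i-1}\leqslant x_i$. To run the inductive step I would compare the $(i-1)/i$-subword of $A_i$ with that of $A_{i+1}$: passing from $A_{i+1}$ to $A_i$ changes the letter at position $x_i$ from $i+1$ (a blank in the $(i-1)/i$-subword) to $i$, so the $(i-1)/i$-subword of $A_i$ is obtained from that of $A_{i+1}$ by inserting a single $i$ at position $x_i$. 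I would then analyze how inserting one additional high letter $i$ redistributes the pairing to the right of $x_i$, using the inductive hypothesis from the previous step of the sweep together with the lowest-weight input above to show that no $(i-1)/i$-unbracketed $i$ can survive to the right of $x_i$.

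The main obstacle I anticipate is precisely this bracketing bookkeeping: inserting the new $i$ at $x_i$ can steal an $i-1$ from an existing pair, thereby threatening to expose an unbracketed $i$ to its right, and one must show that the configuration produced by a descending sweep started from the lowest-weight word $v$ never permits this. I expect to handle it by maintaining a slightly stronger invariant that also controls the positions of the $(i-1)/i$-unbracketed $(i-1)$'s relative to $x_i$, so that the insertion at $x_i$ can only pair with an $i-1$ lying weakly to the left of any would-be unbracketed $i$. The arguments for the $B$-chain differ only in that the sweep begins at $B_{n}=X$ rather than at $A_{n}=e_nX$; since omitting the top operator $e_n$ does not affect the $(i-1)/i$-interaction for $i\leqslant n-1$, the same invariant and induction yield $y_i\geqslant y_{i-1}$ for $2\leqslant i\leqslant n-1$.
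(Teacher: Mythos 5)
There is a genuine gap, and in fact two of your key assertions are false. First, your ``structural input'' confuses $\varphi$ with $\varepsilon$: the hypothesis that $v$ is $I_0$-lowest weight says $\varphi_{i-1}(v)=0$, i.e.\ every $i-1$ in $v$ is bracketed in the $(i-1,i)$-bracketing; it says nothing about the letters $i$, and $v$ may well contain many $(i-1,i)$-unbracketed $i$'s (already $v=3\,3\,2\,3$ is $I_0$-lowest weight for $n=2$ but has unbracketed $3$'s at positions $1$ and $4$ in the $(2,3)$-bracketing). So unbracketed $i$'s need not be traceable to letters altered by the sweep. Second, the invariant you propose to induct on --- that $A_i$ has no $(i-1,i)$-unbracketed $i$ strictly to the right of $x_i$ --- is false. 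Take $n=2$ and $v=3\,3\,3\,2\,3$, which is $I_0$-lowest weight. Then $X=e_1e_2v=1\,3\,3\,2\,3$, and $A_2=e_2X=1\,2\,3\,2\,3$ with $x_2=2$; but the $2$ in position $4$ of $A_2$ is $(1,2)$-unbracketed and lies strictly to the right of $x_2$. (The claim itself still holds here: $x_1=2=x_2$.) Since the correct statement controls only the \emph{leftmost} unbracketed $i$ --- which is exactly the assertion $x_{i-1}\leqslant x_i$ --- there is no stronger positional invariant of this kind to propagate, and your inductive scheme does not close. Note also that the crucial inductive step is only sketched (``I would then analyze\ldots'', ``I expect to handle it\ldots''), so even absent these errors the argument is not complete.

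The paper's proof goes the other way around, by contradiction: if $x_i<x_{i-1}$, then the $(i,i+1)$-subword of $A_{i-1}=e_{i-1}A_i$ is obtained from that of $A_i$ by deleting the $i$ in position $x_{i-1}$, which does not disturb the prefix ending at $x_i$; hence the $(i,i+1)$-unbracketed $i$ sitting at position $x_i$ of $A_i$ survives in $A_{i-1}$, giving $f_iA_{i-1}\neq 0$. But $A_{i-1}=(e_{i-1}\cdots e_n)(e_1\cdots e_n)v$ with $v$ an $I_0$-lowest weight element, and such an element satisfies $\varphi_i=0$, a contradiction. If you want to salvage a direct bracketing argument, the quantity to track is the leftmost unbracketed letter only, not all of them.
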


\begin{proof}
If $x_i<x_{i-1}$, then it follows that $f_i A_{i-1} \neq 0$.  But this is the statement that
\[
	f_i(e_{i-1}e_i \cdots e_n)(e_1 \cdots e_n)v \neq 0
\]
for some  integer $2 \leqslant i \leqslant n$, which is absurd since $v$ is $I_0$-lowest weight. 
If $y_i<y_{i-1}$, then it follows that $f_i B_{i-1} \neq 0$. But this is the statement that
\[
	f_i(e_{i-1}e_i \cdots e_{n-1})(e_1 \cdots e_n)v \neq 0
\]
for some  integer $2 \leqslant i \leqslant n-1$, which is also absurd. 
\end{proof}

\begin{claim}
We have $x_1>x_{-1}$ and $y_1>y_{-1}$.  (In particular, $f_{-1}(A_1) \neq 0$, so $x_{-1}$ is well-defined.)
\end{claim}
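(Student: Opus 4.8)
The plan is to read off the four quantities $x_{-1},x_1,y_{-1},y_1$ from the $1,2$-signatures of the relevant words, using the signature rule (Remark~\ref{rmk:signature-rule}) and the combinatorial description of $f_{-1}$ (Remark~\ref{rmk:combo-minus-1}), and then to compare positions. Recall that $A_1=(e_1\cdots e_n)(e_1\cdots e_n)v=g_{n,n}$ and $B_1=(e_1\cdots e_{n-1})(e_1\cdots e_n)v=g_{n-1,n}$, so by Lemma~\ref{lemma.gjk} both satisfy $\varphi_1=2$ and $\varphi_i=0$ for $i>1$. Since $A_1=e_1A_2$ and $B_1=e_1B_2$ and applying $e_1$ raises $\varphi_1$ by one, we have $\varphi_1(A_2)=\varphi_1(B_2)=1$; moreover $x_1$ (resp. $y_1$) is the position of the leftmost $1,2$-unbracketed $2$ of $A_2$ (resp. $B_2$), the letter that $e_1$ turns into a $1$.

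First I would pin down $x_{-1}$ and $y_{-1}$. By Remark~\ref{rmk:combo-minus-1}, $f_{-1}$ acts on the leftmost letter of the $1,2$-subword and is nonzero exactly when that letter is a $1$, in which case $x_{-1}$ (resp. $y_{-1}$) is its position. That $f_{-1}(B_1)\ne0$ is immediate from the standing hypothesis that $(f_h\cdots f_1)B_{-n}$ is $I_0$-lowest weight: this forces $B_{-n}=f_{n-1}^{\bar{1}}B_1\ne0$, hence $f_{-1}(B_1)\ne0$, so $y_{-1}$ is well-defined.

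Granting for the moment that $x_{-1}$ is well-defined, I would prove $x_1>x_{-1}$ (and, identically, $y_1>y_{-1}$) by exhibiting an earlier $1,2$-letter. Since $\varphi_1(A_2)=1$ and $\varepsilon_1(A_2)\ge1$, the reduced $1,2$-subword of $A_2$ is $1\,2^{\varepsilon_1(A_2)}$, so the unique $1,2$-unbracketed $1$ precedes, in reading order, the leftmost $1,2$-unbracketed $2$; that is, it sits at a position $u<x_1$. Applying $e_1$ changes only position $x_1$, so this $1$ survives in $A_1$, and thus $A_1$ has a $1,2$-letter strictly left of $x_1$. Its leftmost $1,2$-letter, where $f_{-1}$ acts, is therefore at a position $x_{-1}\le u<x_1$. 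The same argument applied to $B_2,B_1$ gives $y_{-1}<y_1$.

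The remaining and hardest point is that $f_{-1}(A_1)\ne0$, i.e. that the leftmost $1,2$-letter of $A_1$ is a $1$. This genuinely fails for a general $g_{n,n}$ (matched $1,2$-pairs can precede the unbracketed $1$, making the leftmost letter a $2$ and $\varepsilon_{-1}(g_{n,n})=1$), so it must use the hypothesis that $B_1$ lowers all the way to an $I_0$-lowest weight element. The plan is to compare $A_1=(e_1\cdots e_{n-1})(e_nX)$ with $B_1=(e_1\cdots e_{n-1})X$, the two words produced by running the same cascade $e_1\cdots e_{n-1}$ on $X$ and on $e_nX$, and to track the single extra $n$ of $e_nX$ down through the cascade to see its effect on the $1,2$-subword relative to $B_1$, where the hypothesis already guarantees a clean bracketing with leftmost letter a $1$. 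Equivalently, I would use the insertion-tableau description from the proof of Proposition~\ref{proposition.ins}: there $P(A_1^{\#})$ is the tableau of shape $\wt(v^{\#})$ with the two rightmost $1$'s of row $1$ promoted to $n+1$, and $\varphi_{-1}(A_1)$ can be read off from whether these two promoted letters ever occupy the same row during insertion. I expect this transfer — from the lowest-weight behavior of $B_1$ to the non-vanishing of $\varphi_{-1}(A_1)$ — to be the main obstacle, and it is precisely where the detailed $(i,i+1)$-subword bookkeeping organized through the $W(p\!:\!q)$ notation will be needed.
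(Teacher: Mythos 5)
Your proposal has a genuine gap: the nonvanishing $f_{-1}(A_1)\neq 0$ — which is precisely the content of the parenthetical in the claim and which you yourself flag as "the remaining and hardest point" — is never proved. You only name two candidate strategies (tracking the extra $n$ through the cascade $e_1\cdots e_{n-1}$, or reading $\varphi_{-1}$ off the insertion tableau of $A_1^{\#}$) and execute neither; both are also considerably heavier than what is needed. The paper's argument is short and, notably, is available to you from what you have already established. Since $y_{n-1}\geqslant \cdots \geqslant y_1 > y_{-1}$, the words $B_1$ and $B_n=X$ agree on positions $1,\ldots,y_{-1}$; because $f_{-1}(B_1)\neq 0$, the word $X$ therefore has no $1$ or $2$ in positions $1,\ldots,y_{-1}-1$ and $X(y_{-1})=1$. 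Likewise $A_1$ agrees with $X=A_{n+1}$ on positions $1,\ldots,x_1-1$ since $x_n\geqslant\cdots\geqslant x_1$. If $x_1>y_{-1}$, the leftmost $1,2$-letter of $A_1$ is the $1$ at position $y_{-1}$; if $x_1<y_{-1}$, it is the $1$ created by $e_1$ at position $x_1$; and $x_1=y_{-1}$ is impossible because position $x_1$ carries a $2$ in $A_2$ while $X(y_{-1})=1$ and none of $e_2,\ldots,e_n$ can turn a $1$ into a $2$. In every case the leftmost $1,2$-letter of $A_1$ is a $1$, so $f_{-1}(A_1)\neq 0$.

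The two steps you do carry out are correct and are in fact cleaner than the paper's. Your signature-rule observation — that $A_2$ (resp.\ $B_2$) has exactly one $1,2$-unbracketed $1$, at a position $u$ strictly left of the leftmost unbracketed $2$ at position $x_1$ (resp.\ $y_1$), and that this $1$ survives into $A_1$ (resp.\ $B_1$) — gives $y_{-1}\leqslant u<y_1$ directly, whereas the paper derives $y_1>y_{-1}$ by observing that $y_1=y_{-1}$ would force $B_{-n}=B_n$ and hence two distinct $I_0$-lowest weight elements of different weights in one component. It likewise gives $x_{-1}\leqslant u<x_1$ once $f_{-1}(A_1)\neq 0$ is known, whereas the paper obtains the strict inequality only by invoking \textbf{C2'} (Proposition~\ref{proposition.ins}) to place $f_{-1}(A_1)$ in a different $I_0$-component and thereby rule out $x_{-1}=x_1$. (Your observation also shows, incidentally, that the paper's case $x_1<y_{-1}$, in which $A_1(1:x_1-1)$ would contain no $1$'s or $2$'s, cannot actually occur, since $A_1(u)=1$ with $u<x_1$.) None of this, however, substitutes for the missing proof that $f_{-1}(A_1)\neq 0$.
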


\begin{proof}
By the definition of the operator $f_{-1}$ we have $y_1 \geqslant y_{-1}$.  Since $v$ and $v^*:=f_h^1 f_{n-1}^{\bar{1}}
e_1^{n-1} e_1^n v$ are both $I_0$-lowest weight and have different weights,
we cannot have $y_1=y_{-1}$.  Thus $y_1>y_{-1}$.  Now $B_{n}(1:y_{-1})=B_{1}(1:y_{-1})$.  
Therefore, there are no $1$'s or $2$'s in $B_{n}(1:y_{-1}-1)$ and we have $B_{n}(y_{-1})=1$ since these statements must be 
true of $B_1$.  If $x_1>y_{-1}$, then $A_1(1:y_{-1})=B_1(1:y_{-1})$ and so $A_{-2} \neq 0$ with $x_{-1}=y_{-1}$. If 
$x_1< y_{-1}$, then $A_1(1:x_1-1)=B_{n}(1:x_1-1)$ contains no $1$'s or $2$'s and $A_1(x_1)=1$.  Thus  $A_{-2} \neq 0$ 
with $x_{-1}=x_{1}$.  It is clearly impossible for $x_{1}=y_{-1}$.  Therefore, we have established that 
$A_{-2}=f_{-1}(A_1) \neq 0$.  In the notation of Proposition~\ref{proposition.ins}, we have for $j=k=n$, that 
$f_{-1}e_1^je_1^k(v) \neq 0$.  Hence we must be in case \textbf{C2'}(b) from which we deduce that 
$f_{-1}(A_1)$ lies in a different $I_0$-connected component than $A_{1}$.  From this it follows that $x_1>x_{-1}$.
\end{proof}

\begin{claim}
For $2 \leqslant i \leqslant n$, we have $x_{-(i-1)} \leqslant x_{-i}$.  
For $2 \leqslant i \leqslant n$, we have $y_{-(i-1)} \leqslant y_{-i}$.  
(In particular, $A_{-3}, \ldots, A_{-(n+1)}$ are nonzero, so $x_{-2}, \ldots, x_{-n}$ are well-defined.)
\end{claim}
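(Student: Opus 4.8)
The plan is to establish both families of inequalities, together with the nonvanishing of $A_{-(i+1)}$ (resp. $B_{-(i+1)}$) for the relevant range of $i$, by a single induction on $i$ that runs in parallel with the two claims already proved. The starting observation is that $A_{-(i+1)}=f_iA_{-i}$ and $A_{-i}=f_{i-1}A_{-(i-1)}$, so that $x_{-i}$ is the position at which $f_i$ acts on $A_{-i}$ while $x_{-(i-1)}$ is the position at which $f_{i-1}$ acts on $A_{-(i-1)}$; the inequality $x_{-(i-1)}\le x_{-i}$ thus asserts that each successive move in the chain $A_{1}\to A_{-2}\to A_{-3}\to\cdots$ occurs weakly to the right of the previous one. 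The base case $i=2$ is the instance in which $f_{-1}$ (rather than an $f_{i-1}$) creates the new letter $2$, so it is handled by the combinatorial rule for $f_{-1}$ in Remark~\ref{rmk:combo-minus-1} together with the nonvanishing $A_{-2}=f_{-1}(A_1)\ne0$ supplied by the strict inequality $x_1>x_{-1}$ proved above.

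First I would isolate the structural input. Since $A_{-(i-1)}=(f_{i-2}\cdots f_2f_{-1})A_1$ is built from $A_1=g_{n,n}$ using only $f_{-1},f_2,\dots,f_{i-2}$, and each of these operators changes only letters that are at most $i-1$, the letters equal to $i$ or $i+1$ are never created or destroyed; hence the $i/(i+1)$-subword of $A_{-(i-1)}$ is literally identical to that of $A_1$. By Lemma~\ref{lemma.gjk} we have $\varphi_i(A_1)=0$ for $i\ge2$, and one checks from the explicit word form of $g_{n,n}$ given there that in fact each $i/(i+1)$-subword of $A_1$ is balanced, i.e. $\varepsilon_i(A_1)=\varphi_i(A_1)=0$, so every $i$ and every $i+1$ is bracketed.

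Next I would run the insertion argument. Passing from $A_{-(i-1)}$ to $A_{-i}=f_{i-1}A_{-(i-1)}$ inserts a single new $i$---a closing symbol of the $i/(i+1)$-bracketing---at position $x_{-(i-1)}$. The monotonicity is then the elementary fact that inserting one closing symbol into a fully bracketed word can neither leave the word balanced nor unbracket any closing symbol lying to its left: it produces exactly one unbracketed $i$, located weakly to the right of $x_{-(i-1)}$. Since $f_i$ acts on precisely this rightmost unbracketed $i$, this simultaneously yields $A_{-(i+1)}\ne0$ and $x_{-i}\ge x_{-(i-1)}$. The $B$-chain is treated by the same mechanism applied to the $i/(i+1)$-subword of $B_{-i}$; here $B_1=g_{n-1,n}$ again satisfies $\varphi_i(B_1)=0$, but its $i/(i+1)$-subwords may carry a surplus of $(i+1)$'s. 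I would show that this surplus vanishes for $i\le n-1$, so that $B_{-(i+1)}\ne0$, $y_{-i}$ is defined, and the same monotonicity goes through, whereas at the top level the surplus is exactly what forces the expected termination $B_{-(n+1)}=0$, consistently with $y_n,y_{-n}$ being undefined and with the hypothesis that $(f_h\cdots f_1)(B_{-n})$ is $I_0$-lowest weight.

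The hard part will be the bookkeeping in the inductive step rather than any single idea. Checking that $f_{-1},f_2,\dots,f_{i-2}$ leave the $i/(i+1)$-subword untouched is immediate, but one must verify carefully that the $i$ created by $f_{i-1}$ is not absorbed by an $(i+1)$ that only became available through the earlier moves of the sequence; this forces one to track the bracketing across the entire product of non-commuting operators rather than one factor at a time, and to control the surplus of $(i+1)$'s in the $B$-chain level by level. As in the proof that $x_i\ge x_{i-1}$, I expect the cleanest way to rule out the bad case $x_{-i}<x_{-(i-1)}$, should the direct insertion analysis become unwieldy, is by contradiction: such a configuration would let a crystal operator act where the $I_0$-lowest-weight property of $v$, or of the terminal element $v^\ast=f_h^1f_{n-1}^{\bar{1}}e_1^{n-1}e_1^{n}v$, forbids it.
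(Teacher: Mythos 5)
There is a genuine gap, and it sits exactly where you flagged the ``bookkeeping''. Your central structural claim --- that each $i/(i+1)$-subword of $A_1=g_{n,n}$ is balanced, so that every $i$ \emph{and} every $i+1$ is bracketed --- is false. Lemma~\ref{lemma.gjk} only gives $\varphi_i(A_1)=0$ for $i\geqslant 2$; since $\varepsilon_i(A_1)=\varphi_i(A_1)-\wt(A_1)_i+\wt(A_1)_{i+1}=\wt(v)_{i+1}-\wt(v)_i$ for $2\leqslant i\leqslant n-1$ and $v$ is $I_0$-lowest weight, $A_1$ generically carries unbracketed $(i+1)$'s (e.g.\ for $v=998799887766$ one has $\varepsilon_6(A_1)=1$). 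Consequently the ``insert one closing symbol into a fully bracketed word'' step collapses: the $i$ created at position $x_{-(i-1)}$ may bracket with a pre-existing unbracketed $i+1$ to its left, in which case $\varphi_i(A_{-i})=0$ and $A_{-(i+1)}=f_iA_{-i}=0$. So neither the nonvanishing nor the location of the rightmost unbracketed $i$ follows from this local analysis, and the problematic $(i+1)$'s are already present in $A_1$, not created by the earlier moves of the sequence.

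The parenthetical nonvanishing is in fact a global statement. The paper obtains it from condition \textbf{C2'}(b) (Proposition~\ref{proposition.ins}, proved by an RSK-insertion argument), which the preceding claim shows applies to $f_{-1}(A_1)$: it asserts outright that $f_h^1f_n^{\bar{1}}(A_1)$ is $I_0$-lowest weight for some $h<n$, hence $A_{-(n+1)}\neq 0$ and, writing $A_{-(n+1)}=e_1^{h}(\hat v)$ for that lowest weight element $\hat v$, one gets $A_{-i}=(e_i\cdots e_n)e_1^{h}(\hat v)$. (For the $B$-chain, $B_{-n}\neq 0$ is part of the standing hypothesis that $f_h^1 f_{n-1}^{\bar{1}}e_1^{n-1}e_1^n(v)$ is $I_0$-lowest weight.) Your fallback --- ruling out $x_{-(i-1)}>x_{-i}$ by contradiction with a lowest-weight property --- is indeed the paper's argument for the monotonicity when $i>2$, but running it requires exactly the presentation $A_{-(i-1)}=(e_{i-1}\cdots e_n)(e_1\cdots e_g)\hat v$ with $\hat v$ lowest weight in order to conclude $f_iA_{-(i-1)}=0$; the relevant lowest weight element is $\hat v$ (respectively $v^*$ for the $B$-chain), not $v$ itself. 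Since you never invoke \textbf{C2'}, the argument does not close.
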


\begin{proof}
Again, case \textbf{C2'}(b) applies to $f_{-1}(A_1)$ and so the parenthetical statement is immediate.  
First, it is clear from the definitions of the $f_{-1}$ and $f_2$ operators that $x_{-1}\leqslant x_{-2}$ and that 
$y_{-1} \leqslant y_{-2}$.  If $x_{-(i-1)}>x_{-i}$ for $i>2$, then it follows that $f_i A_{-(i-1)} \neq 0$.  But this is the statement 
that $f_i(e_{i-1}e_i \cdots e_n)(e_1 \cdots e_g) \hat{v} \neq 0$ for some $I_0$-lowest weight element $\hat{v}$ and integers 
$3 \leqslant i \leqslant n$ and $0 \leqslant g <n$ which is absurd.  If $y_{-(i-1)}>y_{-i}$ for $i>2$, then it follows that 
$f_i(B_{-(i-1)})\neq 0$. But this is the statement that $f_i(e_{i-1}e_i \cdots e_{n-1})(e_1 \cdots e_g)v^* \neq 0$ for 
some  integers $3 \leqslant i \leqslant n$ and $0 \leqslant g <n$ which is equally absurd.  
\end{proof}

So far, we have the following situation:
\begin{equation*}
\begin{split}
 x_n & \geqslant \cdots \geqslant x_2 \geqslant x_1 > x_{-1} \leqslant x_{-2} \leqslant \cdots \leqslant x_{-n}
 \qquad \text{and}\\
 y_{n-1} & \geqslant \cdots \geqslant y_2 \geqslant y_1 > y_{-1} \leqslant y_{-2} \leqslant \cdots  \leqslant y_{-(n-1)}.
\end{split}
\end{equation*}

\begin{claim}
We have $x_{-1}=y_{-1}$.
\end{claim}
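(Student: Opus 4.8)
The plan is to read the desired equality $x_{-1}=y_{-1}$ directly off the case analysis already carried out in the proof of the preceding claim (that $x_1 > x_{-1}$ and $y_1 > y_{-1}$), rather than re-examining the $\{1,2\}$-subwords of $A_1$ and $B_1$ from scratch. First I would record the trichotomy established there. Since the changes in the chain $X=A_{n+1}\to\cdots\to A_1$ occur at positions $x_n\geqslant\cdots\geqslant x_1$, the word $A_1$ agrees with $X$ on positions $1,\ldots,x_1-1$; likewise, since $y_{n-1}\geqslant\cdots\geqslant y_1>y_{-1}$, the word $B_1$ agrees with $X=B_n$ on positions $1,\ldots,y_{-1}$, where $B_1(1:y_{-1}-1)$ contains no $1$'s or $2$'s and $B_1(y_{-1})=1$. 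Comparing $x_1$ with $y_{-1}$ then yields exactly three possibilities, all of which appear in that earlier proof: if $x_1>y_{-1}$ then $A_1(1:y_{-1})=B_1(1:y_{-1})$ and so $x_{-1}=y_{-1}$; if $x_1<y_{-1}$ then $A_1(1:x_1-1)$ contains no $1$'s or $2$'s with $A_1(x_1)=1$, giving $x_{-1}=x_1$; and $x_1=y_{-1}$ is impossible.

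Next I would invoke the other half of the previous claim, namely the strict inequality $x_1>x_{-1}$. The key point is that this was obtained by an \emph{independent} route: because $A_1=e_1^n e_1^n v=g_{n,n}$ has $\varphi_1(A_1)=2$, the case distinction in \textbf{C2'} forces us into case \textbf{C2'}(b) (case (a) being excluded by $j=k=n$), whence $f_{-1}(A_1)$ lies in a different $I_0$-component than $A_1$, and this is what gives $x_1>x_{-1}$. Since this inequality nowhere uses the comparison of $x_1$ with $y_{-1}$, the two facts may be combined without circularity.

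Finally I would combine them. In the case $x_1<y_{-1}$ we would have $x_{-1}=x_1$, and then $x_1>x_{-1}=x_1$ is absurd; thus this case cannot occur. The impossible case $x_1=y_{-1}$ is already excluded, so we must have $x_1>y_{-1}$, and hence $x_{-1}=y_{-1}$, as claimed.

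The main obstacle here is not computational but logical bookkeeping: one must be careful that the equality $x_{-1}=x_1$ in the second case and the strict inequality $x_1>x_{-1}$ are derived from genuinely different inputs — the first from the location of the leftmost $1$ in the $\{1,2\}$-subword of $A_1$, the second from the connected-component conclusion of \textbf{C2'}(b) — so that setting them against each other produces a real contradiction rather than a tautology. Once that independence is made explicit, the proof is immediate.
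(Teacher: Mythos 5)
Your proof is correct and follows essentially the same route as the paper: both arguments rule out $x_1=y_{-1}$ (already shown impossible) and $x_1<y_{-1}$ (since that case forces $x_{-1}=x_1$, contradicting the independently derived $x_1>x_{-1}$ from \textbf{C2'}(b)), leaving $x_1>y_{-1}$, where $A_1(1:y_{-1})=B_1(1:y_{-1})$ gives $x_{-1}=y_{-1}$. Your explicit remark on the non-circularity of combining $x_{-1}=x_1$ with $x_1>x_{-1}$ is a fair point of care, but it matches what the paper's proof implicitly does.
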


\begin{proof}
Since $x_1 = y_{-1}$ is impossible and since $x_1<y_{-1}$ would imply that $x_{-1}=x_1$, which contradicts $x_1>x_{-1}$, 
we may assume $x_1>y_{-1}$.  However, in this case we have $A_1(1:y_{-1})=B_1(1:y_{-1})$.  Since $f_{-1}$ acts on $B_1$ 
in position $y_{-1}$, it follows that $f_{-1}$ acts on $A_1$ in position $y_{-1}$ as well.  This implies $x_{-1}=y_{-1}$.
\end{proof}

\begin{claim}
For $1 \leqslant i \leqslant n-1$, we have $x_i \leqslant y_i$.
\end{claim}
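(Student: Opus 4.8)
The plan is to translate the statement into the signature rule and then reduce $x_i\leqslant y_i$ to a pointwise comparison of the words $A_{i+1}$ and $B_{i+1}$. By definition $x_i$ is the position at which $e_i$ acts on $A_{i+1}$ and $y_i$ the position at which $e_i$ acts on $B_{i+1}$; by the signature rule (Remark~\ref{rmk:signature-rule}) these are, respectively, the leftmost $(i,i+1)$-unbracketed letter $i+1$ in $A_{i+1}$ and in $B_{i+1}$. Since $A_{i+1}=(e_{i+1}\cdots e_n)X$ and $B_{i+1}=(e_{i+1}\cdots e_{n-1})X$ are obtained from $X$ using only operators $e_j$ with $j\geqslant i+1$, which alter nothing below $i+1$, all letters $\leqslant i$ — in particular every letter $i$ — occupy identical positions in $A_{i+1}$ and $B_{i+1}$. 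Thus the two $(i,i+1)$-bracketings differ only through their $i+1$ entries.

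First I would record the elementary monotonicity fact that does the final work: if two words have their letters $i$ in the same positions and, in every prefix, the first word has at least as many letters $i+1$ as the second, then the leftmost $(i,i+1)$-unbracketed $i+1$ of the first lies weakly left of that of the second. This is immediate from the right-to-left matching description of the bracketing. Applied to $A_{i+1}$ and $B_{i+1}$ it yields $x_i\leqslant y_i$, provided $A_{i+1}$ has in every prefix at least as many $i+1$'s as $B_{i+1}$.

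To secure this prefix domination I would prove the sharper pointwise statement $A_m(p)\leqslant B_m(p)$ for all positions $p$ and all $m$ with $2\leqslant m\leqslant n$. Granting it for $m=i+1$: whenever $B_{i+1}(p)=i+1$, the coincidence of letters $\leqslant i$ forces $A_{i+1}(p)\notin\{1,\ldots,i\}$, and then $A_{i+1}(p)\leqslant i+1$ gives $A_{i+1}(p)=i+1$. Hence every $i+1$ of $B_{i+1}$ sits at a position that is also an $i+1$ in $A_{i+1}$, which gives the required prefix domination and therefore $x_i\leqslant y_i$.

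Finally I would establish $A_m\leqslant B_m$ pointwise by downward induction on $m$. The base case $m=n$ is immediate since $A_n=e_nX$ differs from $B_n=X$ only at $x_n$, where it is lowered from $n+1$ to $n$. For the step, with $A_m=e_mA_{m+1}$ and $B_m=e_mB_{m+1}$, the operator $e_m$ lowers $A_{m+1}$ at $x_m$ and $B_{m+1}$ at $y_m$, and the induction hypothesis together with the reduction above already gives $x_m\leqslant y_m$. Away from $x_m,y_m$ the bound is inherited, and at $x_m$ it holds because $B_{m+1}(x_m)\geqslant A_{m+1}(x_m)=m+1>m$. The hard part will be the position $y_m$: there $B_m(y_m)=m$ while $A_m(y_m)=A_{m+1}(y_m)$ is a priori only $\leqslant m+1$, so the inequality can fail if $A_{m+1}(y_m)=m+1$. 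I expect this to be the main obstacle, and I would resolve it by strengthening the inductive hypothesis to also record that the leftmost $(m,m+1)$-unbracketed $m+1$ of $B_{m+1}$ occupies a position carrying a letter $\leqslant m$ in $A_{m+1}$ once $m\geqslant 2$; the bad configuration $A_{m+1}(y_m)=m+1$ genuinely occurs only at the bottom level $m=1$, which lies outside the asserted range. Verifying this auxiliary property, using that both chains descend from the single word $X$, is where the real combinatorial content of the claim resides.
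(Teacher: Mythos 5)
Your reduction breaks at two independent points. First, the pointwise invariant $A_m(p)\leqslant B_m(p)$ that drives your induction is false whenever $x_m\neq y_m$: at $p=y_m$ you have $B_m(y_m)=m$, while $A_m(y_m)=A_{m+1}(y_m)\geqslant m+1$, precisely because of your own (correct) observation that the letters $\leqslant m$ occupy identical positions in $A_{m+1}$ and $B_{m+1}$ while $B_{m+1}(y_m)=m+1$. For the same reason the auxiliary property you propose as a repair --- that position $y_m$ carries a letter $\leqslant m$ in $A_{m+1}$ --- is impossible, so the strengthened hypothesis contradicts the very sentence you used to justify the prefix comparison; and the failure is not confined to $m=1$ but occurs at every level with $x_m<y_m$, which is exactly the situation the later claims of this section are about. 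Second, your ``elementary monotonicity fact'' is false as stated: identical $i$'s plus prefix domination of the $(i+1)$'s does not force the leftmost unbracketed $i+1$ to move weakly left. Take $i=1$, $B=\_\,\_\,1\,2\,\_\,2$ and $A=\_\,2\,1\,\_\,\_\,2$: the $1$'s agree and every prefix of $A$ has at least as many $2$'s, yet the leftmost unbracketed $2$ sits at position $4$ in $B$ and at position $6$ in $A$. What is true is the containment version (if every $i+1$ of $B$ is also an $i+1$ of $A$, then an unbracketed $i+1$ of $B$ stays unbracketed in $A$); but the $(i+1)$-positions of $A_{i+1}$ and $B_{i+1}$ are $S\cup\{x_{i+1}\}$ and $S\cup\{y_{i+1}\}$ for a common set $S$, so neither contains the other once $x_{i+1}<y_{i+1}$.

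The missing ingredient is the previously established inequality $x_i\leqslant x_{i+1}$, which your argument never invokes. The paper's proof observes that the $i/(i+1)$-subword of $A_{i+1}$ is obtained from that of $B_{i+1}$ by moving a single $i+1$ leftward from $y_{i+1}$ to $x_{i+1}$, assumes $x_i>y_i$ for contradiction together with the inductive hypothesis $x_{i+1}\leqslant y_{i+1}$ (base case $i=n-1$, where the subword of $A_n$ is that of $B_n=X$ with one $n$ inserted, so containment does apply), and then uses the chain $y_i<x_i\leqslant x_{i+1}\leqslant y_{i+1}$ to conclude that the unbracketed $i+1$ of $B_{i+1}$ at position $y_i$ lies strictly left of both modified positions and therefore survives, unbracketed, in $A_{i+1}$. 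Note that the counterexample above has precisely the structure of such a pair --- one $2$ moved leftward --- and is excluded in the application only by $x_i\leqslant x_{i+1}$; no argument that ignores this inequality can close the gap.
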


\begin{proof}
First we show that $x_{n-1} \leqslant y_{n-1}$. Now $y_{n-1}$ represents the position of the leftmost $(n-1,n)$-unbracketed 
$n$ in $B_n$. This $n$ is also unbracketed in $A_n$ because the $(n-1)/n$-subword of $A_n$ is obtained from the
$(n-1)/n$-subword of $B_n$ by inserting an $n$. Hence the leftmost $(n-1,n)$-unbracketed $n$ in $A_n$ is weakly to the 
left of position $y_{n-1}$, so $x_{n-1} \leqslant y_{n-1}$. Next, suppose that $x_{i+1} \leqslant y_{i+1}$ but $x_i > y_i$.   
The $i/(i+1)$-subword of $A_{i+1}$ only differs from the $i/(i+1)$-subword of $B_{i+1}$ by moving an $i+1$ to the left from 
$y_{i+1}$ to $x_{i+1}$.  Since $y_i<x_{i+1}$ by assumption, the $i+1$ which appears in $B_{i+1}(y_i)$ still appears in 
$A_{i+1}(y_i)$ and is $(i,i+1)$-unbracketed.  This implies $x_i \leqslant y_i$.  Induction completes the proof.
\end{proof}

\begin{claim}
For $1 \leqslant i \leqslant n$, we have $x_i \geqslant x_{-i}$. For $1 \leqslant i \leqslant n-1$, we have $y_i \geqslant y_{-i}$.
\end{claim}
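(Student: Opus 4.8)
The plan is to prove both inequalities by induction on $i$, treating the $A$-chain statement $x_i \geqslant x_{-i}$ and the $B$-chain statement $y_i \geqslant y_{-i}$ in parallel, since the two arguments are verbatim the same. The base cases $x_1 > x_{-1}$ and $y_1 > y_{-1}$ are exactly the content of the preceding claim, so it remains only to carry out the inductive step, using the forward monotonicity $x_{i-1} \leqslant x_i$ (resp. $y_{i-1} \leqslant y_i$) already established together with the inductive hypothesis $x_{-(i-1)} \leqslant x_{i-1}$ (resp. $y_{-(i-1)} \leqslant y_{i-1}$).

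The key device is to compare the forward word $A_{i+1}$, from which $x_i$ is read off, with the backward word $A_{-i}$, from which $x_{-i}$ is read off, through the identity $A_{-i} = (f_{i-1}\cdots f_2 f_{-1})(e_1 \cdots e_i) A_{i+1}$ that comes from $A_1 = e_1 \cdots e_i A_{i+1}$. Every operator appearing here has index at most $i$, and the only ones that alter the $i/(i+1)$-subword are $e_i$ (turning the $i+1$ at position $x_i$ into an $i$), $e_{i-1}$ (deleting the $i$ at position $x_{i-1}$), and $f_{i-1}$ (creating a new $i$ at position $x_{-(i-1)}$); for $i=2$ the role of $f_{i-1}$ is played by $f_{-1}$, which creates the $i$ at position $x_{-1}$. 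By the inductive hypothesis and forward monotonicity, all three positions $x_i$, $x_{i-1}$, $x_{-(i-1)}$ are $\leqslant x_i$. Hence the $i/(i+1)$-subwords of $A_{i+1}$ and $A_{-i}$ agree in every position strictly to the right of $x_i$.

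I would then model the $i/(i+1)$-bracketing as a matching of parentheses, reading each $i$ as a closing bracket and each $i+1$ as an opening bracket, so that $x_i$ is the leftmost unmatched opening bracket of $A_{i+1}$ and $x_{-i}$ is the rightmost unmatched closing bracket of $A_{-i}$. Since $x_i$ is the leftmost unmatched opening bracket, a left-to-right scan of $A_{i+1}$ has empty stack just before $x_i$ and, crucially, never pops the bracket at $x_i$ again; consequently every closing bracket of the suffix to the right of $x_i$ is processed while at least two opening brackets are available, so it stays matched even after the bottom opening bracket at $x_i$ is removed. Because $A_{-i}$ differs from $A_{i+1}$ only in positions $\leqslant x_i$, the suffix to the right of $x_i$ is an unchanged string and, by this invariant, contains no unmatched closing bracket for any nonnegative incoming stack size. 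Therefore the rightmost unmatched closing bracket of $A_{-i}$, namely $x_{-i}$, lies at a position $\leqslant x_i$, which is exactly $x_i \geqslant x_{-i}$. The identical argument applied to $B_{i+1}$, $B_{-i}$, and $B_1 = e_1\cdots e_i B_{i+1}$ gives $y_i \geqslant y_{-i}$ for $1 \leqslant i \leqslant n-1$.

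The main obstacle is precisely the bracketing bookkeeping in the third paragraph: a priori, inserting or deleting closing brackets inside the prefix $[1,x_i]$ can re-bracket letters in the suffix and could in principle create a new unmatched $i$ to the right of $x_i$, which would destroy the bound. What rescues the argument is the stack invariant forced by $x_i$ being globally unmatched — every closing bracket of the suffix is read while strictly more than one opening bracket sits on the stack — so that the net effect of all prefix modifications on the suffix scan (removal of a single bottom opening bracket) cannot unmatch any suffix $i$. Verifying this invariant carefully, and separately checking the boundary indices $i=2$ (where $f_{-1}$ stands in for $f_{i-1}$) and $i=n$ (resp. $i=n-1$ for the $B$-chain), is where the actual work lies.
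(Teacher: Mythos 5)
Your proof is correct and takes essentially the same route as the paper's: induction on $i$ with the previous claim as base case, comparison of the $i/(i+1)$-subwords of the forward and backward chains (which, by forward monotonicity and the inductive hypothesis, differ only at positions $\leqslant x_i$), and the signature rule characterizations of $x_i$ and $x_{-i}$. The only difference is presentational: the paper runs the inductive step as a contradiction, showing that under $x_i < x_{-i}$ the unbracketed $i$ of $A_{-i}$ at position $x_{-i}$ survives the prefix modification and persists in $A_i$, contradicting that $f_i$ acts on $A_i$ at $x_i$, whereas you argue directly via the stack invariant that the suffix right of $x_i$ can contain no unbracketed $i$ in $A_{-i}$ --- two sides of the same observation.
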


\begin{proof}
We already know that $x_1 \geqslant x_{-1}$. So assume that $x_{i-1} \geqslant x_{-(i-1)}$ but $x_i<x_{-i}$.
The $i/(i+1)$-subword of $A_{i}$ is obtained from the $i/(i+1)$-subword of $A_{-i}$ by moving an $i$ to the right from 
$x_{-(i-1)}$ to $x_{i-1}$.  Since $A_{-i}(x_{-i})$ contains an $(i,i+1)$-unbracketed $i$ and $x_{i-1}<x_{-i}$, we see that 
$A_{i}(x_{-i})$ still contains an $(i,i+1)$-unbracketed $i$. This implies that $x_i \geqslant x_{-i}$. Induction completes the proof.
The second statement is proved in the same way.
\end{proof}

From the previous result, we have the following situation:
\[
\begin{array}{ccccccccccccccc}
 	\cdots & \geqslant & x_3 & \geqslant & x_2  & \geqslant & x_1  &> & x_{-1} & \leqslant & x_{-2} & \leqslant 
 	& x_{-3} & \leqslant & \cdots \\
  	&& \mathbin{\rotatebox[origin=c]{270}{$\leqslant$}} &  & \mathbin{\rotatebox[origin=c]{270}{$\leqslant$}} &  & 
	\mathbin{\rotatebox[origin=c]{270}{$\leqslant$}} &  & || &  &  & &  &&  \\
	\cdots &  \geqslant & y_3 & \geqslant & y_2  & \geqslant & y_1 & > & y_{-1} & \leqslant & y_{-2}  & \leqslant & 
	y_{-3} & \leqslant & \cdots  
\end{array}
\]
where every entry on the left side of the array is $\geqslant$ to its mirror image on the right side of the array.
From now on, let $j$ be minimal such that $x_{j}<y_{j}$; if no such $j$  exists, set $j=n$.

\begin{claim}
We have $x_i = y_i$ for all $i <j$ and $x_{i+1}  < y_i$ for all $j \leqslant i <n$.
\end{claim}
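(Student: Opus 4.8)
The plan is to treat the two assertions separately. The equality $x_i=y_i$ for $i<j$ is immediate: minimality of $j$ gives $x_i\geqslant y_i$ for every $i<j$ (otherwise a smaller index would already witness $x<y$), and since $i<j\leqslant n$ we have $i\leqslant n-1$, so the previously established inequality $x_i\leqslant y_i$ applies; together these force $x_i=y_i$. The real content is the second assertion $x_{i+1}<y_i$ for $j\leqslant i<n$, which I would obtain by induction on $i$ starting at $i=j$ (if no index with $x_i<y_i$ exists we set $j=n$ and the second assertion is vacuous).

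Before the induction I would record how the $i/(i+1)$-subwords of $A_{i+1}$ and $B_{i+1}$ are related. For $i\leqslant n-2$, the operators $e_{i+2},\ldots,e_n$ do not touch the letters $i$ and $i+1$, so $A_{i+2}$ and $B_{i+2}$ have the same $i/(i+1)$-subword (namely that of $X$); applying $e_{i+1}$ then inserts an $i+1$ at position $x_{i+1}$ in the $A$-chain and at position $y_{i+1}$ in the $B$-chain. Hence the $i/(i+1)$-subword of $A_{i+1}$ is obtained from that of $B_{i+1}$ by erasing the $i+1$ at $y_{i+1}$ and adding one at $x_{i+1}$. For $i=n-1$ there is no erasure: since $A_n=e_nX$ and $B_n=X$, the $(n-1)/n$-subword of $A_n$ is that of $B_n$ with a single $n$ added at $x_n$.

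The inductive step rests on the following lemma: for $j\leqslant i\leqslant n-1$, if $x_i<y_i$ then $x_{i+1}<y_i$. I would argue contrapositively. Suppose $x_{i+1}\geqslant y_i$. Then every modification turning the $B$-subword into the $A$-subword happens weakly to the right of $y_i$: the added letter sits at $x_{i+1}\geqslant y_i$, and the erased letter (when $i\leqslant n-2$) sits at $y_{i+1}\geqslant y_i$ by the monotonicity $y_i\leqslant y_{i+1}$. Hence the two subwords agree on the prefix ending just before $y_i$. Since $y_i$ is the leftmost unpaired $i+1$ in the $B$-subword, every $i+1$ strictly before $y_i$ is paired there with an $i$ strictly before $y_i$, so those pairings survive unchanged in the $A$-subword; consequently there is no unpaired $i+1$ before $y_i$ in the $A$-subword, giving $x_i\geqslant y_i$. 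With $x_i\leqslant y_i$ this yields $x_i=y_i$, contradicting $x_i<y_i$. Feeding the lemma into the induction, the base case $x_j<y_j$ holds by the definition of $j$; and once $x_i<y_i$ is known, the lemma gives $x_{i+1}<y_i$, which together with $y_i\leqslant y_{i+1}$ (for $i\leqslant n-2$) propagates $x_{i+1}<y_{i+1}$, so the induction continues up to $i=n-1$.

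The main obstacle is the bracketing fact used inside the lemma, namely that a leftmost unpaired $i+1$ at position $y_i$ forces every earlier $i+1$ to be matched with an earlier $i$. I would justify this with the stack formulation of the signature rule: process the subword left to right, pushing each $i+1$ and popping on each $i$. Any $i+1$ before $y_i$ that were never popped would remain in the final stack below the one at $y_i$, contradicting that $y_i$ is the leftmost surviving $i+1$; and the $i$ that pops such an earlier $i+1$ must occur before $y_i$, since the $i+1$ pushed at $y_i$ sits above it on the stack. Everything else is bookkeeping with the already-proved monotonicities and the inequality $x_i\leqslant y_i$.
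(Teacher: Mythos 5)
Your proof is correct and follows essentially the same route as the paper's: both hinge on the observation that the $i/(i+1)$-subwords of $A_{i+1}$ and $B_{i+1}$ differ only by relocating a single letter, so that if $x_{i+1}\geqslant y_i$ the two subwords agree strictly before position $y_i$ and the signature rule then forces $x_i\geqslant y_i$, contradicting $x_i<y_i$. The only organizational differences are that you absorb the case $x_{i+1}=y_i$ into the prefix argument (the paper rules it out separately by comparing letter values, $B_{i+1}(x_{i+1})\geqslant i+2$ versus $B_{i+1}(y_i)=i+1$), and you propagate $x_i<y_i$ upward by induction via $x_{i+1}<y_i\leqslant y_{i+1}$ rather than first establishing it for all $i\geqslant j$ through the paper's downward equality-propagation argument.
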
  

\begin{proof}
The first claim is immediate.  Next we note that $x_i<y_i$ for all $i \geqslant j$.  (Otherwise $x_i  = y_i$ for some $i \geqslant j$.
This implies that $x_k=y_k$ for all $k \leqslant i$, and, in particular, $x_j=y_j$.) By definition, we have $B_{i+1}(y_i)=i+1$ 
and $A_{i+2}(x_{i+1})=i+2$.  From the latter, it follows that $B_{i+2}(x_{i+1})\geqslant i+2$ and, since $y_{i+1}>x_{i+1}$ 
(or $y_{i+1}$ is undefined) that $B_{i+1}(x_{i+1}) \geqslant i+2$.  Therefore, we have $x_{i+1} \neq y_i$.  If $x_{i+1}>y_i$,
we must have $x_i<x_{i+1}$ and $y_i<y_{i+1}$ from which it follows that $A_{i+1}(1:y_i)=B_{i+1}(1:y_i)$. But this makes 
$x_i<y_i$ impossible.  By contradiction, we conclude that $x_{i+1} < y_i$.
\end{proof}

\begin{claim} \label{mir}
For $i <j$ we have $x_{-i}=y_{-i}$.  Also, $x_j>x_{j-1}$.
\end{claim}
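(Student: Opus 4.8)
The plan is to prove the two assertions in the order $x_j>x_{j-1}$ first, then $x_{-i}=y_{-i}$ for $i<j$ by induction on $i$, since the inequality $x_j>x_{j-1}$ is exactly what controls the one delicate inductive step (I take $j\geqslant 2$, so that $x_{j-1}$ is meaningful; for $j=1$ part (a) is vacuous). Throughout I will use position-bookkeeping: each $e_k$ creates exactly one letter $k$ and destroys exactly one letter $k+1$, so the multiset of positions carrying a fixed value $m$ in $A_i$ or $B_i$ is governed by which operators touch value $m$. Since $e_j$ is the only operator among $e_j,\ldots,e_n$ that creates or destroys a letter $j$, the $j$-positions of $A_j$ are $X_j\sqcup\{x_j\}$ and those of $B_j$ are $X_j\sqcup\{y_j\}$, where $X_j$ is the set of $j$-positions of $X$; this recovers the structural fact that the $(j-1)/j$-subwords of $A_j$ and $B_j$ differ only by one $j$, sitting at $x_j$ in $A_j$ and at $y_j$ in $B_j$.

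For $x_j>x_{j-1}$ I argue by contradiction. We already know $x_j\geqslant x_{j-1}$, so suppose $x_j=x_{j-1}=y_{j-1}$. As $y_{j-1}$ is where $e_{j-1}$ acts on $B_j$, it is the leftmost $(j-1,j)$-unbracketed $j$ of $B_j$, so $B_j(x_j)=j$ and hence $x_j\in X_j\sqcup\{y_j\}$. Since $x_j<y_j$ this forces $x_j\in X_j$, i.e.\ $X(x_j)=j$. On the other hand $x_j$ is the position where $e_j$ creates a $j$ from a $j+1$, so $A_{j+1}(x_j)=j+1$; because $e_{j+1},\ldots,e_n$ never alter a letter $j$, this gives $X(x_j)\geqslant j+1$, a contradiction. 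Hence $x_j>x_{j-1}$. (The boundary case $j=n$ is handled by the same computation, with the extra $n$ of $A_n=e_nX$ in place of a displaced $j$.)

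For part (a) I induct on $i$, the base $i=1$ being the established equality $x_{-1}=y_{-1}$. The key reduction is that in passing from $A_1$ to $A_{-i}=(f_{i-1}\cdots f_2f_{-1})A_1$, the only operator affecting the $i/(i+1)$-subword is $f_{i-1}$, which inserts an $i$ at $x_{-(i-1)}$; by the inductive hypothesis $x_{-(i-1)}=y_{-(i-1)}$, so the same insertion happens in $B_{-i}$. Moreover, comparing $A_1$ with $B_1$ by the position-counting above, the two words carry identical letters in every position of value $\leqslant j-1$, because $e_1,\ldots,e_{j-1}$ act at the common positions $x_m=y_m$ ($m<j$), while $e_j,\ldots,e_n$ only ever touch letters $\geqslant j$. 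Consequently, when $i+1\leqslant j-1$ the $i/(i+1)$-subwords of $A_{-i}$ and $B_{-i}$ coincide outright, and $f_i$ acts at the same place, giving $x_{-i}=y_{-i}$.

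The remaining and hardest case is $i=j-1$, where the $(j-1)/j$-subword involves the value $j$ on which $A$ and $B$ genuinely differ: the subwords of $A_{-(j-1)}$ and $B_{-(j-1)}$ have identical $(j-1)$-entries but one $j$ displaced from $y_j$ (in $B$) to $x_j<y_j$ (in $A$). I will close this with a prefix-sum (bracketing) argument: writing $j$ as an open and $j-1$ as a close bracket, the rightmost unbracketed $j-1$ of $B_{-(j-1)}$ is at $y_{-(j-1)}$, and from the established inequalities together with $x_j>x_{j-1}$ one obtains $y_{-(j-1)}\leqslant y_{j-1}=x_{j-1}<x_j<y_j$, so both $x_j$ and $y_j$ lie strictly to the right of $y_{-(j-1)}$. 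Moving a single open bracket leftward within the suffix after $y_{-(j-1)}$ only raises the running excess of opens over closes, hence keeps every $j-1$ in that suffix bracketed while leaving $y_{-(j-1)}$ itself unbracketed; therefore $A_{-(j-1)}$ and $B_{-(j-1)}$ share the same rightmost unbracketed $j-1$, and $x_{-(j-1)}=y_{-(j-1)}$. The main obstacle is precisely this last step—verifying that the lone displaced $j$ cannot change which $j-1$ is rightmost unbracketed—and it is the monotonicity of the bracketing under leftward motion of an open bracket that makes it go through.
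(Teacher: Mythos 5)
Your proof is correct, and its skeleton matches the paper's: establish $x_j>x_{j-1}$, dispose of $i\leqslant j-2$ by observing that the two words agree on all letters $\leqslant j-1$ and that the relevant operators only see those letters, and isolate $i=j-1$ as the delicate step. For $x_j>x_{j-1}$ you push the contradiction back to $X$ itself (from $x_j=y_{j-1}$ you get $X(x_j)=j$, while $A_{j+1}(x_j)=j+1$ forces $X(x_j)\geqslant j+1$), where the paper stops at $B_j(x_j)\geqslant j+1$ versus $B_j(y_{j-1})=j$; this is the same computation. The genuine divergence is at $i=j-1$: the paper deduces $x_{-(j-1)}=y_{-(j-1)}$ from the prefix equality $A_{-(j-1)}(1:x_{j-1})=B_{-(j-1)}(1:x_{j-1})$ together with $x_{-(j-1)}\leqslant x_{j-1}$ and $y_{-(j-1)}\leqslant y_{j-1}$, leaving implicit why the differing suffixes (which is where the bracketing partners live) cannot relocate the rightmost unbracketed $j-1$. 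Your bracket-monotonicity argument supplies exactly that justification: moving the lone displaced $j$ from $y_j$ to $x_j$ leaves the running excess of $j$'s over $(j-1)$'s unchanged through position $y_{-(j-1)}$ and only raises it beyond, so no $j-1$ to the right of $y_{-(j-1)}$ can become a new running minimum, and $y_{-(j-1)}$ itself stays unbracketed. Two small points to tighten: for $j=1$ the second assertion should simply be declared vacuous (as you do), and in the default case $j=n$ the quantity $y_n$ is undefined, so the $(n-1)/n$-subwords differ by an $n$ \emph{inserted} at $x_n$ rather than displaced from $y_n$; your monotonicity argument applies verbatim to an insertion as well (it also only raises the running excess on a suffix to the right of $y_{-(n-1)}$), but you only flag this case for the $x_j>x_{j-1}$ part, so a sentence covering it for the $i=j-1$ step would complete the write-up.
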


\begin{proof}
Since the restrictions of $A_{j-1}$ and  $B_{j-1}$ to the alphabet $\{1,2,\ldots,j-1\}$ are identical, and since the operators 
$e_{j-2},\ldots, e_1,f_{-1},f_2,\ldots, f_{j-2}$ only depend on and effect these letters, it follows that for $i\leqslant j-2$ we 
have $x_{-i}=y_{-i}$. Now we must show $x_{-(j-1)}=y_{-(j-1)}$. We have $A_{j+1}(x_j)=j+1$ and thus 
$B_{j+1}(x_j)\geqslant j+1$, and hence by $x_j<y_j$, $B_{j}(x_j)\geqslant j+1$. Since $B_j(y_{j-1})=j$, this yields
$x_j \neq y_{j-1}$.  In light of $x_{j-1}=y_{j-1}$ this gives $x_j \neq x_{j-1}$.  From this it follows that 
$A_{j} (1:x_{j-1})=B_j(1:x_{j-1})$. By the minimality of $j$ and by the result for $i \leqslant j-2$ this implies that 
$A_{-(j-1)}(1:x_{j-1})=B_{-(j-1)}(1:x_{j-1})$.  Since we have both $x_{-(j-1)} \leqslant x_{j-1}$ and $y_{-(j-1)} \leqslant y_{j-1}$, 
the previous equality implies that $x_{-(j-1)}=y_{-(j-1)}$.
\end{proof}

If $1<i<n$, let $\#(A_{-i}(p:q))$ denote the number of $i$'s minus the number of $(i+1)$'s which appear in $A_{-i}(p:q)$. 
Define $\#(B_{-i}(p:q))$ analogously.  Set $AB_i(p:q)=\#(A_{-i}(p:q))-\#(B_{-i}(p:q))$.

\begin{claim} 
\label{ineq}
Suppose $1<i<n$.
\begin{enumerate}
\item{If $x_{-i}<y_{-i}$, then $AB_i(1:x_{-i})>0$.}
\item{ If $x_{-i}>y_{-i}$, then $AB_i(1:y_{-i})<0$.}
\item{If $x_{-i}<y_{-i}$, then $AB_i(x_{-i}+1:y_{-i})<0$.}
\item{If $x_{-i}<y_{-i}$, $x_{-i}=x_{i}$, $x_i \neq x_{i+1}$, and $x_{i} \neq y_i$, then $AB_i(x_{-i}+1:y_i)<-1$.}
\end{enumerate}
\end{claim}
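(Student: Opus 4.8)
The plan is to read every quantity $\#(W(1:q))$ as the height of a lattice path. Recording $+1$ for each letter $i$ and $-1$ for each letter $i+1$, set $s_A(q):=\#(A_{-i}(1:q))$ and $s_B(q):=\#(B_{-i}(1:q))$. Since $A_{-(i+1)}=f_iA_{-i}$ and $f_i$ promotes the rightmost $(i,i+1)$-unbracketed $i$ (the signature rule), the standard bracketing argument identifies $x_{-i}$ as the \emph{first} position at which $s_A$ attains its global maximum $\varphi_i(A_{-i})$, and likewise $y_{-i}$ for $s_B$; in particular $s_A(q)<\varphi_i(A_{-i})$ for $q<x_{-i}$ and $s_A(q)\le\varphi_i(A_{-i})$ for all $q$, and symmetrically for $s_B$. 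All four inequalities thus become comparisons of these two paths at the distinguished abscissae $x_{-i},y_{-i}$. Part (3) then falls out immediately: when $x_{-i}<y_{-i}$ the increment $\#(A_{-i}(x_{-i}{+}1:y_{-i}))=s_A(y_{-i})-s_A(x_{-i})\le 0$ because $s_A$ already peaked at $x_{-i}$, while $\#(B_{-i}(x_{-i}{+}1:y_{-i}))=s_B(y_{-i})-s_B(x_{-i})>0$ because $s_B$ peaks only at $y_{-i}$; subtracting gives $AB_i(x_{-i}{+}1:y_{-i})<0$.

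For parts (1), (2) and (4) I would first reduce from the negative indices back to $A_1=g_{n,n}$ and $B_1=g_{n-1,n}$. The operators $f_{-1},f_2,\ldots,f_{i-1}$ building $A_{-i}$ from $A_1$ act only on letters $\le i$, so they never create or destroy a letter $i+1$, and the only letter $i$ they create is the single one placed at position $x_{-(i-1)}$ by the final operator. Hence $\#(A_{-i}(1:q))=\#(A_1(1:q))+\mathbf{1}[x_{-(i-1)}\le q]$, and symmetrically for the $B$-chain with $y_{-(i-1)}$, which yields
\[
  AB_i(1:q)=\delta_i(q)+\mathbf{1}[x_{-(i-1)}\le q]-\mathbf{1}[y_{-(i-1)}\le q],
  \qquad
  \delta_i(q):=\#(A_1(1:q))-\#(B_1(1:q)).
\]
This strips away the entire negative-index chain, reducing each inequality to knowledge of the single step function $\delta_i$ together with the relative order of $x_{-(i-1)},y_{-(i-1)}$ and the breakpoints of $\delta_i$.

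The next step is to compute $\delta_i$ from the explicit form of $g_{n,n}$ and $g_{n-1,n}$. By Lemma~\ref{lemma.gjk} every letter $>1$ is fully bracketed in $A_1$ and $B_1$, so $\#(A_1(1:q))\le 0$ and $\#(B_1(1:q))\le 0$ throughout; moreover $A_1=(e_1\cdots e_{n-1})A_n$ and $B_1=(e_1\cdots e_{n-1})B_n$ with $A_n=e_nB_n$, so the two words agree except for the propagation of a single box-move governed by the positive-index positions $x_i,x_{i+1},\ldots$ against their counterparts $y_i,y_{i+1},\ldots$. I would pin down $\delta_i$ as a bounded step function whose jumps lie among these positions, using the orderings $\cdots\ge x_2\ge x_1>x_{-1}\le\cdots$, their $y$-mirror, and the relations $x_r=y_r$ (for $r<j$) and $x_{r+1}<y_r$ (for $r\ge j$) proved just above. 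Given $\delta_i$, parts (1) and (2) are short case checks: for (1) the first indicator in the reduction equals $1$ since $x_{-(i-1)}\le x_{-i}$, and one shows $\delta_i(x_{-i})\ge\mathbf{1}[y_{-(i-1)}\le x_{-i}]$, forcing $AB_i(1:x_{-i})>0$, while (2) is the mirror image with $A$ and $B$ interchanged. Part (4) is the delicate one: the extra hypotheses $x_{-i}=x_i$, $x_i\ne x_{i+1}$ and $x_i\ne y_i$ are exactly what force two \emph{distinct} box-moves of $\delta_i$ to sit inside the enlarged window $(x_{-i},y_i]$, each contributing with the same sign, so that the net drop of $\delta_i$, hence of $AB_i$, across that window is at least $2$.

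The main obstacle is this middle step, namely the exact determination of $\delta_i$: the simultaneous bookkeeping of where the $i/(i+1)$-subwords of $g_{n,n}$ and $g_{n-1,n}$ differ. This is subtle because $e_i$ and $e_{i-1}$ act at different positions along the $A$- and $B$-chains and because $e_{i-1}$ can delete an $i$ just created by $e_i$, so the naive ``one box-move'' picture must be corrected; the degenerate coincidences $x_i=x_{i+1}$ and $x_i=y_i$ are precisely the cases one has to separate, which is why they surface as hypotheses in part (4). Once the shape of $\delta_i$ is established, parts (1), (2) and (4) follow from the lattice-path comparison as above, while part (3) is already complete.
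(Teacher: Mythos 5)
Your part (3) is correct and coincides with the paper's own argument: $s_A$ has already attained its global maximum at $x_{-i}$ while $s_B$ first attains its maximum only at $y_{-i}>x_{-i}$, so the increment of $s_A$ over $(x_{-i},y_{-i}]$ is $\leqslant 0$ and that of $s_B$ is $>0$. Your reduction identity $\#(A_{-i}(1:q))=\#(A_1(1:q))+\mathbf{1}[x_{-(i-1)}\leqslant q]$ is also sound (it is half of the indicator formula the paper proves separately as a later claim). But parts (1), (2) and (4) are not proved: everything is funneled into the "exact determination of $\delta_i$", which you yourself flag as the main obstacle and do not carry out, and the inequality $\delta_i(x_{-i})\geqslant\mathbf{1}[y_{-(i-1)}\leqslant x_{-i}]$ that (1) reduces to does not follow from anything in your outline. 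Indeed, with only the normalization "$x_{-i}$ is the first position where $s_A$ attains its global maximum $\varphi_i(A_{-i})$", part (1) can fail: if $\varphi_i(B_{-i})$ were, say, $3$, then $s_B(x_{-i})$ could equal $2$ and $AB_i(1:x_{-i})=\varphi_i(A_{-i})-2$ need not be positive. The explicit step-function route also risks circularity, since the paper derives its contradictions precisely by playing the present claim off against the indicator formula; one cannot expect the formula alone to reprove the claim.

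The missing idea is that no computation of $\delta_i$ is needed, because the earlier claims guarantee that case \textbf{C2'}(b) applies to $f_{-1}(A_1)$ and to $f_{-1}(B_1)$. Hence $A_{-i}=e_i\cdots e_n e_1^{h'}(v')$ and $B_{-i}=e_i\cdots e_{n-1}e_1^{h}(v^*)$ for $I_0$-lowest weight elements $v'$, $v^*$, so each of $A_{-i}$ and $B_{-i}$ has \emph{exactly one} $(i,i+1)$-unbracketed $i$, located at $x_{-i}$ and $y_{-i}$ respectively. In your lattice-path language this pins both paths completely: $s_A(q)\leqslant 0$ for $q<x_{-i}$, $s_A(x_{-i})=1$, and $s_A(q)\leqslant 1$ thereafter, and likewise for $s_B$ with $y_{-i}$. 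Parts (1) and (2) are then immediate, e.g. $AB_i(1:x_{-i})=1-s_B(x_{-i})\geqslant 1$ when $x_{-i}<y_{-i}$. For part (4) the extra hypotheses are used not to locate two box-moves of $\delta_i$ but to show that the letter in position $x_i=x_{-i}$ of $B_{-i}$ is an $i+1$: one chases it through $A_{i+1}(x_i)=i+1$, then $x_i\neq x_{i+1}$ gives $A_{i+2}(x_i)=i+1$, hence $B_{i+1}(x_i)=i+1$, then $x_i\neq y_i$ gives $B_i(x_i)=i+1$, and the remaining operators do not touch letters $\geqslant i+1$. This forces $s_B(x_{-i})\leqslant -1$ and hence the required gap of at least $2$.
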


\begin{proof}
 Once again, \textbf{C2'}(b) applies to $f_{-1}(A_1)$ and so we may write $A_{-i}=e_i \cdots e_n e_1^{h'}(v')$ 
 for some $I_0$-lowest weight element $v'$ and some $h'<n$.  It follows that $A_{-i}$ has exactly one $(i,i+1)$-unbracketed 
 $i$ and it occurs in $x_{-i}$.  In addition, case \textbf{C2'}(b) applies to $f_{-1}(B_1)$ by assumption, so 
 $B_{-i}=e_i \cdots e_{n-1}e_1^h(v^*)$ for an $I_0$-lowest weight element $v^*$. Hence $B_{-i}$ has exactly one 
 $(i,i+1)$-unbracketed $i$ and it occurs in $y_{-i}$.  Thus we have $\#(A_{-i}(1:x_{-i}))>0$ and $\#(B_{-i}(1:y_{-i}))>0$. 
 If $x_{-i}<y_{-i}$ then  $\#(B_{-i}(1:x_{-i})) \leqslant 0$, while if $x_{-i}>y_{-i}$ then $\#(A_{-i}(1:y_{-i})) \leqslant 0$.  
 Together this proves the first two statements.
 For the third statement we have $\#(A_{-i}(x_{-i}+1:y_{-i})) \leqslant 0$ and $\#(B_{-i}(x_{-i}+1:y_{-i}))>0$. 
 For the fourth statement, again, we have $\#(A_{-i}(x_{-i}+1:y_{-i})) \leqslant 0$, but now note that $A_{i+1}(x_{i})=i+1$.  
 Since $x_{i} \neq x_{i+1}$, also, $A_{i+2}(x_{i})=i+1$, whence $B_{i+1}(x_i)=i+1$, and, by, $x_i \neq y_i$, we have
 $B_{i}(x_i)=i+1$.  This now implies that $B_{-i}(x_i)=i+1$ or $B_{-i}(x_{-i})=i+1$. Since the $i$ in $B_{-i}(y_i)$ must be 
 $(i,i+1)$-unbracketed this implies that $\#(B_{-i}(x_{-i}+1:y_{-i}))>1$.
\end{proof}

\begin{claim}
\label{func}
Fix an interval $[p,q]$.  We define the function $[t]$ by $[t]=1$ if $t \in [p,q]$ and $[t]=0$ otherwise.  
With this notation, we have that 
\[
 	AB_i(p:q)=[x_{-(i-1)}]-[x_{i-1}]+2[x_i]-[x_{i+1}]+[y_{i+1}]-2[y_i]+[y_{i-1}]-[y_{-(i-1)}].
\]
\end{claim}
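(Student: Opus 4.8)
The plan is to compute the two counts $\#(A_{-i}(p:q))$ and $\#(B_{-i}(p:q))$ separately by tracking the index-$i$ content of the window $[p,q]$ along the explicit chains of crystal operators that build $A_{-i}$ and $B_{-i}$ out of the common word $X=(e_1\cdots e_n)v$, and then subtracting. The only tool needed is an elementary \emph{step rule}: the quantity $\#(W(p:q))$ (number of $i$'s minus number of $(i+1)$'s in positions $[p,q]$) changes only when an applied operator acts at a position inside $[p,q]$ on a letter in $\{i-1,i,i+1,i+2\}$. Applying $e_m$ (which turns an $m+1$ into an $m$) changes $\#(\cdot(p:q))$ by $+2$ if $m=i$, by $-1$ if $m=i-1$ or $m=i+1$, and by $0$ otherwise, provided the affected position lies in $[p,q]$; applying $f_m$ has exactly the opposite effect, and for the letters $1,2$ the operator $f_{-1}$ behaves like $f_1$. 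This reduces the claim to identifying which steps of each chain are index-$i$ relevant and where they act.

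First I would record the $A$-chain $X=A_{n+1}\xrightarrow{e_n}A_n\xrightarrow{e_{n-1}}\cdots\xrightarrow{e_1}A_1\xrightarrow{f_{-1}}A_{-2}\xrightarrow{f_2}\cdots\xrightarrow{f_{i-1}}A_{-i}$, in which, by definition of the positions, the step producing $A_m$ acts at $x_m$ and the step producing $A_{-(m+1)}$ acts at $x_{-m}$. Among all these steps the only ones touching $i-1,i,i+1$ are $e_{i+1}$ at $x_{i+1}$, $e_i$ at $x_i$, $e_{i-1}$ at $x_{i-1}$, together with the unique raising step that sends an $i-1$ to an $i$, namely $f_{i-1}$ at $x_{-(i-1)}$ when $i\geqslant 3$ and $f_{-1}$ at $x_{-1}=x_{-(i-1)}$ when $i=2$. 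By the step rule this gives $\#(A_{-i}(p:q))=\#(X(p:q))+2[x_i]-[x_{i-1}]-[x_{i+1}]+[x_{-(i-1)}]$. The identical analysis of the $B$-chain $X=B_n\xrightarrow{e_{n-1}}\cdots\xrightarrow{e_1}B_1\xrightarrow{f_{-1}}B_{-2}\xrightarrow{f_2}\cdots\xrightarrow{f_{i-1}}B_{-i}$ yields $\#(B_{-i}(p:q))=\#(X(p:q))+2[y_i]-[y_{i-1}]-[y_{i+1}]+[y_{-(i-1)}]$, and subtracting, using $AB_i(p:q)=\#(A_{-i}(p:q))-\#(B_{-i}(p:q))$, produces precisely the stated expression.

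The one delicate point, which I expect to be the main obstacle, is the top of the two chains: the $A$-chain opens with the extra step $e_n$ (acting at $x_n$) that is \emph{absent} from the $B$-chain, since $B_n=B_{n+1}=X$. For $i\leqslant n-2$ this step involves only the letter $n\geqslant i+2$ and is therefore invisible to the index-$i$ count, so the two computations run genuinely in parallel. For the boundary value $i=n-1$ the step $e_{i+1}=e_n$ does occur in the $A$-chain, contributing $-[x_{i+1}]$, but has no counterpart in the $B$-chain, consistently with the fact that $y_{i+1}=y_n$ is undefined; adopting the convention $[t]=0$ when $t$ is undefined makes the formula's term $+[y_{i+1}]$ vanish and the identity persists. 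I would finish by noting that every remaining step $e_m,f_m$ with $m\notin\{i-1,i,i+1\}$, in particular the raising steps $f_2,\dots,f_{i-2}$, leaves the index-$i$ count of $[p,q]$ unchanged, so no further terms can appear and the bookkeeping is complete.
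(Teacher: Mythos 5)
Your computation is correct and is precisely the ``straightforward computation'' that the paper leaves to the reader: tracking the change in the $(i,i{+}1)$-count of the window $[p,q]$ along the two defining operator chains from $X$ and noting that only the steps $e_{i+1},e_i,e_{i-1},f_{i-1}$ (with $f_{-1}$ playing the role of $f_1$ when $i=2$) are visible to this count. Your handling of the boundary case $i=n-1$, where the convention $[y_n]=0$ is needed because $y_n$ is undefined, is consistent with how the claim is later applied.
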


\begin{proof}
This is a straightforward computation.
\end{proof}

\begin{claim}
\label{samesies}
Suppose $j<n$. If either $x_j>x_{-j}$ or $y_j>y_{-j}$, then both $x_j>x_{-j}$ and $y_j>y_{-j}$.  
In this case we have $x_{-j}=y_{-j}$.
\end{claim}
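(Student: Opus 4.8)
The plan is to treat the statement as a purely combinatorial fact about the integer sequences $x_\bullet,y_\bullet$ and to run everything through the counting function $AB_j$ of Claim~\ref{func}. First I would simplify that function at $i=j$: since $x_{j-1}=y_{j-1}$ (the indices below $j$ agree) and $x_{-(j-1)}=y_{-(j-1)}$ (Claim~\ref{mir}), the four boundary terms cancel and for \emph{every} interval one is left with
\[
	AB_j = 2[x_j]-[x_{j+1}]+[y_{j+1}]-2[y_j],
\]
where $[t]$ denotes the indicator of membership in the chosen interval. I would combine this with the orderings already in hand: $x_j\geqslant x_{-j}$ and (as $j<n$) $y_j\geqslant y_{-j}$, together with the strict gaps $x_j<y_j$ and $x_{j+1}<y_j$ proved in the preceding claims.

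For the direction $x_j>x_{-j}$ I expect a clean argument. Evaluating $AB_j$ on $[1,x_{-j}]$: if $x_{-j}<y_{-j}$, then each of $x_j,x_{j+1},y_j,y_{j+1}$ exceeds $x_{-j}$, so every indicator vanishes and $AB_j=0$, contradicting Claim~\ref{ineq}(1). If instead $x_{-j}>y_{-j}$, I would evaluate on $[1,y_{-j}]$, where only $[y_j]$ and $[y_{j+1}]$ can survive; Claim~\ref{ineq}(2) forces the value to be negative, which forces $y_j=y_{-j}$, and this is absurd since then $x_{-j}\leqslant x_j<y_j=y_{-j}<x_{-j}$. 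Hence $x_{-j}=y_{-j}$; and $y_j=y_{-j}$ would give $x_{-j}=y_j>x_j\geqslant x_{-j}$, so in fact $y_j>y_{-j}$, completing this direction.

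For the reverse direction $y_j>y_{-j}$, the ordering $x_{-j}>y_{-j}$ is eliminated in exactly the same way through Claim~\ref{ineq}(2). What remains is the configuration $x_{-j}\leqslant y_{-j}$, and here the coarse bracket count is consistent with $x_j=x_{-j}$: the reduced formula never produces a contradiction by itself. To close this I would bring in the refined estimates Claim~\ref{ineq}(3)--(4) together with the strict gap $x_{j+1}<y_j$ and the structural input from case~\textbf{C2'}(b) that $A_{-j}$ and $B_{-j}$ each carry a \emph{unique} $(j,j+1)$-unbracketed $j$, located at $x_{-j}$ and $y_{-j}$ respectively. The idea is that $x_j=x_{-j}$ pins the position of this unbracketed letter in a way that, propagated up the $(j,j+1)$-subwords along the two chains, forces $y_j=y_{-j}$ as well, contradicting $y_j>y_{-j}$; once $x_j>x_{-j}$ is known, $x_{-j}=y_{-j}$ follows from the first direction.

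The hard part will be precisely this degenerate configuration $x_j=x_{-j}\leqslant y_{-j}<y_j$ in the second direction. Since the counting identity of Claim~\ref{func} is satisfiable there, the coarse argument that disposes of every other case gives nothing, and I expect to need a direct analysis of the words $A_{-j}$ and $B_{-j}$ --- tracking the single $(j,j+1)$-unbracketed $j$ guaranteed by \textbf{C2'}(b) and how the gap $x_{j+1}<y_j$ constrains it --- rather than a further application of the bracket-counting lemmas.
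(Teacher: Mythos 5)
Your first direction is correct and is in substance the paper's own argument: the reduced identity $AB_j=2[x_j]-[x_{j+1}]+[y_{j+1}]-2[y_j]$ together with Claim~\ref{ineq}(1)--(2) rules out both $x_{-j}<y_{-j}$ and $x_{-j}>y_{-j}$ once $x_j>x_{-j}$ is assumed, and the resulting equality $x_{-j}=y_{-j}$ forces $y_j>y_{-j}$. The gap is in the second direction. Having reduced to the configuration $x_j=x_{-j}\leqslant y_{-j}<y_j$, you assert that the bracket-counting machinery ``never produces a contradiction by itself'' and defer to an unspecified direct analysis of the words $A_{-j}$ and $B_{-j}$. In fact the paper closes this configuration by exactly the further applications of the counting lemmas that you set aside: when $y_{-j}>x_{-j}$ one evaluates $AB_j$ on the interval $[x_{-j}+1,y_{-j}]$, where $[x_j]=[y_j]=[y_{j+1}]=0$; if $x_j=x_{j+1}$ the identity gives $AB_j=0$ against Claim~\ref{ineq}(3), and if $x_j<x_{j+1}$ it gives $AB_j\in\{-1,0\}$ against the sharper bound $AB_j<-1$ of Claim~\ref{ineq}(4), whose hypotheses $x_{-j}=x_j$, $x_j\neq x_{j+1}$, $x_j\neq y_j$ are precisely what this subcase supplies. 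Only the boundary subcase $y_{-j}=x_{-j}$ needs a word argument, and it is short: $A_{j+1}(x_j)=j+1$ forces $B_{j+1}(x_j)\geqslant j+1$, hence $B_j(x_j)\geqslant j+1$ because $e_j$ acts at position $y_j>x_j$, hence $B_{-j}(x_j)\geqslant j+1$, contradicting $B_{-j}(y_{-j})=j$ when $y_{-j}=x_{-j}=x_j$.

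So the proposal as written does not prove the claim: the case you flag as the hard part is exactly the case left unproved, and the plan you sketch for it (propagating the unique $(j,j+1)$-unbracketed $j$ up the two chains to force $y_j=y_{-j}$) is neither carried out nor needed. The missing content is the case split on $y_{-j}=x_{-j}$ versus $y_{-j}>x_{-j}$, and within the latter on $x_j=x_{j+1}$ versus $x_j<x_{j+1}$, with Claims~\ref{ineq}(3) and~\ref{ineq}(4) supplying the contradictions.
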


\begin{proof}
If $j=1$, the conclusions of the claim have already been proven in previous claims. Thus assume $j>1$.  First note that, 
since $x_{-(j-1)}=y_{-(j-1)}$ and $x_{j-1}=y_{j-1}$, we have $AB_j(p:q)=2[x_j]-[x_{j+1}]+[y_{j+1}]-2[y_j]$. To prove the first 
statement, we will show that both (1) $x_j>x_{-j}$ and $y_j=y_{-j}$ and (2) $y_j>y_{-j}$ and $x_j=x_{-j}$ are impossible.

First suppose that $x_j>x_{-j}$ and that $y_j=y_{-j}$. Since $x_{-j}<x_j<y_j=y_{-j}$, we have by Claim~\ref{ineq} that 
$AB_j(1:x_{-j})>0$. However, $x_j,x_{j+1},y_{j+1},y_j$ are each $>x_{-j}$ so by Claim~\ref{func} we have $AB_j(1:x_{-j})=0$. 
Hence,  $x_j>x_{-j}$ and  $y_j=y_{-j}$ is impossible.

Now suppose that $y_j>y_{-j}$ and that $x_j=x_{-j}$.

\smallskip \noindent
\textbf{Case 1:} $y_{-j}<x_{-j}$.  Since $y_{-j}<x_{-j}$ we have by Claim~\ref{ineq} that $AB_j(1:y_{-j})<0$. However, 
$x_j,x_{j+1},y_{j+1},y_j$ are each $>y_{-j}$ so by Claim~\ref{func} we have $AB_j(1:y_{-j})=0$.

\smallskip \noindent
\textbf{Case 2:} $y_{-j}=x_{-j}$.  We have $A_{j+1}(x_j)=j+1$ and so $B_{j+1}(x_j)\geqslant j+1$. Hence by $x_j<y_j$ 
we have $B_{j}(x_j)\geqslant j+1$ which gives $B_{-j}(x_j)\geqslant j+1$. However, by definition $B_{-j}(y_{-j})=j$ so this 
makes $x_{-j}=y_{-j}$ impossible in light of  $x_j=x_{-j}$.

\smallskip \noindent
\textbf{Case 3a:} $y_{-j}>x_{-j}$ and $x_j=x_{j+1}$. Since $y_{-j}>x_{-j}$ we have by Claim~\ref{ineq} that 
$AB_j(x_{-j}+1:y_{-j})<0$. However, $x_j, x_{j+1}$ are each $<x_{-j}+1$ and $y_j, y_{j+1}$ are each $>y_{-j}$ so by 
Claim~\ref{func} we have $AB_j(1:y_{-j})=0$.

\smallskip \noindent
\textbf{Case 3b:} $y_{-j}>x_{-j}$ and $x_j<x_{j+1}$. Since $y_{-j}>x_{-j}=x_{j}$, $x_j \neq x_{j+1}$, and 
$x_{j} \neq y_j$, we have by Claim~\ref{ineq} that $AB_j(x_{-j}+1:y_{-j})<-1$. However, $x_j<x_{-j}+1$ and 
$y_j, y_{j+1}$ are each $>y_{-j}$ so by Claim~\ref{func} we have $AB_j(x_{-j}+1:y_{-j}) \in \{-1,0\}$.

\smallskip

Hence  $y_j>y_{-j}$ and  $x_j=x_{-j}$ is impossible.  This establishes that if either $x_j>x_{-j}$ or $y_j>y_{-j}$, then both 
$x_j>x_{-j}$ and $y_j>y_{-j}$.  

Now assume that both  $x_j>x_{-j}$ and $y_j>y_{-j}$.  If $x_{-j}<y_{-j}$, we have by Claim~\ref{ineq} that 
$\#_j(A_{-j}(1:x_{-j}))>0$. However, $x_j,x_{j+1},y_{j+1},y_j$ are each $>x_{-j}$ so by Claim~\ref{func} we have 
$\#_j(A_{-j}(1:x_{-j}))=0$.  If $x_{-j}>y_{-j}$, we have by Claim~\ref{ineq} that $\#_j(A_{-j}(1:y_{-j}))<0$. However, 
$x_j,x_{j+1},y_{j+1},y_j$ are each $>x_{-j}$ so by Claim~\ref{func} we have $\#_j(A_{-j}(1:y_{-j}))=0$. Hence $x_{-j}=y_{-j}$.
\end{proof}

\begin{claim}
If $x_j<x_{-j}$ or $y_{j}<y_{-j}$, then for $j \leqslant i <n$ we have $y_{-i}<y_i$ and $y_{-i} \leqslant x_{-i}$.
\end{claim}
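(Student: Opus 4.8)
The plan is to argue by induction on $i$, with base case $i=j$, propagating the two inequalities $y_{-i}<y_i$ and $y_{-i}\leqslant x_{-i}$ upward using the $(i,i+1)$-subword counting machinery already assembled in Claims~\ref{ineq} and~\ref{func}. Before carrying this out, the first thing I would pin down is the decisive constraint imposed by the monotonicity relations $x_i\geqslant x_{-i}$ (for $1\leqslant i\leqslant n$) and $y_i\geqslant y_{-i}$ (for $1\leqslant i\leqslant n-1$) proved earlier in this section. These relations make the hypothesis $x_j<x_{-j}$ or $y_j<y_{-j}$ very tight: for $j\leqslant n-1$ both $x_j\geqslant x_{-j}$ and $y_j\geqslant y_{-j}$ hold, while for $j=n$ the index set $\{i:j\leqslant i<n\}$ is empty and $y_n,y_{-n}$ are undefined. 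I would therefore first verify how the hypothesis interacts with this picture, since it governs whether the statement is essentially vacuous (closing the trichotomy at the critical index $j$, whose strict-forward case $x_j>x_{-j}$ or $y_j>y_{-j}$ is already handled by Claim~\ref{samesies}) or whether an honest inductive argument is needed.

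For the inductive argument itself, I would use the descriptions from case \textbf{C2'}(b) of Proposition~\ref{proposition.ins}, namely $A_{-i}=e_i\cdots e_n\,e_1^{h'}(v')$ and $B_{-i}=e_i\cdots e_{n-1}\,e_1^{h}(v^{*})$ for $I_0$-lowest weight elements $v',v^{*}$. The key structural fact these supply is that $A_{-i}$ and $B_{-i}$ each contain exactly one $(i,i+1)$-unbracketed $i$, occurring at positions $x_{-i}$ and $y_{-i}$ respectively. In the base case, strictness $y_{-j}<y_j$ would be extracted from the hypothesis together with the monotonicity, and $y_{-j}\leqslant x_{-j}$ from the conclusion $x_{-j}=y_{-j}$ of Claim~\ref{samesies}. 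In the inductive step I would track how passing from $A_{-i}$ to $A_{-(i+1)}$ and from $B_{-i}$ to $B_{-(i+1)}$ relocates the unique unbracketed letter, feed the resulting position comparisons into the sign identities $AB_i(p:q)$ of Claim~\ref{ineq} and the indicator formula of Claim~\ref{func}, and combine these with the strict inequality $x_{i+1}<y_i$ (valid for $j\leqslant i<n$) in order to promote the two inequalities from index $i$ to index $i+1$, in exactly the contradiction-by-counting style of Claim~\ref{samesies}.

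The main obstacle I expect is twofold. The bracketing bookkeeping in the inductive step is delicate: relocating the single $(i,i+1)$-unbracketed letter can re-bracket or free neighbouring entries, and one must rule out, via the $AB_i$ sign analysis of Claims~\ref{ineq} and~\ref{func}, a new unbracketed letter appearing to the left that would violate $y_{-(i+1)}\leqslant x_{-(i+1)}$; this is the same case-splitting performed in the proof of Claim~\ref{samesies}. More structurally, the boundary at $i=n-1$ is awkward because $y_n,y_{-n}$ are undefined, so the transition into index $n$ must be treated separately using $B_{n+1}=B_n$ and $B_{-(n+1)}=0$ rather than a generic $y$-comparison. The conceptual crux, and what ultimately settles the claim, is reconciling its hypothesis with the already-established monotonicity: this reconciliation is exactly what the downstream argument exploits to force the matching of the $x$- and $y$-position sequences needed to conclude $A_{-(n+1)}=B_{-n}$ in Lemma~\ref{lemma.clean}.
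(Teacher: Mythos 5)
Your reading of the hypothesis is the right one to flag: the earlier claims establish $x_i\geqslant x_{-i}$ and $y_i\geqslant y_{-i}$ throughout, so the statement as printed is vacuous, and the inequalities are evidently a typo for $x_j>x_{-j}$ or $y_j>y_{-j}$ (this is consistent with the proof's opening appeal to Claim~\ref{samesies}, with the displayed diagram that follows, and with the final claim ``We have $x_j=x_{-j}$,'' whose proof assumes $x_j>x_{-j}$ for contradiction). Your overall plan also matches the paper's: induction on $i$ with base case $i=j$ supplied by Claim~\ref{samesies} (which gives both $y_{-j}<y_j$ and $y_{-j}=x_{-j}$), and an inductive step driven by the $AB_i$ counting of Claims~\ref{ineq} and~\ref{func}.

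The gap is that the inductive step — which is essentially all of the content — is only gestured at. The paper's argument has a specific shape you do not articulate: assuming $y_{-(i-1)}<y_{i-1}$ and $y_{-(i-1)}\leqslant x_{-(i-1)}$, one must rule out exactly the three combinations (1) $y_{-i}=y_i$ and $y_{-i}>x_{-i}$, (2) $y_{-i}<y_i$ and $y_{-i}>x_{-i}$, and (3) $y_{-i}=y_i$ and $y_{-i}\leqslant x_{-i}$ (the third being immediate since it would force $y_i\leqslant x_i<y_i$), leaving only the desired conclusion. Each of the first two splits into several subcases according to the relative positions of $x_{-i}$, $x_i$, $x_{i+1}$, $x_{i-1}$, and in each one must choose the interval $[p,q]$ so that Claim~\ref{func} pins down $AB_i(p:q)$ up to the few indicators one cannot compute, and then contradict the sign forced by the appropriate part of Claim~\ref{ineq} — including part (4), whose stronger bound $AB_i(x_{-i}+1:y_{-i})<-1$ is needed precisely when $x_{-i}=x_i$ and $x_i<x_{i+1}$, and whose side conditions must be verified. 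Saying the step goes ``in exactly the contradiction-by-counting style of Claim~\ref{samesies}'' names the right toolbox but does none of this selection or verification, so as written the proposal is an outline of the paper's proof rather than a proof.
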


\begin{proof}
We proceed by induction.  By the first statement of Claim~\ref{samesies}, we can be sure that $y_{-j}<y_j$.  By the second 
statement of Claim~\ref{samesies} we can be sure that $y_{-j}=x_{-j}$, so in particular, $y_{-j} \leqslant x_{-j}$. Therefore the 
claim holds for $i=j$.  Now let $i>j$ and suppose that the claim holds for $i-1$ so that $y_{-(i-1)}<y_{i-1}$ and 
$y_{-(i-1)} \leqslant x_{-(i-1)}$.  We will show that under this assumption, each of (1) $y_{-i}=y_i$ and $y_{-i}>x_{-i}$, 
(2)  $y_{-i}<y_i$ and $y_{-i}>x_{-i}$, and (3) $y_{-i}=y_i$ and $y_{-i}\leqslant x_{-i}$ is impossible.

First suppose that $y_{-i}=y_i$ and that $y_{-i}>x_{-i}$.  

\smallskip \noindent
\textbf{Case 1:} $x_{-i}<x_{i}$.  Since $y_{-i}>x_{-i}$ by Claim~\ref{ineq} we have $AB_i(1:x_{-i})>0$.  However, by 
assumption $x_i,x_{i+1},y_{i+1},y_i,y_{i-1}$ are each $>x_{-i}$  and $x_{-(i-1)}=y_{-(i-1)}$ so the only possible relevant 
change is at $x_{i-1}$.  Thus by Claim~\ref{func} we have $AB_i(1:y_{-i})\in \{-1,0\}$.

\smallskip \noindent
\textbf{Case 2a:} $x_{-i}=x_i$ and $x_i=x_{i+1}$. Since $y_{-i}>x_{-i}$ by Claim~\ref{ineq} we have $AB_i(1:x_{-i})>0$.  
By assumptions, each of  $x_{-(i-1)},x_{i-1},x_i,x_{i+1},y_{-(i-1)}$ are $< x_{-i}+1$.  Clearly $y_i=y_{-i} \in [x_{-i}+1:y_{-i}]$.  
Moreover, $y_{i-1}\leqslant y_i =y_{-i}$ and $y_{i-1}>x_i=x_{-i}$, so $y_{i-1}\in [x_{-i}+1:y_{-i}]$.  Without computing the value 
of $[y_{i+1}]$ we may conclude by Claim~\ref{func} that $AB_i(1:y_{-j})\in \{-1,0\} $.

\smallskip \noindent
\textbf{Case 2b:} $x_{-i}=x_i$ and $x_i<x_{i+1}$. Since $y_{-i}>x_{-i}$, $x_{-i}=x_{i}$, $x_i \neq x_{i+1}$, and 
$x_{i} \neq y_i$ we have by Claim~\ref{ineq} that $AB_i(x_{-i}+1:y_{-i})<-1$. By assumptions, each of
$x_{-(i-1)},x_{i-1},x_i,y_{-(i-1)}$ are $< x_{-i}+1$.  Again, we know that $y_i,y_{i-1}\in [x_{-i}+1:y_{-i}]$. Without computing 
 the value of $[y_{i+1}]$ and $[x_{i+1}]$ we may compute by Claim~\ref{func} that $AB_i(x_{-i}+1:y_{-i}) \in \{-1,0,1\}$.
 
\smallskip
Hence it is impossible that $y_{-i}=y_i$ and that $y_{-i}>x_{-i}$.  Now suppose that $y_{-i}<y_i$ and that $y_{-i}>x_{-i}$.

\smallskip \noindent
\textbf{Case 1a:} $x_{-i}<x_i$ and $x_i \leqslant y_{-i}$. 
Since $y_{-i}>x_{-i}$, we have by Claim~\ref{ineq} that $AB_i(x_{-i}+1:y_{-i})<0$. We have that $x_{-(i-1)}, y_{-(i-1)}$ are 
both $< x_{-i}+1$, that $x_i \in [x_{-i}+1:y_{-i}]$ and that $y_i, y_{i+1}$ are both $>y_{-i}$.  Without computing $[x_{i-1}], 
[x_{i+1}], [y_{i-1}]$  we may determine by Claim~\ref{func} that $AB_i(x_{-i}+1:y_{-i}) \in \{0,1,2,3\}$.

\smallskip \noindent
\textbf{Case 1bi:} $x_{-i}<x_i$, $x_i > y_{-i}$, and $x_{i-1} \leqslant x_{-i}$.  Since $y_{-i}>x_{-i}$, we have by Claim~\ref{ineq} 
that $AB_i(x_{-i}+1:y_{-i})<0$.  By assumption each of $x_{-(i-1)},x_{i-1},y_{-(i-1)}$ are $< x_{-i}+1$ and $x_{i+1}, x_i, y_i, 
y_{i+1}$ are $> y_{-i}$. Without computing  $[y_{i-1}]$ we may determine by Claim~\ref{func} that $AB_i(x_{-i}+1:y_{-i}) 
\in \{0,1\}$.

\smallskip \noindent
\textbf{Case 1bii:} $x_{-i}<x_i$, $x_i > y_{-i}$, and $x_{i-1} > x_{-i}$. Since $y_{-i}>x_{-i}$, we have by Claim~\ref{ineq} that
$AB_i(1:x_{-i})<0$. By assumption $x_{-(i-1)}, y_{-(i-1)}$ are $\leqslant x_{-i}$ whereas each of 
$x_{i-1},x_i,x_{i+1},y_{i-1},y_i,y_{i+1}$ are $> x_{-i}$.  Thus  by Claim~\ref{func}, we have  $AB_i(1:x_{-i})=0$.

\smallskip \noindent
\textbf{Case 2a:} $x_{-i}=x_i$ and $x_i=x_{i+1}$.  Since $y_{-i}>x_{-i}$ we have by Claim~\ref{ineq} that 
$AB_i(x_{-i}+1:y_{-i})<0$.  By assumption each of $x_{-(i-1)},x_{i-1},x_i,x_{i+1},y_{-(i-1)}$ are $< x_{-i}+1$ and $y_i, y_{i+1}$ 
are $> y_{-i}$. Without computing  $[y_{i-1}]$ we may determine by Claim~\ref{func} that $AB_i(x_{-i}+1:y_{-i}) \in \{0,1\}$.

\smallskip \noindent
\textbf{Case 2b:} $x_{-i}=x_i$ and $x_i<x_{i+1}$. Since $y_{-i}>x_{-i}$, $x_{-i}=x_{i}$, $x_i \neq x_{i+1}$, and $x_{i} \neq y_i$ 
we have by Claim~\ref{ineq} that $AB_i(x_{-i}+1:y_{-i})<-1$.   By assumption each of $x_{-(i-1)},x_{i-1},x_i,y_{-(i-1)}$ are
$< x_{-i}+1$ and $y_i, y_{i+1}$ are $> y_{-i}$. Without computing  $[y_{i-1}]$ and $[x_{i-1}]$ we may determine by 
Claim~\ref{func} that $AB_i(x_{-i}+1:y_{-i}) \in \{-1,0,1\}$.

\smallskip
Hence $y_{-i}<y_i$ and $y_{-i}>x_{-i}$ is impossible.  Now suppose  $y_{-i}=y_i$ and $y_{-i}\leqslant x_{-i}$.  This would 
imply $y_{i}=y_{-i} \leqslant x_{-i} \leqslant x_i < y_i$ which is absurd. The three possibilities listed in the beginning of the 
proof are thus impossible, and the only remaining one is $y_{-i}<y_i$ and $y_{-i}\leqslant x_{-i}$.
\end{proof}

Supposing $j=3$, and $n=5$, and $x_j>x_{-j}$ our situation would look as follows:
\[
\begin{array}{cccccccccccccccc}
	x_5 & \geqslant & x_4 & \geqslant  \mathbf{x_3}  > & x_2  & \geqslant & x_1  &> & x_{-1} & \leqslant & x_{-2} & 
	\leqslant & x_{-3} &  \leqslant & x_{-4} & \geqslant x_{-5} \\
	\wedge &  &\wedge &  & || &  & || &  &|| &  & || &   & || & & \mathbin{\rotatebox[origin=c]{270}{$\leqslant$}} & \\
 	\mathbf{y_4}  & \geqslant & \mathbf{y_3} &  \geqslant & y_2  & \geqslant & y_1 & > & y_{-1} & \leqslant & y_{-2}  
	& \leqslant & y_{-3}  & \leqslant & y_{-4}&
\end{array}
\]
where again every entry on the left side of the array is $\geqslant$ its mirror image on the right side of the array, and the 
bold entries are bigger than their mirror image.

\begin{claim}
\label{claim.A=B}
If $x_j=x_{-j}$, then $A_{-(n+1)}=B_{-n}$.
\end{claim}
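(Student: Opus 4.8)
The plan is to reduce the word identity $A_{-(n+1)} = B_{-n}$ to a purely combinatorial statement about the positions $x_i, x_{-i}, y_i, y_{-i}$. Observe first that every operator occurring in either string changes a single letter by $\pm 1$: a lowering operator $e_i$ acting at position $x_i$ (resp.\ $y_i$) decreases the letter there by one, while a raising operator $f_i$ or $f_{-1}$ acting at $x_{-i}$ (resp.\ $y_{-i}$) increases it by one. Since $A_{-(n+1)}$ and $B_{-n}$ are both obtained from the common word $X$, the letter in position $p$ of each is $X(p)$ plus the net signed number of operators acting at $p$, and this net count is independent of the order of application. Hence $A_{-(n+1)} = B_{-n}$ is equivalent to the multiset identity
\[
	\{x_{-1},\dots,x_{-n}\} \uplus \{y_1,\dots,y_{n-1}\} = \{x_1,\dots,x_n\} \uplus \{y_{-1},\dots,y_{-(n-1)}\},
\]
where the raising positions contribute $+1$ and the lowering positions $-1$ at each $p$.

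I would then cancel matching pairs in this identity using relations already established. For $i < j$ we have $x_i = y_i$ and, by Claim~\ref{mir}, $x_{-i} = y_{-i}$, so all terms of index $< j$ cancel between the two sides. The level-$j$ terms cancel as well: by hypothesis $x_{-j} = x_j$, and since the hypothesis excludes the case $x_j > x_{-j}$, the contrapositive of Claim~\ref{samesies} forces $y_j = y_{-j}$ whenever $j < n$ (the case $j = n$ being immediate, as only the single pair $x_{-n}, x_n$ survives the cancellation and these are equal by hypothesis). After these cancellations the identity reduces to
\[
	\{x_{-i}\}_{j < i \leqslant n} \uplus \{y_i\}_{j < i \leqslant n-1} = \{x_i\}_{j < i \leqslant n} \uplus \{y_{-i}\}_{j < i \leqslant n-1},
\]
which follows at once from the two equalities $x_{-i} = x_i$ for $j < i \leqslant n$ and $y_{-i} = y_i$ for $j < i \leqslant n-1$.

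Thus the crux is to prove, in the regime $x_j = x_{-j}$, that at each level $i > j$ the raising step retraces the lowering step, i.e.\ $x_{-i} = x_i$ and $y_{-i} = y_i$. I would establish this by induction on $i$, in the same spirit as the claim immediately preceding Claim~\ref{samesies} (which treats the complementary regime $x_j < x_{-j}$ or $y_j < y_{-j}$): assuming the equalities through level $i-1$, I would evaluate the count $AB_i(p:q)$ on a suitable interval in two ways, once from the localization of the unique $(i,i+1)$-unbracketed $i$ in $A_{-i}$ and $B_{-i}$ supplied by Claim~\ref{ineq}, and once from the explicit indicator formula of Claim~\ref{func}, and rule out every alternative to $x_{-i} = x_i$ and $y_{-i} = y_i$. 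This case analysis is the main obstacle: it requires combining the monotonicity chains with the sign estimates, and the bookkeeping of which of $x_{i\pm 1}, y_{i\pm 1}, x_{-(i-1)}, y_{-(i-1)}$ lie in the chosen interval is delicate, exactly as in the proof of that preceding claim. Once these equalities are in hand, the reduced multiset identity holds and therefore $A_{-(n+1)} = B_{-n}$.
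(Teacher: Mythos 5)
Your reduction to the multiset/vector identity, and the cancellation of all terms of level $\leqslant j$ (using $x_i=y_i$, Claim~\ref{mir}, the hypothesis $x_j=x_{-j}$, and the contrapositive of Claim~\ref{samesies} to get $y_j=y_{-j}$), is exactly the paper's argument. But the proof is not complete: everything hinges on the equalities $x_{-i}=x_i$ for $j<i\leqslant n$ and $y_{-i}=y_i$ for $j<i\leqslant n-1$, and you do not prove them — you only announce an induction via the counts $AB_i(p:q)$ and concede that the required case analysis is "the main obstacle." Note that these equalities do not follow from the order relations already on record ($x_i\geqslant x_{-i}$ together with the two monotone chains is consistent with strict inequality at levels above $j$), so this is a genuine missing step, not bookkeeping. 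Moreover the $AB_i$ machinery is the wrong tool here: Claims~\ref{ineq} and~\ref{func} are built to derive contradictions from specific strict-inequality configurations in the regime where the $A$- and $B$-chains diverge, and adapting them to force equality at every level $i>j$ would require a new, substantial case analysis that neither you nor the paper carries out.

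The argument that actually closes this step is much more direct. Once $x_{-j}=x_j$, the words $A_{-(j+1)}$ and $A_{j+1}$ have identical restrictions to the letters $\{j+1,\ldots,n+1\}$: passing from $A_{j+1}$ to $A_{-(j+1)}$, the operator $e_j$ deletes the $j+1$ in position $x_j$, the operators $e_{j-1},\ldots,e_1,f_{-1},f_2,\ldots,f_{j-1}$ touch only letters $\leqslant j$, and $f_j$ reinstates a $j+1$ in position $x_{-j}=x_j$. Since $f_{j+1}$ depends only on the $(j+1)/(j+2)$-subword, it acts on $A_{-(j+1)}$ in the same position as on $A_{j+1}$, namely $x_{j+1}$ (the position reversing $e_{j+1}$); hence $x_{-(j+1)}=x_{j+1}$, and the same reasoning propagates upward by induction, and identically for the $y$'s. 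This is presumably why the paper states "for all $i\geqslant j$, $x_i=x_{-i}$" without further comment. With that argument supplied, your proof would be correct and essentially identical to the paper's.
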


\begin{proof}
We have for all $i<j$ that $x_i=y_i$ and $x_{-i}=y_{-i}$. Since by assumption $x_j=x_{-j}$, we have for all $i \geqslant j $, 
$x_i=x_{-i}$.  Moreover, if $j<n$  then by Claim~\ref{samesies} $y_j=y_{-j}$ and for all $i \geqslant j$, we have $y_i=y_{-i}$. 
If $\ell$ is the length of the word $v$ and $1 \leqslant p \leqslant \ell$, define the vector $\vec{p}$ to be the vector of length 
$\ell$, which has a $1$ in position $p$ and $0$'s elsewhere. Then recalling that $A_{n+1}=X=B_n$, we have the equalities:
\begin{multline*}
A_{-(n+1)}=X-\sum_{i=1}^n \vec{x}_i + \sum_{i=1}^n \vec{x}_{-i}
=X-\sum_{i=1}^{j-1} \vec{x}_i + \sum_{i=1}^{j-1} \vec{x}_{-i}
=X-\sum_{i=1}^{j-1} \vec{y}_i + \sum_{i=1}^{j-1} \vec{y}_{-i}\\
=X-\sum_{i=1}^{n-1} \vec{y}_i + \sum_{i=1}^{n-1} \vec{y}_{-i} =B_{-n}.
\end{multline*}
\end{proof}

\begin{claim}
We have $x_j=x_{-j}$.
\end{claim}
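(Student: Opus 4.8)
The plan is to argue by contradiction. Since the inequality $x_j \geqslant x_{-j}$ has already been established, it suffices to rule out $x_j > x_{-j}$; once $x_j = x_{-j}$ is known, Claim~\ref{claim.A=B} immediately yields $A_{-(n+1)} = B_{-n}$, which completes this half of Lemma~\ref{lemma.clean}. So I would assume $x_j > x_{-j}$ and split into the cases $j < n$ and $j = n$.

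For the main case $j < n$, I would first feed $x_j > x_{-j}$ into Claim~\ref{samesies} to promote it to the symmetric conclusion $y_j > y_{-j}$ together with $x_{-j} = y_{-j}$, and then invoke the final case-analysis claim above to propagate the inequalities $y_{-i} < y_i$ and $y_{-i} \leqslant x_{-i}$ all the way up to $i = n-1$. The contradiction should come from the top of the chain, where the $y$-sequence terminates: by definition $B_{n+1} = B_n$ and $B_{-(n+1)} = f_n B_{-n} = 0$, so $B_{-n}$ carries no $(n,n+1)$-unbracketed $n$, whereas $A_{-(n+1)} = f_n A_{-n} \neq 0$ forces $A_{-n}$ to carry exactly such an unbracketed $n$ at position $x_{-n}$. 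I would then track the single-letter discrepancy between $A_n$ and $B_n$ (namely $A_n(x_n) = n$ while $B_n(x_n) = n+1$) through the remaining operators $e_{n-1}, \ldots, e_1, f_{-1}, f_2, \ldots, f_{n-1}$, using the $\#$- and $AB_i$-bookkeeping of Claims~\ref{ineq} and~\ref{func} on the $(n-1,n)$- and $(n,n+1)$-subwords. With the propagated position inequalities in hand, the goal is to show that the unbracketed $n$ witnessed in $A_{-n}$ persists as an unbracketed $n$ in $B_{-n}$, contradicting $B_{-(n+1)} = 0$.

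For the remaining case $j = n$, Claim~\ref{mir} already supplies $x_n > x_{n-1}$ together with $x_i = y_i$ and $x_{-i} = y_{-i}$ for all $i \leqslant n-1$. Assuming in addition $x_n > x_{-n}$, I would run the same terminal $(n,n+1)$-subword comparison against the constraint $B_{-(n+1)} = 0$, now using that all lower indices agree on the two sides, to again exhibit a forbidden unbracketed $n$ in $B_{-n}$. In both cases the contradiction forces $x_j = x_{-j}$.

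I expect the main obstacle to be precisely this top-of-chain analysis: unlike the interior indices, at $i = n$ the quantities $y_n$ and $y_{-n}$ are undefined and the operator $f_n$ plays a distinguished role through the boundary condition $B_{-(n+1)} = 0$, so the clean symmetric $AB_i$ argument of Claim~\ref{samesies} is not directly available and must be replaced by a hands-on comparison of the $(n,n+1)$-subwords of $A_{-n}$ and $B_{-n}$, showing that the extra $n$ inherited from the discrepancy at $x_n$ cannot be bracketed on the $B$-side.
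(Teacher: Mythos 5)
Your proposal is correct and follows essentially the same route as the paper: assume $x_j>x_{-j}$, split into $j=n$ (where Claim~\ref{mir} and the definition of $j$ give agreement of all lower positions, hence equal prefixes up to $x_{-n}$) and $j<n$ (where Claim~\ref{samesies} and the propagation claim supply $y_{-(n-1)}\leqslant x_{-(n-1)}$), and in both cases derive a contradiction with $B_{-(n+1)}=0$ by exhibiting an $(n,n+1)$-unbracketed $n$ in $B_{-n}$. The "hands-on comparison of the $(n,n+1)$-subwords" you anticipate as the main obstacle is exactly what the paper carries out in its Cases 2a and 2b, listing the few elementary moves that transform the subword of $A_{-n}(1:x_n)$ into that of $B_{-n}(1:x_n)$ and checking each preserves an unbracketed $n$.
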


\begin{proof}
Suppose $x_j>x_{-j}$.

\smallskip \noindent
\textbf{Case 1:} $j=n$. By the definition of $j$, we have $x_{n-1}=y_{n-1}$ and by Claim~\ref{mir} we have 
$x_{-(n-1)}=y_{-(n-1)}$. Since $x_{-n}<x_n$, this implies $A_{-n}(1:x_{-n})=B_{-n}(1:x_{-n})$.  Since $A_{-n}$ contains an 
$(n,n+1)$-unbracketed $n$ in position $x_{-n}$, so does $B_{-n}$. Therefore, $f_n(B_{-n})\neq 0$ which contradicts 
$B_{-(n+1)}=0$.

\smallskip \noindent
\textbf{Case 2a:} $j<n$ and $x_{n-1}=x_{-(n-1)}$. We have $y_{-(n-1)} \leqslant x_{-(n-1)} \leqslant x_n$. Since $x_n<y_{n-1}$ 
this means that we cannot have $y_{-(n-1)} =x_n$, so we must have $y_{-(n-1)}<x_n$.  Since $x_{n-1}=x_{-(n-1)}$ and 
$y_{n-1}>x_{n}$, the $n/(n+1)$-subword of $B_{-n}(1:x_{n})$ is obtained from the $n/(n+1)$-subword of $A_{n}(1:x_{-n})$
 by:
\begin{enumerate}
\item{Erasing an $n$ from $x_{n}$ and adding an $n$ in $y_{-(n-1)}$. (Note $y_{-(n-1)}<x_n$.)}
\item{Adding an $n+1$ to $x_n$. }
\end{enumerate}
Therefore, since the  $n/(n+1)$-subword of $A_{-n}(1:x_{n})$ contains an $(n,n+1)$-unbracketed $n$ and each one of 
these two steps does not change that property, the $n/(n+1)$-subword of $B_{-n}(1:x_{n})$ also does. This implies $
f_n(B_{-n})\neq 0$ which contradicts $B_{-(n+1)}=0$.

\smallskip \noindent
\textbf{Case 2b:} $j<n$ and $x_{n-1}>x_{-(n-1)}$. Since, $x_{n-1}, y_{n-1} \in [1:x_{n-1}]$ and $x_{n-1}, x_n \in 
[x_{n-1}+1: x_n]$ and $y_{n-1}>x_n$, the $n/(n+1)$-subword of $B_{-n}(1:x_{n})$ is obtained from the $n/(n+1)$-subword 
of $A_{-n}(1:x_{n})$ by:
\begin{enumerate}
\item{Erasing an $n$ from $x_{-(n-1)}$ and adding an $n$ in $y_{-(n-1)}$. (Note $y_{-(n-1)} \leqslant x_{-(n-1)}$).}
\item{Adding an $n$ to $x_{n-1}$ and erasing an $n$ from $x_n$. (Note $x_{n-1} \leqslant x_n$).}
\item{Adding an $n+1$ to $x_n$. }
\end{enumerate}
Therefore, since the  $n/(n+1)$-subword of $A_{-n}(1:x_{n})$ contains an $(n,n+1)$-unbracketed $n$ and each one of
these three steps does not change that property, so does the $n/(n+1)$-subword of $B_{-n}(1:x_{n})$. This implies 
$f_n(B_{-n})\neq 0$ which contradicts $B_{-(n+1)}=0$.
\end{proof}

Since, indeed $x_j=x_{-j}$, we have $A_{-(n+1)}=B_{-n}$ by Claim~\ref{claim.A=B}, 
which completes the proof of Lemma~\ref{lemma.clean}.

\subsection{Proof of Lemma~\ref{lemma.clean} for $j=n$ and $j'=n-1$}
\label{section.proof clean 2}

\begin{lemma} \label{clean2}
Suppose $v$ is $I_0$-lowest weight and $h<n-1$.  Suppose that $(e_2 \cdots e_{n-1})e_1^h(v)\neq0$ and 
$e_2 \cdots e_n e_1^h(v)\neq0$. If $f_n^1f_{n}^{{1}}e_{\bar{1}}^{n}e_1^n(v)$ is $I_0$-lowest weight, then 
$f_n^1f_{n-1}^{{1}}e_{\bar{1}}^{n-1}e_1^n(v)$ is $I_0$-lowest weight.
\end{lemma}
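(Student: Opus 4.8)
The plan is to deduce Lemma~\ref{clean2} from the first case of Lemma~\ref{lemma.clean} (the case $j=n-1$, $j'=n$ already settled in Section~\ref{section.proof clean 1}) by applying the Lusztig/Sch\"utzenberger involution $w \mapsto w^{\#}$ from the proof of Proposition~\ref{proposition.ins}. On $\mathcal{B}^{\otimes \ell}$ this involution obeys the type $A_n$ intertwining rules $(e_i b)^{\#} = f_{n+1-i}(b^{\#})$ and $(f_i b)^{\#} = e_{n+1-i}(b^{\#})$ for $i \in I_0$, together with a matching rule for the odd operator $e_{-1}$, and it exchanges $I_0$-highest with $I_0$-lowest weight elements. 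First I would record the induced rules on the composite operators occurring in the statement: iterating the above gives $(f_n^1 y)^{\#} = (e_1 \cdots e_n)(y^{\#})$ and $(f_{n-1}^1 y)^{\#} = (e_2 \cdots e_n)(y^{\#})$, together with the dual translations of the chains $e_1^k$ and $e_{\bar 1}^k$. These identities are exactly what turn the mixed $e$/$f$ chains of Lemma~\ref{clean2} into the chains $A_i,B_i,A_{-i},B_{-i}$ that drive Section~\ref{section.proof clean 1}.

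Next I would translate hypothesis and conclusion. Setting $u = v^{\#}$, which is $I_0$-highest weight, the side conditions $(e_2\cdots e_{n-1})e_1^h(v)\neq 0$ and $(e_2\cdots e_n)e_1^h(v)\neq 0$ become precisely the non-vanishing statements guaranteeing that the analogues of $A_{-i}$ and $B_{-i}$ built from $u$ are all nonzero, so that the first-case argument is applicable. Applying $\#$ should convert the hypothesis that $f_n^1 f_n^1 e_{\bar 1}^n e_1^n(v)$ is $I_0$-lowest weight into the hypothesis of the first case (the condition on the analogue of $B_{-(n+1)}$ that forces it to vanish), and the desired conclusion that $f_n^1 f_{n-1}^1 e_{\bar 1}^{n-1} e_1^n(v)$ is $I_0$-lowest weight into the corresponding conclusion $A_{-(n+1)} = B_{-n}$ of Section~\ref{section.proof clean 1}, read in the involuted data. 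Invoking that equality and then applying $\#$ once more to return to the original crystal would then give the lemma.

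The main obstacle I anticipate is nailing down the transformation rule for the odd operators $e_{-1}, f_{-1}$ under $\#$. By the combinatorial description in Remark~\ref{rmk:combo-minus-1}, $f_{-1}$ is governed by the $\{1,2\}$-subword read from the left, and reverse-complementation sends this to the $\{n, n+1\}$-subword read from the opposite end; hence the $\#$-conjugate of $f_{-1}$ is one of the primed odd operators of~\eqref{equation.-ip} rather than $f_{-1}$ itself, and I must verify that this primed operator interacts with the $I_0$-lowest weight condition exactly as the first-case analysis requires. A secondary obstacle is the non-vanishing bookkeeping: one must check that every intermediate application of $e_{-1}$, $e_i$, and $f_i$ in both the original and the involuted chains is defined, which is what the two hypotheses on $e_1^h(v)$ are designed to supply. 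Should the odd step of the involution prove too delicate, the fallback is to rerun the position-tracking argument of Section~\ref{section.proof clean 1} verbatim with the roles of the two chains $e_1^n$ and $e_1^{n-1}$ (equivalently, of $A$ and $B$) interchanged, since the chain of inequalities among the $x_i, y_i, x_{-i}, y_{-i}$ and the terminal identity $A_{-(n+1)} = B_{-n}$ are symmetric in which of the two chains is pinned to $I_0$-lowest weight.
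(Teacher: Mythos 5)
Your proposed reduction via the involution $w\mapsto w^{\#}$ does not go through, and the obstacle you flag as ``the main one'' is in fact fatal rather than a technicality. Under reverse-complement the odd operator $f_{-1}$ (which acts on the \emph{leftmost} letter of the $\{1,2\}$-subword) is conjugated into an operator acting on the \emph{rightmost} letter of the $\{n,n+1\}$-subword, i.e.\ one of the primed operators of~\eqref{equation.-ip}, not $f_{-1}$ or $e_{-1}$ again. Consequently the $\#$-image of the hypothesis of Lemma~\ref{clean2} is a statement about a primed odd operator acting near $I_0$-\emph{highest} weight elements, whereas the already-proven case of Lemma~\ref{lemma.clean} is a statement about $f_{-1}$ acting on the elements $g_{j,k}$ (which have $\varphi_1=2$ and $\varphi_i=0$ for $i>1$). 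These two statements are not translates of one another, so invoking the $j=n-1$, $j'=n$ case after applying $\#$ proves nothing; closing that gap would require an independent argument essentially as hard as the lemma itself. Your fallback --- rerunning the Section~\ref{section.proof clean 1} argument with the two chains interchanged --- also fails: that argument is anchored on $B_{-(n+1)}=0$, on $v$ being $I_0$-lowest weight so that $f_i A_{i-1}=0$, and on both chains living in the component of $v$, none of which holds in the setting of Lemma~\ref{clean2}, where one starts from $e_1^h(v)$ with $h<n-1$ and the whole point is to identify the \emph{other} component reached by $e_{-1}$.

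The paper's proof is built on a genuinely different ingredient that your proposal lacks: a counting invariant. Writing $e_{-1}(Y)=e_1^s e_1^t(v^*)$ with $v^*$ an $I_0$-lowest weight element (possible since $\varphi_1(e_{-1}(Y))\in\{1,2\}$ and $\varphi_k=0$ for $k>1$, by Lemma~\ref{lemma.gjk}), one must show $t=n$ and $s=n-1$. The value $t=n$ is forced by comparing the number of $(n+1)$'s in $v^*$ with that in $v$, which is computed via the length $L(W)$ of the longest weakly increasing subsequence (Claim~\ref{FR}, an RSK fact) together with the monotonicity $L(e_{\bar 1}^{n-1}e_1^h(v))\geqslant L(e_{\bar 1}^{n}e_1^h(v))$ (Claim~\ref{Is}, proved by tracking an increasing subsequence through each application of $e_i$). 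Then $s=n-1$ follows from \textbf{C2'}(b) because $v$ and $v^*$ lie in different $I_0$-components. No step of your outline produces the inequality on $L$ or the component-separation argument, so the proposal as written has a genuine gap.
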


\begin{proof}[Proof of Lemma~\ref{clean2}]
Suppose $v$ and $v'=f_n^1f_{n}^{{1}}e_{\bar{1}}^{n}e_1^h(v)$ are $I_0$-lowest weight and 
$(e_2 \cdots e_{n-1})e_1^h(v)\neq0$.  We must show that $f_n^1f_{n-1}^{{1}}e_{\bar{1}}^{n-1}e_1^h(v)$ is 
$I_0$-lowest weight.

\begin{claim}
\label{FR}
Given a word $W$, define $L(W)$ to be the length of the longest weakly increasing subsequence of $W$. If $V$ is 
$I_0$-lowest weight, and $W$ and $V$ are in the same $I_0$-connected component, then the number of $(n+1)$'s 
which appear in $V$ is equal to $L(W)$.
\end{claim}

\begin{proof}
This easily follows from analyzing the RSK insertion tableaux of the words.
\end{proof}

\begin{claim}
 \label{Is}
We have  $L(e_{\bar{1}}^{n-1}e_1^h(v)) \geqslant L(e_{\bar{1}}^{n}e_1^h(v))$.
\end{claim}

\begin{proof}
Since $Y=e_2 \cdots e_{n-1} e_1^h(v) \neq 0$, by inspection of the insertion tableaux of $v$ and $Y$ we observe that 
$\varphi_1(Y)=0$, $\varphi_2(Y)=1$, and $\varphi_k(Y)=0$ for all $k>2$.  This implies that $Y$ contains a letter $2$ 
which precedes all letters $1$.  Hence $e_{\bar{1}}^{n-1}e_1^h(v)=e_{-1}(Y) \neq 0$, so the statement 
$L(e_{\bar{1}}^{n-1}e_1^h(v)) \geqslant L(e_{\bar{1}}^{n}e_1^h(v))$ is well-defined.

We will now recycle notation from the proof of Section~\ref{section.proof clean 1} with slight changes.  
Let $X=e_1^h(v)$. For $2 \leqslant i \leqslant n+1$, set $A_i=(e_i \cdots e_n)(X)$ and  $B_i=(e_i \cdots e_{n-1})(X)$.  
Set $A_1=e_{-1}(A_2)$ and $B_1=e_{-1}(B_2)$.
Let $x_i$ be the integer which represents the position, where $A_{i+1}$ and $A_i$ differ and $y_i$ be the integer which 
represents the position where $B_{i+1}$ and $B_i$ differ. 

Suppose that $v$ contains $r$ letters $(n+1)$. It follows from weight considerations that $v'$ contains $(r+1)$ letters $(n+1)$.
This implies that $L(e_{\bar{1}}^{n}e_1^h(v))=r+1$ whereas $L(e_2 \cdots e_n e_1^h(v))=r$. This is to say $L(A_1)=r+1$ and
 $L(A_2)=r$. So $A_1$ contains a weakly increasing subsequence of length $r+1$, specified by the indices 
 $i_1^0, \ldots, i_1^{r}$.  We must have that $i_1^0 =x_1$ and that $A_1(i_1^1)=1$, otherwise the same indices 
 would specify a weakly increasing subsequence of $A_2$ of length $r+1$. It follows that $A_2$ has a weakly 
 increasing subsequence given by the indices $i_2^1, \ldots, i_2^{r}$ where $A_2(i_2^1)=1$.  Now suppose 
 $2 \leqslant k \leqslant n$ and $A_k$ has a weakly increasing subsequence given by the indices $i_k^1, \ldots, i_k^{r}$,
 where $A_k(i_k^1)=1$. If $x_k \notin \{ i_k^1, \ldots, i_k^{r}\}$, then $A_{k+1}$ has such a subsequence specified 
 by the same indices.  

Now suppose that  $x_k \in \{ i_k^1, \ldots, i_k^{r}\}$.  Create a list of indices as follows:
\begin{enumerate}
\item{If $i_k^j \leqslant x_k$ or $A_k(i_k^j) \neq k$, then $i_{k+1}^j=i_k^{j}$.}
\item{If $i_k^j> x_k$ and $A_k(i_k^j) = k$, then $A_k(i_k^j)$ is $(k,k+1)$-bracketed with some $k+1$ in a position 
between $x_k$ and $i_k^j$.  Let $i_{k+1}^j$ denote this position.}
\end{enumerate}

This creates a set $\{ i_{k+1}^1, \ldots, i_{k+1}^{r}\}$, which, after a possible reordering into increasing order, specifies 
a weakly increasing subsequence of $A_{k+1}$ with $A_{k+1}(i_{k+1}^1)=1$.   

By induction $B_n=A_{n+1}=X$ has a weakly increasing subsequence  specified by the indices $\{ {i'}_{n}^1, \ldots, {i'}_{n}^{r}\}$,
with $B_{n}({i'}_{n}^1)=1$.  Let $k>1$ and assume  $B_{k+1}$ has a weakly increasing subsequence  specified by the 
indices $\{ {i'}_{k+1}^1, \ldots, {i'}_{k+1}^{r}\}$, with $B_{k+1}({i'}_{k+1}^1)=1$.   If $y_k < {i'}_{k+1}^1$, then the same is 
true of $B_k$ with the same indices.  If $y_k > {i'}_{k+1}^1$ then $B_k=e_k(B_{k+1})=[B_{k+1}(1:{i'}_{k+1}^1) \,\,\, 
e_k(B_{k+1}({i'}_{k+1}^1+1:\ell))]$.  Since $B_{k+1}({i'}_{k+1}^1+1:\ell)$ has a weakly increasing subsequence of length $r-1$, 
$e_k(B_{k+1}({i'}_{k+1}^1+1:\ell))$ does as well.  Thus $B_k=[B_{k+1}(1:{i'}_{k+1}^1) \,\,\, e_k(B_{k+1}({i'}_{k+1}^1+1:\ell))]$
has a weakly increasing subsequence of length $r$ specified by some indices $\{ {i'}_{k}^1, \ldots, {i'}_{k}^{r}\}$, with 
$B_{k}({i'}_{k}^1)=1$ (where ${i'}_{k}^1={i'}_{k+1}^1$).  By induction this is true for $k=2$. Since $e_{-1}(B_2)=B_1$ is 
defined and since $B_2({i'}_{2}^1)=1$, we have $y_1<{i'}_{2}^1$ and so $\{y_1, {i'}_{2}^1, \ldots, {i'}_{2}^{r}\}$ is a list of 
indices  which give a weakly increasing subsequence of length $r+1$ in $B_1$.
\end{proof}

We want to show that $f_n^1f_{n-1}^{{1}}e_{\bar{1}}^{n-1}e_1^h(v)$ is $I_0$-lowest weight. Now $e_{-1}(Y)$ is obtained from 
$Y=e_2 \cdots e_{n-1} e_1^h(v)$ by changing its first $2$ to $1$.  As a result $\varphi_1(e_{-1}(Y)) \in \{1,2\}$ and 
$\varphi_k(e_{-1}(Y))=0$ for all $k>1$. Therefore, we may write $e_{-1}(Y)=e_1^s e_1^t(v^*)$ for some $I_0$-lowest weight 
element $v^*$, and $s \geqslant 0$ and $t>0$ with $t \geqslant s$ (using Lemma~\ref{lemma.gjk} when 
$\varphi_1(e_{-1}(Y))=2$). This gives $v^*=f_t^1f_s^1e_{\bar{1}}^{n-1}e_1^h(v)$.
Since $v'$ contains one more $n+1$ than $v$, it follows from Claims~\ref{FR} and~\ref{Is} that $v^*$ contains at least 
one more $n+1$ than $v$, which means we must have $t=n$.  This also means that $v$ and $v^*$ are not in the same 
connected $I_0$-component. But if $v=f_h^1f_{n-1}^{\bar{1}}e_{1}^{s}e_1^n(v^*)$ is in a different connected $
I_0$-component than $v^*$, then \textbf{C2'}(b) applies which forces $s=n-1$. Thus  
$v^*=f_n^1f_{n-1}^1e_{\bar{1}}^{n-1}e_1^h(v)$. 

This concludes the proof of Lemma~\ref{clean2}.
\end{proof}

\begin{proposition}
Lemma~\ref{lemma.clean} with $j=n-1$ and $j'=n$ and Lemma~\ref{clean2} imply Lemma~\ref{lemma.clean}.
\end{proposition}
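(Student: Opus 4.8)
The plan is to derive the one remaining case of Lemma~\ref{lemma.clean}, namely \textbf{C1'} with $j=k=n$ and $j'=n-1$, from the already-established case $j=n-1$, $j'=k=n$ together with Lemma~\ref{clean2}. The guiding observation is that these two cases have \emph{identical} conclusion
\[
	E:\qquad f_{n-1}^{\bar{1}}e_1^{n-1}e_1^n(v)=f_n^{\bar{1}}e_1^ne_1^n(v),
\]
and differ only in their hypothesis: the case $j=n-1$, $j'=n$ assumes $H_I$, that $f_h^1f_{n-1}^{\bar{1}}e_1^{n-1}e_1^n(v)$ is $I_0$-lowest weight, whereas the case $j=n$, $j'=n-1$ assumes $H_{II}$, that $f_h^1f_n^{\bar{1}}e_1^ne_1^n(v)$ is $I_0$-lowest weight. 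Thus, fixing $v$ and $h<n-1$, it suffices to produce $E$ from $H_{II}$.

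First I would set $u:=f_h^1f_n^{\bar{1}}e_1^ne_1^n(v)$, which is $I_0$-lowest weight by $H_{II}$. Inverting the type $A$ operators via \textbf{A1} and cancelling $e_{-1}f_{-1}=\mathrm{id}$ via \textbf{Q3} yields $f_{-1}e_1^ne_1^n(v)=e_2\cdots e_n\,e_1\cdots e_h(u)$, hence the two identities $e_1^ne_1^n(v)=e_{\bar{1}}^ne_1^h(u)$ and $v=f_n^1f_n^1e_{\bar{1}}^ne_1^h(u)$. The first of these shows that the hypothesis of Lemma~\ref{clean2} holds with lowest weight element $u$ and parameter $h$, whose side conditions I would check from the shape of $u$ and the given nonvanishing of $f_{-1}e_1^ne_1^n(v)$ and of $e_1^{n-1}e_1^n(v)$. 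Lemma~\ref{clean2} then furnishes an $I_0$-lowest weight element $v^*:=f_n^1f_{n-1}^1e_{\bar{1}}^{n-1}e_1^h(u)$ with $e_1^{n-1}e_1^n(v^*)=e_{\bar{1}}^{n-1}e_1^h(u)$.

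The crux is to identify $v^*$ with $v$. The $v^*$-identity says exactly that $f_h^1f_{n-1}^{\bar{1}}e_1^{n-1}e_1^n(v^*)=u$ is $I_0$-lowest weight, i.e.\ that $H_I$ holds for $v^*$; applying the already-proven case $j=n-1$, $j'=n$ to $v^*$ then gives $E$ for $v^*$, so that $f_n^{\bar{1}}e_1^ne_1^n(v^*)=f_{n-1}^{\bar{1}}e_1^{n-1}e_1^n(v^*)=e_1^h(u)$ and hence $e_1^ne_1^n(v^*)=e_{\bar{1}}^ne_1^h(u)$. Comparing with the formula for $v$ from the previous paragraph forces $v^*=f_n^1f_n^1e_{\bar{1}}^ne_1^h(u)=v$. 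Once $v^*=v$, the $v^*$-identity reads $e_1^{n-1}e_1^n(v)=e_{\bar{1}}^{n-1}e_1^h(u)$, and applying $f_{n-1}^{\bar{1}}$—which cancels $e_{\bar{1}}^{n-1}$—gives $f_{n-1}^{\bar{1}}e_1^{n-1}e_1^n(v)=e_1^h(u)=f_n^{\bar{1}}e_1^ne_1^n(v)$, which is precisely $E$, completing the case $j=n$, $j'=n-1$ and thus Lemma~\ref{lemma.clean}.

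I expect the main obstacle to be the bookkeeping of the operator-inversion identities—verifying that each intermediate application is nonzero so that \textbf{A1} and \textbf{Q3} may legitimately be applied—and, above all, the nonvanishing $e_1^ne_1^n(v^*)\neq0$ required before invoking the case $j=n-1$, $j'=n$ on $v^*$. The latter I would settle by a direct weight computation, tracing the roots through each operator to obtain $\wt(v^*)=\wt(u)-\sum_{i=h+1}^n\alpha_i=\wt(v)$; since by Lemma~\ref{lemma.gjk} the nonvanishing of $e_1^ne_1^n$ on an $I_0$-lowest weight element depends only on its weight (equivalently, its shape), $e_1^ne_1^n(v^*)\neq0$ then follows from the given $e_1^ne_1^n(v)\neq0$. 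Note that equality of weights alone does not force $v^*=v$, so the explicit inverse formula $v=f_n^1f_n^1e_{\bar{1}}^ne_1^h(u)$ is genuinely needed for the identification.
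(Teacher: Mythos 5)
Your proposal is correct and follows essentially the same route as the paper's proof: your $u$ is the paper's $v^*$ and your $v^*$ is the paper's $v'$, and both arguments apply Lemma~\ref{clean2} to the lowest weight element produced by the hypothesis, verify the hypothesis of the already-proven case for the resulting element, invoke that case, and identify the result with $v$ by comparing the inverse formulas (using the weight computation to justify $e_1^ne_1^n(v^*)\neq 0$).
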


\begin{proof}
We need to show that if $v$ is $I_0$-lowest weight, $e_1^{n-1}e_1^n(v)\neq0$, $e_1^ne_1^n(v)\neq0$, and 
$v^*=f_h^1f_{n}^{\bar{1}}e_1^{n}e_1^n(v)$ is $I_0$-lowest weight, then $f_{n-1}^{\bar{1}}e_1^{n-1}e_1^n(v)
=f_{n}^{\bar{1}}e_1^{n}e_1^n(v)$. Now $v=f_n^1f_n^1e_{\bar{1}}^ne_1^h(v^*)$ is $I_0$-lowest weight (in particular, 
$e_2 \cdots e_n e_1^h(v^*)\neq0$).  Now we show that $e_2 \cdots e_{n-1} e_1^h(v^*)\neq0$.  By definition, 
$e_1^h(v^*)\neq0$. Either $v^*$ has more $n$'s than $(n-1)$'s so that $e_2 \cdots e_{n-1} e_1^h(v^*)\neq0$, 
or else $v^*$ has the same number of $n$'s as $(n-1)$'s and $h=n-2$ in which case also 
$e_2 \cdots e_{n-1} e_1^h(v^*)\neq0$.  Therefore, by Lemma~\ref{clean2} $v'=f_n^1f_{n-1}^1e_{\bar{1}}^{n-1}e_1^h(v^*)$
is $I_0$-lowest weight. Rewriting this as $v^*=f_h^1f_{n-1}^{\bar{1}}e_{1}^{n-1}e_1^n(v')$ and noting that $\wt(v)=\wt(v')$ 
implies $e_{1}^{n}e_1^n(v') \neq 0$ Lemma~\ref{lemma.clean} with $j=n-1$ and $j'=n$ gives 
$v^*=f_h^1f_{n}^{\bar{1}}e_{1}^{n}e_1^n(v')$.  This implies that $v=v'$ and that hence that 
$f_{n-1}^{\bar{1}}e_1^{n-1}e_1^n(v)=f_{n}^{\bar{1}}e_1^{n}e_1^n(v)$.
\end{proof}

\bibliographystyle{alpha}
\bibliography{main}{}

\end{document}